\tikzset{knot/.style={double=#1,double distance=1pt,line width=2pt,white}}
\newcommand{\R}{\mathbb{R}}
\newcommand{\C}{\mathbb{C}}
\newcommand{\xUDarrow}[1]{%
 {\left\updownarrow\vbox to #1{}\right.\kern-\nulldelimiterspace}
}
\newcommand{\nolabel}[1]{%
 {}
}
\newcommand\xLRarrow[2][]{%
  \ext@arrow 9999{\longleftrightarrowfill@}{#1}{#2}}
\newcommand\longleftrightarrowfill@{%
  \arrowfill@\leftarrow\relbar\rightarrow}
 \newcommand{\xdashleftrightarrow}[2][]{\ext@arrow 3359\leftrightarrowfill@@{#1}{#2}}
 \def\leftrightarrowfill@@{\arrowfill@@\leftarrow\relbar\rightarrow}
\def\arrowfill@@#1#2#3#4{%
  $\m@th\thickmuskip0mu\medmuskip\thickmuskip\thinmuskip\thickmuskip
   \relax#4#1
   \xleaders\hbox{$#4#2$}\hfill
   #3$%
}
\newtheorem{thm}{Theorem}[section]
\newtheorem{prp}[thm]{Proposition}
\newtheorem{cnj}[thm]{Conjecture}
\newtheorem{rmk}[thm]{Remark}
\newtheorem{dfn}[thm]{Definition}
\def\be{\begin{equation}}
\def\ee{\end{equation}}
\def\sfA{{\mathsf{A}}}
\def\sfB{{\mathsf{B}}}
\def\sfC{{\mathsf{C}}}
\def\sfD{{\mathsf{D}}}
\def\sfH{{\mathsf{H}}}
\def\sfI{{\mathsf{I}}}
\def\sfP{{\mathsf{P}}}
\def\sfW{{\mathsf{W}}}
\def\sfZ{{\mathsf{Z}}}
\def\sfPsi{{\mathsf{\Psi}}}
\def\IR{{\mathbb{R}}}
\def\IQ{{\mathbb{Q}}}
\def\IZ{{\mathbb{Z}}}
\def\IP{{\mathbb{P}}}
\def\IC{{\mathbb{C}}}
\def\IN{{\mathbb{N}}}
\def\CH{{\mathcal{H}}}
\def\CM{{\mathcal{M}}}
\def\CN{{\mathcal{N}}}
\def\ws{\mathrm{ws}}
\def\CO{\mathcal{O}}
\def\bx{\boldsymbol{x}}
\def\hgt{\mathrm{ht}}
\def\gcd{{\rm{gcd}}}
\def\Sk{{\rm{Sk}}}
\def\bc{{\boldsymbol{c}}}
\newcommand{\pic}[2]{\raisebox{-.5\height}{\includegraphics[scale=#2]{#1}}}
\newcommand{\picLabel}[3]{
	\raisebox{-.5\height}{
		\labellist
		\Label #1
		\endlabels
		\endlabellist
		\includegraphics[scale=#3]{#2}}
	}
	\newcommand{\Label}{
		\@Labeli
	}
	\newcommand\@Labeli{\@ifnextchar\endlabels{\@Labelend}{\@Labelii}}
	\newcommand\@Labelii[3]{
		\pinlabel {#1} [tr] at #2 #3
		\@Labeli 
	}
	\newcommand\@Labelend[1]{
	}
\newcommand{\newpic}[2]{\raisebox{-.4\height}{\includegraphics[scale=#2]{#1}}}
\newcommand\OC{\pic{overcross}{.50}}
\newcommand\UC{\pic{undercross} {.50}}
\newcommand\SP{\pic{splice} {.50}}
\newcommand\UK{\pic{unknot} {.50}}
\newcommand\PT{\pic{positivetwist} {.50}}
\newcommand\NT{\pic{negativetwist} {.50}}
\newcommand\ST{\pic{straight} {.50}}
\newcommand\CIo{\newpic{C10} {.075}}
\newcommand\CIIo{\newpic{C20} {.075}}
\newcommand\CIIi{\newpic{C21} {.075}}
\newcommand\CIIIo{\newpic{C30} {.075}}
\newcommand\CIIIi{\newpic{C31} {.075}}
\newcommand\CIIIii{\newpic{C32} {.075}}
\renewcommand{\(}{\left(}
\renewcommand{\)}{\right)}
\title{The worldsheet skein D-module and basic curves on Lagrangian fillings of the Hopf link conormal}
\author[a,b,c]{Tobias Ekholm}
\author[b,c,d]{Pietro Longhi}
\author[b,c]{Lukas Nakamura}
\affiliation[a]{Institut Mittag-Leffler, Aurav 17, 182 60 Djursholm, Sweden}
\affiliation[b]{Department of Mathematics, Uppsala University, Box 480, 751 06 Uppsala, Sweden}
\affiliation[c]{Centre for Geometry and Physics, Uppsala University, Box 480, 751 20 Uppsala, Sweden}
\affiliation[d]{Department of Physics and Astronomy, Uppsala University, Box 516, 751 20 Uppsala, Sweden}
\emailAdd{tobias.ekholm@math.uu.se, pietro.longhi@physics.uu.se, lukas.nakamura@math.uu.se}
\abstract{
	HOMFLYPT polynomials of knots in the 3-sphere in symmetric representations satisfy recursion relations.
	Their geometric origin is holomorphic curves at infinity on knot conormals that determine a $D$-module
	with characteristic variety the Legendrian knot conormal augmention variety and with the recursion relations as operator polynomial generators \cite{Aganagic:2013jpa,Ekholm:2018iso}. 
	We consider skein lifts of recursions and $D$-modules corresponding to skein valued open curve counts \cite{Ekholm:2019yqp} that encode HOMFLYPT polynomials colored by arbitrary partitions. We define a worldsheet skein module which is the universal target for skein curve counts and a corresponding $D$-module. 
	
	We then consider the concrete example of the Legendrian conormal of the Hopf link. We show that the worldsheet skein $D$-module for the Hopf link conormal is generated by three operator polynomials that  
	annihilate the skein valued partition function for any choice of Lagrangian filling and recursively determine it uniquely. We find Lagrangian fillings for any point in the augmentation variety and show that their skein valued partition functions admit quiver-like expansions where all holomorphic curves are generated by a small number of basic holomorphic disks and annuli and their multiple covers. 
	}  
\begin{document} 

\maketitle

\section{Introduction}
Quantum knot invariants satisfy recursion relations. For example, generating functions of symmetrically colored HOMFLYPT polynomials of knots are known to be $q$-holonomic~\cite{Garoufalidis}. 

HOMFLYPT-polynomials of a knot in~$S^{3}$ are connected to topological string theory of its Lagrangian conormal, first in $T^{\ast} S^{3}$ and then, after conifold transition, in the resolved conifold~\cite{Ooguri:1999bv}. From this perspective, the recursion relation is an operator equation (sometimes called a `quantum curve') that quantizes the moduli space of classical vacua which is given by the corresponding equation in commutative variables~\cite{Aganagic:2003qj}. In the A-model topological string, the classical vacua correspond to (formal) Lagrangian fillings of the Legendrian unit knot conormal, together with their disk potentials (the semi-classical limit of counts of holomorphic curves ending on them) that constitute the augmentation variety of knot contact homology, see~\cite{Aganagic:2013jpa}. The quantum curve lift of the augmentation variety is a $D$-module generated by an  operator polynomial that annihilates the partition function generated by all holomorphic curves on the Lagrangian conormal, and the augmentation variety is the characteristic variety of this $D$-module, see~\cite{Aganagic:2013jpa,Ekholm:2018iso}. 

In~\cite{Ekholm:2019yqp}, a new approach to open curve counting was developed. Deformation invariant counts of holomorphic curves in a Calabi-Yau threefold with boundary on a Lagrangian~$L$ of vanishing Maslov class were obtained by counting the curves by the values of their boundaries in the HOMFLYPT skein module of $L$. 
Here the Euler characteristic of the curve contributes to the power of the skein variable $z$ and a certain linking between $L$ and the curves contributes to the power of the framing variable $a$. 
In~\cite{Ekholm:2020csl} this perspective was used to generalize quantum curves from symmetrically colored HOMFLYPT polynomials to colorings by arbitrary partitions, in the case of the simplest knot, the unknot.  
We refer to this new type of recursion associated to the conormals $L_{K}$ of a link $K$ as \emph{skein valued recursion}. It is a collection of operator equations $\sfA_{K;j}\cdot \sf Z_{K}=0$, where $\sfA_{K;j}$ are linear combinations of operators in the skein module of the unit conormal of $K$ viewed as the boundary of $L_{K}$ and $\sf Z_{K}$ is the partition function in the skein of $L_{K}$. 

In this paper we describe broader geometric and physical frameworks for skein valued recursion. In order to address the naturality of skein valued curve counts we introduce a `universal' skein module $\Sk(X,L)$ of an oriented 3-submanifold $L$ in a 6-manifold $X$ equipped with an identification of its normal bundle $NL$ in $X$ with its cotangent bundle $T^{\ast}L$ and an induced almost complex structure in a neighborhood of $L$. We call it the \emph{worldsheet skein}. Elements in the worldsheet skein are formal linear combinations of embedded surfaces in $X$ with boundary in $L$ with $J$-complex tangent spaces along the boundary and with interior disjoint from $L$, up to skein relations. The skein relations are modeled on elliptic and hyperbolic boundary nodal degenerations of holomorphic curves and the corresponding curve families of their normalizations, and are direct counterparts of the usual HOMFLYPT skein relations. The invariance proof for the usual skein valued counting immediately implies that curve counts in the worldsheet skein are invariant as well.

In this setting the operator polynomials $\sfA_{K;j}$ are polynomials in the worldsheet skein $\Sk(\R\times ST^{\ast}S^{3};\R\times \Lambda_{K})$ of the unit conormal in the unit cotangent bundle. We call this worldsheet skein  
divided by the ideal generated by $\sfA_{K;j}$ the \emph{worldsheet skein $D$-module} of $\Lambda_{K}$, see Section \ref{sec:recursion-worldsheet} for details. 

Our main result, Theorem \ref{t:Hopf}, determines the worldsheet skein $D$-module of the unit conormal of the Hopf link, and shows that it determines the skein valued curve counts for any of its Lagrangian fillings. In the case of the conormal filling, the $D$-module generators give skein recursion identities for HOMFLYPT polynomials of the Hopf link colored by arbitrary partitions. 
From the viewpoint of the $A$-model topological string these relations characterize the generating series of all-genus worldsheet instantons, with boundaries on a stack with arbitrarily many branes wrapped on a Lagrangian filling.
The proof of Theorem \ref{t:Hopf} is geometric, and uses Symplectic Field Theory (SFT) moduli spaces of holomorphic curves at infinity \cite{EGH} on the Hopf link conormal. 

The Lagrangian fillings in Theorem \ref{t:Hopf} have the topology of the product of a torus and the line or of two solid tori. In the former case the partition function is trivial. In the latter we observe that the skein valued partition functions have `quiver-like' structures, compare~\cite{Kucharski:2017ogk, Ekholm:2018eee}, where the partition functions are generated by (skein valued multiple covers of) two basic holomorphic disks on each component (like for the unknot) and one or two basic annuli stretching between the components. For details see Theorem \ref{t: skein quiver 1},  Theorem \ref{thm:alternative quiver formula for Hopf} and Conjecture~\ref{c: skein quiver}.

\subsection*{Acknowledgements}
We are grateful to Marcos Mari\~no and Vivek Shende for discussions.
This work is supported by the Knut and Alice Wallenberg Foundation, KAW2020.0307 Wallenberg Scholar and by the Swedish Research Council, VR 2022-06593, Centre of Excellence in Geometry and Physics at Uppsala University and VR 2020-04535, project grant.

\section{Background, worldsheet skein $D$-modules, and main results}
In Section~\ref{ssec:intrU(1)} we review background results on recursion relations for quantum knot invariants. 
In Section~\ref{ssec:introskeinDmodule}, we give a brief account of worldsheet skein valued curve counting (details appear in Section \ref{ssec:reviewskeinsonbranes}) and recursion relations in that setting. 
In Section \ref{ssec:intromainresults} we state our main results, the description of the worldsheet skein $D$-module of the Hopf link and the skein valued quiver descriptions of corresponding partition functions.

\subsection{Augmentation varieties and $D$-modules for knots and links}\label{ssec:intrU(1)}
We review how quantum curves related to HOMFLYPT partition functions appear from symplectic geometry and topological string perspectives.

Consider a link $K=K_{1}\cup\dots\cup K_{m}\subset S^{3}$ with $m\ge 1$ connected components $K_{j}$. 
The conormal Lagrangian $L_{K}\subset T^{\ast}S^3$ is the family of cotangent planes along $K$ that annihilate its tangent vector field, topologically $L_{K}=\bigcup_{j=1}^{m}L_{K_{j}}\approx\bigsqcup_{j=1}^{m}S^1\times \IR^2$, a disjoint union of $m$ solid tori. At infinity, $(T^{\ast}S^{3},L_{K})$ is asymptotic to $(\R\times U^{\ast}S^{3},\R\times \Lambda_{K})$, where  
$U^{\ast}S^{3}$ is the unit cotangent bundle of $S^{3}$, with its standard contact structure given by the Liouville form and $\Lambda_{K}$ is the unit conormal of $K$ which is a Legendrian submanifold, $\Lambda_{K}=\bigcup_{j=1}^{m}\Lambda_{K_{j}}\approx\bigsqcup_{j=1}^{m} T^{2}$. 

In \cite{Aganagic:2013jpa} it was observed that the symplectic geometric knot contact homology and the physical topological string were connected via interactions between infinite area punctured holomorphic curves with boundary on $\R\times\Lambda_{K}$ and asymptotic to Reeb chords of $\Lambda_{K}$ at infinity and finite area curves with boundary on $L_{K}$.    

This is well-understood at the semi-classical level as follows. Reeb chords of $\Lambda_{K}$ correspond to binormal geodesics of the link $K$ graded by their Morse index (a non-negative integer).
The Chekanov-Eliashberg dg-algebra $CE^{\ast}(\Lambda_{K})$ of $\Lambda_K$ is freely generated over the group ring of $H_{1}(\Lambda_{K})$, i.e., the commutative ring $\C[x_{1}^{\pm},y_{1}^{\pm},\dots,x_{m}^{\pm},y_{m}^{\pm}]$, by Reeb chords. Its differential counts $\R$-invariant holomorphic disks in $\R\times U^{\ast} S^{3}$ with boundary on $\R\times\Lambda_{K}$, see~\cite{EGH,Ekholm:2018eee} and Section~\ref{ssec:groundstatesatinfinity}. In this context, $CE^{\ast}(\Lambda_{K})$ is known as \emph{knot contact homology}~\cite{ekholm2013knot}. Basic representations of $CE^{\ast}(\Lambda_{K})$ are chain maps $CE^{\ast}(\Lambda_{K})\to \C$, which can be identified with algebra maps in the degree $0$ Reeb chords that vanish on the polynomials that are in the image of the differential acting on degree $1$ chords. By elimination theory, the representation variety then projects to an algebraic variety $V_{K}\subset (\C^{\ast})^{2m}$, the \emph{augmentation variety}, defined by a set of polynomial equations,
\be
A_{K;j}(x_1,y_1,\dots,x_{m},y_{m}) \ = \ 0,\qquad j=1,\dots,n.
\ee

In \cite{Aganagic:2013jpa}, it was shown that the augmentation variety has an interpretation as the moduli space of Lagrangian fillings of $\Lambda_{K}$ and is locally parameterized by the corresponding disk potentials. Further, in \cite{Aganagic:2013jpa, Ekholm:2018iso}, first steps towards a (perturbative) understanding beyond the semi-classical level were taken. It was argued that the defining equations of the augmentation variety have a lift to operator equations 
that annihilate the full topological string partition function of the Lagrangian $Z_{K;{\text{top}}}$. The operator polynomials 
generate an ideal $\hat I_{K}$ in the (exponentiated) Weyl algebra $\C[\hat x_{1},\hat y_{1},\dots,\hat x_{m},\hat y_{m}]$. Concretely, the ideal is generated by a finite set of operator polynomials $\hat A_{K;j}(\hat x_{1},\hat y_{1},\dots,\hat x_{m},\hat y_{m})$, $j=1,\dots,n$, that then define the $D$-module 
\be\label{eq:D-module-def-background}
\hat D_{K} \ = \ \C[\hat x_{1},\hat y_{1},\dots,\hat x_{m},\hat y_{m}] \ / \ \hat I_{K}\,, 
\ee
with characteristic variety equal to $V_K$. From the point of view of this $D$-module, the partition function $Z_{K;{\text{top}}}$ corresponds, via \eqref{eq:U1-recursion-review}, to a homomorphism  
\be\label{eq:D-module-hom}
Z_{K;{\text{top}}} \ \in \ {\mathrm{Hom}}(\hat{D}_{K}, \C[x_{1},\dots,x_{m}])\,,
\ee
where $\C[x_{1},\dots,x_{m}]$ should be thought of as the ring of regular functions on $V_K$ locally parameterized by exponentiated generators $x_j$ of the first homology of $L$.

\subsubsection{Quantum curves from a physics perspective}
From the physical point of view, we consider topological string theory on $T^{\ast}S^3$ with one brane on each component $L_{K_{j}}$ of $L_{K}$, 
and $N$ branes on the zero section $S^{3}$. The topological string partition function of this brane system computes $U(N)$-HOMFLYPT polynomials of $K$ with $K_{j}$ colored by the $n_{j}$ single row (or symmetric) partitions, $H_{K;(n_1),\dots,(n_{m})}(q^{N},q)$, see \cite{Ooguri:1999bv}. In the large $N$ limit, the topological string background $T^{\ast}S^{3}$ transitions to the resolved conifold, the total space $X$ of $\CO(-1)\oplus\CO(-1)\to\C \IP^{1}$ and 
boundaries on $S^{3}$ of open string instantons (holomorphic curves) shrink and disappear, without affecting the partition function \cite{Gopakumar:1998ki}. As a result, the HOMFLYPT polynomials counts holomorphic curves in $X$ with boundary on $L_{K}\subset X$:
\be\label{eq:Z-open-abelian}
Z_{K;\text{top}}(\bx) \ = \sum_{n_1,\dots, n_m\geq 0} H_{K;(n_1),\dots,(n_{m})}(a,q)\, x_{1}^{n_{1}}\dots x_{m}^{n_{m}}\,,
\ee
where $a=q^N$ is the closed string K\"ahler modulus of $X$ and $x_j$ is the open string modulus of the $j^{\rm th}$ brane.

In the absence of instantons, this conormal brane system is described by a worldvolume theory, (complexified) abelian Chern-Simons gauge theory. Let $(\hat x_{j},\hat y_{j})$ denote the longitude and meridian holonomy operators on the $j^{\rm th}$ boundary torus $\partial L_{K_{j}}\approx T^2$ in this theory. Then
\be\label{eq:Weyl-algebra-review}
\hat y_i\hat x_j \ = \ q^{2\delta_{ij}} \hat x_j\hat y_i 
\ee
and $(\hat y_j,\hat x_j)$, $j=1,\dots,m$ generate an exponentiated Weyl algebra of line operators in $\partial L_{K}$.

String instantons deform the worldvolume theory by the insertion of Wilson loop operators along their boundaries in~$L_{K}$, see \cite{Witten:1992fb}. From this point of view, \eqref{eq:Z-open-abelian} is the (normalized) expectation value of the instanton Wilson lines in Chern-Simons theory on $L_K$. In other words, \eqref{eq:Z-open-abelian} can be regarded as a wavefunction 
for a quantum state $|Z_{K}\rangle\in \CH(\partial L_K)$ in the Hilbert space of $\partial L_K\approx \bigsqcup_{j=1}^{m}T^{2}$, defined by the Chern-Simons  path integral on $L_K$.

If $K$ is a knot ($m=1$) then the quantum curve is an operator $\hat A_{K}(\hat x_{1},\hat y_{1})$ that annihilates $Z_{K;\text{top}}(x_{1})$ \cite{AganagicVafa2012} and gives a recursion relation for its symmetrically colored HOMFLYPT polynomials. In the many component case ($m>1$), the `quantum curve' is an exponentiated $D$-module \cite{Aganagic:2013jpa} generated by a finite collection of operator polynomials $\hat A_{K;j}(\hat x_1, \hat y_1,\dots,\hat x_{m},\hat y_{m})$, $j=1,\dots,n$, that all annihilate $Z_{K;\text{top}}(\bx)$, 
\be\label{eq:U1-recursion-review}
\hat A_{K;j} \cdot Z_{K;\text{top}} \ = \ 0, \qquad j=1,\dots,n.
\ee

\subsection{Skein valued curve counts, worldsheet skein $D$-modules and recursions}\label{ssec:introskeinDmodule}
In~\cite{Ekholm:2019yqp}, a new approach to open curve counting was developed, where holomorphic curves in a Calabi-Yau threefold with boundary on a Maslov index zero Lagrangian $L$ with $r$ connected components $L=L_{1}\cup\dots\cup L_{r}$ are counted by their boundaries in the framed HOMFLYPT skein module of $L$. 

The framed HOMFLYPT skein module is generated as a module over the ring\linebreak $\mathbb{Q}[a_{L_{1}}^{\pm},\dots,a_{L_{r}}^{\pm},z^{\pm}]$, where $z=(q-q^{-1})$, by all isotopy classes of framed links in $L$ modulo the three framed skein relations \eqref{eq:skein-rel-intro}. For curve counts, the power of $z$ corresponds to the Euler characteristic, and a certain linking between $L_{j}$ and the curves contributes to the power of $a_{L_{j}}$. The role of $q$ and $a_{L_{j}}$ can be understood from a topological string perspective, in the construction of \cite{Ooguri:1999bv}, wrap $M_{j}$ branes on $L_{j}$ then $q$ corresponds to the exponentiated string coupling and $a_{L_{j}} = q^{M_{j}}$.

Here we use a version of skein valued curve counts with nice transformation properties. We use a counterpart of skein modules that we call \emph{worldsheet skein} modules. The worldsheet skein module $\Sk(X,L)$ of a Lagrangian $L$ in a Calabi-Yau $3$-fold $X$ is the $\mathbb{Q}$-vector space generated by isotopy classes of embedded surfaces with boundary in $L$ that are holomorphic along the boundary and have interior disjoint from $L$, divided by two worldsheet skein relations that are direct counterparts of the HOMFLYPT skein relations, see Section \ref{ssec:surfaceskein} for the detailed definition. Fixing a 4-chain for $L$ we get a homomorphism $\rho_L: \, \Sk(X,L)\to\Sk(L)$ that takes a surface to its boundary in the skein times $z^{-\chi}a^{l}$, where $\chi$ is the Euler characteristic of the surface and $l$ is the intersection number with the 4-chain.   

In this context \eqref{eq:D-module-def-background} has a worldsheet skein valued counterpart as follows. As in knot contact homology, we think of $V_{K}$ as the moduli space of Lagrangian fillings $L$ of $\Lambda_{K}$. For such $L$, the skein valued curve count $\sfZ^\textnormal{ws}_{L}\in \widehat{\Sk}(T^*S^3,L)$ gives a homomorphism from the degree $0$ part of a certain BRST-homology, which is the worldsheet skein counterpart of knot contact homology, to the worldsheet skein module of $L$. 

Furthermore, the part of the degree zero BRST-homology which is generated by the empty Reeb chords (on all components) takes the form 
\[
\sfD_K^{\ws} \ = \ \Sk(\R\times U^{\ast}S^3,\R\times \Lambda_{K}) \ / \ \sfI_{K}^{\ws},
\]   
where the ideal $\sfI_{K}^{\ws}$ is generated by a finite number of worldsheet skein operators $\sfA_{K;j}^{\ws}$, $j=1,\dots,n$, which when composed with the homomorphism $\rho_{\partial L}$ to $\Sk(\partial L) \approx \Sk(\IR\times \Lambda_K)$ all annihilate $\sfZ_{L}$, 
	\be
	\rho_{\partial L}\left(\sfA_{K;j}^{\ws}\right)\cdot \sfZ_{L}=0, \quad j=1,\dots,n\,.
	\ee
Here $\rho_{\partial L}$ indicates that we evaluate in $\Sk(\partial L)$ using its $4$-chain.
It follows that
	\[
	\sfZ_{L} \ \in \ \mathrm{Hom}(\sfD_{K}^{\ws},\widehat{\Sk}(L)).
	\] 

We call $\sfD_K^{\ws}$ the \emph{worldsheet skein $D$-module} of $K$. Worldsheet skein $D$-modules and associated skein recursion relations and partition functions are the main subjects of this paper.

\subsubsection{Notation for basic skein modules}
The skein modules of the thickened and solid tori $T^{2}\times\R$ and $S^{1}\times \R^{2}$, respectively, were studied extensively in~\cite{morton2002power,lukac2005idempotents,lukac2001homfly}. 
The skein module of the torus $\Sk(T^{2})$ has a basis $\sfP_{i,j}$, 
where $\sfP_{i,j}$ is a curve in homology class $(i,j)\in H_{1}(T^{2})=H_{1}(S^{1})\otimes H_{1}(S^{1})$. It admits an algebra structure, where multiplication corresponds to stacking links, characterized by the relation
\[
\sfP_{i,j}\sfP_{k,l}-\sfP_{k,l}\sfP_{i,j} \ = \ \left(q^{(il-kj)}-q^{(kj-il)}\right)\sfP_{i+k,j+l}.
\]  

The skein module of the solid torus $\Sk(S^{1}\times\R^{2})$ has a basis $\sfW_{\lambda,\overline{\mu}}$, where $\lambda$ and $\mu$ range over all partitions, and $\overline{\mu}$ denotes the conjugate partition of $\mu$. 
(Physically this is understood in terms of orientation reversal of Wilson lines in the worldvolume theory on the branes. Conjugation of partitions depends on the rank of the group whose representations they label, and when working in arbitrary rank a Wilson line with reversed orientation is labeled by a conjugate partition.)
Here $\sfW_{\lambda,\overline{\mu}}$ lies 
in homology class $|\lambda|-|\overline{\mu}|$ in $H_{1}(S^{1}\times\R^{2})\approx\IZ$, where $|\lambda|$ denotes the number of boxes in the (Young)-diagram of $\lambda$. Furthermore, 
the skein algebra $\Sk(T^{2})$ acts on $\Sk(S^{1}\times\R^{2})$ again by stacking, viewing $T^{2}$ as the (ideal) boundary of $S^{1}\times\R^{2}$. Under this action, $\sfW_{\lambda,\overline{\mu}}$ is a basis of eigenvectors of the operator $\sfP_{1,0}$, all with distinct eigenvalues. In physics this corresponds to a generalization to arbitrary rank of Verlinde's basis for the boundary Hilbert space of the worldvolume Chern-Simons theory \cite{Verlinde:1988sn, Witten:1988hf}.
This feature of the $\sfW_{\lambda,\overline{\mu}}$ basis played an important role in elucidating a connection between HOMFLYPT and Kauffman polynomials \cite{Marino:2010ppv}.

\subsection{Main results}\label{ssec:intromainresults}
Our main results give worldsheet skein operator ideals and closed form partition functions for all Lagrangian fillings of the Hopf link conormal Legendrian. 

Let $\Gamma=\Gamma_{1}\cup\Gamma_{2}\subset S^{3}$ denote the negative Hopf link and let 
$\Lambda_{\Gamma}=\Lambda_{\Gamma_{1}}\cup\Lambda_{\Gamma_{2}}\approx T^2\sqcup T^2$ denote the Legendrian conormal of $\Gamma$. 
Let $L$ be a Lagrangian filling of $\Lambda_{\Gamma}$ with $H_{1}(L)\approx\IZ^{2}$. Below $L$ will have the topology either of two solid tori $S^1\times\R^2$ or the product of a torus and the real line $T^{2}\times\R$. The action completed skein of $L$, $\widehat{\Sk}(L)$, see Section \ref{ssec:univcurvecounts}, will play the role of the sheaf of regular functions (in the local open string coordinates) on the augmentation variety, indeed specializing to the $U(1)$ skein we get usual regular functions.   

The partition function of $L$ is an element $\sfZ_{L}\in\widehat{\Sk}(L)$ which, as explained above gives a homomorphism from the world sheet skein $\Sk(\R\times ST^{\ast}S^{3},\R\times\Lambda)/\sfI^{\ws}_{\Gamma}$ to $ \widehat{\Sk}(L)$. If $\eta_{1},\eta_{2}$ is a basis for $H^{1}(L)$ and $\beta_j\in\{-1,0,1\}$, $j=1,2$, then $(\eta_{1},\beta_1,\eta_{2},\beta_2)$ determines an \emph{initial condition} for partition functions $\sfZ_{L}$ which requires that if we expand $\sfZ_{L}$ as  
\[
\sfZ_{L}=\sum_{(\xi_{1},\xi_{2})\in H_{1}(L),\alpha\in\mathbb{Z}} \mathsf{B}_{\xi_{1},\xi_{2},\alpha},
\]
where $\mathsf{B}_{\xi_{1},\xi_{2},\alpha}$ is the component of $\sfZ_{L}$ in the homology class $(\xi_{1},\xi_{2})$ of homogeneous $a$-degree $\alpha$ then $\mathsf{B}_{\xi_{1},\xi_{2},\alpha}=0$ for all $(\xi_{1},\xi_{2},\alpha)$ such that $\eta_{1}(\xi_{1},\xi_{2})+\beta_1\cdot\alpha<0$ or $\eta_{2}(\xi_{1},\xi_{2})+\beta_2\cdot\alpha<0$.

If $L_{1}$ and $L_{2}$ are two Lagrangians that intersect cleanly along a circle then we can form new Lagrangians by surgery on the intersection: remove small neighborhoods of the circle in each Lagrangian and join the resulting boundaries. Lagrangians that intersect cleanly along a circle admits skein modules with extra relations as curves cross the intersection locus, see Remark \ref{r: formal fillings}. Here such singular Lagrangian fillings $L'$ sit at singular points in the augmentation variety and the skein $D$-module encodes also information about the deformations of $L'$ corresponding both to shifting the intersection off and smoothing the intersection.  

Finally, as above, we write $X$ for the resolved conifold. We note that if $L\subset T^{\ast}X$ is a Lagrangian disjoint from $S^{3}$ then there is a canonical isomorphism $\Sk(T^{\ast}S^{3},L\cup S^3) \cong \Sk(X,L)$: any surface in $T^*S^3$ is equal in the world-sheet skein to a linear combination of surfaces in the complement of the zero-section, and conversely, a generic surface in $X$ does not intersect the central $S^2$.

\begin{thm}\label{t:Hopf}
The augmentation variety $V_{\Gamma}$ of the negative Hopf link $\Gamma$ is a two-dimensional subvariety of $(\C^{\ast})^{4}$, which is Lagrangian with respect to the natural holomorphic symplectic form. Each point in $V_{\Gamma}$ corresponds to a Lagrangian filling $L\subset X$ or $L\subset T^{\ast}S^{3}$ of $\Lambda_{\Gamma}$ that can be constructed by surgery from the conormals of the components of $\Gamma$ and the 0-section in $T^{\ast}S^{3}$. 

The worldsheet skein $D$-module of $\Gamma$ is 
\be
	\sfD_\Gamma^{\ws} = \Sk(\IR\times U^*S^3,\IR\times\Lambda_{\Gamma}) / \sfI_\Gamma^{\ws}
\ee
where $\sfI_\Gamma^{\ws}$ is generated by the following three operators 
\begin{align}\label{eq:HopfA1} 
	\sfA_1^{\ws}
	\ &= \
	\(
	\sfP^{\ws;(1)}_{1;0,0}
	- \sfP^{\ws;(1)}_{1;1,0}
	-\sfP^{\ws;(1)}_{1;0,1}
	+\sfP^{\ws;(1)}_{1;1,1}
	\)\\ \notag
	&\quad- 
	\(
	\sfP^{\ws;(2)}_{1;0,0}
	- \sfP^{\ws;(2)}_{1;1,0}
	-\sfP^{\ws;(2)}_{1;0,1}
	+\sfP^{\ws;(2)}_{1;1,1}
	\),\\ \label{eq:HopfA2}
	\sfA_2^{\ws} 
	\ &= \
     \(
	-\sfP^{\ws;(1)}_{2;-1,0}
	+ \sfP^{\ws;(1)}_{2;0,0}
	+  \sfP^{\ws;(1)}_{2;-1,1}
	- \sfP^{\ws;(1)}_{2;0,1}
	\)\\\notag
	&\quad+ 
	\(\sfP^{\ws;(2)}_{2;0,-1}
	-\sfP^{\ws;(2)}_{2;1,-1}
	-\sfP^{\ws;(2)}_{2;0,0}
	+\sfP^{\ws;(2)}_{2;1,0}
	\),\\ \label{eq:HopfA3}
	\sfA_3^{\ws}
	\ &= \
	\(
    -\sfP^{\ws;(2)}_{3;-1,0}
	+ \sfP^{\ws;(2)}_{3;0,0}
	+ \sfP^{\ws;(2)}_{3;-1,1}
	-\sfP^{\ws;(2)}_{3;0,1}
	\) \\\notag
	&\quad+
	\(
	 \sfP^{\ws;(1)}_{3;0,-1}
	- \sfP^{\ws;(1)}_{3;1,-1}
	-\sfP^{\ws;(1)}_{3;0,0}
	+\sfP^{\ws;(1)}_{3;1,0}
	\).
\end{align}
Here the Euler characteristic of $\sfP^{\ws;(j)}_{k;p,q}$ equals $1$ and the boundary represents $\sfP^{(j)}_{p,q}\in\Sk(\Lambda_{\Gamma})$, up to a monomial in framing variables.

More precisely, each of the Lagrangian fillings in $L\in V_{\Gamma}$ determines an initial condition for partition functions, and there is a unique partition function $\sfZ_{L}\in\widehat{\Sk}(L)$ annihilated by $\sfA_{j} = \rho_L(\sfA^{\ws}_j)$, $j=1,2,3$, that satisfies the initial condition determined by~$L$. 
In particular, if $L=L_\Gamma$ is the Lagrangian conormal filling of the Hopf link, we find that the generating series of the colored HOMFLYPT polynomials $H_{\Gamma;\lambda,\mu}(a,q)$ colored by arbitrary partitions $\lambda,\mu$ on each component  $\Gamma_{1}, \Gamma_{2}$ 
\be\label{eq: Hopf HOMFLY partitionfunction}
	\sfZ_{L} \ = \ \sum_{\lambda,\mu} H_{\Gamma;\lambda,\mu}(a,q)\ \sfW_{\lambda,\emptyset}\otimes \sfW_{\mu,\emptyset} \ \in \ \widehat{\Sk}(L),
\ee
is the unique solution to the skein recursion 
\begin{align}
	\label{eq:A1-conormalintr}
	\sfA_1 
	\ &= \
	\(
	\sfP^{(1)}_{0,0}
	- \sfP^{(1)}_{1,0}
	-a_{L_1} \sfP^{(1)}_{0,1}
	+a^2 \sfP^{(1)}_{1,1}
	\)\otimes a_{L_2}\\ \notag
	&\quad- a_{L_1}\otimes
	\(
	\sfP^{(2)}_{0,0}
	- \sfP^{(2)}_{1,0}
	-a_{L_2} \sfP^{(2)}_{0,1}
	+ a^2 \sfP^{(2)}_{1,1}
	\),\\ 
	\label{eq:A2-conormalintr}
	\sfA_2 
	\ &= \
	a_{L_1}
	\(
	-\sfP^{(1)}_{-1,0}
	+ \sfP^{(1)}_{0,0}\)\otimes 1
	+  \(a_{L_{1}}\sfP^{(1)}_{-1,1}
	- a^2 \sfP^{(1)}_{0,1}
	\)\otimes (a_{L_2})^2\\\notag
	&\quad+ 
	1\otimes a_{L_{2}}\((a_{L_{2}})^{-1}\sfP^{(2)}_{0,-1}
	-\sfP^{(2)}_{1,-1}
	-\sfP^{(2)}_{0,0}
	+a^2 \sfP^{(2)}_{1,0}
	\),\\ 
	\label{eq:A3-conormalintr}
	\sfA_3
	\ &= \
	1\otimes a_{L_2}
	\(
	-\sfP^{(2)}_{-1,0}
	+ \sfP^{(2)}_{0,0}\)
	+ (a_{L_1})^2\otimes \(a_{L_{2}}\sfP^{(2)}_{-1,1}
	- a^2 \sfP^{(2)}_{0,1}
	\) \\\notag
	&\quad+a_{L_1}
	\(
	(a_{L_1})^{-1} \sfP^{(1)}_{0,-1}
	- \sfP^{(1)}_{1,-1}
	-\sfP^{(1)}_{0,0}
	+a^2 \sfP^{(1)}_{1,0}
	\)\otimes 1\,,
\end{align}  
with initial conditions that the coefficients for $\sfW_{\lambda,\overline{\nu}}\otimes \sfW_{\mu,\overline{\rho}}$ vanish whenever $\overline{\nu}\ne \emptyset$ or $\overline{\rho}\ne \emptyset$, and the coefficient of $\sfW_{\emptyset,\emptyset}$ equals $1$. 

Analogous operators and partition functions for Lagrangian fillings at any point in its augmentation variety are given in Section \ref{sec:hopf}.
\end{thm}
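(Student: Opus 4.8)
The plan is to establish three things in sequence: (i) the semiclassical statement that $V_\Gamma$ is a two-dimensional Lagrangian subvariety of $(\C^*)^4$ whose points correspond to surgery fillings; (ii) that the three operators $\sfA_1^\ws,\sfA_2^\ws,\sfA_3^\ws$ genuinely annihilate the skein partition function of every such filling; and (iii) that together with the filling-dependent initial condition they determine $\sfZ_L$ uniquely. The geometric engine throughout is the SFT moduli-space analysis of holomorphic curves at infinity on $\R\times\Lambda_\Gamma$, following the invariance machinery of \cite{Ekholm:2019yqp} as lifted to the worldsheet skein in Section \ref{ssec:surfaceskein}.

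\emph{Step 1 (augmentation variety and fillings).} First I would compute the Chekanov--Eliashberg dg-algebra $CE^*(\Lambda_\Gamma)$ directly. Since $\Gamma$ is the Hopf link, its binormal geodesics (Reeb chords) are explicit, and the degree-$0$ and degree-$1$ generators together with the disk-counting differential yield finitely many polynomial relations $A_{\Gamma;j}=0$ cutting out $V_\Gamma\subset(\C^*)^4$. By elimination theory this is the augmentation variety; checking $\dim V_\Gamma=2$ and that it is Lagrangian for the form $\sum_j \tfrac{dx_j}{x_j}\wedge\tfrac{dy_j}{y_j}$ is a finite computation. The identification of points of $V_\Gamma$ with surgery fillings then follows from the general principle of \cite{Aganagic:2013jpa}: a filling $L$ produces an augmentation via its disk potential, and conversely each augmentation is realized by surgering together the component conormals and the $0$-section, giving either the $S^1\times\R^2 \sqcup S^1\times\R^2$ or $T^2\times\R$ topology as stated.

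\emph{Step 2 (the operators annihilate $\sfZ_L$).} This is the heart of the argument and where I expect the main obstacle. The idea is standard in spirit: each $\sfA_j^\ws$ is assembled as the skein class of the boundary of a one-parameter family of holomorphic curves whose total count is invariant, so that the algebraic sum of the ends of the corresponding SFT moduli space vanishes, giving $\rho_{\partial L}(\sfA_j^\ws)\cdot\sfZ_L=0$. Concretely I would identify, for each generator $\sfP^{\ws;(j)}_{k;p,q}$, the rigid curve at infinity that contributes it, verify the Euler-characteristic and $a$-degree bookkeeping (each $\sfP^{\ws;(j)}_{k;p,q}$ has $\chi=1$ and represents $\sfP^{(j)}_{p,q}$ up to a framing monomial as asserted), and then apply the skein-gluing formula: stacking an operator onto $\sfZ_L$ corresponds to adding its boundary curve to the count, and deformation invariance of the total count forces the signed/weighted sum in \eqref{eq:HopfA1}--\eqref{eq:HopfA3} to annihilate $\sfZ_L$. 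The framing factors $a_{L_j}$, $a$ appearing in \eqref{eq:A1-conormalintr}--\eqref{eq:A3-conormalintr} after applying $\rho_L$ are precisely the linking/$4$-chain-intersection contributions, so matching them is a linking-number computation on each filling. The hardest part is controlling the SFT compactness and transversality: one must rule out unexpected multiply-covered or broken configurations at the relevant boundary strata and confirm that the only ends of the moduli space are the ones producing the listed terms. For the Hopf link this should be tractable because the geometry at infinity is that of two linked tori with explicit Reeb dynamics, but verifying that the degeneration combinatorics reproduces exactly the quadrilateral patterns of signs in each $\sfA_j^\ws$ is the delicate bookkeeping.

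\emph{Step 3 (uniqueness and the conormal specialization).} With the recursions in hand, I would show they determine $\sfZ_L$ uniquely given the initial condition. Expanding $\sfZ_L=\sum \mathsf{B}_{\xi_1,\xi_2,\alpha}$ as in the statement, the action of $\sfP_{1;p,q}$-type operators shifts homology class and $a$-degree by controlled amounts, so the relations $\sfA_j\cdot\sfZ_L=0$ become triangular recursions in the lattice $H_1(L)\times\Z$ of $a$-degrees. The initial condition $\eta_i(\xi_1,\xi_2)+\beta_i\alpha\ge 0$ bounds the support in a half-space, and one checks that the leading operator in each recursion is invertible on each graded piece, so every coefficient is determined inductively from the seed (for the conormal filling, $\mathsf{B}_{\emptyset,\emptyset}=1$ with all $\overline\nu,\overline\rho=\emptyset$ coefficients vanishing). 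Finally, specializing to $L=L_\Gamma$ and rewriting in the $\sfW_{\lambda,\overline\mu}$ basis via the eigenbasis property of $\sfP_{1,0}$ recalled in Section \ref{ssec:introskeinDmodule}, the unique solution must coincide with the generating series \eqref{eq: Hopf HOMFLY partitionfunction} of colored HOMFLYPT polynomials, since the latter is known to satisfy the same $q$-holonomic recursion and initial data. I would close by noting that the uniqueness also covers the singular (surgery) fillings, where the extra skein relations of Remark \ref{r: formal fillings} are compatible with the same three operators.
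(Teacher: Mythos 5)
Your Step 2 misidentifies the mechanism by which the three operators arise, and this is exactly where the Hopf link differs from the unknot. You propose to read each $\sfA_j^{\ws}$ off as the collection of ends of a single SFT moduli space with one positive degree-one puncture, and to ``rule out unexpected \dots broken configurations'' so that the listed terms are the only ends. For the Hopf link this cannot work: $\Lambda_\Gamma$ has two \emph{degree-zero} Reeb chords $a_{12}$, $a_{21}$, so the one-dimensional moduli spaces attached to the degree-one chords $c_{11}, c_{22}, c_{12}, c_{21}$ unavoidably have broken ends consisting of curves at infinity with negative punctures at $a_{12}$, $a_{21}$ (triangles, and disks with flow lines attached) glued to curves on the filling with positive punctures at those chords. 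By Proposition \ref{prp: chain map eqn}, a single Hamiltonian therefore only annihilates the \emph{generalized} partition function $\overline{\sfZ}_{L}$, not $\sfZ_{L}$; no individual moduli space yields a relation on $\sfZ_L$ alone. The paper's proof instead performs a skein-valued elimination: it computes $\sfH^{\textnormal{ws}}(c_{11})$, $\sfH^{\textnormal{ws}}(c_{22})$, $\sfH^{\textnormal{ws}}(c_{12},a_{21})$, $\sfH^{\textnormal{ws}}(c_{21},a_{12})$ from Morse flow trees and then chooses capping surfaces so that in suitable linear combinations the negative-puncture terms cancel identically in the worldsheet skein, e.g.\ $\sfA_1^{\ws}=\sfH^{\textnormal{ws}}(c_{11};\gamma_1)-\sfH^{\textnormal{ws}}(c_{22};\gamma_2)$ with the two triangles matched by the caps, while $\sfA_2^{\ws}$ and $\sfA_3^{\ws}$ each combine \emph{three} Hamiltonians. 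The existence of capping choices making these cancellations possible is the core content of the theorem and is absent from your proposal; it also explains why each $\sfA_j^{\ws}$ has eight disk terms, four coming from each of two different degree-one moduli spaces.

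Your Step 3 also understates a real difficulty, and ends with a circular step. The claim that ``the leading operator in each recursion is invertible on each graded piece'' fails for $\sfA_1$: acting on $\sfW_{\mu,\emptyset}\otimes\sfW_{\lambda,\emptyset}$, the relevant combination has eigenvalue proportional to $C_\mu(q^2)-C_\lambda(q^2)$, which vanishes precisely on the diagonal $\mu=\lambda$, so $\sfA_1$ determines each graded piece only up to terms $\sum_\lambda b_\lambda\, \sfW_{\lambda,\emptyset}\otimes\sfW_{\lambda,\emptyset}$. The paper's induction is genuinely a two-operator argument: the box-removing parts $(a_{L}\sfP_{1,-1}-\sfP_{0,-1})$ of $\sfA_2$, $\sfA_3$ are needed to kill this diagonal ambiguity, and an analogous interplay recurs for every filling ($L_{lm}$, $L_{dl}$, $L_{dd}$, $L_0$). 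Finally, your identification of the unique solution with the colored HOMFLYPT series inverts the logic: it is not previously ``known'' that the HOMFLYPT generating series satisfies these skein-valued recursions (only $U(1)$ specializations are classical). In the paper that fact is the output of the geometric argument --- the conormal filling's skein curve count equals the colored HOMFLYPT series by \cite{Ooguri:1999bv,Ekholm:2019yqp}, and it is annihilated by $\rho_L(\sfA_j^{\ws})$ by invariance plus the elimination above --- after which uniqueness yields the closed-form identification \eqref{eq: Hopf HOMFLY partitionfunction}.
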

Theorem~\ref{t:Hopf} is proved in Section~\ref{ssec:proofThmHopf}. 

We next discuss closed form expressions for the skein valued partition function of the various Lagrangian fillings of the Hopf link Legendrian conormal $\Lambda_{\Gamma}$. 
There are both disconnected and connected fillings.
The former are classified in terms of fillings associated to the knot components of the Hopf link, both of which are unknots. 
The Legendrian conormal $\Lambda_{U}$ of the unknot $U$ has three natural fillings in the resolved conifold. We denote them according to how they appear in a toric diagram (see Section \ref{sec:unknot}): the `exterior leg' fillings $L_{l^{\pm}}$ and $L_{m^{\pm}}$, which correspond to the the conormal and the complement, respectively, and the `middle leg' filling $L_{d}$. All three are topologically solid tori $S^{1}\times\R^2$. 
The Legendrian Hopf link conormal $\Lambda_{\Gamma}$ then has disconnected fillings $L_{st}=L_{s}\cup L_{t}$, labeled by $s,t\in\{l^{\pm},m^{\pm},d\}$. For convenient notation we write $|l^{\pm}|=l$, $|m^{\pm}|=m$, and $|d|=d$. The Legendrian conormal also has a connected filling, the link complement with topology $\R\times T^{2}$, which we denote $L_{0}$. 

We use the uniqueness part of Theorem~\ref{t:Hopf} to show that the partition functions of the various fillings admit expressions in terms of multi-covers of finitely many basic curves, a skein valued analogue of the quiver generating series of knot polynomials, see~\cite{Kucharski:2017ogk, Ekholm:2018eee,Ekholm:2019lmb}, where the 
building blocks of skein-valued quivers are the partition functions for (framed) disks and annuli \cite{Ekholm:2020csl, Ekholm:2021colored} 
\be\label{eq : basic part functions intro}
\begin{split}
	\sfPsi_{\mathrm{di}}^{(f)}[\xi] \ & :=  \ \sum_{\lambda} \left[(-1)^{|\lambda|} q^{\kappa(\lambda)}\right]^{f} \prod_{\ydiagram{1}\in\lambda } \frac{-q^{-c(\ydiagram{1})}}{q^{h(\ydiagram{1})} - q^{-h(\ydiagram{1})}} \ \xi^{|\lambda|}\sfW^{}_{\lambda,\emptyset},\\
	\sfPsi_{\mathrm{an}}^{(f_1, f_2)}[\xi] \ 
	& = \ \sum_{\lambda}  
	\left[(-1)^{|\lambda|} q^{\kappa(\lambda)}\right]^{f_1+f_2} \
	|\xi|^{|\lambda|}
	\sfW_{\lambda,\emptyset}\otimes \sfW_{\lambda,\emptyset},\\
\end{split}
\ee
and certain generalizations of them which are collected in Section \ref{sec:basic-curves}.

The unknot conormal partition function was determined in \cite{Ekholm:2020csl}. It was shows in \cite{2024arXiv240110730N} that it is equal to the product of the partition functions of two disks
$\sfZ^{}_{L_{l^+}}  = \sfPsi_{\mathrm{di}}^{(0)}\,  \cdot \, \sfPsi_{\mathrm{di}}^{(1)}[a^2]$ and noting that the complement is the conormal of a dual unknot, the unknot complement partition function satisfies $\sfZ_{L_{d^+}} 
= 
\sfPsi_{\mathrm{di}}^{(1)}\,  \cdot \, \sfPsi_{\mathrm{di}}^{(0)}[a^2]$.
We insert these along links in Lagrangians $L$ as follows. Given a framed knot or a pair of framed knots in $L$, remove a small neighborhood of it (identified with a solid torus), and consider the skein element represented by $\sfPsi_{\mathrm{di}}^{(f)}$ or by $\sfPsi_{\mathrm{an}}^{(f_1, f_2)}$ included in the neighborhood. We have the following result for $L_{0}$ and $L_{st}$ where $|s|\ne |t|$. 

\begin{thm}\label{t: skein quiver 1}$\quad$
	\begin{itemize}
		\item[$(a)$] The skein valued partition function $\sfZ_{L_{0}}$ of $L_{0}$ equals $\sfZ_{L_{0}}=1$ (corresponding to no holomorphic curves).
		\item[$(b)$] The skein valued partition function $\sfZ_{L_{st}}$ of a filling $L_{st}$, $|s|\ne |t|\in\{l,m,d\}$ is obtained by inserting the two disks of the corresponding unknots and one annulus stretching between the components. 
		 See Figure \ref{fig:hopf-link-quiver-description-conormal-complement} and \eqref{eq:conormal-complement-Z} for the case of $L_{l^{+}m^{+}}$ and Section \ref{sec:conormal-complement-solution} for explicit formulas in all cases.  
	\end{itemize}
\end{thm}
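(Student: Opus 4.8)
The plan is to reduce everything to the uniqueness statement of Theorem~\ref{t:Hopf}. Once it is known that each filling $L$ fixes an initial condition for which there is a \emph{unique} $\sfZ_L\in\widehat{\Sk}(L)$ annihilated by the specialized operators $\sfA_j=\rho_L(\sfA_j^{\ws})$, $j=1,2,3$, each proposed closed form only has to be checked against two conditions: that it satisfies the initial condition of $L$, and that it is killed by the three operators. Accordingly, for every filling I would first locate the corresponding point of $V_\Gamma$, read off the framing monomials and homological shifts that $\rho_L$ attaches to the worldsheet operators \eqref{eq:HopfA1}--\eqref{eq:HopfA3}, and record the associated data $(\eta_1,\beta_1,\eta_2,\beta_2)$; the verification of $(a)$ and $(b)$ then splits into an easy initial-condition check and a recursion check.

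For $(a)$, the complement filling $L_0\approx T^2\times\R$ carries the empty skein element $\sfZ_{L_0}=1$, which lies in homology class $0$ and $a$-degree $0$ and so trivially meets the initial condition. It remains to prove $\rho_{L_0}(\sfA_j^{\ws})\cdot 1=0$ for $j=1,2,3$. Here I would use that for the connected complement of the negative Hopf link the two peripheral tori of $\Lambda_\Gamma$ are glued through the linking, so that the classes created by $\sfP^{(1)}_{p,q}$ and $\sfP^{(2)}_{p,q}$ become identified in $H_1(L_0)=\IZ^2$. Under this identification, and once the $4$-chain framing monomials of $L_0$ are inserted, the component-$1$ and component-$2$ brackets in each of \eqref{eq:HopfA1}--\eqref{eq:HopfA3} map to the same element of $\Sk(T^2\times\R)$ and cancel. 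Geometrically this is the statement that $L_0$, having no contractible boundary curve, bounds no holomorphic disk and no higher curve.

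For $(b)$, with $L_{st}=L_s\cup L_t$ and $|s|\ne|t|$, I would take as ansatz the skein element built by inserting, along the core circle of each solid-torus component, the product of two disk series $\sfPsi_{\mathrm{di}}^{(f)}$ that reproduces the relevant unknot filling ($L_s$ on the first factor, $L_t$ on the second), together with one annulus $\sfPsi_{\mathrm{an}}^{(f_1,f_2)}$ whose two boundary circles lie one in each component. The initial condition is immediate because the disk and annulus series are supported on $\sfW_{\lambda,\emptyset}\otimes\sfW_{\mu,\emptyset}$ with unit leading term. For the recursion I would use that $\sfA_2$ and $\sfA_3$ are, up to framing, deformations of the single unknot operator acting on one component plus a coupling term reaching the other; since $\sfPsi_{\mathrm{di}}$ is known to solve the unknot recursion \cite{Ekholm:2020csl,2024arXiv240110730N}, the per-component pieces are annihilated outright, and the problem collapses to showing the coupling terms are cancelled by the annulus. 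Concretely, I would compute in the $\sfW_{\lambda,\emptyset}$ basis with the standard solid-torus action of the torus operators $\sfP_{p,q}$, in which $\sfP_{1,0}$ is diagonal with a content-dependent eigenvalue and $\sfP_{0,1}$ acts by a Pieri-type add/remove-a-box rule, and check that the disk coefficients $\prod_{\square\in\lambda}\tfrac{-q^{-c(\square)}}{q^{h(\square)}-q^{-h(\square)}}$ make the resulting sums telescope.

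The main obstacle is precisely this last cancellation. The annulus factor $\sfPsi_{\mathrm{an}}^{(f_1,f_2)}=\sum_\lambda(\cdots)\,\sfW_{\lambda,\emptyset}\otimes\sfW_{\lambda,\emptyset}$ entangles the two components through the shared index $\lambda$, so the coupling term of $\sfA_2$, which creates a box on one component while reading data off the other, does not act factor by factor. I expect the crux to be a transfer-matrix / commutation identity showing that pushing a torus operator past the annulus insertion reproduces exactly the opposite of the relevant cross-term; diagonalizing $\sfP_{1,0}$ on the common label $\lambda$ and matching box contents across the two tensor factors is where the genuine content sits, and it is here that the negative-Hopf linking --- encoded in the precise framing monomials of $\rho_L$ and in the powers of $a$ and $q$ --- must enter to balance the signs. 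With that single identity established, uniqueness in Theorem~\ref{t:Hopf} closes both $(a)$ and $(b)$.
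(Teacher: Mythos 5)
Your overall strategy --- write a closed-form ansatz for each filling, check the initial condition, check that the three specialized operators annihilate it, and invoke the uniqueness part of Theorem~\ref{t:Hopf} --- is exactly the paper's strategy, and your part $(a)$ is essentially the paper's argument: on $L_0\approx T^2\times\R$ the two boundary tori act by stacking from opposite ends, so the specialized operators become sums of commutators $[\sfP_{i,j},\,\cdot\,]$, see \eqref{eq:A1-complement}--\eqref{eq:A3-complement}, and these annihilate the empty skein element $1$; your statement that the two brackets ``map to the same element'' when applied to $1$ is this same cancellation.

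For part $(b)$, however, your plan has two concrete gaps. First, the ansatz is wrong for the mixed fillings: for $L_{l^+m^+}$ the curve stretching between the components is the \emph{twisted} annulus $\sfPsi_{\widetilde{\mathrm{an}}}$ of \eqref{eq:twisted-ann}, supported on $\sfW_{\lambda,\emptyset}\otimes\sfW_{\emptyset,\overline{\lambda}}$, not the ordinary annulus $\sfPsi_{\mathrm{an}}$ supported on $\sfW_{\lambda,\emptyset}\otimes\sfW_{\lambda,\emptyset}$. Consequently $\sfZ_{L_{lm}}$ in \eqref{eq:conormal-complement-Z} is \emph{not} supported on $\sfW_{\lambda,\emptyset}\otimes\sfW_{\mu,\emptyset}$ (its support satisfies $i\geq 0$ and $i+j\geq 0$, so negative classes occur in the second factor); your untwisted ansatz would satisfy the wrong initial condition and would not be annihilated by the operators of Section~\ref{sec:conormal-complement-filling}. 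Second, and more seriously, the decomposition your recursion check rests on --- ``the per-component pieces are annihilated outright by the disks, so the problem collapses to cancelling the coupling terms against the annulus'' --- fails. The per-component brackets of the specialized operators carry monomials mixing the two framing variables, e.g.\ the term $a_{L_1}a_{L_2}^{-2}\sfP^{(1)}_{0,1}$ in \eqref{eq:A1-conormal-complement}, so they are \emph{not} the unknot operators that annihilate $\sfZ_U$ or $\sfZ_U^c$; no clean split into per-component plus coupling terms exists. The mechanism that actually works (and is the paper's proof in Section~\ref{sec:conormal-complement-solution}) is to use the relative disk recursions to push each $\sfP_{i,j}$ through the disk insertions --- the operator transforms and its $a$-weights change at each pass --- until all three relations are reduced to identities for the twisted annulus alone, namely \eqref{eq:twAnnulus-B1}--\eqref{eq:twAnnulus-B4}. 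Your ``crux'' paragraph correctly senses that a commutation identity is the heart of the matter, but you place it only at the annulus while assigning to the disks a cancellation they do not perform; without the disk push-through step, the reduction you describe does not get started.
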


\begin{figure}[h]
	\begin{center}
		\includegraphics[width=0.6\textwidth]{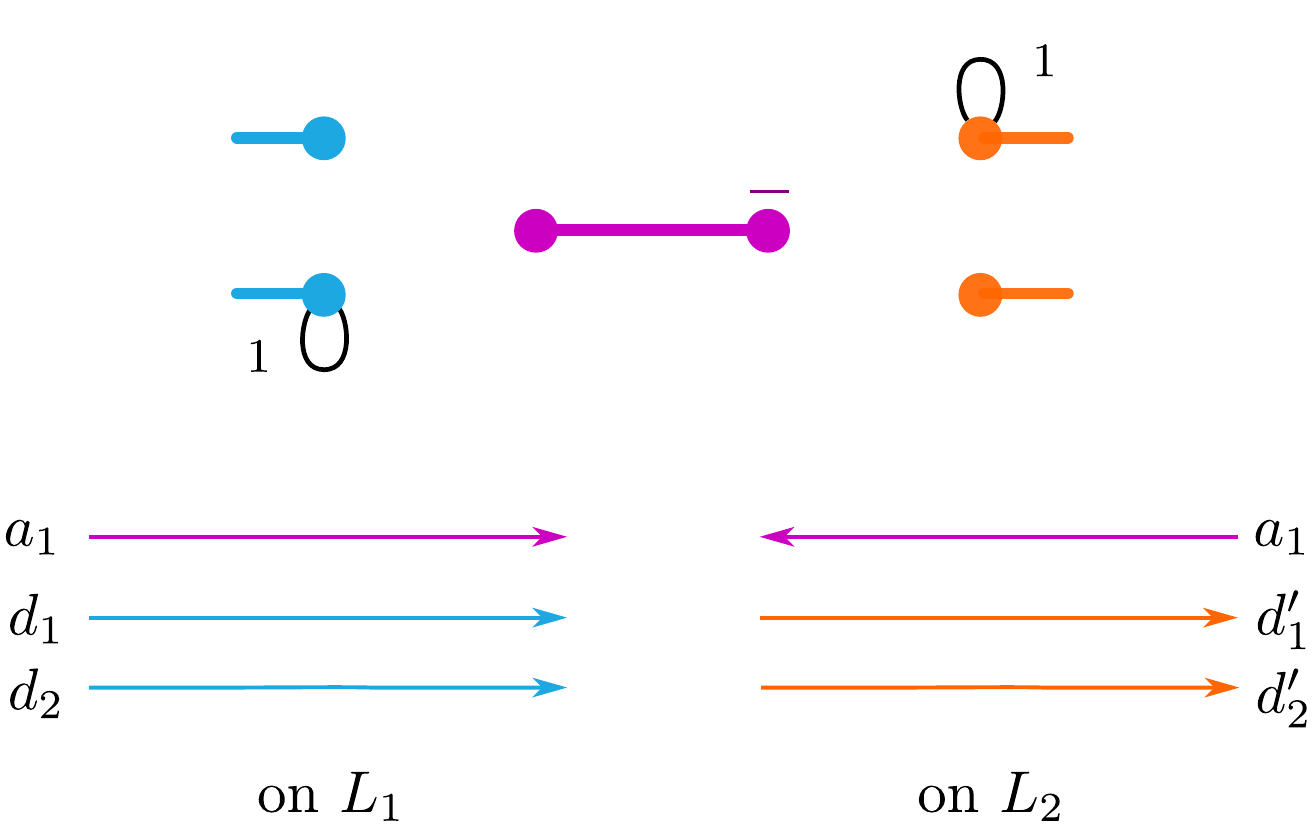}
		\caption{Basic disks and annuli in the quiver description of the partition function $L_{lm}$. On each brane there are two disks ($d_{1},d_{2}$ and $d_{1}',d_{2}'$) that are mutually unlinked. The disks $d_2, d_2'$ come with a power of $a^2$.
		The two branes are connected by a twisted annulus $a_1$ that does not link with any of the disks. The twisted annulus partition function $\sfPsi_{\widetilde{\mathrm{an}}}$ is obtained from $\sfPsi_{\mathrm{an}}$, see \eqref{eq : basic part functions intro}, by replacing $W_{\lambda,\emptyset}$ with $W_{\emptyset,\overline{\lambda}}$ in the second factor. 
		}
		\label{fig:hopf-link-quiver-description-conormal-complement}
	\end{center}
\end{figure}

We conjecture the following similar forms of the partition functions for $L_{st}$, where $|s|=|t|$: 
\begin{cnj}\label{c: skein quiver}
	The partition functions $\sfZ_{L_{st}}$ for $|s|=|t|$ 
	is generated by two basic disks on each Lagrangian component and two basic annuli that differ by a factor of $a^{2}$. For example, the partition function $\sfZ_{L_{l^{+}l^{+}}}$ is obtained by inserting the partition functions $\sfPsi_{\mathrm{di}}^{(0)}, \sfPsi_{\mathrm{di}}^{(1)}[a^2]$ and $\sfPsi_{\mathrm{an}}^{(-1,-1)}$,  
	$\sfPsi_{\overline{\mathrm{an}}}^{(0,0)}[a^2]$ along the curves in Figure~\ref{fig:hopf-link-quiver-description}, see Section~\ref{sec:skein valued-quiver} for (more) detailed formulas. 
\end{cnj}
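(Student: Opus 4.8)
The plan is to prove the conjecture by the uniqueness mechanism already used for Theorem~\ref{t: skein quiver 1}: I would write down the proposed quiver expression as an explicit element $\sfZ'_{st}\in\widehat{\Sk}(L_{st})$, verify that it satisfies the initial condition attached to $L_{st}$ by Theorem~\ref{t:Hopf}, and then check that it is annihilated by all three recursion operators $\sfA_1,\sfA_2,\sfA_3$. Since $L_{st}\in V_\Gamma$, the uniqueness clause of Theorem~\ref{t:Hopf} then forces $\sfZ'_{st}=\sfZ_{L_{st}}$, which is exactly the asserted quiver form.

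First I would assemble the candidate. For the representative case $L_{l^+l^+}$ this means inserting, along the core curves of Figure~\ref{fig:hopf-link-quiver-description}, the two disk series $\sfPsi^{(0)}_{\mathrm{di}}$ and $\sfPsi^{(1)}_{\mathrm{di}}[a^2]$ on each solid-torus component, together with the two connecting annuli $\sfPsi^{(-1,-1)}_{\mathrm{an}}$ and $\sfPsi^{(0,0)}_{\overline{\mathrm{an}}}[a^2]$ from \eqref{eq : basic part functions intro}. Expanding the resulting product in the $\sfW_{\lambda,\overline\nu}\otimes\sfW_{\mu,\overline\rho}$ basis using the skein multiplication of $\Sk(S^1\times\R^2)$ gives $\sfZ'_{st}$ as a double sum with hook-content weights from the disk factors and diagonal couplings $\sfW_{\lambda,\emptyset}\otimes\sfW_{\lambda,\emptyset}$ (and its conjugate) from the two annuli. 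The initial condition is then immediate: each basic factor is supported in a half-space of the (homology, $a$-degree) lattice determined by its framings $f$, the product is supported in the Minkowski sum of these half-spaces, and the framings $(0),(1),(-1,-1),(0,0)$ are chosen precisely so that this sum lies inside the half-planes $(\eta_i,\beta_i)$ prescribed by $L_{l^+l^+}$; the $\sfW_{\emptyset,\emptyset}$ coefficient is $1$ because every factor has constant term $1$.

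The hard part is the annihilation $\sfA_j\cdot\sfZ'_{st}=0$ for $j=1,2,3$. The operators $\sfP^{(j)}_{p,q}$ act on the solid-torus basis by the explicit box-adding and box-removing action recorded in \cite{morton2002power,lukac2005idempotents,lukac2001homfly}, and each $\sfA_j$ couples the two components. Substituting the hook-content and diagonal-annulus weights, the vanishing of $\sfA_j\cdot\sfZ'_{st}$ reduces to a family of $q$-identities, one per homology class and $a$-degree, relating the four shifted terms inside each operator. I expect the single-component pieces to follow from the $q$-hook-length identities already used for the unknot recursion in \cite{Ekholm:2020csl}, with the diagonal annulus coupling allowing the two-variable recursion to factor through these one-variable identities on each component. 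The genuinely new difficulty, absent in Theorem~\ref{t: skein quiver 1}, is the presence of two annuli differing by $a^2$: under $\sfA_1$, which mixes the components across $a$-degrees differing by $2$, the two annulus contributions generate cross terms, and the main obstacle is to show that these cancel in pairs. I would organize this by grading the identity by total $a$-degree and homology class, reducing to finitely many checks in each degree and isolating the cross-term cancellation as the decisive new input.

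Finally, the three diagonal cases $|s|=|t|\in\{l,m,d\}$ are related by the dualities among the unknot fillings (conormal, complement, middle leg), realized on the skein side by the conjugation $\sfW_{\lambda,\emptyset}\leftrightarrow\sfW_{\emptyset,\overline\lambda}$ together with framing shifts. I would therefore carry out the verification in full only for $L_{l^+l^+}$ and deduce the remaining cases by transporting the candidate, the operators, and the initial condition through these symmetries, checking that both the half-plane condition and the annihilation are preserved. As an independent consistency check, and a possible alternative route, one could argue geometrically that the listed disks and annuli are the only rigid holomorphic curves on $L_{st}$ and that all higher curves are their multiple covers, so that the quiver form follows directly from the skein-valued count; but given the operators and uniqueness already established, the recursion argument above is the cleaner path.
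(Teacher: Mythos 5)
The statement you are proving is labeled a \emph{conjecture} in the paper, and for good reason: your proposal correctly identifies the natural strategy (exhibit the candidate, check the initial condition, verify $\sfA_j\cdot\sfZ'=0$ for $j=1,2,3$, and invoke the uniqueness clause of Theorem~\ref{t:Hopf}), which is exactly the mechanism the authors use for Theorem~\ref{t: skein quiver 1} and for Theorem~\ref{thm:alternative quiver formula for Hopf} — but the decisive step is left as an expectation rather than carried out, and it is precisely the step that the authors themselves could not complete. In the proven cases, the verification works because the operators can be commuted past each basic factor via \emph{relative} recursion relations (e.g.\ \eqref{eq:relative recursion disk}, \eqref{eq:relative recursion inverse disk}, and the identities \eqref{eq:twAnnulus-B1}--\eqref{eq:twAnnulus-B4}), reducing annihilation of the full product to an annihilation statement for a single remaining factor. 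For the configuration of Conjecture~\ref{c: skein quiver} this reduction breaks down: the annulus $\sfPsi_{\mathrm{an}}^{(-1,-1)}$ links \emph{all} the disks and the anti-annulus with $-1$ units of linking on each brane, so the candidate is assembled with the twisted products $\star_{-1}$ of \eqref{eq:twisted-prod} (see \eqref{eq:Hopf-quiver-skein}--\eqref{eq:Hopf-skein-quiver}, with Littlewood--Richardson coefficients and framing factors $q^{-\kappa^{\sigma}_{\rho\nu}}$), and the commutation identities available in Section~\ref{sec:basic-curves} do not push $\sfP^{(j)}_{p,q}$ through such linked insertions. Your phrase ``reducing to finitely many checks in each degree'' does not close the argument either: the identity must hold in infinitely many degrees, and per degree the check is a nontrivial $q$-identity in Littlewood--Richardson data, not a finite verification that terminates the proof. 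The cross-term cancellation between the two annuli differing by $a^2$, which you correctly isolate as ``the decisive new input,'' is exactly what is missing — the paper only verifies it numerically for partitions with fewer than eight boxes.

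It is also worth noting that the paper's own route toward this statement is genuinely different from yours: Section~\ref{sec:mdliproof} outlines a \emph{geometric} argument via SFT stretching and bifurcations of moduli spaces (two basic annuli crossing to produce a three-holed sphere, then contraction of boundaries), whose gap is a rigorous description of multiply covered curves near the stretched limit; and the rigorously proven nearby result, Theorem~\ref{thm:alternative quiver formula for Hopf}, deliberately replaces the pair of linked annuli by the three-holed-sphere factor $\widetilde{\sfPsi}_{(0,3)}^{(-1,-1)}[a]$, precisely because that factor satisfies tractable operator relations (Proposition~\ref{prop:relation for Z_{(0,3)}}) through which the relative-recursion bookkeeping goes through. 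If you want to salvage your approach, the realistic path is to prove the analogue of Proposition~\ref{prop:relation for Z_{(0,3)}} for the linked disk--annulus--anti-annulus configuration, i.e.\ to establish the operator identities satisfied by $\sfPsi_{{\mathrm{an}}}^{(-1,-1)}\star_{-1}\bigl[\sfPsi_{\overline{\mathrm{an}}}^{(0,0)}[a^2]\star_0(\cdots)\bigr]$ directly; as written, your proposal asserts rather than proves them, so it does not constitute a proof of the conjecture.
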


\begin{figure}[h]
	\begin{center}
		\includegraphics[width=0.6\textwidth]{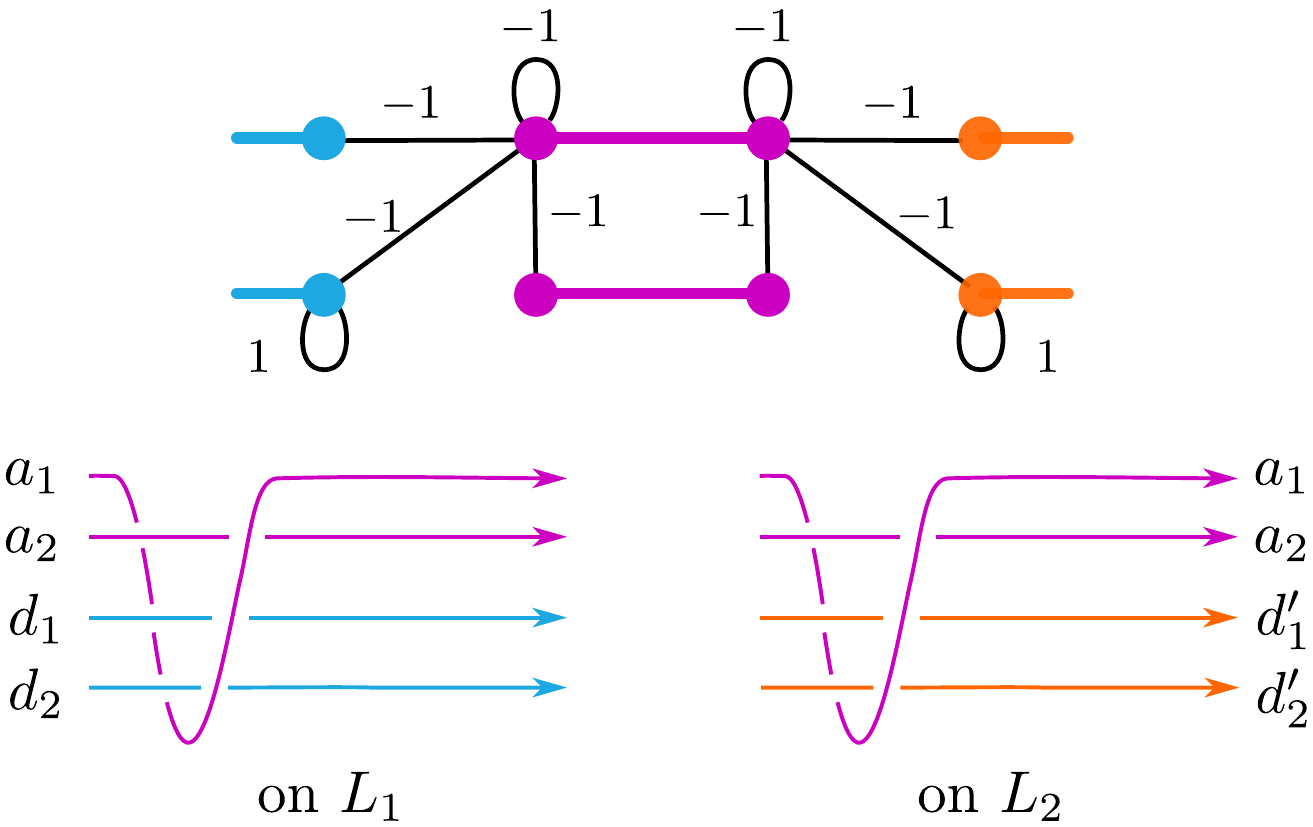}
		\caption{Basic disks and annuli in the quiver description of $\sfZ_{L_{l^{+}l^{+}}}$. On each brane there are two disks ($d_{1},d_{2}$ and $d_{1}',d_{2}'$) that are mutually unlinked (in a suitable choice of framing). Both disks link with one of the annuli ($a_{1}$). The second annulus ($a_{2}$) only links the first annulus, on each brane. Disks $d_2, d_2'$ and annulus $a_2$ come with a power of $a^2$.}
		\label{fig:hopf-link-quiver-description}
	\end{center}
\end{figure}

Specializing the skein valued expression for $\sfZ_{L_{l^{+}l^{+}}}$ in Conjecture \ref{c: skein quiver} to $U(1)$ gives a generalized quiver that except for disk nodes also have annulus nodes. This expression also equals the generating function of the HOMFLYPT polynomials of the Hopf link for all symmetric partitions. For more general partitions, we checked Conjecture~\ref{c: skein quiver} numerically when the number of boxes is less than eight. In Section~\ref{sec:mdliproof} we outline a proof of Conjecture~\ref{c: skein quiver} that uses moduli spaces of holomorphic curves near the SFT stretched limit, a more thorough description of moduli spaces of multiply covered curves is needed in order to make that argument rigorous. In Theorem \ref{thm:alternative quiver formula for Hopf} we give an alternate closed form expression for the Hopf link partition function in terms of two disks and a somewhat formal three holed sphere.

\section{Augmentation varieties and curve counts}\label{sec:backgroundCS}
In this section we discuss recursive properties of knot invariants. We review how recursion relations arise from the viewpoint of topological string theory via the Floer theoretic framework of knot contact homology.

\subsection{The augmentation variety and disk potentials}\label{sec:VK-review}
Let $K\subset S^{3}$ be an $m$-component link and let $\Lambda_{K}\subset U^{\ast} S^{3}$ be its conormal Legendrian. Recall from Section \ref{ssec:intrU(1)} that the knot contact homology of $K$ is the Chekanov-Eliashberg dg-algebra $CE^{\ast}(\Lambda_{K})$, which is a Floer theoretical algebra over the group ring 
\[
\C[H_{1}(\Lambda_{K})] \ = \ \C[x_{1}^{\pm},y_{1}^{\pm},\dots,x_{m}^{\pm},y_{m}^{\pm}],
\]   
generated by Reeb chords of $\Lambda_{K}$
(flow lines of the Reeb vector field, analogous to Lagrangian intersection points in Floer homology, see Section~\ref{ssec: strings at infinity}) 
where the differential counts holomorphic disks up to $\R$-translation in $\R\times U^{\ast} S^{3}$ with boundary on $\R\times\Lambda$, see Section~\ref{ssec:groundstatesatinfinity}.  Recall also that the augmentation variety $V_{K}\subset (\C^{\ast})^{2m}=\{(x_{1}^{\pm},y_{1}^{\pm},\dots,x_{m}^{\pm},y_{m}^{\pm})\}$ is the algebraic variety that is the  locus where there exist chain maps $CE^{\ast}(\Lambda_{K})\to\C$. In all known examples, $V_{K}$ is a Lagrangian subvariety of $(\C^{\ast})^{2m}$ with respect to the symplectic form $\sum_{j} d\log x_{j}\wedge d\log y_{j}$, in line with the interpretation of $V_{K}$ as the characteristic variety of a $D$-module.

The augmentation variety has several asymptotic regions at infinity. If $(u_1,\dots,u_{m})$, such that $(\log u_{1},\dots,\log u_{m})$ parameterizes a linear Lagrangian subspace, with dual coordinates $(\log v_{1},\dots,\log v_{m})$, are coordinates along the asymptotic Lagrangian of a branch $V^{(\alpha)}_{K}$ in an asymptotic region, then the variety $V_{K}$ is locally parameterized by 
\[
\log v_{j} \ = 
\ \frac{\partial W^{(\alpha)}}{\partial\log u_{j}}(u_{1},\dots,u_{m})\,,
\]   
where $W_{K}^{(\alpha)}$ is the (formal) disk potential along the given branch. In case the augmentations along $V^{(\alpha)}_{K}$ correspond to a Lagrangian filling $L^{(\alpha)}$ of $\Lambda_{K}$ then $W^{(\alpha)}$ is given by an actual count of (generalized) holomorphic disks on $L^{(\alpha)}$, see \cite{Aganagic:2013jpa}.

Consider for example the (negative) Hopf link $\Gamma$. There is one branch of $V_{\Gamma}^{(2)}$ that corresponds to the conormal filling $L_{ll}$ with coordinates $(x_{1},x_{2})$. Here
\be
\log y_{j} \ = \ \frac{\partial W_{U}}{\partial\log x_{j}}(x_{j}),\quad j=1,2,
\ee
where $W_{U}$ denotes the disk potential of the unknot conormal. There is another branch $V_{\Gamma}^{(1)}$ that corresponds to the complement filling 
$L_0 \simeq S^3\setminus K$. Since $L_0$ is exact in the complement of $S^{3}$ in $T^{\ast} S^{3}$, its disk potential is trivial $W^{(0)}=0$, the parameterizing coordinates are 
\be
u_{1} =  y_1 \, x_{2}^{-\ell_{12}} = y_{1}\,x_{2},\qquad u_{2} = y_{2}\, x_{1}^{-\ell_{12}}=y_{2}\,x_{1},  
\ee
where $\ell_{ij}\in \IZ$ denotes the linking number between the link components in $S^3$, see~\cite{Aganagic:2013jpa}.

In general it is an open question whether, for a given knot or link, one can construct (generalized) Lagrangians with disk potentials that parameterize all branches of the augmentation variety. In known examples, different branches of $V_K$ intersect along (complex) codimension one loci, which is a property of the characteristic variety of an irreducible $D$-module. We point out that if a branch corresponds to a Lagrangian filling $L^{(\alpha)}$, then that branch is determined by the disk potential of $L^{(\alpha)}$. In examples, it seems that the global structure of the augmentation variety is governed also by holomorphic curves of lower Euler characteristics {with boundaries} on certain `generating' (in the Fukaya category sense) Lagrangian fillings.

We next recall how augmentation varieties are related to quantum knot invariants.
It follows from~\cite{Ooguri:1999bv,Ekholm:2019yqp} that counts of holomorphic curves on the conormal Lagrangian $L_{K}\subset X$ are given by HOMFLYPT polynomials. Therefore, the recursion relations $\hat A_{K;j}$ for HOMFLYPT polynomials of $K$ in symmetric representations, see \cite{1998math.....12048F, 2003math......9214G, garoufalidis2018colored}, written as operator equations in operators $(\hat x_{j},\hat y_{j})$, $j=1,\dots,m$ 
corresponding to Weyl quantization of $(x_{j},y_{j})$, annihilate the wave function counting holomorphic curves on $L_{K}$, see~\eqref{eq:U1-recursion-review}. 
More generally, coordinates $u_{j}$ of an asymptotic branch $V_{K}^{(\alpha)}$ correspond to a choice of polarization, and also specifies an initial condition, $\log v_{j}\to 0$. There are accordingly many solutions to the recursion relation corresponding to different choices of polarizations and initial conditions. 
Taking the semi-classical limit of such a wave function solution $Z^{(\alpha)}$ gives the disk potential $W^{(\alpha)}$ of the corresponding branch,
\be
Z^{(\alpha)}(u_1,\dots,u_{m}) \ = \ \exp\left(\frac{1}{g_s} W^{(\alpha)}(u_1,\dots,u_{m}) + O(1) \right)
\ee 
and its support localizes on the critical manifold of $W^{(\alpha)}$, which coincides with the asymptotic branch $V^{(\alpha)}_{K}$ of $V_K$. Thus, in the limit $g_{s}\to 0$ (or equivalently $q=e^{\frac{g_s}{2}}\to 1$, compare~\cite{Ekholm:2018iso}), the recursion relations expressed by operator equations become polynomials in commutative variables that cut out the augmentation variety $V_K$, see \cite{AganagicVafa2012}.

\subsection{Skein curve counts and the conifold transition}\label{ssec:reviewskeinsonbranes}
In this section we introduce the world sheet skein and review skein valued curve counts in that setting.

\subsubsection{The worldsheet skein}\label{ssec:surfaceskein}
Consider an oriented $3$-manifold $L$ and its cotangent bundle $T^{\ast}L$ with its standard symplectic form $\omega=d(pdq)$, where $pdq$ is the Liouville form. Let $J$ be an almost complex structure on $T^{\ast}L$ {along $L$} that is compatible with $\omega$. We note that such $J$ are unique up to {contractible choice} and that they admit extensions to neighborhoods of $L$ in $T^{\ast}L$ that are compatible with $\omega$ and that any two such extensions are homotopic on some neighborhood of $L$.   

Consider a 6-manifold $W$ and a properly embedded, oriented 3-dimensional submanifold $L\subset W$ with normal bundle $NL$ and an isomorphism $NL\to T^{\ast}L$. (Existence of such an isomorphism is equivalent to $NL$ being trivial). We say that $L$ is \emph{locally Lagrangian in $W$}.
If $L$ is locally Lagrangian in $W$ and $J$ is an almost complex structure along $L$ in $T^{\ast}L$ compatible with $\omega$ along $L$ then $J$ induces a complex structure on $TW|_L$ for which $TL$ is a totally real subbundle. We call such a complex structure $J$ of $TW$ along $L$ \emph{locally compatible}. It follows from the discussion above that locally compatible complex structures admit extensions as almost complex structure to a neighborhood of $L$ in $W$ and that any two such extensions are homotopic on some common neighborhood of definition. 

Consider a pair $(W,L)$ where $W$ is a $6$-manifold and $L$ a local Lagrangian in $W$ and a locally compatible complex structure $J$.
Let $\Sigma$ be a compact Riemann surface with boundary $\partial\Sigma$. A \emph{generic worldsheet} is an embedding $u\colon (\Sigma,\partial\Sigma)\to(X,L)$ with the following properties:
\begin{itemize}	
	\item[$(i)$] $u(\Sigma\setminus\partial\Sigma)\cap L=\emptyset$,
	\item[$(ii)$] The restriction $du\colon T_{\partial\Sigma}\Sigma\to T_{L}X$ is $J$-complex linear, $(du+J\circ du\circ i)|_{\partial\Sigma}=0$.  
\end{itemize}
We say that two generic worldsheets are \emph{worldsheet isotopic} if there is a continuous $1$-parameter family of generic worldsheets connecting them.

Consider a generic surface $S$ in $(X,L)\setminus B$, where $B$ is a $6$-ball centered at a point in $L$. We think of $B$ as a $\C^{3}$-coordinate ball where $\R^{3}$ corresponds to $L$ and the point is the origin. We define two skein relations below, \eqref{eq: ws skein hyp} and \eqref{eq: ws skein ell},  both of which have the form $S_+ - S_- = S_0$,  
where $S_\pm$ and $S_0$ are obtained from $S$ by gluing in local surfaces in $B$. In the $\C^{3}$-coordinates of $B$ we view $S$ as giving linear asymptotic conditions given by complex linear spaces or half spaces that our local models match. It is clear how to complete this data to smooth surfaces and that the result is unique up to small isotopy.  For simpler formulas we parameterize the half disk in $\C$ by $\R\times [0,\pi]$ and the disk by $\R\times S^1$, with $-\infty$ corresponding to $0$, $s+it\mapsto e^{s+it}$.  
\begin{dfn}
	The \emph{worldsheet skein relations} are the following, see also Figure \ref{fig:ws-skein-rel}
	\begin{itemize}
		\item\emph{Hyperbolic.} 
		Assume that the asymptotic conditions of $S$ are the positive coordinate half-planes in the first two components of the $\C^{3}$-coordinates.
		Let $u_{\pm}^{k}\colon \R\times[0,\pi]\to\C^{3}$, $k=1,2$ be 
		\[ 
		u_{\pm}^{1}(s,t)=(e^{s+it},0,\pm 1),\qquad u_{\pm}^{2}(s,t)=(0,e^{s+it},0),
		\]
		and let $S_\pm^{\mathrm{hyp}}$ be the worldsheet obtained by inserting $u^{1}_\pm$ and $u^{2}_{\pm}$ in $S$. 
		Let $u_{0}\colon \R\times[0,\pi]\to\C^{3}$
		\[
		u_{0}(s,t)=(e^{s+it},e^{-(s+it)},0)
		\]
		and let $S_0^{\mathrm{hyp}}$ be the worldsheet obtained by inserting $u_0$ in $S$. Then 
		\begin{equation}\label{eq: ws skein hyp}
			S_+^{\mathrm{hyp}} \ - \ S_-^{\mathrm{hyp}}  \ = \ S_0^{\mathrm{hyp}}
		\end{equation}
		
		\item\emph{Elliptic.}
		Assume that the asymptotic conditions of $S$ is the complex line containing the vector $(1,-i,0)$. 
		Let $v_{\pm}\colon \R\times S^{1}\to\C^{3}$ be 
		\[ 
		v_{\pm}(s,t)=(e^{s+it},-ie^{s+it},\pm i),
		\]
		and let $S_\pm^{\mathrm{ell}}$ be the worldsheet obtained by inserting $v_\pm$ in $S$. 
		Let $v_{0}\colon [0,\infty)\times S^1 \to\C^{3}$
		\[
		v_{0}(s,t)=(e^{s+it}+e^{-(s+it)},i(e^{s+it}-e^{-(s+it)}),0)
		\]
		and let $S_0^{\mathrm{ell}}$ be the worldsheet obtained by inserting $v_0$ in $S$. 
		Then 
		\begin{equation}\label{eq: ws skein ell}
			S_+^{\mathrm{ell}} \ - \ S_-^{\mathrm{ell}} \ =  \ S_0^{\mathrm{ell}}
		\end{equation}
	\end{itemize}
\end{dfn}

\begin{figure}[h!]
\begin{center}
\includegraphics[width=0.7\textwidth]{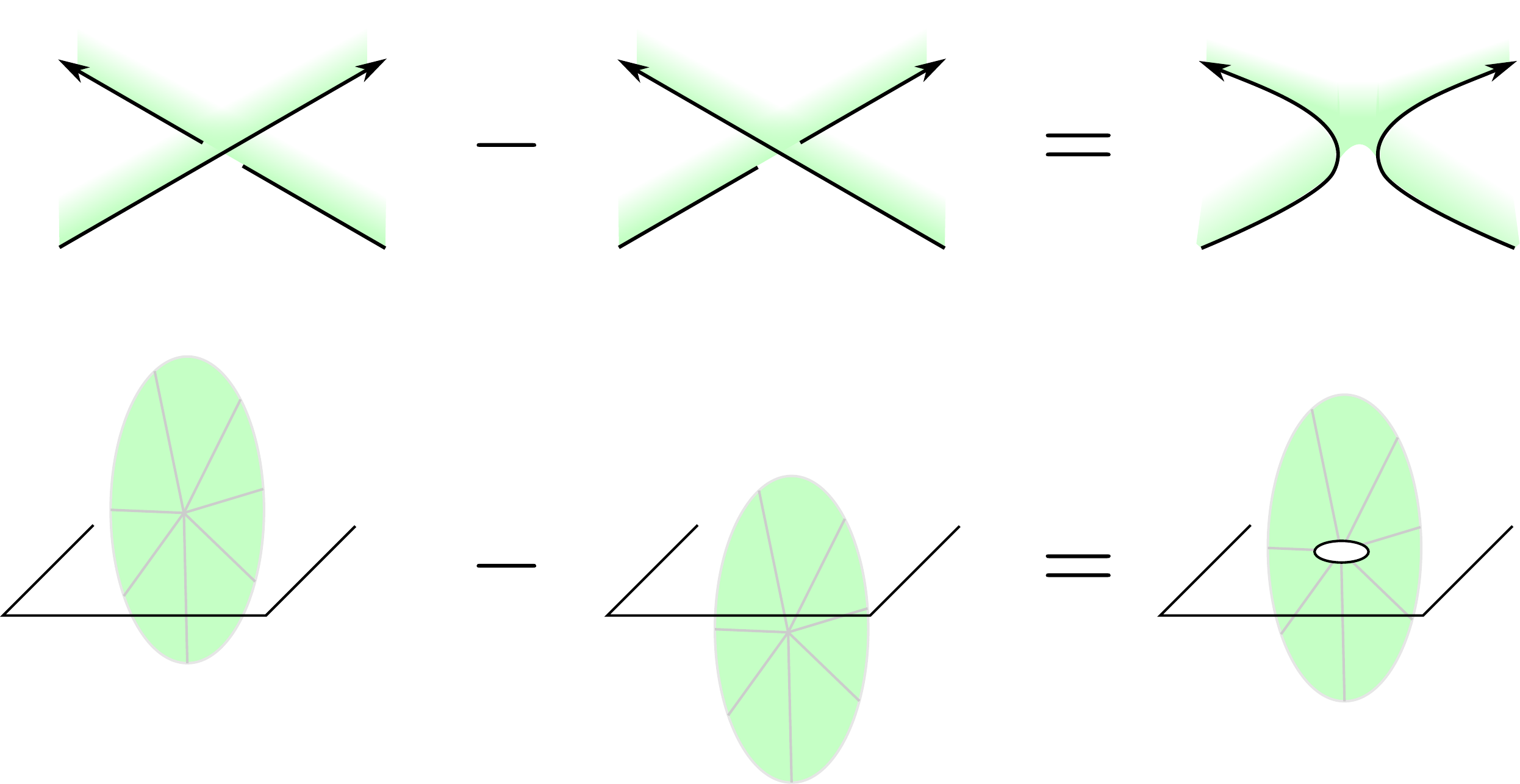}
\caption{The worldsheet skein relations. Top: the hyperbolic skein relation, the black segments are part of the boundaries of the generic worldsheet surfaces on the Lagrangian and the green regions indicates the interior of the surfaces near the boundary. Bottom: the elliptic skein relation, the black rectangular shape indicates the Lagrangian, the green represents interiors of the worldsheet surfaces and the black circle in the right hand side is the newborn boundary component that appears when the interior of the surface crosses the Lagrangian.}
\label{fig:ws-skein-rel}
\end{center}
\end{figure}

\begin{dfn}
	The \emph{worldsheet skein module} $\Sk(X,L)$ is the $\IQ$-vector space generated by worldsheet isotopy classes modulo the skein relations \eqref{eq: ws skein hyp} and \eqref{eq: ws skein ell}.
\end{dfn}

Often in our applications the choice of complex structure $J$ is clear from the context and we will omit it from the notation. Note that $\Sk(W,L)$ is naturally a module over $\Sk(S^{6},S^3)$ where the multiplication is induced by taking connected sum (of ambient spaces). 

If $(W,L)$ and $(W',L')$ are $6$-manifolds with local Lagrangians and $\iota\colon (W,L) \to (W',L')$ is an embedding that preserves local complex structures and the orientations of $L$ and $L'$, then $\iota$ induces a map $\Sk(W,L) \to \Sk(W',L')$. This will be particularly interesting to us in the situation when $(W,L)$, outside of a compact subset, is a disjoint union of the form 
\be
\bigl((-\infty,0] \times M_0 \,,\, (-\infty,0] \times \Lambda_0 \bigr) \ \sqcup \ 
\bigl([0,\infty) \times M_1 \,,\, [0,\infty) \times \Lambda_1\bigr)
\ee
and $(W',L')$ outside of a compact subset is a union of the form 
\be
\bigl((-\infty,0] \times M_1 \,,\, (-\infty,0] \times \Lambda_1 \bigr) \ \sqcup \ 
\bigl([0,\infty) \times M_2 \,,\, [0,\infty) \times \Lambda_2\bigr).
\ee
Here $M_{k}$ are $5$-manifolds, $\Lambda_{k}$ are oriented 2-submanifolds and the local complex structures $J_{k}$ are independent of the $\IR$-coordinate outside of a  compact subset of $\IR$. Then there is a $\Sk(S^6,S^3)$-linear map 
\be\label{eq:universalSkeinProductCobordism}
\Sk(W,L) \times \Sk(W',L') \to \Sk(W'',L'')
\ee
where $(W'',L'')$ is obtained from $(W,L)$ and $(W',L')$ by gluing the matching cylindrical regions. 

In this case we allow generators of $\Sk(W,L)$ to also be non-compact with chord and orbit asymptotics as follows.
Fix finite collections $c_{j}$ of smoothly embedded paths in $M_{k}$ with endpoints on $\Lambda_{k}$, derivative at the endpoint equal to $\pm J_{k}\partial_{s}$, where $\partial_{s}$ denotes the tangent vector of the $\IR$-factor, and otherwise disjoint from $\Lambda_{k}$. We call these arcs with extra data \emph{chords}. 

We fix similarly a finite collection of embedded loops $\gamma_{k}$ in $M_{k}\setminus\Lambda_{k}$ together with an overall decay rate $\epsilon_{k}$. We call the $d$-fold iterate of these loops, with the given decay rate, \emph{orbits}, and denote them by  $\gamma^{(d)}_{k}$ , $d\ge 1$.

Then the Riemann surface $\Sigma$ is allowed to have punctures at interior and boundary punctures where the map $u$ is asymptotic, respectively, to the cylinder over orbits or to the strip over a chord, see Figures \ref{fig:stretching} and \ref{fig:stretching-open}. Details are as follows. 

Consider first the orbit case. We require that the surface is exponentially asymptotic to the orbit $\gamma_{k}^{(d)}$ at some rate $\epsilon$. The asymptotics of the surface near an orbit $\gamma_{k}$ then determines an embedding of a number of circles into the $5$-dimensional solid torus $\gamma_{j}\times D^{4}$. Such a configuration is uniquely determined up to isotopy by the partition of the total homotopy class corresponding to the multiplicities of the orbit.

The case of chords is more complicated since multiple punctures may map to the same chord. However, for the purposes of this paper it is sufficient to consider only boundary punctures that map to distinct $c_{j}$ and we will restrict to that case. (We leave more detailed investigations of chords with multiple boundary punctures to future work.)

The product \eqref{eq:universalSkeinProductCobordism} extends with the convention that it is non-zero only if the positive and negative asymptotic Reeb chords and orbits match so that the two maps can be glued together. At orbits the gluing is canonical: cylinders joining the asymptotic links are unique up to isotopy. At chords the distinct asymptotics then determine a unique link. The interior of the surface is unique up to isotopy as in the orbit case.

\subsubsection{Universal counts of holomorphic curves}\label{ssec:univcurvecounts}
Consider now the case when $(W,\omega)$ is a symplectic Calabi-Yau threefold with a $\omega$-tame almost complex structure $J$, and $L$ is a Maslov zero Lagrangian equipped with a (relative) spin structure so that $(W,L)$ is tame at infinity. In \cite{Ekholm:2019yqp}, \eqref{eq: ws skein hyp} and \eqref{eq: ws skein ell} were identified with wall-crossings for holomorphic curves with boundary in $L$. Consequently, if we define the world-sheet partition function of $L$ as 
\be
\sfZ^{\textnormal{ws}}_{L} \ = \ \sum_{u\in\mathcal{M}} w(u) \langle u\rangle \ \in \ \widehat{\Sk}(W,L),
\ee 
where $\mathcal{M}$ denotes the moduli space of all disconnected bare holomorphic curves, see \cite{Ekholm:2024bare}, $w(u)$ is the rational (orbifold) weight of $u$, $\langle u \rangle$ denotes the element represented by $u$ in $\Sk(W,L)$, and $\widehat{\Sk}(W,L)$ is the action completion of $\Sk(W,L)$, then $\sfZ^{\textnormal{ws}}_{L}$ is invariant under deformations.

Here the \emph{action completion} is the following. For relative homology classes of non-positive symplectic area there are no curves and the invariant is represented by the empty curve. For any fixed positive symplectic area there is a maximal possible Euler characteristic of holomorphic curves with that given area and by compactness the number of bare curves below any given Euler characteristic is finite. We complete in this way in Euler characteristic for fixed action level and then in action level.  

If $(W,L)$ is cylindrical at infinity in the SFT sense (see \cite{EGH}), then for any collections $\bc^\pm$ of Reeb chords of the Legendrians and any collection of closed Reeb orbits $\boldsymbol{\gamma}^\pm$ of the contact manifold at $\pm \infty$ so that $|\bc^+| + |\boldsymbol{\gamma}^+| = |\bc^-| + |\boldsymbol{\gamma}^-|$ (or $|\bc^+| + |\boldsymbol{\gamma}^+| = |\bc^-| + |\boldsymbol{\gamma}^-| + 1$ if $(W,L)$ is cylindrical), we can analogously define
\be
\sfZ^{\textnormal{ws}}_{L}(\bc^+,\bc^-) \ = \ \sum_{u\in  \CM(\bc^+,\boldsymbol{\gamma}^+,\bc^-,\boldsymbol{\gamma}^-)} w(u)\, \langle u \rangle \qquad \in \quad \widehat{\Sk}(W,L)\,,
\ee 
where now $\CM(\bc^+,\boldsymbol{\gamma}^+,\bc^-,\boldsymbol{\gamma}^-)$ denotes the moduli space of all disconnected bare punctured holomorphic curves with positive asymptotics at $\bc^+$ and $\boldsymbol{\gamma}^+$ and negative asymptotics at $\bc^-$ and $\boldsymbol{\gamma}^-$ modulo translation in the $\IR$-coordinate if $(W,L)$ is cylindrical. Note that the asymptotic expansions of the curves near boundary punctures give the required data near Reeb chord endpoints.

If $(W,L)$ splits into $(W_+,L_+)$ and $(W_-,L_-)$ when SFT-stretching \cite{EGH,Ekholm:2019yqp} along a contact hypersurface $(M,\Lambda) \subseteq (W,L)$, then we have that 
\be
\sfZ^{\textnormal{ws}}_L(\bc^+,\boldsymbol{\gamma}^+,\bc^-,\boldsymbol{\gamma}^-) \ = \ \sum_{\bc,\boldsymbol{\gamma}} \sfZ^{\textnormal{ws}}_{L_+}(\bc^+,\boldsymbol{\gamma}^+,\bc,\boldsymbol{\gamma}) \cdot \sfZ^{\textnormal{ws}}_{L_-}(\bc,\boldsymbol{\gamma},\bc^-,\boldsymbol{\gamma}^-)\,,
\ee
where the sum is taken over all collections of Reeb chords $\bc$ of $\Lambda$ and closed Reeb orbits $\boldsymbol{\gamma}$ of $M$ that satisfy $|\bc|+|\boldsymbol{\gamma}|=|\bc^+|+|\boldsymbol{\gamma}^+|=|\bc^-|+|\boldsymbol{\gamma}^-|$. 

As explained above, gluing at orbits gives topologically unique results by general position. For gluing at Reeb chords one must take into account the effect of extending perturbations that move Reeb chords apart. Since we use gluing only for curves with distinct asymptotics at boundary punctures, we leave such questions to future work.

\subsubsection{Curve counts in the HOMFLYPT skein}
The framed skein module $\Sk(L)$ of an oriented 3-manifold $L$ is the module over the ring $\IQ[a_L^{\pm1}, z^{\pm1}]|_{z = q - q^{-1}}$ generated by isotopy classes of framed links in $L$ modulo the three HOMFLYPT skein relations: 
\be\label{eq:skein-rel-intro}
\begin{split}
	& \OC - \UC=(q-q^{-1})\ \SP \\[2mm]
	& \qquad  \UK = \frac{a_L-a_L^{-1}}{q-q^{-1}}\\[2mm]
	& \PT=a_L \ \ST\ ,\qquad \NT =a_L^{-1}\ \ST. 
\end{split}
\ee
In the case that $L$ is not connected and has components $L = L_1 \sqcup \dots \sqcup L_r$, then we extend the coefficient ring to $\IQ[a_{L_1}^{\pm1},...,a_{L_r}^{\pm1}, z^{\pm1}]$, and the second and third skein relation hold for $a_{L_i}$ when applied in $L_i$.

For a finite collection $\gamma_i\colon S^1 \to L, i =1,...,r$, of mutually disjoint, embedded, oriented curves, we may modify this definition to obtain the skein module $\Sk(L;\{\gamma_i\})$ over the ring $\IQ[a_{L}^{\pm1},a_{1}^{\pm1},...,a_{r}^{\pm1}, z^{\pm1}]$ where links are assumed to be disjoint from the $\gamma_i$ and in addition to \eqref{eq:skein-rel-intro} we have the following skein relation for part of a link crossing $\gamma_i$: 
\be\label{eq:gamma-skeinRelation}
\picLabel{{$\gamma_i$}{100}{-10}} {gamma-skeinRelation1-crop} {.250} = a_i \picLabel{{$\gamma_i$}{100}{-10}} {gamma-skeinRelation2-crop} {.250} \,.
\vspace{10pt}
\ee
In our applications, $L$ may be disconnected and to each connected component $L_i$ of $L$, we associate a collection $\{\gamma^i_j\}$ of such curves for which the relation \eqref{eq:gamma-skeinRelation} holds with $a_j = a_{L_i}$. The operation of forming the connected sum with $S^3$ induces on $\Sk(L;\{\gamma_i\})$ the structure of a $\Sk(S^3)$-module. In Section \ref{sec:skeinsolidtorus} below we recall basic algebraic properties of the skein modules of $\IR \times T^2$ and $S^1 \times \IR^2$.

Given a pair $(W,L)$ as in Section \ref{ssec:surfaceskein} where $L$ decomposes into connected components $L=L_{1}\cup\dots\cup L_{r}$, one can obtain non-trivial homomorphisms from $\Sk(W,L)$ to a skein module of $L$ as follows. Choose a nowhere-vanishing vector field $v$ on $L$ and an oriented 4-chain $C = C_1 \cup ... \cup C_r$ in $W$ so that $\partial C_i = 2 L_i$ and near its boundary, $C_i$ is given by $\{\exp_x(\pm t J v)|x \in L_i, t \in [0,\varepsilon) \}$ with respect to some Riemannian metric on $W$. We further assume that the interior $\textnormal{int}\,C$ of $C$ intersects $L$ transversely along a collection of mutually disjoint embedded circles which inherit an orientation from the orientations of $L$ and $C_i$. For any surface $u\colon\Sigma \to W$ representing an element in $\Sk(W,L)$ so that $v$ is nowhere tangent to $u(\partial \Sigma)$ and $u(\partial \Sigma)$ is disjoint from $L \cap\, \textnormal{int}\, C$,
\be
\rho_{L,C}(u) \ \coloneqq \ \prod_{i=1}^r a_{L_i}^{u \cdot C_i} z^{-\chi(\Sigma)} \langle \partial u \rangle
\ee
defines an element in $\Sk(L;\{L \cap\, \textnormal{int}\, C_i\})$, where $u \cdot C_i$ denotes the algebraic count of intersections of $C_i$ with the interior of $u(\Sigma)$, $\chi(\Sigma)$ denotes the Euler characteristic of $\Sigma$, and $\langle \partial u \rangle$ denotes the element in $\Sk(L;\{L \cap\, \textnormal{int}\, C_i\})$ represented by $\partial u$ with the framing given by $v|_{\partial u}$, and the $a$-variables corresponding to $L \cap \textnormal{int}\,C_i$ have been set equal to $a_{L_i}$. The skein relations \eqref{eq:skein-rel-intro} and \eqref{eq:gamma-skeinRelation} ensure that $\rho_{L,C}(u)$ only depends on the class of $u$ in $\Sk(W,L)$, so it descends to a well-defined $\IQ$-linear homomorphism
\be
\rho_{L,C}: \Sk(W,L) \to \Sk(L;\{L \cap\, \textnormal{int}\, C_i\}).
\ee
This map is in fact compatible with the $\Sk(S^6,S^3)$ and $\Sk(S^3)$-module structures on the different skeins. In what follows, we will usually omit the loops $L \cap\, \textnormal{int}\, C$ from the notation.

By applying $\rho_{L,C}$ to the world-sheet partition function $\sfZ^{\textnormal{ws}}_L$, we obtain the partition function 
\be
\sfZ_{L} \ = \ \sum_{u\in\mathcal{M}} w(u) \,\prod_{j=1}^{r} a^{u \cdot C}_{L_{j}} \,z^{-\chi(u)}\, \langle \partial u\rangle \ \in \ \widehat{\Sk}(L),
\ee 
which takes values in the action completion of the HOMFLYPT skein module of $L$. We will usually omit $C$ from the notation and simply write $\rho_L$ instead of $\rho_{L,C}$.

Consider now the Lagrangian conormal $L_{K}\subset T^{\ast}S^{3}$ of a framed $m$ component link $K=K_{1}\cup\dots\cup K_{m} \subset S^{3}$. Shifting $L_{K}$ off $S^{3}$ along a one form (defined in a tubular neighborhood) dual to the tangent vector of $K$, it is easy to see that there exists exactly one annulus stretching from each conormal to the zero section. The total contribution in the skein of such annuli is, see \cite{Ekholm:2021colored} or Section \ref{sec:skein-rec-annulus} 
\be
\sum_{(\lambda_{1},\dots,\lambda_{m})} \langle K_{1;{\lambda_{1}}}\cup\dots\cup K_{m;{\lambda_{m}}}\rangle\, \sfW_{\lambda_{1}}\otimes\dots\otimes \sfW_{\lambda_{m}} \ \in \ \widehat{\Sk}(L),
\ee
where $K_{j,{\lambda_{j}}}$  denotes the skein element $\sfW_{\lambda_{j}}=\sfW_{\lambda_{j},\emptyset}$ 
in the solid torus inserted in a small tubular neighborhood around $K_{j}$ and $\langle K\rangle$ denotes the HOMFLYPT polynomial in $S^{3}$. 

By invariance of the curve count we can now transition to the resolved conifold by SFT-stretching \cite{EGH, Ekholm:2019yqp}. Stretching the complex structure around $S^{3}$ in a neighborhood that is smaller than the shift off of the conormal we find that holomorphic curves fall apart into holomorphic buildings joined at Reeb orbits of $S^{3}$. However, if $S^{3}$ has the round metric then the index of any closed Reeb orbit is $\ge 2$ and curves with negative asymptotics at such Reeb orbits would have negative dimension. Consequently, under stretching boundaries shrink and all curves lift off of the zero section. For a suitable almost complex structure the count in $T^{\ast}S^{3}\setminus S^{3}$ is then the same as in the resolved conifold $X$, where $\log a^{2}$ which in $T^{\ast} S^{3}$ corresponds to the linking dual of the zero section, i.e., the cotangent fiber sphere, becomes the homology class or complexified area of $\C P^{1}$ in $X$, giving a mathematical derivation of the large $N$ transition in \cite{Ooguri:1999bv}.

\section{Quantum groundstates of strings and curves at infinity}\label{sec:recursion-worldsheet}
In this section we consider holomorphic curve counting, or $A$-model open topological strings, in a non-compact Calabi-Yau $W$ with Lagrangian boundary condition or branes on a non-compact Lagrangian $L\subset W$. We will assume that $(W,L)$ is asymptotic to a certain $\R$-invariant symplectic pair at infinity and describe how infinite area holomorphic curves, or infinite-area worldsheet instantons, in the region at infinity relate to finite area curves, corresponding to regular worldsheet instantons, in the bulk.   

Our main examples of such geometry at infinity are Legendrian conormals $\Lambda_{K}\subset U^{\ast} S^{3}$ of links $K\subset S^{3}$, where relevant infinite area curves are directly computable by finite dimensional methods (Morse flow graphs), see \cite{Ekholmflowtrees}.

\subsection{Holomorphic curves and asymptotic contact boundaries}\label{ssec: strings at infinity}
We consider topological string theory in a symplectic Calabi-Yau $W$ with a Maslov index zero Lagrangian $L$.  
We will consider the case when $(W,L)$ is non-compact and asymptotically cylindrical over $(Y,\Lambda)$, a contact manifold $Y$ with Legendrian submanifold $\Lambda$. We begin with a review of relevant notions from contact geometry.

A contact structure $\xi$ on a $(2n-1)$-dimensional manifold $Y$ is a completely non-integrable hyperplane distribution. A contact 1-form of $\xi$ is a $1$-form $\alpha$ on $Y$ such that $\xi = {\rm ker}(\alpha)\subset TY$. The complete non-integrability condition means that $\alpha\wedge (d\alpha)^{\wedge n}$ is everywhere non-zero and $d\alpha$ gives a symplectic form on $\xi$. 
We point out that if $\alpha$ is a contact $1$-form of $\xi$ then so is $e^{f}\cdot\alpha$ for any function $f\colon Y\to\R$. To a contact manifold $Y$ one associates a symplectic manifold $\R\times Y$, the symplectization of $Y$, with symplectic structure $\omega$ which is $\R$-invariant: if $s$ is a coordinate along $\IR$, then $\omega = d(e^s\alpha)$, where $\alpha$ is a contact $1$-form on $Y$.

A Legendrian submanifold $\Lambda\subset Y$ of a contact $(2n-1)$-manifold is an $(n-1)$-dimensional submanifold with tangent space everywhere contained in $\xi$,  i.e., $T\Lambda\subset \xi$. By non-integrability of $\xi$, $(n-1)$ is the maximal dimension of a submanifold with this property. A Legendrian $\Lambda\subset Y$ determines an $\IR$-invariant Lagrangian submanifold $\IR\times \Lambda\subset \R\times Y$.

Associated to a contact $1$-form $\alpha$ is its Reeb vector field $R_{\alpha}$ which is the unique vector field along $Y$ determined by the conditions,
\[
\iota_{R_{\alpha}}d\alpha=0, \qquad \iota_{R_{\alpha}}\alpha=1,
\]
where $\iota$ denotes contraction. Closed orbits of $R_{\alpha}$ are called \emph{Reeb orbits} and flow lines that begin and end on $\Lambda$ \emph{Reeb chords}.

With these notions introduced we explain the notion of $(W,L)$ being asymptotically cylindrical.
At infinity, $W$ is topologically $[0,\infty)\times Y$, where $Y$ is a contact $5$-manifold with contact $1$-form $\alpha$ and its symplectic form is $\omega=d(e^{s}\alpha) + \omega_{0}$, where $s$ is a coordinate on $[0,\infty)$  and $\omega_{0}$ is uniformly bounded. Rescaling by $e^{-s}$ we find that the symplectic geometry of $W$ is arbitrarily close to that of the symplectization $\R\times Y$ outside any sufficiently large compact subset with complement $(T,\infty) \times Y$ for $T \gg 0$. The requirement that $L$ is asymptotic to the Legendrian $\Lambda$ means similarly that, near infinity, in the region where $W$ is asymptotic to $\R\times Y$, $L$ is asymptotic to $\IR\times \Lambda$, where $\Lambda\subset Y$ is a Legendrian submanifold.

There is a natural $\R$-invariant class of almost complex structures $J$ on $\R\times Y$, that restricts to complex structures in the contact hyperplanes and have the following property in remaining directions, 
\be\label{eq:J-symp}
J(\partial_s)=R_{\alpha},\quad J(R_{\alpha}) = -\partial_s\,,
\ee
where $\partial_{s}$ is the standard vector field along $\R$. 

Assume then that $(W,L)$ is asymptotic to $(Y,\Lambda)$ and consider open topological strings on $(W,L)$.
Topological strings quantize the symplectic structure of $W$. We first consider the more standard finite area part of the theory. Here, worldsheet instantons correspond to holomorphic maps $u\colon(\Sigma,\partial \Sigma)\to (W,L)$, where $\Sigma$ is a Riemann surface with boundary~$\partial\Sigma$. The weight of an instanton in the topological string path integral is given by its exponentiated symplectic area, $\exp(-\int_\Sigma u^*\omega)$. Locally in $W$ the symplectic form can be expressed in terms of a Liouville 1-form $\omega = d\lambda$, which means that the symplectic area can be expressed as a boundary line integral
\be\label{eq:string-area}
\int_\Sigma u^*\omega = \oint_{\partial \Sigma} u^*\lambda + \langle t,\beta \rangle
\ee
up to quantized shifts labeled by fluxes $t\in H^{2}(W)$ and homology classes $\beta\in H_2(W;\IZ)$, where $\langle\,,\rangle$ denotes the pairing between cohomology and homology.

\subsection{Holomorphic curves with infinite area and line operators}
Let $(W,L)$ be as in Section \ref{ssec: strings at infinity}. In addition to the finite area worldsheet instantons, we consider also holomorphic curves with infinite area. More precisely, these are curves in $W$ with boundary on $L$ that have finite Hofer energy, see~\cite{EGH}, which implies that the curves have punctures where they are asymptotic to Reeb chords and orbits. The space of such curves admits a compactification by several level curves with additional levels that are curves in the symplectization $\IR\times Y$ that are holomorphic with respect to an $\IR$-invariant complex structure as in \eqref{eq:J-symp}, with finite $(d\alpha)$-area, and again with punctures at Reeb chords and orbits, at both positive and negative infinity. More precisely, the asymptotic condition means that, at infinity, holomorphic string worldsheets are asymptotically close to $\IR\times \gamma$ where $\gamma$ is an integral curve of $R_{\alpha}$ (such products are themselves holomorphic by the definition of $J$ in \eqref{eq:J-symp}). In particular, near interior and boundary punctures, the $Y$-component of such maps limit to Reeb orbits and chords, respectively, see Figures~\ref{fig:stretching} and~\ref{fig:stretching-open}. 

\begin{figure}[h!]
	\begin{center}
		\includegraphics[width=0.70\textwidth]{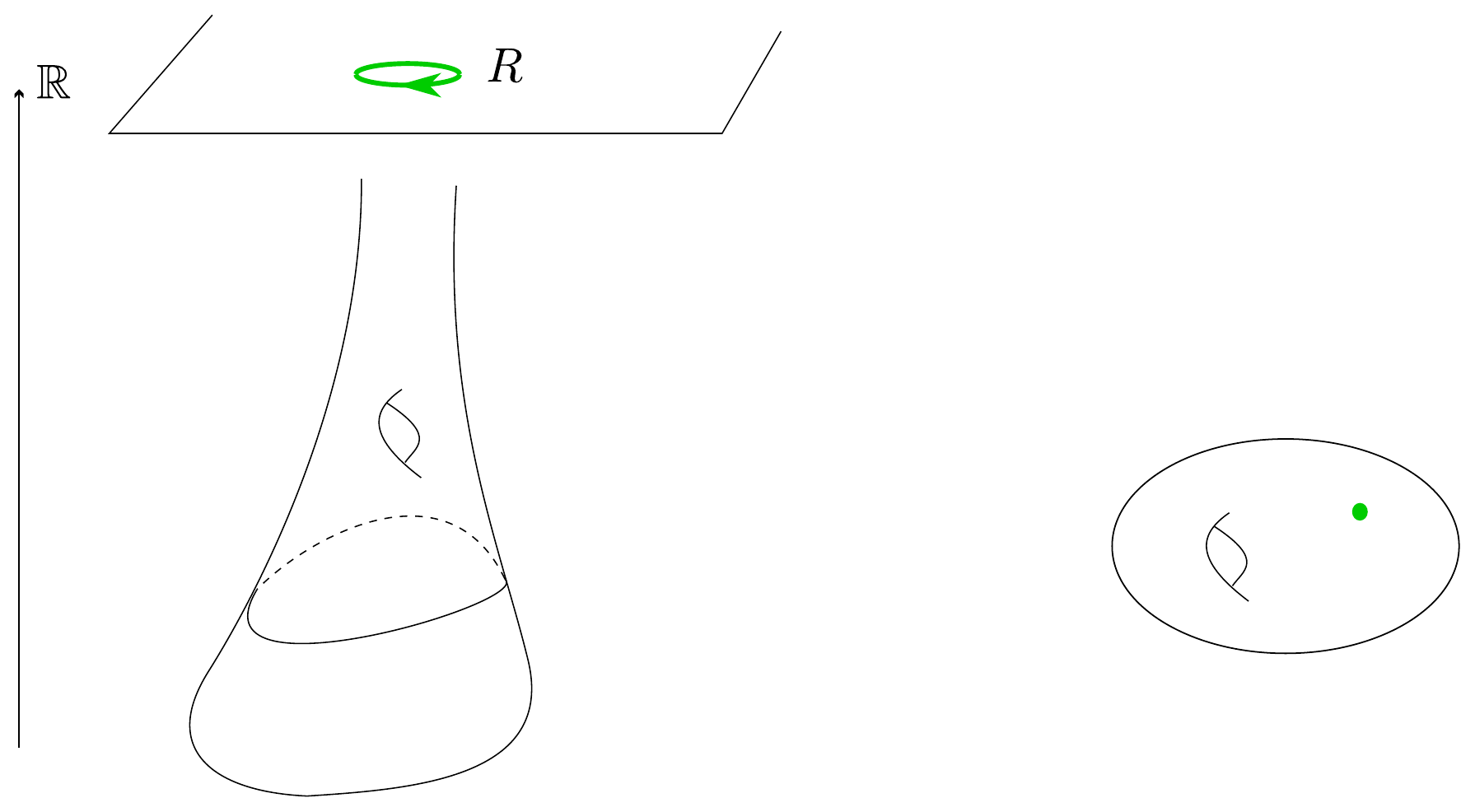}
		\caption{Left: a holomorphic curve stretching away to infinity that asymptotes to a Reeb orbit. Right: the source Riemann surface where the asymptotic Reeb flow loop corresponds to a puncture. }
		\label{fig:stretching}
	\end{center}
\end{figure}

\begin{figure}[h!]
	\begin{center}
		\includegraphics[width=0.70\textwidth]{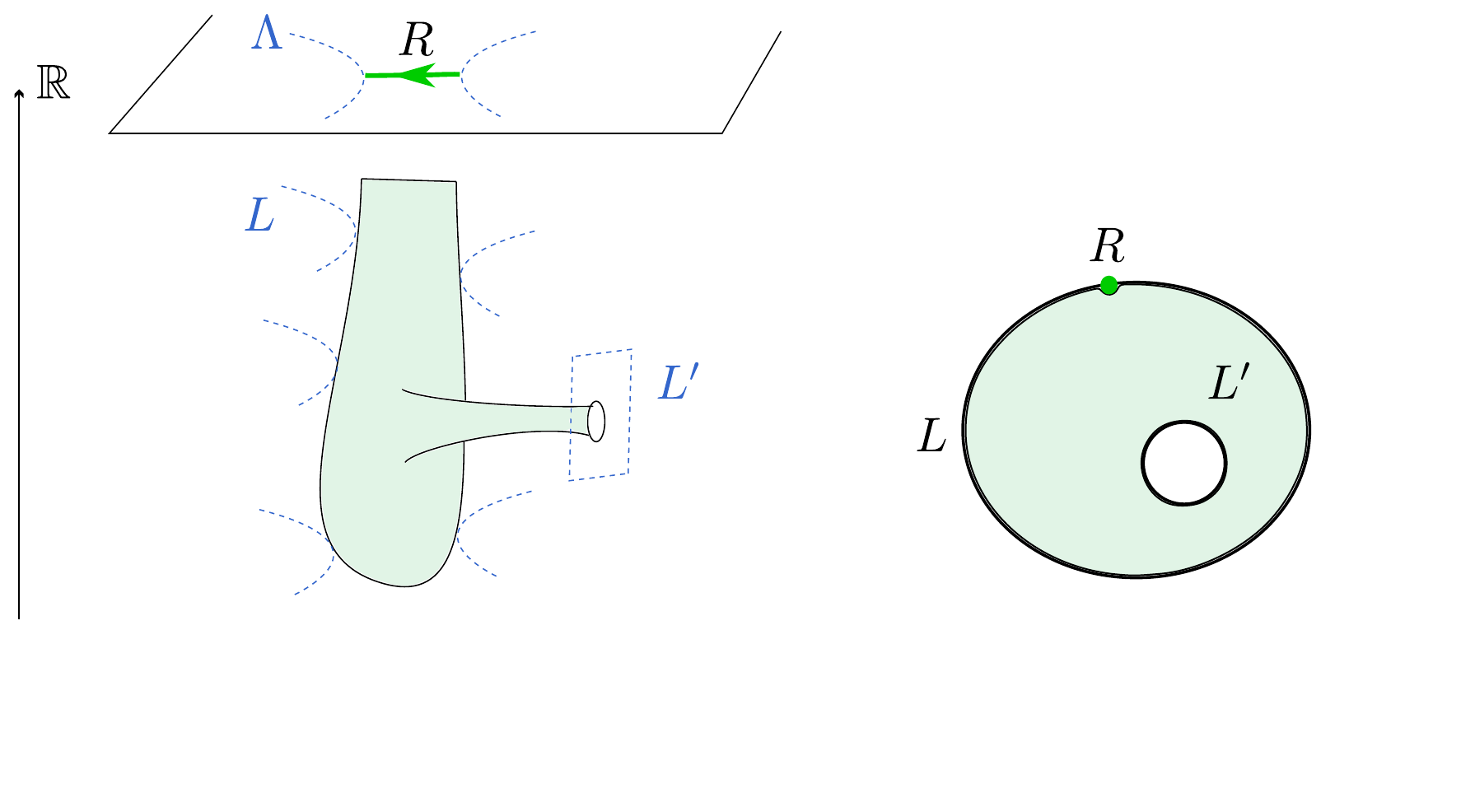}
		\caption{Open holomorphic curves with boundary punctures on the worldsheet corresponding to a Reeb chord $c$ connecting $c^{-}\in\Lambda$ to $c^{+}\in\Lambda$. }
		\label{fig:stretching-open}
	\end{center}
\end{figure}

Unlike finite area curves, which contribute instanton corrections to the vacuum partition function, infinite area curves are non-dynamical and do not contribute to the vacuum sector. However, such curves may contribute to other sectors of the theory, in particular to the expectation value of certain extended operators.

In fact, the $\R$-shift in the cylindrical region $[0,\infty)\times Y$ allows us to regularize infinities of curves with Reeb asymptotics: 
if $C$ is a holomorphic curve in $\R\times Y$ then the ratio of the area of $C$ and an $s$-shift of $C$ equals $e^{s}$ and the $(d\alpha)$-area of $C$ is invariant. 
Moreover the Liouville 1-form is $e^s \alpha$, and since the Reeb vector field obeys $\iota_{R_{\alpha}} \alpha = 1$, the asymptotic Reeb flow line of a holomorphic curve is calibrated by the contact 1-form $\alpha$, and therefore corresponds to critical points of the action 
\be
\gamma \mapsto \int_\gamma \alpha.
\ee
We then regularize the infinite area curves by subtracting half-infinite trivial strips corresponding to its asymptotics and obtain finite operators for finite $d\alpha$-area instantons. This regularization is not unique but is possible to organize in an invariant way, see Section \ref{ssec:groundstatesatinfinity}.

\begin{rmk}[Analogy with framed BPS states] \label{rmk:framed-BPS}
	Recall that branes in topological strings on $(W,L)$ admit natural lifts to M5 branes in M-theory on $W\times S^1\times \IR^4$, wrapped on $L\times S^1\times \IR^2$, and that in this lift, worldsheet instantons (of finite area) organize into contributions from M2 branes wrapped on holomorphic cycles $C$ in $(W,L)$ \cite{Gopakumar:1998ii, Gopakumar:1998jq, Ooguri:1999bv, Labastida:2000zp, Labastida:2000yw}.
	
	On the other hand, M2 branes with infinite area give rise to (non-dynamical) surface operators in the worldvolume theory on the M5 branes. 
	When the surface operator wraps $\gamma\times S^{1}$, where $\gamma$ is a curve $\gamma$ in $L$ and $S^{1}$ the M-theory circle, it descends to a pair of dual line operators: one in the 3d topological gauge theory on~$L$, and one in the dual 3d~$\CN=2$ QFT on~$S^1\times \IR^2$, see~\cite{Dimofte:2011ju}.
	In our setting infinite area curves with finite $d\alpha$-area define line operators that act on the (skein valued) topological partition function. 
	
	There is a close analogy between the infinite area holomorphic curves considered here and framed BPS states \cite{Gaiotto:2010be}, generally understood as infinitely massive states that measure the response of a QFT to the insertion of such BPS line operators. 
	In fact, wall-crossing of framed BPS states can be used to compute the spectrum of finite mass BPS states. In a similar way, the infinite area instantons considered here give recursion relations that determine the contributions from the finite area worldsheet instantons.  
\end{rmk}

\subsection{Perturbative and exact groundstates of strings at infinity}\label{ssec:groundstatesatinfinity}
As already mentioned, Reeb chords and orbits of $\Lambda_{K}$ correspond to critical points of the string action functional near infinity. Therefore, in the quantum theory, Reeb chords and orbits generate the space of perturbative groundstates of the open string. We give a brief description of Reeb chords and orbits in cases relevant here.   

Let $K\subset S^{3}$ be an $m$-component link. For $\Lambda_{K}\subset U^{\ast}S^{3}$, if the unit cotangent bundle $U^{\ast}S^3$ with respect to some Riemannian metric is equipped with the Liouville one-form $\alpha=pdq$, then Reeb orbits are cotangent lifts of geodesic loops and Reeb chords are cotangent lifts of geodesics that connect the link to itself and are perpendicular to $K$ at its endpoints. Further, there is a (Maslov) grading on the space of perturbative groundstates, induced by a relative Chern class, which in the case under consideration agrees with the Morse index of the geodesic in $S^3$ underlying a given Reeb chord or orbit. If we use the round metric on $S^{3}$ then any closed geodesic has Morse index (and therefore Maslov degree) $\ge 2$ and binormal chords have Morse index and Maslov degree $\ge 0$.  
We will be mainly concerned with the degree zero groundstate which is generated by words of Reeb chords of degree zero, where we include also the empty word. 

Reeb chords are not invariant under deformations, and this can lead to jumps in the space of perturbative groundstates. The deformation invariance of the open topological string leads to the expectation that there exists a well-defined quantum ground state which is insensitive to deformations.
The invariant ground state is obtained in a familiar way: perturbative groundstates are corrected by instantons and the true quantum groundstate appears as the homology of a BRST-differential defined by the instantons, as in Witten's realization of Morse theory in quantum mechanics~\cite{Witten:1982im} and in line with the principles underlying the treatment of holomorphic curves in Symplectic Field Theory~\cite{EGH}.  

Mathematically, the complex of groundstates with BRST-differential is a higher genus extension of the Chekanov-Eliashberg dg-algebra, see~\cite{ekholm2013knot} for the dg-algebra in this setting and~\cite{Ekholm:2018iso} for first steps in a perturbative approach (in the string coupling $g_s$) to the generalization to higher genus. 
Here we will follow the approach to curve counting developed in~\cite{Ekholm:2019yqp}, where instead of working perturbatively in $g_s$, we count bare curves in the skein with genus parameter $z=q-q^{-1}=e^{\frac{g_{s}}{2}}-e^{-\frac{g_{s}}{2}}$. A more detailed description from the point of view of perturbative ground states is as follows. 

The Reeb chords on~$\Lambda$ represent open string states that have Chan-Paton degrees of freedom at their endpoints, and the boundary of a string world sheet gives rise to (non-dynamical) line operators. 
With this in mind we think of Reeb chords $c$ as asymptotic data given by their corresponding trivial strips $\R\times c$ with boundary line defects along $\R\times c^{\pm}$. We will modify the definition of the skein modules to allow also for open knots and links that agree with a finite collection of the lines $c^{\pm}\times\R$, for some Reeb chords $c$, outside some compact region in $[0,\infty)\times \Lambda_{K}$, compare Section \ref{sec:skein of torus with two boundary points}.

In order to cut off the otherwise infinite $d (e^{s}\alpha)$-area at the positive infinity of a collection $\bc$ of Reeb chord strips and Reeb orbit cylinders, we fix a choice of capping data~\cite{ekholm2013knot}. 
If the Legendrian is connected, then for any word of Reeb chords and for closed Reeb orbits this is done by choosing a surface with boundary on the Lagrangian $\Lambda \times \R$ whose interior is disjoint from $\Lambda \times \R$ and which has punctures which are negatively asymptotic to the given Reeb orbits and the word of Reeb chords. 
If instead the Legendrian has many connected components there may be Reeb chords with endpoints on distinct components. To define a capping surface for these we first connect the different components of the Lagrangian by adding a $0$-handle (a Legendrian $2$-sphere filled by a Lagrangian disk in the complement of $\Lambda\times\R$) and by connecting this to each component by a $1$-handle (Legendrian $S^{1}\times I$ filled by Lagrangian $D^2\times I$ in the complement). 
On this connected Lagrangian we can then pick capping data for all Reeb chords. We call such a surface a capping surface. 

If we then cut the trivial strips and cylinders off at some (large) $s>0$ and join to them the correspondingly cut-off capping surface, we have a finite area surface corresponding to $\bc$. 
We use such surfaces in combination with cut-off in $s$ to regularize the area of curves with Reeb asymptotics. 
From the viewpoint of the partition function, this regularization can be regarded as a way to compute finite contributions from `framed BPS states' in the background of a line operator whose charge (i.e., topological sector) is encoded by $\bc$, see Remark~\ref{rmk:framed-BPS}.
After fixing a choice of capping surfaces, we view the space of perturbative vacua as a module over the worldsheet skein $\Sk(\R\times U^{\ast}S^{3},\R\times \Lambda_{K})$ generated by words of the capped Reeb chords and orbits, corresponding to the (capped) boundaries of infinite-area holomorphic worldsheet instantons.
This definition depends on a choice of capping surface, however it is easy to keep track of how it would change if different choices are made.

We will be mainly concerned with the degree zero perturbative ground states generated by words of degree zero Reeb chords with endpoints on $\Lambda_{K}$. The grading is induced by the Maslov index discussed above, see \cite{ekholm2013knot} for more detail and Section \ref{sec:unknot} for concrete calculations.
The BRST-differential is given by counts of rigid (up to translation) curves in the symplectization $\IR\times U^{\ast}S^{3}$ with boundary on $\IR\times\Lambda_{K}$ in the skein. 
Adopting standard SFT notation (see~\cite{EGH}) we call the corresponding generating function the \emph{Hamiltonian}.  
The Hamiltonian is then the generating function of all bare rigid holomorphic curves with arbitrary sets of positive and negative punctures $\bc^\pm$:
\be\label{eq:generating-hamiltonian}
\sfH_{\Lambda_K} = \sum_{\scriptsize{\begin{matrix}\bc^+,\bc^-,\chi \\
			|\bc^{+}|=|\bc^{-}|+1\end{matrix}}}\;\sum_{u\in \CM(\bc^+,\bc^-,\chi)} w(u)\, \langle u\rangle \,\bc^+\partial_{\bc^-} \, \,,
\ee 
Here $\CM(\bc^+,\bc^-,\chi)$ denotes the moduli space of bare holomorphic curves with positive asymptotics according to $\bc^{+}$, negative asymptotics according to $\bc^{-}$, and of Euler characteristic $\chi$, $z=q-q^{-1}$, according to the conventions for bare curve counts, $w(u)$ is the weight (orbifold number of points) of the solution $u$, and $\langle u\rangle$ is the element represented by the map in the worldsheet skein $\Sk(\R\times U^{\ast}S^{3},\R\times \Lambda_{K})$. 

Note that capping surfaces of chords in $\bc^{+}$ together with a curve in $\CM(\bc^+,\bc^-,\chi)$ gives a capping surface for $\bc^{-}$, and thus with caps fixed the Hamiltonian gives the BRST-differential, thereby acting as an endomorphism on the space of perturbative vacua.
Note also that $|\bc^{+}|-|\bc^{-}|$ is the dimension of the moduli space and hence rigidity requires that $|\bc^+|=|\bc^-|+1$.  (The notation $\partial_{\bc^{-}}$ is to emphasize that the Hamiltonian acts in a natural way on certain functions with values in an extension of $\Sk(W,L)$.)

To connect the above to Chekanov-Eliashberg dg-algebras, we consider the semi-classical limit $q\to 1$ of the $U(1)$-version of this theory. For the $U(1)$-theory, we use in addition to the specialization $a=a_L=q$ the following splitting of the skein relations~\eqref{eq:skein-rel-intro}
\be\label{eq:U1-skein-rel}
\begin{split}
	q^{-1} \ \OC =  \SP =  q\ \UC\,,\qquad
	q^{-1} \ \PT= \ST = q\ \NT \,,\qquad 
	\UK = 1\,.
\end{split}
\ee
Thus in the $q\to 1$ limit, the skein relation degenerates to the relation in homology. An analogous fact is known to hold in the context of Abelian Chern-Simons theory \cite{Dunne:1989cz}, which corresponds to the worldvolume theory of a single brane on $L$.
The Euler characteristic contribution from the terms in the Hamiltonian (without capping surfaces) are $z^{-\chi+\#(\bc^{+})}$ and thus in the limit that the only contributions come from disks with exactly one positive puncture. 
Since these are the curves counted by the differential of the Chekanov-Eliashberg dg-algebra, the semi-classical approximation to the degree zero ground state is exactly the degree zero knot contact homology.  

Similarly, the $U(1)$-theory provides a rigorous version of the perturbative higher genus knot contact homology of~\cite{Ekholm:2018iso}. In fact the Hamiltonian in~\eqref{eq:generating-hamiltonian} contains more information than the corresponding expression there, which is recovered with the specialization $a_{\Lambda_{K_{j}}}=q$ and replacing the skein module on $\Lambda_{K}$ by the $U(1)$-skein that encodes homology and linking. (Note however, that the concrete calculations in~\cite{Ekholm:2018iso} are pictorial, which means they can be used also for the finer data needed in~\eqref{eq:generating-hamiltonian}.)

\subsection{Skein $D$-modules of link conormals}
Let $K\subset S^{3}$ be an $m$-component link with Legendrian conormal $\Lambda_{K}\subset U^{\ast}S^{3}$ and consider its worldsheet skein algebra $\Sk(\R\times U^{\ast}S^{3},\R\times\Lambda_{K})$. 

Recall that in $U^{\ast}S^{3}$ all Reeb orbits have degree $\ge 2$ and that all Reeb chords have degree $\ge 0$. 
It follows that the degree zero generators of the BRST-complex for $\Lambda_{K}$ are words of Reeb chords, all of degree zero and that the degree $0$ BRST-homology $H^{0}_{\mathrm{BRST}}(\Lambda_{K})$ is the cokernel of the differential acting on degree $1$ generators, which are words of Reeb chords with exactly one chord of degree $1$ and the remaining ones of degree $0$. 
This differential then counts holomorphic curves with positive asymptotics according to such a degree $1$ word and negative asymptotics according to a (possibly empty) degree zero word of Reeb chords. We have
\be
H_{\text{BRST}}^{0}(\Lambda_{K})= \{\text{degree $0$ words of capped chords}\} \ / \ \sfH_{\Lambda_{K}}\{\text{capped degree $1$ words}\}.
\ee

Let $L\subset X$ be a Lagrangian filling of $\Lambda_{K}$, for example $L=L_{K}$, the conormal Lagrangian of $K$. 
Consider the generalized partition function $\overline{\sfZ}_{L}$ that counts holomorphic curves in $X$ with boundary on $L$ and positive punctures at degree $0$ Reeb chords,
\be
\overline{\sfZ}_{L} \ = \ \sfZ_{L} + \sum_{k\ge 1} \sfZ^{(k)}_{L} \ \in \ \widehat{\Sk}(L)
\ee
where $\sfZ_{L}$ counts compact curves, i.e., curves without positive punctures, and $\sfZ^{(k)}_{L_{K}}$ counts curves of finite Hofer energy, see \cite{EGH}, with $k>0$ positive punctures. As in Section \ref{ssec:reviewskeinsonbranes} we count curves by the values of their boundaries in the skein.

The Hamiltonian $\sfH_{\Lambda_K}$ acts on $\overline{\sfZ}_{L}$ by gluing all negative puncture asymptotic Reeb chords (the $\partial_{\bc^{-}}$-factor) in $\sfH_{\Lambda_{K}}$ to positive Reeb chord asymptotics of $\sfZ_{L}$, intersecting with the $4$-chain, linking with $4$-chain intersection circles, and taking the result in $\widehat{\Sk}(L)$, see Figure \ref{fig:HZinSFT}.  

\begin{figure}[h!]
	\begin{center}
		\includegraphics[width=0.550\textwidth]{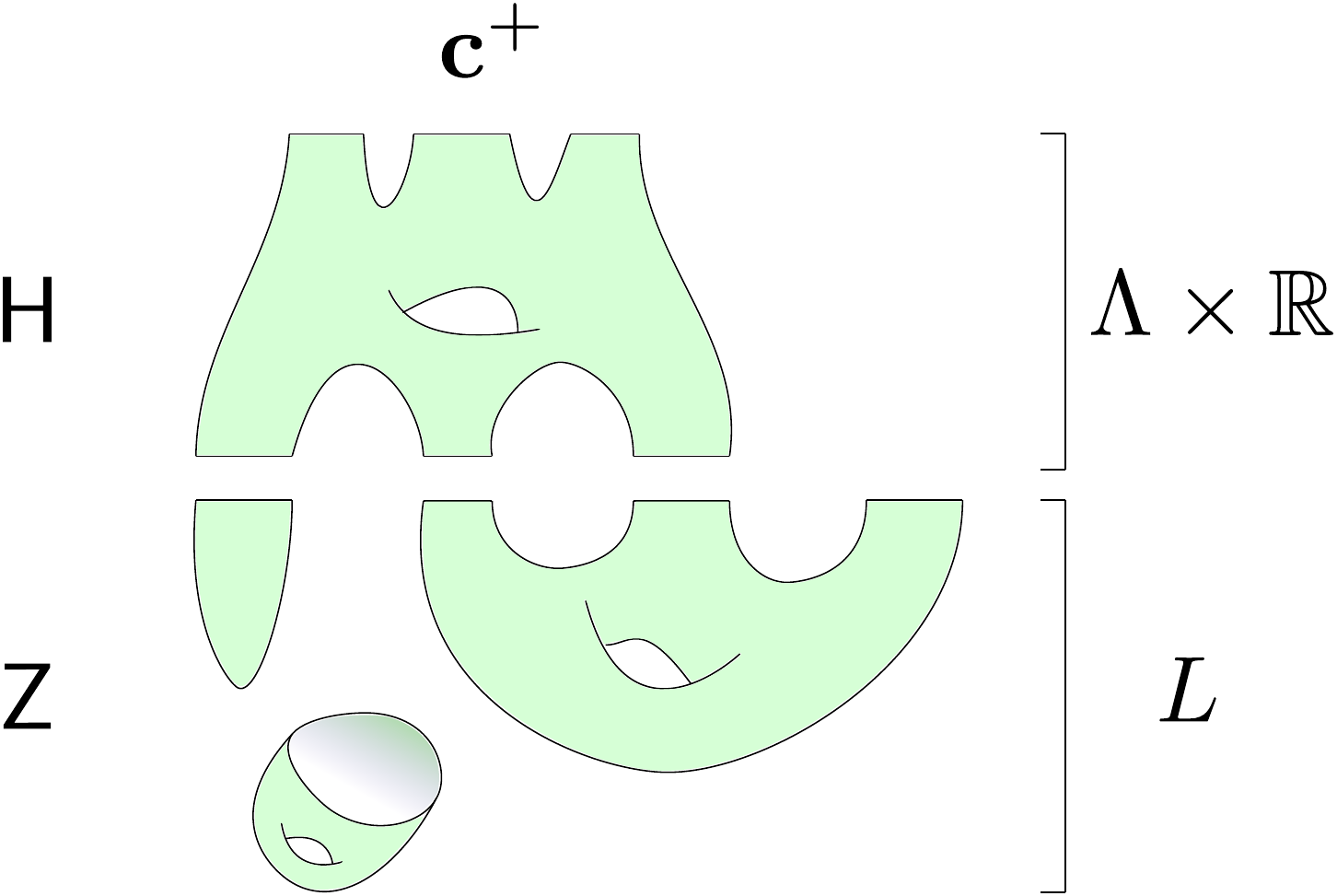}
		\caption{The Hamiltonian acts on the generalized partition function.}
		\label{fig:HZinSFT}
	\end{center}
\end{figure}

Consider a degree $1$ word $\bc^{+}$ of Reeb chords. If $\sfW$ is a degree $1$ element in the BRST-complex then let $\langle \sfW,\bc^{+}\rangle$ denote the component of $\sfW$ with Reeb chord word $\bc^{+}$. The following result is the key basic observation that connects BRST-homology and recursion relations.

\begin{prp}\label{prp: chain map eqn}
	For any degree $1$ Reeb chord word $\bc^{+}$ we have
	\be\label{eq:1-dimboundaryskeinvalued}
	\left\langle\,\rho_{\partial L}(\sfH_{\Lambda_K}) \cdot \overline{\sfZ}_{L}\,,\,\bc^{+}\,\right\rangle \ = \ 0,
	\ee
	where $\rho_{\partial L}(\sfH_{\Lambda_K})$ denotes the worldsheet Hamiltonian $\sfH_{\Lambda_K}$ mapped to the skein of $\Sk(\R\times\Lambda_K)$ viewed as the skein of $\R\times \partial L$, i.e., the skein of the boundary of $L$. 
\end{prp}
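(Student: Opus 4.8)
The plan is to realize the left-hand side of \eqref{eq:1-dimboundaryskeinvalued} as the skein-valued count of the boundary of a compact $1$-dimensional moduli space, so that the identity becomes the skein lift of the standard ``the boundary of a $1$-manifold counts to zero'' argument (equivalently, the SFT master/chain-map equation). For the fixed degree $1$ word $\bc^{+}$ I would consider the moduli space $\CM(\bc^{+})$ of disconnected bare holomorphic curves in the filling $(X,L)$ with positive ends forming the word $\bc^{+}$ and no negative ends. The rigid curves counted by $\overline{\sfZ}_{L}$ have degree $0$ positive words, so by the index formula a positive word of total degree $1$ raises the dimension by one and $\CM(\bc^{+})$ is $1$-dimensional.

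First I would analyze the SFT compactification of $\CM(\bc^{+})$ following \cite{EGH}: its ends are two-level holomorphic buildings. I would rule out orbit breakings at the outset: since $U^{\ast}S^{3}$ carries the round metric, every closed Reeb orbit has degree $\ge 2$, so a building whose upper level carries a negative orbit asymptotic would leave the lower level with a positive orbit end of degree $\ge 2$ and hence negative expected dimension. Thus the genuine SFT ends are two-level buildings whose upper level is a curve in the symplectization $\R\times U^{\ast}S^{3}$, rigid modulo $\R$-translation, with positive asymptotic $\bc^{+}$ and negative asymptotic some degree $0$ word $\bc$, glued along $\bc$ to a lower-level curve in $(X,L)$ with positive asymptotic $\bc$. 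These are exactly the terms assembled by the Hamiltonian acting on $\overline{\sfZ}_{L}$: the upper level is counted by the $\bc^{+}\partial_{\bc^{-}}$ factor of $\sfH_{\Lambda_{K}}$ in \eqref{eq:generating-hamiltonian}, the lower level by $\overline{\sfZ}_{L}$, and the gluing along the matching chords $\bc$ realizes the $\partial_{\bc^{-}}$ contraction; passing boundaries through $\rho_{\partial L}$ to $\Sk(\R\times\Lambda_{K})\cong\Sk(\partial L)$ recovers the component $\langle\rho_{\partial L}(\sfH_{\Lambda_{K}})\cdot\overline{\sfZ}_{L},\bc^{+}\rangle$.

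The remaining codimension one degenerations are the interior elliptic and hyperbolic boundary nodes, where the worldsheet surfaces develop the local models underlying \eqref{eq: ws skein hyp} and \eqref{eq: ws skein ell}. Exactly as in the invariance proof of \cite{Ekholm:2019yqp}, the skein-valued evaluation is continuous across these walls: the two sides are identified by a worldsheet skein relation, so they do not constitute boundary for the $\widehat{\Sk}(L)$-valued count. Consequently the skein-weighted boundary of $\CM(\bc^{+})$ reduces to the SFT chord breakings of the previous paragraph. Since $\CM(\bc^{+})$ compactifies to a $1$-dimensional space, the total skein-valued boundary count vanishes, and therefore the SFT-breaking contributions sum to zero, which is precisely \eqref{eq:1-dimboundaryskeinvalued}.

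The hard part is the analytic input that makes the skein-valued boundary count literally the count of endpoints of a compact $1$-manifold. One must choose abstract perturbations achieving transversality for $\CM(\bc^{+})$ and its strata while remaining compatible with the skein relations, so that the invariance mechanism of \cite{Ekholm:2019yqp} still applies and the orbifold weights $w(u)$ of multiply covered curves are correctly accounted for. One must also control the gluing at Reeb chords; following the conventions of Section \ref{ssec:surfaceskein} I would restrict to boundary punctures with distinct chord asymptotics, where the glued boundary link, and hence its skein class, is unambiguous, leaving the general multiply occupied chord case to the treatment indicated there.
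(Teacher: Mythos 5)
Your proof is correct and follows essentially the same route as the paper's: the paper's one-sentence argument invokes exactly the two ingredients you develop in detail, namely SFT compactness identifying the ends of the $1$-dimensional moduli space $\CM(\bc^{+})$ with the two-level buildings assembled by $\rho_{\partial L}(\sfH_{\Lambda_K})\cdot\overline{\sfZ}_{L}$, and the fact from \cite{Ekholm:2019yqp} that the skein relations absorb the wall-crossing degenerations so that only the chord breakings count as boundary. Your additional points (excluding orbit breakings via the round metric, restricting to distinct chord asymptotics, and the transversality caveats) are consistent with, and make explicit, what the paper leaves implicit.
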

\begin{proof}
	SFT-compactness \cite{EGH} together with the fact that the framed skein relations \eqref{eq:skein-rel-intro} correspond to wall crossing \cite{Ekholm:2019yqp} shows that the expression counts the ends in the $1$-dimensional moduli space of holomorphic curves in $X$ with boundary on $L$ and positive asymptotics according to $\bc^{+}$.  	
\end{proof}
Proposition \ref{prp: chain map eqn} implies, in analogy with the $D$-modules discussed above, that the generalized partition function $\overline{\sfZ}_{L}$~corresponds to a homomorphism
\be\label{eq:modulemorphism}
H_{\text{BRST}}(\Lambda_{K}) \ \to \ \widehat{\Sk}(L).
\ee

For a concrete example, Figure~\ref{fig:unknot-Lambda} shows the boundaries of all degree one infinite area holomorphic curves for the conormal $\Lambda_{U}$ of the unknot. There are four holomorphic disks that are asymptotic to the unique degree one Reeb chord $c$ of the unknot conormal in $\R\times U^*S^3$, see Figure \ref{fig:unknot-Lambda}. These disks link the zero section when viewed in $T^*S^3$ differently, resulting in a non-trivial factor of $a$. Using one of the disks itself as capping disk we get 
\begin{figure}[h!]
	\centering
	\includegraphics[width=.7\textwidth]{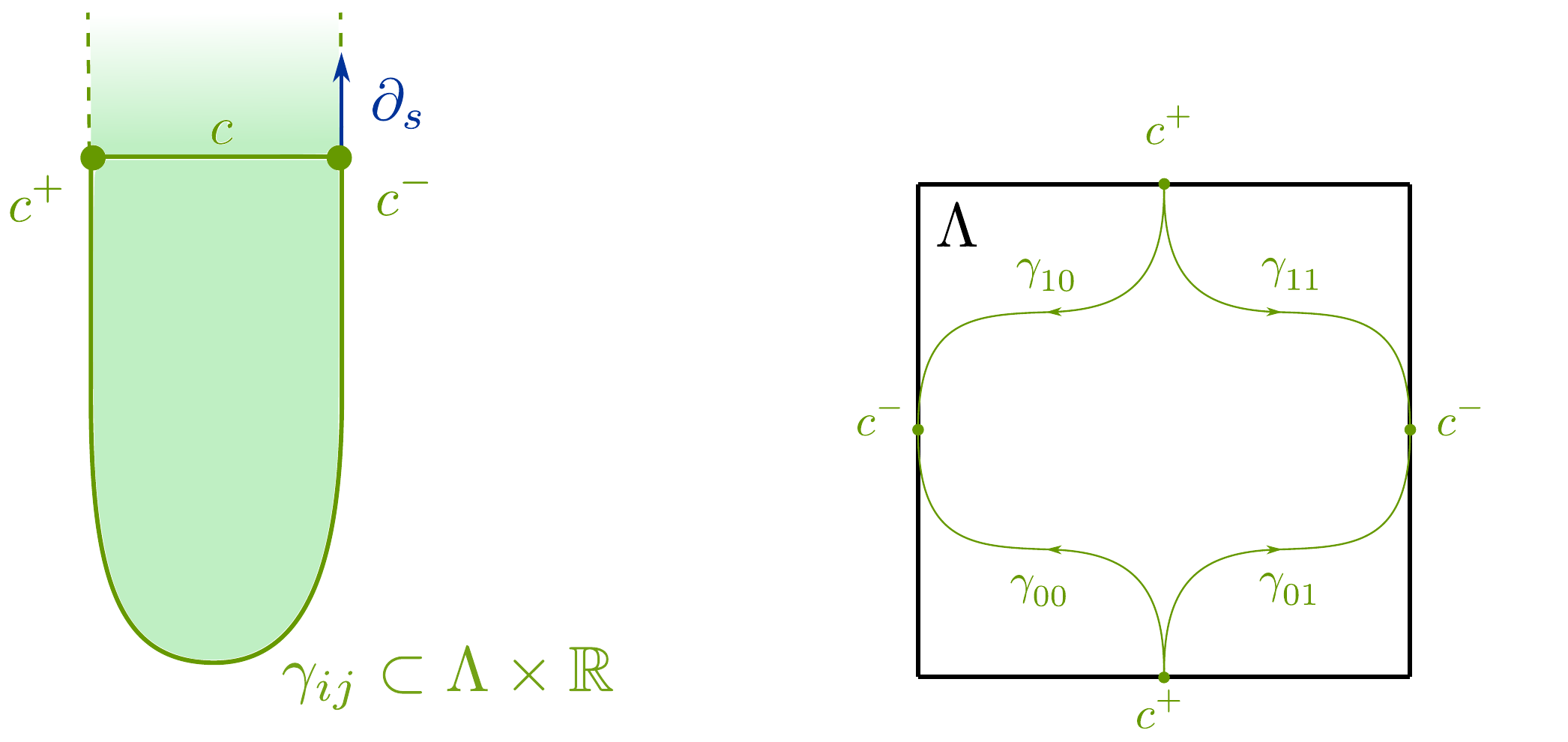}
	\caption{Left: a holomorphic disk with boundary on $L$. 
		Right: the unknot Legendrian torus admits a configuration with a single Reeb chord $c$,. There are four holomorphic disks with pieces of the boundary denoted $\gamma_{ij}$ and extending along $\Lambda$, and with the rest of the boundaries attached to $c^\pm$ and stretched in the transverse $\IR$ direction.}
	\label{fig:unknot-Lambda}
\end{figure}
\be
H_{\text{BRST};\Lambda_{U}}^{0}= \Sk(\R\times U^{\ast} S^{3},\R\times T^{2}) \ / \ 
\left\langle 
\sfP_{0,0}^{\ws}-\sfP_{1,0}^{\ws}-\sfP_{0,1}^{\ws}+\sfP_{1,1}^{\ws} 
\right\rangle.
\ee
{When mapped by the homomorphism $\rho_L$} into the HOMFLYPT skein $\Sk(L_{K})$ of the conormal if we think of $L_{K}\subset X$ or $L_{K}\subset T^{\ast} S^{3}$ the ideal generator maps to 
\be
\sfP_{0,0}-\sfP_{1,0}-a_{L}\sfP_{0,1}+a^{2}\sfP_{1,1} \ \in \ \Sk(L_{K})\text{ or }\Sk(L_{K}\sqcup S^{3}).
\ee

For general knots and links not much is known about the structure of the module $H_{\text{BRST};\Lambda_{K}}$. 
Here we will be mainly interested in the part that connects to the first summand $\sfZ_{L}$ of $\overline{Z}_{L}$ without positive punctures. Consider a Legendrian $\Lambda$ with $k$ components $\Lambda_{j}$, $j=1,\dots,k$. Let $e_{j}$ denote the empty Reeb chord of the $j^{\rm th}$ component. Then the product $e=e_1\dots e_k$ is (as is any degree zero generator) a cycle in the BRST complex. We define the \emph{world sheet skein $D$-module of $\Lambda$} $\sfD_{\Lambda}^{\ws}$ as the submodule in $H_{\text{BRST}}$ generated by $e$, which is then naturally isomorphic to a quotient: 
\[
\sfD^{\ws}_{\Lambda}=\Sk(Y\times\R,\Lambda\times\R)/\sf I^{\ws}.
\]

We will show below that for the Hopf link $\Gamma$, negative asymptotics in $\sfH_{\Lambda_\Gamma}$ can be eliminated by taking linear combinations of positive asymptotics $\bc_{+}$ as in~\eqref{eq:1-dimboundaryskeinvalued}. In other words, we find non-trivial linear combinations of operators $\sfA_{j}^{\ws}\in\Sk(\R\times U^{\ast}S^{3},\R\times\Lambda_{\Gamma})$ in the image of the Hamiltonian that generates $\sfI^{\ws}_{\Gamma}$ and which then satisfy
\be\label{eq:skeinrecursions} 
\langle\sfA_{j}^{\ws}\rangle_{j=1,\dots,n} = \{0\} \ \subseteq \ H_{\text{BRST};\Lambda_{\Gamma}},\qquad \rho_L(\sfA_{j}^{\ws})\cdot \sfZ_{L}=0,\;j=1,\dots,n,
\ee
where $\rho_L$ represents the worldsheet skein element in the skein of $L$ by intersecting with the 4-chain etc, as usual. 

\begin{rmk}\label{rmk:elimination}
	In this paper we restrict attention to the Hopf link. It would be very interesting to know to what extent the picture for the Hopf link generalizes to other knots and links. For example, one could ask whether there are generalized Lagrangians for any point in the augmentation variety that admits a skein module $\Sk(L)$ where we could count curves so that the corresponding wave function corresponds to module homomorphisms as in~\eqref{eq:modulemorphism}. For examples beyond embedded Lagrangian fillings, note that using the recursion in Section \ref{sec:skein-rec-annulus}, one can define skein modules for Lagrangians that intersect cleanly along circles. 
	
	Also, one might ask to what extent it is possible to perform variable elimination in the skein to obtain the counterpart of~\eqref{eq:skeinrecursions}: are polynomial equations sufficient or are power series needed, and if so which power series?
\end{rmk} 

The connection between worldsheet skein $D$-modules, skein valued recursion relations and the augmentation variety is the following. Reducing from the skein to the $U(1)$-skein gives the (exponentiated) $D$-module \eqref{eq:D-module-def-background} and the quantum curve operators \eqref{eq:U1-recursion-review} that give the recursion for symmetric colorings. Then taking the semi-classical limit, $q\to 1$, gives the augmentation variety $V_K$ of knot contact homology.

\section{The HOMFLYPT skein module of the solid torus}\label{sec:skeinsolidtorus}
The framed skein module $\Sk(L)$ of a $3$-manifold $L$ was defined in \eqref{eq:skein-rel-intro}.
If $L$ has boundary $\partial L$ then using a collar neighborhood of $\partial L$, the skein module of $\partial L\times I$, where $I$ is an interval, acts on $\Sk(L)$ by stacking framed links in the collar neighborhood. Similarly, the skein module of $\partial L\times I$ acts on itself and becomes an algebra that we denote $\Sk(\partial L)$ and $\Sk(L)$ is a module over $\Sk(\partial L)$ again using stacking. 

Since the skein relations \eqref{eq:skein-rel-intro} preserve the homology class of a link, the skein module is graded by $H_1(L)$,
\be
	\Sk(L) = \bigoplus_{\alpha\in H_1(L)} \Sk_\alpha (L).
\ee

This section concerns the HOMFLYPT skein module of the solid torus $\Sk(S^{1}\times\R^{2})$ and the skein algebra of its boundary $\Sk(T^{2})$, that was thoroughly studied in~\cite{2014arXiv1410.0859M}, to which we refer for proofs and further detail. In order to invert certain skein operators we will consider a more general base ring. 
Let $R$ denote the ring of Laurent polynomials $\IQ[q^{\pm 1}, a_L^{\pm 1}]$ in $q$ and $a_L$ in with rational coefficients and denominators $q^{k}-q^{-k}$ for positive integers $k$. We consider the skein module as defined over $R$. 
After reviewing the definitions and results of \cite{2014arXiv1410.0859M} for the skein module and skein algebra of the solid torus and its boundary, we introduce in Section \ref{sec:skein of torus with two boundary points} a generalization to include curves ending at punctures.

\subsection{The skein algebra of the torus}\label{sec:skeinoftorus}
The skein algebra $\Sk(T^{2})$ is graded by $H_1(T^{2})\simeq \IZ^2$:
\be
	\Sk(T^{2}) = \bigoplus_{(i,j)\in \IZ^2} \Sk_{(i,j)}(T^{2})\,
\ee
and $\Sk(T^{2})$ acts naturally on $\Sk(\R^{2}\times S^{1})$.

We review the explicit presentation of the algebra $\Sk(T^{2})$ from~\cite{2014arXiv1410.0859M}.
If $\gcd(i,j)=1$ we say that $(i,j)$ is primitive. For primitive $(i,j)$, let $\sfP_{i,j}\in\Sk(T^{2})$ be the element represented by an embedded $(i,j)$-curve in $T^2$, where and $(i,j)$-curve is a curve in the homology class represented by $i$ meridians and $j$ longitudes, see Figure~\ref{fig:skeinalgebra}.
\begin{figure}[h!]
\begin{center}
\includegraphics[width=0.35\textwidth]{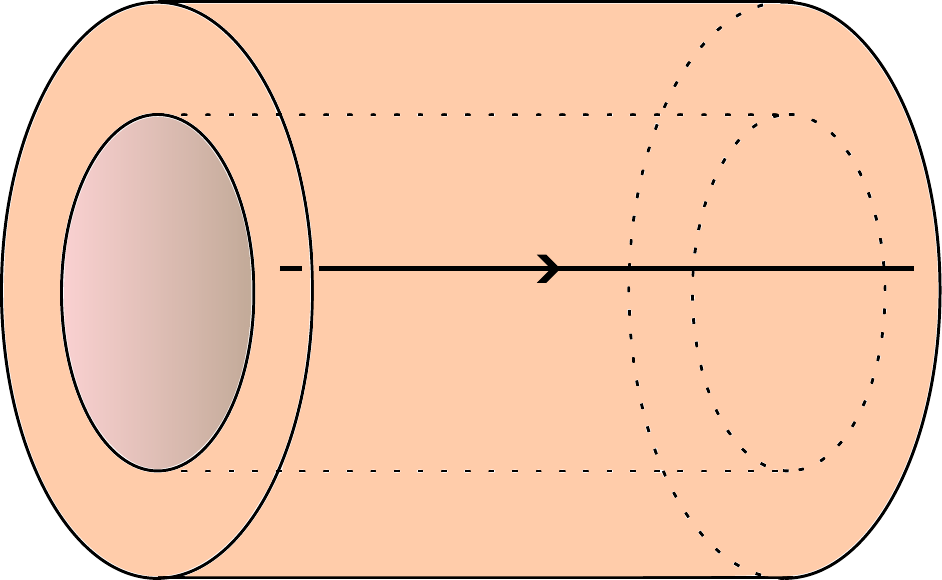}\qquad
\includegraphics[width=0.35\textwidth]{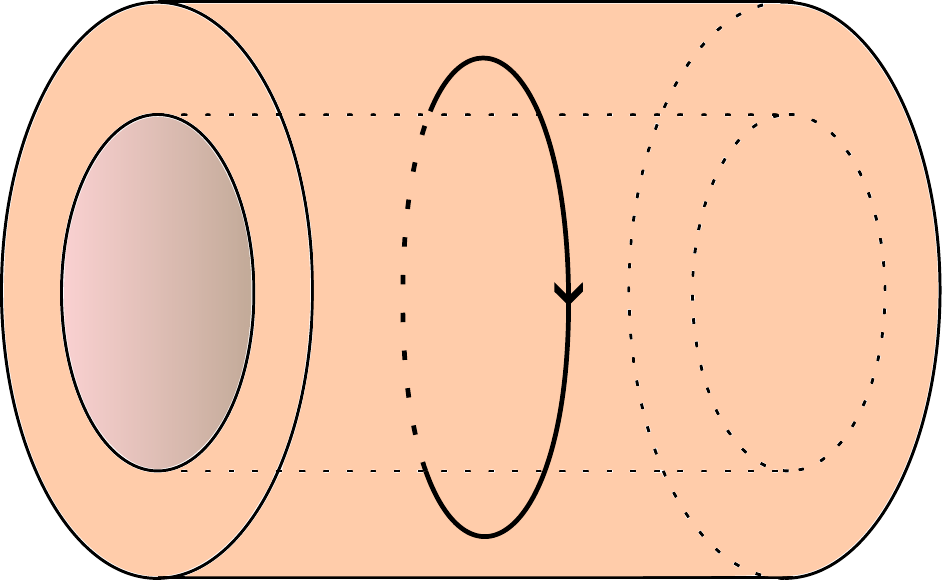}
\caption{Generators $\sfP_{0,1}$ and $\sfP_{1,0}$ of $\Sk(T^{2})$.}
\label{fig:skeinalgebra}
\end{center}
\end{figure}
If $\gcd(i,j)=d >1$, the definition of $\sfP_{i,j}$ is the following. Denote by $(i_0,j_0) = (i/d, j/d)$ the primitive vector of which $(i,j)$ is a multiple. Cut out a small tubular neighborhood $A\subset T^2$ of an embedded $(i_0, j_0)$ curve.
Consider for each $r,s\geq 0$ such that $r+s+1=d>0$, the curves $A_{r,s}$ inside $A\times I$, where $A$ is an annulus, shown in Figure~\ref{fig:powersum}. Identify the tubular neighborhood of the $(i_{0},j_{0})$-curve with $A$ and define $\sfP_{i,j}$ as follows
\be\label{eq:Pij-powersum}
	\sfP_{i,j} = \frac{q-q^{-1}}{q^d-q^{-d}}\sum_{r=0}^{d-1} A_{r,d-1-r}
\ee

\begin{figure}[h!]
\begin{center}
\includegraphics[width=0.35\textwidth]{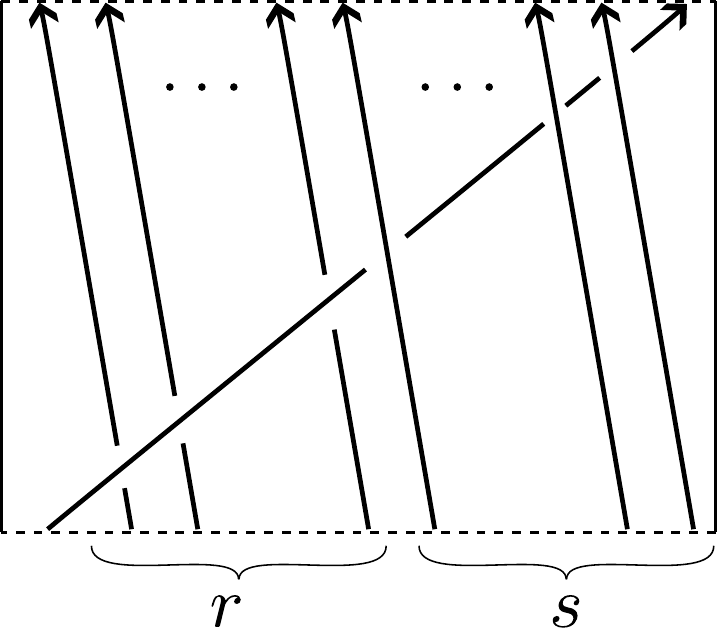}
\caption{The curve $A_{r,s}$ in the skein of the annulus $\Sk(A)$ wraps $d$ times around and has $r$ positive crossings followed by $s$ negative.}
\label{fig:powersum}
\end{center}
\end{figure}

The curves $\sfP_{(i,j)}$, $(i,j)\in \IZ^2$ generate $\Sk(T^{2})$~\cite[Lemma 3.1]{2014arXiv1410.0859M} and obey the following commutation relations~\cite[Theorem 3.2]{2014arXiv1410.0859M}
\be\label{eq:commutator}
	[\sfP_{i,j} ,\sfP_{k,l}] = (q^{(i,j)\wedge (k,l)} - q^{(k,l)\wedge (i,j)}) \sfP_{i+k,j+l}
\ee
where $(i,j)\wedge (k,l) = il-jk$ is the algebraic intersection number of corresponding homology classes.

\subsection{The skein module of the solid torus}\label{ssec:skeinsolidtorus}
It was shown in~\cite{morton2002homfly, hadji2006basis} that $\Sk(S^{1}\times \R^{2})$ admits a basis as an $R$-module given by elements $\sfW_{\lambda,\overline\mu}$, where $\lambda$ and $\mu$ ranges over all partitions, that are all eigenvectors of the meridian operators $\sfP_{\pm 1,0}\in \Sk(T^{2})$. The positive subalgebra $\Sk(S^{1}\times\R^{2})^+$ is generated by $\sfW_{\lambda,\emptyset}$ and is isomorphic to the ring of symmetric functions $\boldsymbol{\Lambda}$ in infinitely many variables.  
Through this isomorphism, $\sfW_{\lambda,\emptyset}$ maps to the Schur function $s_\lambda$. Similarly, the negative subalgebra $\Sk(S^{1}\times\R^{2})^-$, which is generated by $\sfW_{\emptyset,\overline{\mu}}$, is also isomorphic to $\boldsymbol{\Lambda}$. The whole module $\Sk(S^{1}\times\R^{2})$ is then isomorphic to
\be
\Sk(S^{1}\times\R^{2}) \approx \boldsymbol{\Lambda}\otimes_R \boldsymbol{\Lambda}\,.
\ee
Reversing the orientation of all links in $L$ maps $\sfW_{\lambda,\overline{\mu}}$ into $\sfW_{\mu,\overline{\lambda}}$.

The skein elements $\sfW_{\lambda,\overline{\mu}}$ are $R$-linear combinations of links in the solid torus that can be found by direct calculation, diagonalizing the skein algebra operators $\sfP_{\pm1,0}$.
Alternatively, one can use symmetric functions as follows. Results from  
\cite{aiston1998idempotents, lukac2001homfly, lukac2005idempotents, morton2002power} 
show that the sum of closed braids shown in Figure~\ref{fig:powersum} form a generating set for the positive skein corresponding to power sums $p_k$ in the ring of symmetric functions, where the elements $\sfW_{\lambda,\emptyset}$ correspond to Schur functions.
Then changing basis from power sums to Schur functions one gets linear combinations of links for $\sfW_{\lambda,\emptyset}$.
Orientation reversal gives $\sfW_{\emptyset,\overline{\lambda}}$, and $\sfW_{\lambda,\overline{\mu}}$ can be expressed as a linear combination of products of the form $\sfW_{\nu,\emptyset}\cdot \sfW_{\emptyset,\overline{\rho}}$ with integer coefficient. 
We refer to~\cite{hadji2006basis} for a systematic description in terms of systems of curves corresponding to complete symmetric functions.

The first few basis elements are as follows:
\be\label{eq:W-basis}
\begin{split}
	&\sfW_{\ydiagram{1},\emptyset}  = \CIo \\
	&\sfW_{\ydiagram{2},\emptyset}
	 = \frac{1}{q+q^{-1}} \(q^{-1} \ \CIIo\  +\  \CIIi\)\\
	&\sfW_{\ydiagram{1,1},\emptyset}
	 = \frac{1}{q+q^{-1}} \(q\ \CIIo\ -\ \CIIi\)\\
	&\sfW_{\ydiagram{3},\emptyset}
	 = \frac{1}{(q+q^{-1})(q^2+1+q^{-2})} \(q^{-3} \ \CIIIo\  + (1+2 q^{-2})\, \ \CIIIi\  + (q+q^{-1}) \, \ \CIIIii\  \)\\
	&\sfW_{\ydiagram{2,1},\emptyset}
	 = \frac{1}{(q^2+1+q^{-2})} \( \CIIIo\  + (q-q^{-1})\, \ \CIIIi\  -  \CIIIii\  \)\\
	&\sfW_{\ydiagram{1,1,1},\emptyset}
	 = \frac{1}{(q+q^{-1})(q^2+1+q^{-2})} \(q^{3} \ \CIIIo\  - (1+2 q^{2})\, \ \CIIIi\  + (q+q^{-1}) \, \ \CIIIii\  \)
\end{split}
\ee
Here the left and right endpoints of the curves drawn should be identified respecting the order, after which they give link diagrams in a neighborhood of the central curve $S^{1}\times \{0\}$ (which is $\sfW_{\ydiagram{1}}$ above) in the solid torus $S^{1}\times\R^{2}$, where the diagram refers to the projection to $S^{1}\times\R$.

As mentioned, the generators $\sfP_{i,j}$ of the skein algebra $\Sk(T^{2})$ act diagonally on the skein module $\Sk(S^{1}\times \R^{2})$ in the $\sfW_{\lambda,\overline{\mu}}$ basis. For the primitive meridian operators: 
\be\label{eq:P10-eigenvalues}
\begin{split}
	\sfP_{1,0} \, \sfW_{\lambda,\overline{\mu}} 
	& = \left(\frac{a_L-a_L^{-1}}{q-q^{-1}} 
	+ (q-q^{-1}) \left[
	a_L C_\lambda(q^2)-a_L^{-1} C_{\overline{\mu}}(q^{-2})
	\right]\right)\, \sfW_{\lambda,\overline{\mu}} \,,
	\\
	\sfP_{-1,0} \, \sfW_{\lambda,\overline{\mu}} 
	& =\left( \frac{a_L-a_L^{-1}}{q-q^{-1}} 
	+ (q-q^{-1}) \left[
	a_L C_{\overline{\mu}}(q^2)-a_L^{-1} C_\lambda(q^{-2})
	\right]\right) \, \sfW_{\lambda,\overline{\mu}} \,,
	\\
\end{split}	
\ee
where $C_\lambda(s)=\sum_{\ydiagram{1}\in\lambda} s^{c(\ydiagram{1})}
$ is the content polynomial of the partition with $c(\ydiagram{1}) = j-i$, the content of the box in row $i$ and column $j$.

The primitive longitudinal generators act by adding and subtracting boxes as follows.
\be
\begin{split}
	\sfP_{0,1} \, \sfW_{\lambda,\overline{\mu}} & = \sum_{\alpha \in \lambda+\ydiagram{1}} \sfW_{\alpha,\overline{\mu}} + \sum_{\overline\beta \in \overline{\mu}-\ydiagram{1}} \sfW_{\lambda,\overline\beta}\,,
	\\
	\sfP_{0,-1} \, \sfW_{\lambda,\overline{\mu}} & = \sum_{\alpha \in \lambda-\ydiagram{1}} \sfW_{\alpha,\overline{\mu}} + \sum_{\overline\beta \in \overline{\mu}+\ydiagram{1}} \sfW_{\lambda,\overline\beta}\,.
	\\
\end{split}
\ee

The action of other elements of the algebra can be obtained by combined applications of the power sum construction and the commutator~\eqref{eq:Pij-powersum}--\eqref{eq:commutator}, see~\cite[Theorem 4.6]{2014arXiv1410.0859M} for details and further explanation. Explicitly,
for nonzero integers $m,n$:
\be\label{eq:generalformulaePW}
\begin{split}
	\sfP_{0,0} \, \sfW_{\lambda,\overline{\mu}} 
	& = 
	\frac{a_L-a_L^{-1}}{q-q^{-1}} \,  \sfW_{\lambda,\overline{\mu}} 
	\\
	\sfP_{m,0} \, \sfW_{\lambda,\overline{\mu}} 
	& = 
	\left(\frac{a_L^m-a_L^{-m}}{q^m-q^{-m}} 
	+ (q^m-q^{-m}) \left[
	a_L^m C_\lambda(q^{2m})
	-a_L^{-m} C_{\overline{\mu}}(q^{-2m})
	\right]\right)\, \sfW_{\lambda,\overline{\mu}} \,,
	\\
	\sfP_{0,n} \, \sfW_{\lambda,\overline{\mu}} 
	& = 
	\sum_{\alpha \in \lambda+[n]} (-1)^{\hgt(\alpha-\lambda)}\sfW_{\alpha,\overline{\mu}} 
	+ \sum_{\overline\beta \in \overline{\mu}-[n]} (-1)^{\hgt(\overline{\mu}-\beta)} \sfW_{\lambda,\overline\beta}\,,
	\\
	\sfP_{m,n} \, \sfW_{\lambda,\overline{\mu}} 
	& = 
	\frac{q^m-q^{-m}}{q^{mn}-q^{-mn}}
	\Bigg[ 
	\sum_{\alpha \in \lambda+[n]} (-1)^{\hgt(\alpha-\lambda)} a_L^m C_{\alpha-\lambda}(q^{2m})\, \sfW_{\alpha,\overline{\mu}} 
	\\
	&\hspace*{70pt}  \quad+ \sum_{\overline\beta \in \overline{\mu}-[n]} (-1)^{\hgt(\overline{\mu}-\beta)} a_L^{-m} C_{\overline\mu-\beta}(q^{-2m})\, \sfW_{\lambda,\overline\beta}
	\bigg]\,,
\end{split}
\ee
where $\lambda+[n]$ denotes all partitions obtained from $\lambda$ by adding a ``border strip'' of length $n$, and similarly $\overline{\mu}-[n]$ denotes all partitions obtained from $\overline\mu$ by removing a ``border strip'' of length $n$. A border strip $\gamma$ is a skew partition that is connected and has no two-by-two squares, see Figure~\ref{fig:borderstrips}.
\begin{figure}[h!]
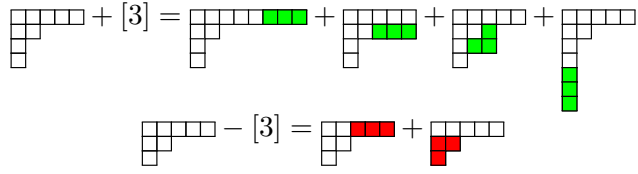

	\begin{center}
		$
		\ydiagram{5,2,1,1} + [3]
		=
		\ydiagram[*(green)]{5+3}*[*(white)]{8,2,1,1}
		+
		\ydiagram[*(green)]{0, 2+3}*[*(white)]{5,5,1,1}
		+
		\ydiagram[*(green)]{0,2+1,1+2}*[*(white)]{5,3,3,1}
		+
		\ydiagram[*(green)]{0,0,0,0,1,1,1}*[*(white)]{5,2,1,1,1,1,1} 
		$
		\\
		$
		\ydiagram{5,2,1} - [3]
		=
		\ydiagram[*(red)]{2+3}*[*(white)]{5,2,1}
		+
		\ydiagram[*(red)]{0, 2,1}*[*(white)]{5,2,1}
		$
		\caption{Addition and subtraction of border strips.}
		\label{fig:borderstrips}
	\end{center}
\end{figure}
The height $\hgt(\gamma)$ of a border strip is the number of rows plus one. 
The formulas extend to $n<0$ where 
\be
\lambda \pm [n] \quad\mapsto \lambda \mp [-n]\,,
\ee
and   
\be
C_{\alpha-\lambda}(q^{2m}) = C_{\alpha}(q^{2m}) - C_{\lambda}(q^{2m}).
\ee

Finally, we discuss the effect of framing change. A change of framing is a diffeomorphism $\Phi^{f}\colon S^{1}\times\R^{2}\to S^{1}\times\R^{2}$ of the form 
\be
\Phi^{f}(e^{i\theta}, u, v) \ = \ (e^{i\theta}, u\cos f\theta , v\sin f\theta ),
\ee
where $f\in\IZ$. It acts on the torus at infinity as
\be
\Phi^{f}(e^{i\theta}, e^{i\phi}) \ = \ (e^{i\theta}, e^{i (f \theta +\phi)} ).
\ee

Given an element $\sfZ$ in the completed positive skein $\widehat{\Sk}_{+}(S^{1}\times\R^{2})$,
\be
	\sfZ \ = \ \sum_{\lambda} H_\lambda(a,q)\, \sfW_{\lambda,\emptyset},
\ee
a change of framing acts as follows
\be\label{eq:framed-Z}
	\Phi^f(\sfZ) = \sum_{\lambda} ((-1)^{|\lambda|} q^{\kappa(\lambda)})^f\, H_\lambda(a,q)\, \sfW_{\lambda,\emptyset}\,,
\ee
where  
\be
\kappa(\lambda) = \sum_{i} (\lambda_i)^2 -  \sum_{j} (\lambda^{t}_j)^2
=
2\sum_{\ydiagram{1}\in\lambda} c(\ydiagram{1})\,.
\ee
The change of framing acts on the skein algebra $\Sk(T^2)$ simply as a coordinate change 
\be\label{eq:framed-Pij}
	\Phi^{f}(\sfP_{i,j})
	\ = \ (-1)^{j\, f\, \sigma}\sfP_{i+j\cdot  f,j},
\ee
where $\sigma=1$ if the spin structure on $(0,1)$ is bounding and $ \sigma=0$ if it is the Lie group spin structure.
Here we require that the 4-chain at infinity remains unchanged. This requirement leads to an intersection between the 4-chain in the new framing along a curve in the class $(f,0)$ which then effectively scales the action of $\sfP_{i,j}$ on $\sfZ$ by $a_L^{-j\cdot f}$,
\be\label{eq:aL-Pij-framed}
	\sfP_{i,j} \to a_L^{-j\cdot f}\, \sfP_{i,j} \,.
\ee
Recall that in the context of \eqref{eq:framed-Z} the solid torus $S^1 \times \R^2$ fills the meridian of the torus at infinity, so we are required to use the bounding spin structure when applying \eqref{eq:framed-Pij}.

\subsection{The skein module of the solid torus with two boundary points}\label{sec:skein of torus with two boundary points} 
In this section we generalize the skein module in Section \ref{ssec:skeinsolidtorus} and include also curves in the skein that are allowed to end at two fixed points in the boundary of the solid torus. In our applications these points will be Reeb chord endpoints. 

We denote this skein module by $\Sk^2(S^1\times \R^2)$. It consists of formal linear combinations over $R$ of framed links with open ends at two fixed boundary points of the solid torus up to framed isotopy and the skein relations \eqref{eq:skein-rel-intro}. We fix an incoming and outgoing boundary point where the open component begins and ends. The precise position of the boundary points does not matter up to isomorphism as the boundary is connected. 

The $R$-module $\Sk^2(S^1\times \R^2)$ has a natural structure of an $R$-algebra as follows. Represent the solid torus as $A\times [0,1]$, where $A$ is an annulus. Let the incoming and outgoing boundary points be $(p,0)$ and $(p,1)$, respectively. Then the multiplication corresponds to gluing $A\times [0,1]$ to $A\times[0,1]$ by identifying $A\times\{1\}$ in the first copy with $A\times\{0\}$ in the second and rescaling the interval factor. The resulting algebra is commutative, see \cite{morton2002homfly}.
Similarly, $\Sk^2(S^1\times \R^2)$ also has the structure of an $\Sk(S^1\times \R^2)$-bimodule compatible with the product structures on $\Sk^2(S^1\times \R^2)$ and $\Sk(S^1\times \R^2)$ as follows. The maps
\begin{align}\label{eq:landr}
	&l \colon \Sk(S^1\times \R^2) \otimes \Sk^2(S^1\times \R^2) \ \to \ \Sk^2(S^1\times \R^2),\\\notag
	&r  \colon  \Sk^2(S^1\times \R^2) \otimes \Sk(S^1\times \R^2) \ \to \ \Sk^2(S^1\times \R^2),
\end{align}
defined by placing a solid torus to the left, respectively to the right, of the solid torus with the boundary points are clearly homomorphisms of $R$-algebras. 

For $n \in \IZ$, let $\sfC'_{n}$ denote the arc from the boundary point at the top of the solid torus to the boundary point at the bottom which goes $n$ times around the solid the torus.
Then $\sfC'_0$ is the unit of the algebra $\Sk^2(S^1\times \R^2)$ and $\sfC'_m \sfC'_n = \sfC'_{n+m}$. Then, as an algebra over $\Sk(S^1\times \R^2)$ with respect to either $l$ or $r$, $\Sk^2(S^1\times \R^2)$ is the algebra $\Sk(S^1\times \R^2)[\sfC'_{\pm1}]$ of Laurent polynomials in $\sfC'_1$ over $\Sk(S^1\times \R^2)$, see \cite{lukac2001homfly}.

Any element $A \in \Sk^2(S^1\times \R^2)$ induces maps 
\be\label{eq:landr2} 
l(\cdot,A), r(A,\cdot)  \colon   \Sk(S^1\times \R^2) \ \to \ \Sk^2(S^1\times \R^2).
\ee 
In the special case that $A=\sfC_0$ or $A=\sfC_1$, we will write 
\be
\begin{split}
&\sfP'_{1,0} \coloneqq l(\cdot,\sfC_0) \qquad \sfP'_{0,0} \coloneqq r(\sfC_0,\cdot)\\
&\sfP'_{1,1} \coloneqq l(\cdot,\sfC_1) \qquad \sfP'_{0,1} \coloneqq a_L^{-1} r(\sfC_1,\cdot).
\end{split}
\ee
After choosing appropriate capping paths, $\sfP'_{i,j}$ is mapped to $\sfP_{i,j}$ viewed as an operator on the skein of the solid torus.

\section{Skein valued recursions -- first examples }\label{sec:unknot}
In this section we discuss skein valued recursions for the solid torus, the toric brane, the unknot conormal, and a neighborhood of a basic holomorphic annulus. The latter three were worked out in~\cite{Ekholm:2020csl, Ekholm:2021colored}. We mostly follow the description there but also add a new version of the recursion for the basic annulus that involves a certain power series.

\subsection{The empty solid torus}
In this section we consider the simplest case of the theory on the cotangent bundle of $T^{\ast}(S^{1}\times\R^{2})$ with no non-constant holomorphic curves. Let $L=S^{1}\times\R^{2}$ in the ambient space which is simply the cotangent bundle $T^{\ast}L$ itself. 
In order to obtain a recursion relation, we consider a version $W$ of $T^{\ast} L$ that is convex at infinity, with $L\subset X$ being a Lagrangian with asymptotic Legendrian boundary torus $\Lambda\subset \partial W$. 
Here $W$ is $T^{\ast} S^{1}\times \C^{2}$ with $L$ equal to $S^{1}\times \R^{2}$. 
As in~\cite{Ekholm:2021colored} we see that $\partial L$ is $S^{1}\times U$, where $U$ is the standard Legendrian unknot in the 3-ball. 
This means that there is an $S^{1}$ Bott-family of degree $1$ Reeb chords. Morsifying this configuration, we find exactly one Reeb chord of degree $1$ corresponding to the minimum in the Bott family. There are two holomorphic disks of the unknot that are asymptotic to this Reeb chord. 
The corresponding recursion operator in the worldsheet skein with two boundary arcs is 
\be
\sfP_{0,0}^{\ws}-\sfP_{1,0}^{\ws}, 
\ee
which for an appropriate $4$-chain and after choosing one of the disks itself as capping surface gives the skein operator 
\be
\sfP_{0,0}-\sfP_{1,0} \ \in \ \Sk(L).
\ee
We obtain the skein recursion 
\be
(\sfP_{0,0}-\sfP_{1,0})\sfZ_{L} = (\sfP_{0,0}-\sfP_{1,0})1=0.
\ee
Here we used $W_{\emptyset,\emptyset} \equiv 1$ to denote the empty link as an element in the skein. 
The $U(1)$-recursion $1-\hat y=0$ is obtained by specializing the above to the $U(1)$-skein where $\sfP_{0,0}\mapsto 1$ and $\sfP_{1,0}\mapsto \hat y$. We point out that $\sfZ=1$ in this case since there are no (bare) holomorphic curves.

\subsection{The toric brane}\label{sec:recSktoricbrane}
Our next example is the toric brane in $\C^{3}$. This is a Lagrangian $L\approx S^{1}\times\R^{2}$ that supports a single holomorphic disk (and its branched covers), see~\cite{Aganagic:2000gs}. The skein recursion operator for the toric brane was found in~\cite{Ekholm:2020csl}, reading off the curves in the world sheet skein gives 
\be\label{eq:relrecSktoricbrane}
\sfP_{0,0}^{\ws} - \sfP_{1,0}^{\ws} - \sfP_{0,1}^{\ws} \,.
\ee
which specializes to 
\be\label{eq:relrecSktoricbrane}
\sfP'_{0,0} - \sfP'_{1,0} - a_{L}\sfP'_{0,1} \,.
\ee
in the open skein of $L$ which for suitable capping disks gives
\be\label{eq:recSktoricbrane}
\sfP_{0,0} - \sfP_{1,0} - a_{L}\sfP_{0,1} \,.
\ee
Reducing to $U(1)$ we get $1-\hat y-q\hat x$, which after the substitution $x\mapsto qx$ gives the standard $1-\hat y-\hat x$. 

We next discuss signs and framing factors in~\eqref{eq:recSktoricbrane}. As explained in~\cite{ekholm2013knot}, signs on holomorphic curves depend on the choice of a spin structure on the Lagrangian. Here $L\approx S^{1}\times\R^{2}$ supports two distinct spin structures and (assuming the longitude generates $H_{1}(L)$), changing spin structure changes the sign of the last term in~\eqref{eq:recSktoricbrane}. In order to determine the remaining sign we use the representation of $\Lambda$ as the front shown in~\cite[Figure 9]{DRGo} and observe, see~\cite{ekholm2013knot}, that the two disks in question are copies of the disks on the standard 1-dimensional unknot with a stabilized boundary condition. Since for the standard unknot filled by a Lagrangian 2-disk these two disk form the boundary of a 1-dimensional moduli space it follows that their relative sign is negative for the bounding spin structure on the circle (here identified with the meridian). We conclude that also the first sign is as stated for spin structures on $\partial L$ that extend to $L$. Finally, we consider framing data. We take a capping path close to the unknot disk. Then the second disk looks like a meridian and the third like a splicing of a longitude and a meridian in $\Lambda$ which becomes an unknot in $L$. The $a_{L}$-factor comes from the kink in the last curve when viewed as a curve in the solid torus $L$. 

We finally discuss this from the point of view of skein module homomorphisms over the augmentation variety, see Remark \ref{rmk:elimination}. The augmentation variety has three asymptotic regions, $x\to 0$, $y\to 0$, and $x,y\to \infty$ with constant ratio. 
Each of these corresponds to a Lagrangian filling $L$ of topology $S^{1}\times\R^{2}$ that fill in $\sfP_{0,1}$, $\sfP_{1,0}$, and $\sfP_{1,1}$, respectively and each gives homomorphisms 
\[
H_{\Lambda;\text{BRST}} \ \to \ \Sk(L),
\]
where the homomorphisms take the empty word to $\sfZ_{L}$ given by the partition function in~\cite{Ekholm:2020csl}.
 
The recursion relation in generic framing can be obtained by transforming~\eqref{eq:recSktoricbrane} according to~\eqref{eq:framed-Pij} and \eqref{eq:aL-Pij-framed} 
\be\label{eq:eq:skein-rec-toric-framed}
	(\sfP_{0,0} - \sfP_{1,0} +(-1)^{1+f} a_L   \, \sfP_{f,1})\, \sfPsi_{\mathrm{di}}^{(f)} =0\,,
\ee
where the specialization $a_\Lambda = a_L$ was taken into account, since we take the operator to act on $\Sk(L)$.
This uniquely fixes the skein valued partition function of a framed toric brane $\sfPsi_{\mathrm{di}}^{(f)}$ 
with initial conditions zero for all negative curves
\be
	\sfPsi_{\mathrm{di}}^{(f)} 
	= \sum_\lambda  ((-1)^{|\lambda|} q^{\kappa(\lambda)})^f  \left[\prod_{\ydiagram{1}\in\lambda} \frac{- q^{-c(\ydiagram{1})}}{q^{h(\ydiagram{1})}-q^{-h(\ydiagram{1})}}\right]  \sfW_{\lambda,\emptyset}\,.
\ee
We point out that the framing change formulas assumes that the 4-chain are unaffected by the framing change at infinity. This gives an intersection of the 4-chain and the Lagrangian along a curve in class $(0,f)$ that shifts the action of the operator $\sfP_{0,1}$ by multiplication by~$a_{L}^{f}$.

\subsection{The unknot}\label{sec:unknot}
In this section we consider the skein recursion for various different Lagrangian fillings of the unknot Legendrian torus. In the limit $a\to 0$ each of these give different fillings for the toric brane discussed earlier.
The augmentation variety of the unknot is
\be
	1-y-x+a^2 xy = 0.
\ee
We think of this as a plane curve and take the tropical limit, which means we consider the curve in $(\C^\ast)^{2}\approx (\R\times S^1)^{2}$ and take the limit as the radius of $S^{1}$ shrinks to $0$ or equivalently draw $(\log_{t}|x|,\log_{t}|y|)$ as $t\to \infty$. The resulting tropical curve is then subdivided into five regions that we label by the solid torus Lagrangian filling that gives the corresponding augmentation.
\begin{itemize}
	\item $L_{l^{+}}$: $\{|x|<1,\  |y|= 1\}$
	\item $L_{m^{+}}$: $\{|x|= 1 ,\ |y|<1\}$
	\item $L_{l^{-}}$: $\{ |x|>a^2 ,\  |y|= a^2\}$
	\item $L_{m^{-}}$: $\{|x|= a^2,\ |y|>a^2\}$
	\item $L_{d}$:  $\{1<|x|=|y|<a^2\}$.
\end{itemize}	
We discuss skein valued curve counts and recursions for three of them, since the remaining two are related by a simple change of variables.

We start with the generator of the world sheet skein $D$-module.
The ideal boundary  of the unknot conormal Lagrangian $L_U$ is a Legendrian torus $\Lambda_U \subset U^{\ast}S^3\approx S^{3}\times S^{2}$  which can be thought of as the ideal contact boundary of either $T^{\ast}S^{3}$ or the resolved conifold $\CO_{\IP^1}(-1)\oplus \CO_{\IP^1}(-1)$. The Legendrian $\Lambda_{U}$ has a single Reeb chord $c$ of degree one and no Reeb chords of degree zero or lower. This means that the recursion relation in the worldsheet skein can be read off directly from the curves at infinity that asymptotes to the Reeb chord $c$. These curves were determined using Morse flow trees in~\cite{ekholm2013knot} and the resulting curves are shown in Figure~\ref{fig:unknot-legendrian}: there are four rigid disks with a positive puncture at $c$. We get the ideal generator in the worldsheet skein
\be
\sfA_U = \sfP_{0,0}^{\ws} - \sfP_{1,0}^{\ws} -  \sfP_{0,1}^{\ws} +   \sfP_{1,1}^{\ws}.
\ee

\begin{figure}[h!]
	\begin{center}
		\includegraphics[width=0.8\textwidth]{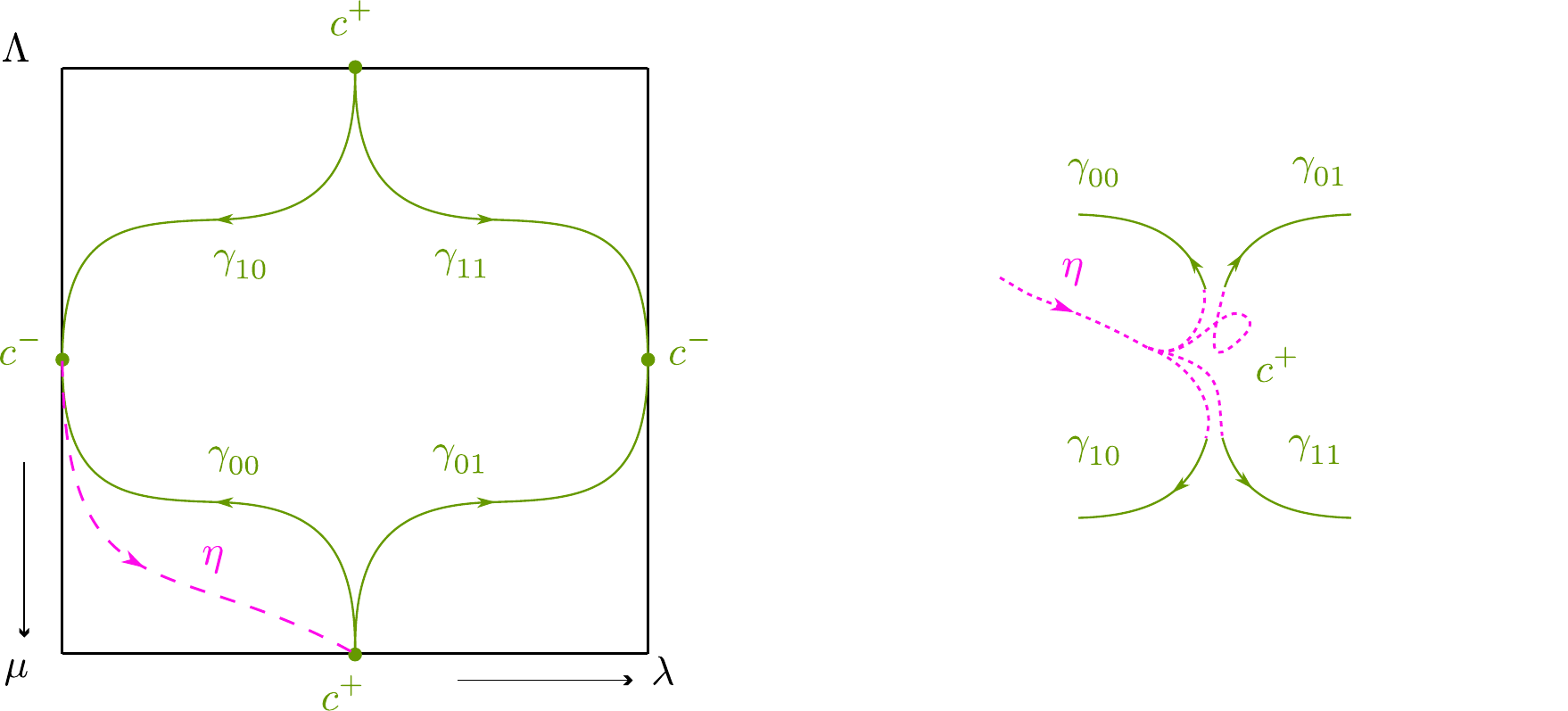}
		\caption{Left: The Legendrian torus of the unknot. Four disks boundaries are marked in solid green, a choice of capping path by a dashed line. Homology classes of boundary curves with this choice of capping are indicated in units of meridian and longitude windings $(\mu,\lambda)$. Right: joining the capping path to each of the disk boundaries. Joining to $\gamma_{01}$ introduces a positive curl, which is responsible for the power of $a_L$ in~\eqref{eq:skein-rec-unknot}.}
		\label{fig:unknot-legendrian}
	\end{center}
\end{figure}

\subsubsection{$L_{l^{\pm}}$ -- the conormal filling}\label{sec:unknot-conormal}
The skein valued recursion relation and its solution for the unknot conormal filling $L_{l^{\pm}}$ were first worked out in~\cite{Ekholm:2020csl}. Note that the sign in $l^{\pm}$ corresponds to the direction of the shift of the conormal. The orientation of the basic annulus changes according to the shift, the unknot in $S^{3}$ does not change but the direction of the curves in the conormal does. It is easy to find the relevant change of variables and we therefore focus on on case $L_{l^{+}}$, say.

In order to write a skein recursion with operators in $\Sk(\Lambda_{U})$ we pick a capping disk to close the disks off. With the boundary of that capping disk $\eta$ in Figure~\ref{fig:unknot-legendrian} we get the following recursion: 
It was shown in~\cite{Ekholm:2020csl} that the count of curves at infinity in the skein uplifts to the following operator in the skein of $\Lambda_{U}$ for a certain choice of 4-chains and spin structures:
\be\label{eq:skein-rec-unknot}
	\sfA_U = \sfA_{U;\lambda}= \sfP_{0,0} - \sfP_{1,0} - a^{-1} a_{L_{\lambda}} \sfP_{0,1} +  a \sfP_{1,1}.
\ee
The classes of the curves are evident from the picture. We discuss signs and framing factors. 

The signs in the formula depend on a spin structure. The particular signs here correspond to the spin structure on the boundary torus that restrict to the bounding spin structure on both the meridian and the longitude circles. The sign follows from index bundle considerations, see~\cite[Theorem 6.4]{ekholm2013knot} (where the calculation is carried out in the Lie group spin structure on $\Lambda_{U}$ is used). 

Consider next powers of $a$. In~\cite{ekholm2013knot} intersections of the four disks where determined relative a specific dual $4$-chain in $T^{\ast} S^{3}$ of the fiber sphere in $U^{\ast}S^{3}$. For the skein valued count we use that 4-chain together with its image under fiber reflection in $T^{\ast}S^{3}$. The result there would then correspond, in our current notation to coefficient $a^{0}$ for $\sfP_{0,0}$, $\sfP_{1,0}$, $\sfP_{0,1}$, and $a^{2}$ for $\sfP_{1,1}$. Here we must consider also intersections with the negative 4-chain near the south pole of the fiber sphere. For the curves bounded by $\sfP_{0,0}$ and $\sfP_{1,0}$ there are no intersections, the curve $\sfP_{1,1}$ has one intersection which now gives $a^{1}$ (formerly this intersection gave $a^{2}$), and the curve $\sfP_{0,1}$ has a negative intersection near the south pole which gives the coefficient $a^{-1}$. (To see the negative  sign, note that the linear combination of curves with boundary $\sfP_{1,1}-\sfP_{1,0}-\sfP_{0,1}$ represents a cycle that wraps once around the generator of $H^{2}(U^{\ast} S^{3})$ corresponding to $a^{2}$.) 

Finally, the power of $a_{L_{l^{+}}}$ in front of $\sfP_{0,1}$ arises as the corresponding factor in~\eqref{eq:recSktoricbrane} (note that $\sfP_{1,1}$ need not be related to any curve in the solid torus).

The skein recursion operator~\eqref{eq:skein-rec-unknot} reduces to the $U(1)$ recursion operator when specializing $N=1$, which corresponds to the following replacements 
\be
	a_{\Lambda_U} \to  q\,,\qquad
	\sfP_{0,0} \to 1\,,\qquad
	\sfP_{1,0} \to \hat y\,, \qquad
	\sfP_{0,1} \to \hat x\,, \qquad
	\sfP_{1,1} \to -q \hat x\hat y\,.
\ee

Also, the recursion operator in generic framing can be obtained by transforming ~\eqref{eq:skein-rec-unknot} according to~\eqref{eq:framed-Pij} 
\be\label{eq:eq:skein-rec-unknot-framed}
	\sfA_U^{(f)} = 
	\sfP_{0,0} - \sfP_{1,0} - (-1)^{f} a^{-1}  a_{L_{l^{+}}} \sfP_{f,1} + (-1)^{f} a \sfP_{f+1,1}\,.
\ee
with the proviso that, an additional rescaling of $\sfP_{i,j}$ by appropriate powers of the specialized framing variable $a_{\Lambda_U}=a_L$ should be taken into account, according to \eqref{eq:aL-Pij-framed}.

The skein valued partition function annihilated by this operator is then
related to the generating series of HOMFLYPT polynomials considered in~\cite{Ekholm:2020csl} by a change of framing~\eqref{eq:framed-Z}, namely
\be\label{eq:Z-unknot-explicit}
	\sfZ^{(f)}_U = \sfZ^{(f)}_{U;\lambda} = \sum_\lambda  ((-1)^{|\lambda|} q^{\kappa(\lambda)})^f  \left[\prod_{\ydiagram{1}\in\lambda} \frac{a q^{c(\ydiagram{1})}-a^{-1} q^{-c(\ydiagram{1})}}{q^{h(\ydiagram{1})}-q^{-h(\ydiagram{1})}}\right]  \sfW_{\lambda,\emptyset}\,.
\ee
In framing zero this factorizes into a product of two disks, see \eqref{eq: unknot skein quiver} below.

We finally discuss the dependence on the capping path $\eta$. The role of the capping path is simply to write the recursion relation in the skein of the torus $\Sk(T^{2})$, which is natural from the point of view of Chern Simons theory. From the geometric view point of curve counting the recursion relation rather lives in the the skein of the torus minus the two Reeb chord endpoints (one needs to perform an oriented blow up near the endpoints to keep track of framing data). In any case the relative recursion is more refined than the recursion in $\Sk(T^{2})$ and gives rise to many recursions there. Geometrically, that corresponds to the choice of capping path $\eta$. As the capping path is a dummy curve that lies above all actual holomorphic curve it does not interfere with the actual dynamics of holomorphic curves and any capping path gives rise to a recursion.   

For an example of this we consider the change of capping path shown in Figure~\ref{fig:unknot-legendrian-recap}.
\begin{figure}[h!]
\begin{center}
\includegraphics[width=0.5\textwidth]{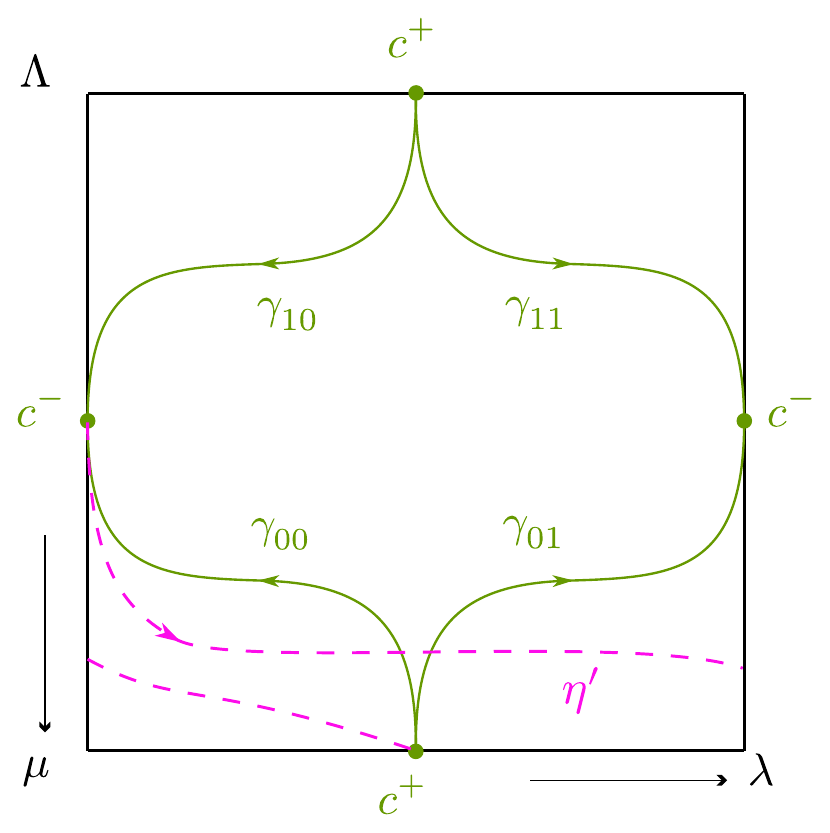}
\caption{The Legendrian torus of the unknot with a different choice of capping path.}
\label{fig:unknot-legendrian-recap}
\end{center}
\end{figure}
Consider first the effect of capping path change on the original framing zero recursion. We get the new recursion (see below for details)
\be\label{unknot-A-recap}
	\sfA_{U;{l^{+}}}^{[0,1]} = 
	a_{L_{l^{+}}} \sfP_{0,1} 
		- \sfP_{1,1}
		-a_{L_{l^{+}}} a^{-1} \frac{1}{2} ((q+q^{-1}) \sfP_{0,2} +(q-q^{-1})(\sfP_{0,1})^2)
		+ a \sfP_{1,2}\,,
\ee
which then as explained above annihilates the original zero-framing partition function $Z$
\be
	\sfA_{U;{l^{+}}}^{[0,1]} \cdot \sfZ_{U;{l^{+}}} = 0.
\ee
We point out that the operator $\sfA_U^{[0,1]}$ is not related to $\sfA_U$ in any algebraically simple way. 
To see the geometric origin of the operator in~\eqref{unknot-A-recap}, we consider how curves at infinity change between Figures~\ref{fig:unknot-legendrian} and~\ref{fig:unknot-legendrian-recap}.
Observe that, in addition to shifting homology classes of curve boundaries by the same amount $(0,1)$, there is an overall rescaling of $\sfP_{0,0}$ (which becomes $\sfP_{0,1}$) by a power of  $a_{\Lambda_U}$. 
The reason for this is evident from Figure \ref{fig:unknot-legendrian-recap} where the concatenation of $\eta'\star\gamma_{00}$ has a self-intersection. 
On the other hand, while a similar self-intersection is also present for the concatenation of $\eta'\star\gamma_{01}$, in this case no rescaling by $a_{L_{{l^{+}}}}$ is present. 
This is because the combination of terms in homology class $(0,2)$ from \eqref{unknot-A-recap} corresponds to a connected twice-around curve, which does not have any such a self-crossing (recall \eqref{eq:W-basis}).

\subsubsection{$L_{m^{\pm}}$ -- the complement filling}\label{sec:unknot-complement}
In the complement filling $L_{m^{\pm}}$ it is the $(0,1)$ curve that bounds a disk in the Lagrangian, while the $(1,0)$ curve at infinity plays the role of the longitude. To take this into account we would have to re-define how the algebra of $P_{i,j}$ operators acts on the $\sfW_{\lambda,\bar\mu}$ basis of this new skein module. In the new definition we would have that $\sfP_{1,0}$ would add a box and $\sfP_{0,1}$ would now act diagonally.

We take instead an alternative route, and change basis on the homology $H_1(\Lambda)$ of the torus at infinity by the $SL(2,\mathbb{Z})$ transformation $S^{-1}$ whose action is defined as
\be
	\sfP_{i,j} \to a^{j}\, \sfP_{-j,i}\,.
\ee
Here we also implemented a change of 4-chains, which is accounted for by the area factor $a^j$, see Section \ref{sec:conormal-complement-filling} for an explanation of the geometric origin of this shift.
With this change of labeling, the recursion for the conormal filling \eqref{eq:skein-rec-unknot} becomes
\be\label{eq:skein-rec-unknot-complement}
	\sfA_{U;\mu} = 
	\sfP_{0,0}
	- a_{L_{m^{+}}}^{-1} \sfP_{0,1}
	- \sfP_{-1,0}
	+ a^2 \sfP_{-1,1}
\ee
The solution to the recursion relation
\be
	\sfA_{U;{m^{+}}} \cdot \sfZ_{U,{m^{+}}} = 0
\ee
is the complement filling skein-valued partition function
\be
\sfZ^{c}_{U,{m^{+}}}  
= 
\sfPsi_{\mathrm{di}}^{(1)}\,  \cdot \, \sfPsi_{\mathrm{di}}^{(0)}[a^2]
= \sum_{\lambda} \prod_{\ydiagram{1}\in\lambda} \frac{q^{c(\ydiagram{1})} - a^2 q^{-c(\ydiagram{1})}}{q^{h(\ydiagram{1})}-q^{-h(\ydiagram{1})}}\, \sfW_\lambda \,.
\ee

\subsubsection{$L_{d}$ -- the middle leg filling}\label{sec:unknot-middle-leg}
For the middle leg filling $L_{d}$ the contractible curve is $(1,-1)$. 
As in the case of the complement filling we change basis for $H_1(\Lambda)$ to take this into account.
In this case we use $ST$ which generates an $\IZ_3$ subgroup of $SL(2,\IZ)$, i.e. $(ST)^3=1$
\be
	\sfP_{i,j} \to  (-1)^i\sfP_{j,-i-j}\,.
\ee
Again we implement an change of the 4-chain. The sign change can be understood as a result of a change in the spin structure.
The skein-valued recursion operator becomes 
\be\label{eq:skein-rec-unknot-middle}
	\sfA_{U;d} = 
	\sfP_{0,0} +a_{L_{d}}^{-1} \sfP_{0,-1} -  \sfP_{1,-1} - a^2 \sfP_{1,-2}
\ee
The solution to the recursion relation is
\be
\begin{split}
	\sfZ_{U;d}
	& = 
	\sum_{\lambda,\overline\mu} 
	\(\prod_{\ydiagram{1}\in\lambda}\frac{q^{-c(\ydiagram{1})}}{q^{h(\ydiagram{1})}-q^{-h(\ydiagram{1})}} \)
	\( \prod_{\ydiagram{1}\in\overline\mu}\frac{q^{-c(\ydiagram{1})}}{q^{h(\ydiagram{1})}-q^{-h(\ydiagram{1})}}\)
	a^{2|\overline\mu|}
	\sum_{\sigma,\overline \tau,\kappa} c_{\kappa\lambda}^\sigma c_{\kappa\mu}^\tau W_{\sigma,\overline\tau}
\end{split}
\ee
where $c_{\kappa\lambda}^\sigma$ are Littlewood-Richardson coefficients. Note that $\sfZ_{U;d}$ is the product of two disc partition functions whose boundaries go along the longitude of $L_d$ in opposite directions.

\subsection{The unlink}
We consider two unlinked unknots. Here, holomorphic curves can be read off from big disks and flow lines at infinity as in \cite{Ekholm:2018eee}. Alternatively, one can shrink the unknot components and move them far apart. After that an action argument shows that the unknot moduli spaces of the degree one Reeb chord of each component cannot contain any curves with negative Reeb chord asymptotics. This means that the worldsheet skein ideal contains both unknot worldsheet skein operator polynomials $\sfA_{U_{1}}$ and $\sfA_{U_{2}}$.

Specializing to the skein we then have the generators of the ideal given by 
\[
\sfA_{U_1}\otimes 1, \qquad 1\otimes \sfA_{U_2}.  
\]
We have Lagrangian fillings given by combinations of the conormal, complement, and middle leg fillings of the unknot components and the ideal generators imply that in all cases the partition functions are products and there are no contributions from curves stretching between the branes: 
\[
\sfZ_{L_{U_1;s}\cup L_{U_2;t}} = 
	\sfZ_{L_{1;s}}\otimes  \sfZ_{L_{2;t}}
,\quad
s,t\in\{l^{\pm},m^{\pm},d\}.
\]   

We point out that this contains non-trivial information for the conormal-complement filling. The complement filling $L_{U_2;m}$ say intersects the conormal of $L_{U_1;l}$ in a circle before shifting that after shifting gives a basic holomorphic annulus stretching between $L_{U_1;l^{+}}$ and $L_{U_2;m^{+}}$. Now stretching first around the complement $L_{U_2;m^{+}}$ and then around $S^{3}$ we find four disks appearing from the two basic annuli. If the framing variable for $S^{3}$ is $a$ and that for the complement of $U_{2}$ is $a_{L_{2}}$ then these disks have homology classes $a_{L_{2}}a$, $a_{L_{2}}a^{-1}$, $a_{L_{2}}a^{-1}$, and $a_{L_{2}}^{-1}a^{-1}$. Here $a_{L_{2}}a$ and one of the $a_{L_{2}}a^{-1}$ are anti-disks whereas $a_{L_{2}}^{-1}a^{-1}$ and one of the $a_{L_{2}}a^{-1}$ are disks and we find that the total contribution of the four disks are that of two disks $a_{L_{2}}a$ and $a_{L_{2}}^{-1}a^{-1}$ in line with the split ideal provided framing variables are specialized correctly.

\subsection{Basic holomorphic annuli}\label{sec:skein-rec-annulus}
In~\cite{Ekholm:2021colored} an induced skein relation near a basic holomorphic annulus was found.
Here we consider the ambient space $T^{\ast} S^{1}\times \C^{2}$ with Lagrangians $S^{1}\times\R^{2}$ and $S^{1}\times P$, where $P$ is a Lagrangian plane close to $\R^{2}$. We then shift the second Lagrangian along a 1-form and find a basic holomorphic annuls. At infinity the Lagrangian has ideal boundary $\Lambda_{1}\cup\Lambda_{2}$, where $\Lambda_{1}\cup\Lambda_{2}$ intersects each fiber $S^{3}$ in a Legendrian Hopf link. As explained in~\cite{Ekholm:2021colored}, after small perturbation $\Lambda_{1}\cup\Lambda_{2}$ has two degree $0$ Reeb chords $p$ and $q$ connecting different components of $\Lambda$, and two degree $1$ Reeb chords $c_{1}$ and $c_{2}$, starting and ending on the same component and we can find all holomorphic curves. This gives the operator equation in the worldsheet skein $\Sk(T^{\ast} S^{1}\times \C^{2},S^{1}\times\R^{2}\sqcup S^{1}\times P)$
\be
{\sfP}_{0,0}^{\ws,(1)} - {\sfP}_{1,0}^{\ws,(1)} = {\sfP}_{0,0}^{\ws,(2)} - {\sfP}_{1,0}^{\ws,(2)}\,.
\ee
This then leads to an operator relation in the relative skein of $S^{1}\times\R^{2}\sqcup S^{1}\times P$,
\be
	({\sfP'}_{0,0}^{(1)} - {\sfP'}_{1,0}^{(1)})\otimes a_{\Lambda_2} = a_{\Lambda_1}\otimes({\sfP'}_{0,0}^{(2)} - {\sfP'}_{1,0}^{(2)})\,,
\ee
or choosing capping paths
\be
(\sfP_{0,0}^{(1)} - \sfP_{1,0}^{(1)})\otimes a_{\Lambda_2} = a_{\Lambda_1}\otimes(\sfP_{0,0}^{(2)} - \sfP_{1,0}^{(2)}).
\ee 
This recursion relation was found combining two moduli spaces of curves with positive puncture at $c_{1}$ and $c_{2}$  to cancel curves with negative punctures at $p$ and $q$. Here we take direct a approach and simply compute the contributions from the canceled terms. The curves at infinity with positive puncture at $c_{1}$ are (except for $\sfP_{0.0}$ and $\sfP_{1,0}$) a triangle with negative punctures at the mixed chords $p$ and $q$. We compute the curves on the filling $L_{1}\cup L_{2}$ with the corresponding positive asymptotics.
\begin{prp}\label{prp:flowdisks}
For each integer $m$ there is exactly one disk with two positive punctures with boundary corresponding to the curve $C'_{m}$ in Figure~\ref{fig:annulus1} in each Lagrangian and no other holomorphic curves. 	
\end{prp}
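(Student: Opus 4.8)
The plan is to compute the moduli space directly in the explicit product model of \cite{Ekholm:2021colored}, namely $(W,L_{1}\cup L_{2})=(T^{\ast}S^{1}\times\C^{2},\,(S^{1}\times\R^{2})\sqcup(S^{1}\times P))$, where the second Lagrangian has been shifted along a one-form so that its $T^{\ast}S^{1}$-projection sits at a nonzero cotangent level. I would fix a split almost complex structure $J=j_{0}\oplus J_{2}$, so that any finite-energy holomorphic map $u=(u_{0},u')\colon\Sigma\to T^{\ast}S^{1}\times\C^{2}$ with two positive boundary punctures at the degree-zero mixed chords $p,q$ decomposes into a $j_{0}$-holomorphic strip $u_{0}\colon\Sigma\to T^{\ast}S^{1}$ and a $J_{2}$-holomorphic component $u'\colon\Sigma\to\C^{2}$ with boundary on the two planes $\R^{2}\sqcup P$ and the punctures asymptotic to the chords of the Legendrian Hopf link at infinity. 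The source $\Sigma$ is a disk with two boundary punctures, i.e.\ conformally a strip, with one boundary arc on each $L_{i}$; this reduces the count to the two factors separately.

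For the $\C^{2}$-factor I would take the planes linear, so that their ideal boundary is the standard Legendrian Hopf link, and identify $u'$ with the unique rigid holomorphic ``banana'' bounded by the two planes with corners at $p$ and $q$; for two transverse Lagrangian planes this disk is explicit, exactly as in the linear Floer computation underlying the basic annulus. The integer $m$ is then recorded entirely in the $T^{\ast}S^{1}$-factor: since the two boundary arcs of $u_{0}$ lie on the zero section and on the shifted section, $u_{0}$ is a holomorphic strip in $T^{\ast}S^{1}\cong\C^{\ast}$ between two sections, and such strips are classified by the number $m$ of times they wind around $S^{1}$. By the Riemann mapping theorem there is exactly one such strip in each winding class, and its boundary represents the arc $C'_{m}$ winding $m$ times around the core of the solid torus, cf.\ Section~\ref{sec:skein of torus with two boundary points}. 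Combining the two factors gives, for each $m$ and in each Lagrangian, a single disk, which I would check is regular by verifying that the Fredholm index vanishes once $m$ is fixed and that transversality is automatic in this integrable product model.

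To show there are no further holomorphic curves I would invoke SFT-compactness together with an index and action argument \cite{EGH}. All closed Reeb orbits of the relevant contact boundary have degree $\ge 2$ and the only degree-zero chords are the mixed chords $p,q$, so any additional negative punctures or closed-orbit asymptotics would force negative expected dimension, while extra boundary components or positive genus raise the index; hence no other rigid configurations occur. The split structure then forces the $\C^{2}$-projection of any candidate to be the same banana and its $T^{\ast}S^{1}$-projection to be governed by the winding alone, leaving precisely the disks already constructed, as depicted in Figure~\ref{fig:annulus1}.

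I expect the main obstacle to be the transversality and multiple-cover bookkeeping uniformly in $m$: for large winding the natural parametrizations resemble covers, and one must confirm that each class $C'_{m}$ is represented by a single \emph{regular} disk of weight one rather than by a nontrivial orbifold contribution. The cleanest way to settle this should be the Morse flow-tree correspondence of \cite{Ekholmflowtrees,ekholm2013knot}, which replaces the analytic count by a finite-dimensional count of gradient trees, for which both the bijection with the integers $m$ and the regularity are manifest.
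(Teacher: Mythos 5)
Your strategy---compute the moduli space explicitly in the model $T^{*}S^{1}\times\C^{2}$, splitting off the $T^{*}S^{1}$-winding from a fixed curve in the $\C^{2}$-factor---is close in spirit to a direct computation, but as written it has a genuine gap at its core: the product decomposition is incompatible with the asymptotics you impose. With a split $J=j_{0}\oplus J_{2}$ and product Lagrangians $S^{1}\times\R^{2}$ and $S^{1}\times P$ shifted by $\epsilon\,d\theta$, the entire configuration is invariant under rotation of the $S^{1}$-factor, so the mixed Reeb chords at infinity come in Bott $S^{1}$-families rather than as two isolated chords; moreover, in a $\C^{2}$-slice a generic plane $P$ near $\R^{2}$ has \emph{four} short mixed chords with the Hopf link components (two per coordinate axis), and sorting their Bott families by degree is needed before one can even say which of them produce $p$ and $q$. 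The two nondegenerate degree-zero chords $p,q$ of the statement exist only after a symmetry-breaking perturbation, and for the perturbed pair neither $J$ nor the boundary condition is a product, so finite-energy curves no longer factor as $(u_{0},u')$. Your ``unique banana times winding strip'' count therefore takes place in the unperturbed Bott model, where curves come in $S^{1}$-families, and passing from that to the rigid count ``exactly one disk for each $m$'' requires a Morse--Bott/cascade argument. That degeneration argument is in fact the paper's entire proof: letting $P\to\R^{2}$ and the shift $1$-form go to zero, all holomorphic curves are identified with Morse flow trees of the difference via the correspondence of \cite{Ekholmflowtrees}; since there are locally only two sheets, the trees are flow lines (attached along the boundary of the slice disk with puncture at the limit of $c_{1}$, as in Figure~\ref{fig:annulus2}), and these are classified precisely by the winding $m$. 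You relegate exactly this argument to a closing remark about transversality, but it is not a supplementary check---it is the proof.

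Second, your argument for ``no other holomorphic curves'' fails. For a Maslov-zero Lagrangian in a Calabi--Yau $3$-fold the $(n-3)\chi$ term in the index formula vanishes, so the expected dimension of a punctured curve depends only on its asymptotics, not on its genus or its number of boundary components (indeed the paper states $\dim=|\bc^{+}|-|\bc^{-}|$ with no $\chi$-dependence). Hence higher-genus curves, or curves with extra boundary components, carrying the same two positive punctures at $p$ and $q$ have the same expected dimension zero and cannot be excluded by an index count; also, since the filling is convex at infinity there are no negative punctures to rule out in the first place. Excluding these extra configurations, together with multiple covers, is precisely the hard content of the words ``no other holomorphic curves,'' and it is what the flow-tree limit delivers: near the degenerate limit every curve asymptotic to $p,q$ has small energy, hence is thin and corresponds to a flow tree, and for two sheets flow trees are embedded flow lines---the disks with boundary $C'_{m}$ and nothing else.
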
	

\begin{proof}
We use the correspondence between holomorphic curves and Morse flow trees~\cite{Ekholmflowtrees}.
As $P$ degenerates to $\R^{2}$ and the $1$-form used for the shift goes to zero, holomorphic curves limit to flow trees. The flow trees in question must be a flow line since there are locally only two sheets. This flow line is attached to the boundary of a holomorphic disk with boundary on $\R^{2}$ and positive puncture at the limit chord corresponding to $c_{1}$. See Figure~\ref{fig:annulus2}.	
\end{proof}	

\begin{figure}[ht]
	\begin{center}
		\includegraphics[width=0.4\textwidth]{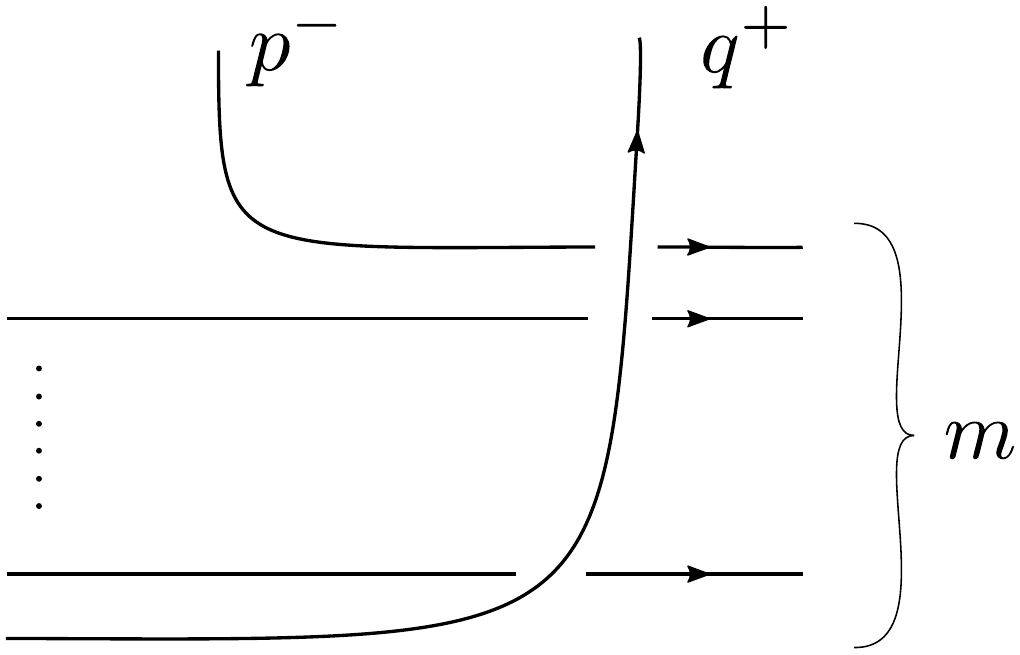}
		\caption{The curve $\sfC'_{m}$ connecting Reeb chord endpoints.} 
		\label{fig:annulus1}
	\end{center}
\end{figure}

\begin{figure}[ht]
	\begin{center}
		\includegraphics[width=0.4\textwidth]{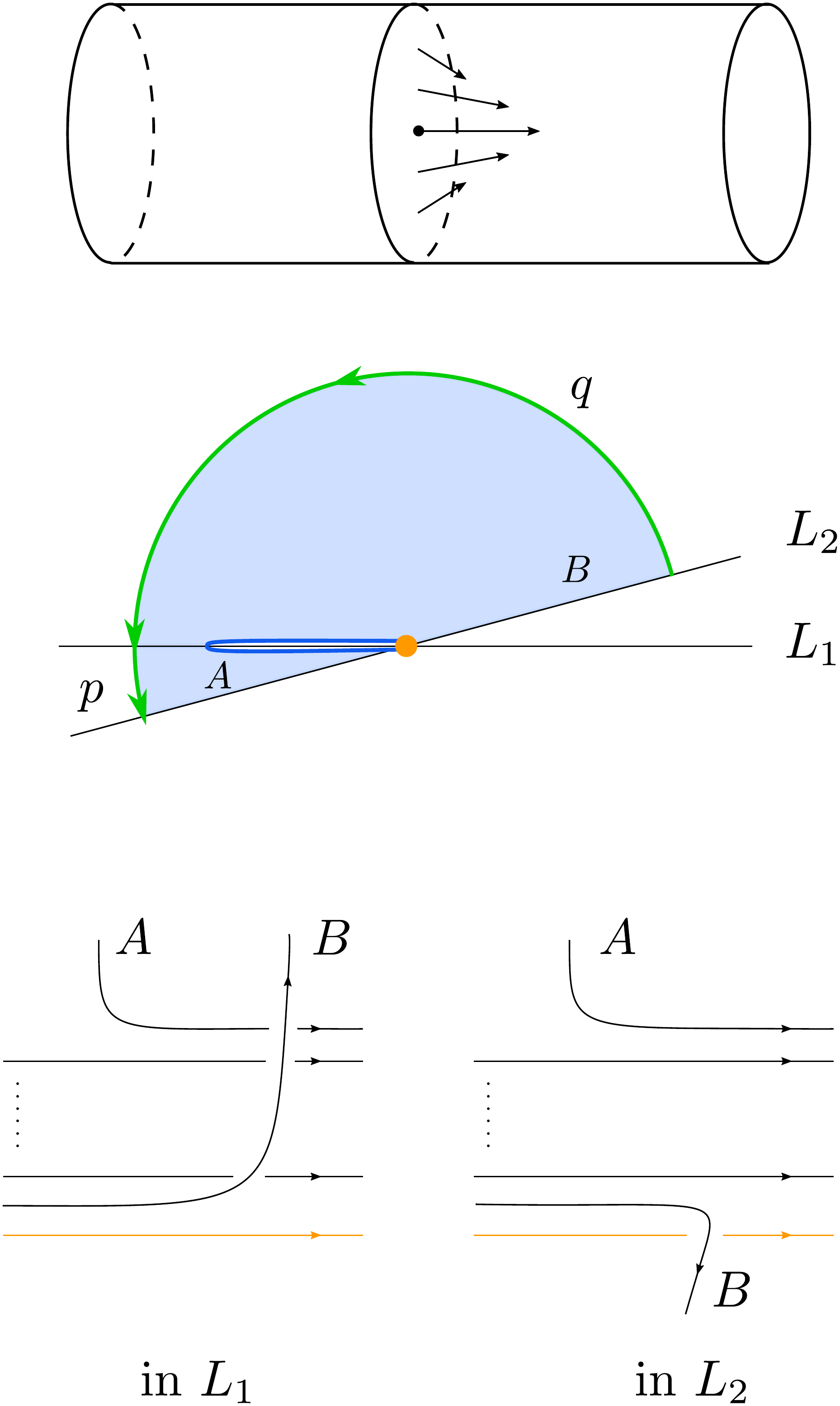}
		\caption{Top: the vector field corresponding to the difference of the two Lagrangians. Middle: the projection of the holomorphic disk with flow line attached to a slice for $m=0$ near the limit. Bottom: the boundary of the curve corresponding to a disk with flow line attached.}
		\label{fig:annulus2}
	\end{center}
\end{figure}
	
We denote by $\sfC_m$ the closed curve in the solid torus in homology class $m$ which has exactly $m-1$ positive crossings. Proposition~\ref{prp:flowdisks} then leads to the following infinite recursion. 
\begin{thm}\label{thm: rec basic annulus}
The skein operator
\be\label{eq:relrecursionAnnulus}
	{\sfP'}_{0,0}^{(1)}-{\sfP'}_{1,0}^{(1)} + (q-q^{-1}) \sum_{k>0} r\left(\sfC'_{k},\cdot\right)\otimes \sfC_{k}
\ee
annihilates the annulus skein valued partition function $\sfPsi_{\mathrm{an}}^{(0, 0)} $ where
\be\label{eq:psi-ann}
	\sfPsi_{\mathrm{an}}^{(f_1, f_2)} := \sum_{\lambda}  
	\left[(-1)^{|\lambda|} q^{\kappa(\lambda)}\right]^{f_1+f_2} \
	\sfW^{(1)}_{\lambda,\emptyset}\otimes \sfW^{(2)}_{\lambda,\emptyset}
\ee 
This is the only solution with initial condition $0$ in all negative classes.
\end{thm}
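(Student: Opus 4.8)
The statement has two independent parts: that the displayed operator annihilates $\sfPsi_{\mathrm{an}}^{(0,0)}=\sum_{\lambda}\sfW^{(1)}_{\lambda,\emptyset}\otimes\sfW^{(2)}_{\lambda,\emptyset}$, and that this is the \emph{unique} element of $\widehat{\Sk}(S^1\times\R^2)^{\otimes 2}$ with vanishing negative classes that it annihilates. The plan is to reduce the first to a single-torus identity in the two-boundary-point skein via a reproducing-kernel transpose trick, and to settle the second by a triangularity argument for the induced recursion. Throughout I write $\Omega=\sum_\lambda\sfW_{\lambda,\emptyset}\otimes\sfW_{\lambda,\emptyset}$ and use that the $\sfW_{\lambda,\emptyset}$ are orthonormal for the Hall pairing under the identification $\Sk(S^1\times\R^2)^+\cong\boldsymbol{\Lambda}$ of Section \ref{ssec:skeinsolidtorus}, so that $\Omega$ is the reproducing kernel.

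\emph{Existence.} The key point is that for any endomorphism $T$ of the second factor one has $(1\otimes T)\Omega=(T^{\dagger}\otimes 1)\Omega$, where $T^{\dagger}$ is the Hall-adjoint. Applying this with $T$ equal to multiplication by the closed curve $\sfC_{k}$, whose adjoint $\sfC_{k}^{\dagger}$ is the corresponding skewing operator (border-strip removal), I first transfer the sum term entirely to the first factor: expanding $\sfC_k\sfW_{\lambda,\emptyset}$ in the $\sfW$-basis and resumming gives $(r(\sfC'_k,\cdot)\otimes\sfC_k)\,\Omega=\sum_{\mu} r(\sfC'_k,\sfC_k^{\dagger}\sfW_{\mu,\emptyset})\otimes\sfW_{\mu,\emptyset}$. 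Since $(\sfP'_{0,0}-\sfP'_{1,0})^{(1)}\Omega=\sum_\mu(\sfP'_{0,0}-\sfP'_{1,0})\sfW_{\mu,\emptyset}\otimes\sfW_{\mu,\emptyset}$ and the $\sfW_{\mu,\emptyset}$ are linearly independent in the second factor, annihilation of $\Omega$ is equivalent to the single-torus identity, for every partition $\mu$,
\[
(\sfP'_{0,0}-\sfP'_{1,0})\,\sfW_{\mu,\emptyset} \ = \ -(q-q^{-1})\sum_{k>0} r\!\left(\sfC'_{k},\,\sfC_{k}^{\dagger}\sfW_{\mu,\emptyset}\right) \qquad\text{in }\Sk^2(S^1\times\R^2).
\]
I would then prove this identity by direct skein manipulation: $\sfP'_{0,0}\sfW_{\mu,\emptyset}-\sfP'_{1,0}\sfW_{\mu,\emptyset}$ is the difference between placing the trivial arc to the right and to the left of $\sfW_{\mu,\emptyset}$, and sliding the arc across the $|\mu|$ strands while resolving each crossing by $\OC-\UC=(q-q^{-1})\SP$ produces exactly the $k$-fold windings $\sfC'_k$ carrying a skewed diagram, together with the prefactor $(q-q^{-1})$ and the signs/framing read off from the resolutions. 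This is precisely the algebraic shadow of gluing the triangle at infinity (with negative punctures at the mixed chords $p,q$) to the two-positive-puncture disks $\sfC'_m$ of Proposition \ref{prp:flowdisks}, so by the wall-crossing/invariance mechanism of Proposition \ref{prp: chain map eqn} the operator indeed annihilates the count; I would carry the skein computation out by induction on $|\mu|$.

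\emph{Uniqueness.} Grade by the second-factor box number $|\mu|$. The sum term raises $|\mu|$ by $k\ge 1$ through $\sfC_k$, whereas $\sfP'_{0,0}-\sfP'_{1,0}$ preserves it. Hence, for a general $\sfZ=\sum c_{\lambda\mu}\sfW_{\lambda,\emptyset}\otimes\sfW_{\mu,\emptyset}$, the component of $\rho_{\partial L}(\sfA^{\ws})\cdot\sfZ=0$ supported on a fixed $\sfW_{\mu,\emptyset}$ in the second factor expresses $\sum_\lambda c_{\lambda\mu}(\sfP'_{0,0}-\sfP'_{1,0})\sfW_{\lambda,\emptyset}$ entirely through coefficients $c_{\lambda\mu'}$ with $|\mu'|<|\mu|$. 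By the eigenvalue formula \eqref{eq:P10-eigenvalues} the leading ($\sfC'_0$) part of $(\sfP'_{0,0}-\sfP'_{1,0})\sfW_{\lambda,\emptyset}$ is proportional to the nonzero content value $C_\lambda(q^2)$ and is diagonal, so the family $\{c_{\lambda\mu}:\lambda\neq\emptyset\}$ is determined by induction on $|\mu|$. The one kernel direction is $\lambda=\emptyset$, where $C_\emptyset=0$ and in fact $(\sfP'_{0,0}-\sfP'_{1,0})\sfW_{\emptyset,\emptyset}=0$; the corresponding coefficients $c_{\emptyset,\mu}$ are fixed instead by the initial condition of vanishing negative classes together with the normalization $c_{\emptyset,\emptyset}=1$. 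Substituting $c_{\lambda\mu}=\delta_{\lambda\mu}$ into both the recursion and the initial condition then exhibits $\sfPsi_{\mathrm{an}}^{(0,0)}$ as the unique solution.

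\emph{Main obstacle.} The crux is the single-torus identity above and the precise bookkeeping around it: identifying $\sfC_k^{\dagger}$ with the genuine Hall-adjoint of multiplication by the all-positive-crossing curve $\sfC_k$, and tracking the framing ($a$-)factors, signs, and the $\Sk$-bimodule structure of $\Sk^2(S^1\times\R^2)$ through the crossing resolutions, so that the coefficient $(q-q^{-1})$ and the precise windings emerge. The secondary delicate point is the kernel direction $\sfW_{\emptyset,\emptyset}$ in the uniqueness recursion, where the diagonal part degenerates and one must verify that the stated vanishing-in-negative-classes condition genuinely pins down every $c_{\emptyset,\mu}$.
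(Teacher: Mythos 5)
Your existence argument is a correct reduction and takes a genuinely different route from the paper: the paper proves annihilation geometrically, as a boundary-of-one-dimensional-moduli-space statement in the spirit of Proposition \ref{prp: chain map eqn}, using the disks of Proposition \ref{prp:flowdisks} together with the observation that they lie outside the central annulus (the combinatorial identity is delegated to a cited reference). Your reproducing-kernel/adjunction trick correctly shows that annihilation of $\sfPsi_{\mathrm{an}}^{(0,0)}$ is \emph{equivalent} to the single-torus identity $(\sfP'_{0,0}-\sfP'_{1,0})\sfW_{\mu,\emptyset}=-(q-q^{-1})\sum_{k>0} r\bigl(\sfC'_{k},\sfC_{k}^{\dagger}\sfW_{\mu,\emptyset}\bigr)$, which is an attractive reformulation; what each approach buys is clear (yours is purely algebraic and self-contained in $\Sk^2(S^1\times\R^2)$, the paper's comes for free from SFT invariance). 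However, you never prove that identity --- you flag it yourself as the crux --- so the existence half remains a plausible sketch rather than a proof.

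The uniqueness half has a genuine gap, precisely at the point you call secondary. The coefficients $c_{\emptyset,\mu}$ sit in classes $(0,|\mu|)$, which are non-negative, so the initial condition of vanishing in negative classes says nothing about them, and they are also not determined at step $|\mu|$ of your induction (the diagonal term degenerates there). Worse, they cannot be recovered from this single relation at any later step: writing $\Omega=\sfPsi_{\mathrm{an}}^{(0,0)}$, for \emph{any} symmetric function $g$ the element $(1\otimes g)\,\Omega=\sum_{\lambda}\sfW_{\lambda,\emptyset}\otimes g\,\sfW_{\lambda,\emptyset}$ is annihilated by the operator \eqref{eq:relrecursionAnnulus}. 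Indeed, granting the single-torus identity, the operator applied to $(1\otimes g)\Omega$ equals $(q-q^{-1})\sum_{k}\bigl(r(\sfC'_k,\cdot)\otimes 1\bigr)\bigl[(1\otimes \sfC_k g)\Omega-(\sfC_k^{\dagger}\otimes g)\Omega\bigr]$, and $(\sfC_k^{\dagger}\otimes g)\Omega=(1\otimes g)(\sfC_k^{\dagger}\otimes 1)\Omega=(1\otimes g)(1\otimes \sfC_k)\Omega=(1\otimes \sfC_k g)\Omega$ by the very adjunction you invoke for existence. Taking $g=1+s_1$ gives a second solution supported in non-negative classes with the same normalization, so triangularity based on the displayed relation alone can never close: any correct uniqueness proof must bring in more, most naturally the companion relation with the two solid-torus factors exchanged (the geometry supplies it, since there are two degree-one Reeb chords $c_1,c_2$, and it is implicit in the symmetric closed-form relation displayed before the theorem and in the paper's eigenbasis argument); with both relations the commutant computation forces $g$ to be a scalar and uniqueness follows. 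A smaller but related error: your claim that the leading $\sfC'_0$-part of $(\sfP'_{0,0}-\sfP'_{1,0})\sfW_{\lambda,\emptyset}$ is diagonal with eigenvalue proportional to $C_\lambda(q^2)$ conflates the relative skein with its capped image --- in $\Sk^2(S^1\times\R^2)$ that $\sfC'_0$-component vanishes identically (this is exactly what your single-torus identity asserts); the diagonal eigenvalue statement from \eqref{eq:P10-eigenvalues} is only available after capping, which is the picture the paper's uniqueness argument uses.
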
 	

\begin{proof}
The fact that the skein valued partition function is annihilated follows from the usual boundary of moduli space argument once we observe that the disks in Proposition~\ref{prp:flowdisks} lie outside the central annulus. Uniqueness follows by expanding in the $\sfW_{\lambda,\overline{\mu}}$ basis which is an eigenbasis for $\sfP_{0,0}-\sfP_{1,0}$. 	

See also \cite{2024arXiv240110730N} for a combinatorial proof of this relation.
\end{proof}	

\begin{rmk}\label{r: formal fillings}
We point out the following general application of the worldsheet skein $D$-modules that generalizes well-known uses of augmentation varieties. Consider a Lagrangian $L$ with negative end along a Legendrian $\Lambda$. Then we can treat $L$ like a closed Lagrangian when it comes to holomorphic disk counting by counting disks with negative asymptotics at Reeb chords of $\Lambda$ and then applying the augmentation at the negative ends. Similarly, we can use skein recursions to fill in negative ends of Lagrangians. We use this construction to define a skein module for two $3$-dimensional Lagrangians that intersect cleanly along a circle. In the notation above for unshifted $S^{1}\times\R^{2}$ and $S^{1}\times P$ we define $\Sk(S^{1}\times\R^{2}\cup S^{1}\times P)$ as generated by isotopy classes of curves that are allowed to switch Lagrangian along the intersection and by imposing the relative recursion relation for an annulus at crossings. A more detailed study of this immersed skein module will appear elsewhere.  
\end{rmk}

\subsection{Basic holomorphic three holed spheres}\label{ssec:threeholedsphere}
Similarly to basic holomorphic disks and annuli, there exist basic holomorphic three holed spheres. From the corresponding formulas for disks and annuli, we conjecture that the local contribution of a basic holomorphic three holed sphere to the Gromov-Witten partition function is given by 
\be\label{eq:definition of Z_{(0,3)}}
	\sfPsi_{(0,3)} \coloneqq 1 + \sum_{\lambda \vdash n \in \IN} (-1)^n \prod_{\ydiagram{1} \in \lambda} q^{c(\ydiagram{1})}\left(q^{h(\ydiagram{1})} - q^{-h(\ydiagram{1})}\right) \sfW_{\lambda,\emptyset} \otimes \sfW_{\lambda,\emptyset} \otimes \sfW_{\lambda,\emptyset} \ \in \ \widehat{\Sk}(S^{1}\times\R^{2})^{\otimes 3}
\ee
for some choice of almost complex neighborhood of the underlying simple holomorphic curve.

Theorem \ref{thm:alternative quiver formula for Hopf} gives an expression for the HOMFLYPT partition function of the Hopf link in terms of the partition functions of a three holed sphere with one boundary component mapped to the unknot in $S^3$ and four basic disks. For this, recall that the map $\mathcal{U}\colon \textnormal{Sk}_+(\R^2 \times S^1) \to \textnormal{Sk}(S^3) \approx R$ defined by mapping the solid torus onto an unframed unknot is given on the basis elements $\sfW_{\lambda,\emptyset}$ by 
\be
\mathcal{U}\left( \sfW_{\lambda,\emptyset}\right) = \left[\prod_{\ydiagram{1}\in\lambda} \frac{a q^{c(\ydiagram{1})}-a^{-1} q^{-c(\ydiagram{1})}}{q^{h(\ydiagram{1})}-q^{-h(\ydiagram{1})}}\right] 
\ee

It follows that the partition function of a three holed sphere with one boundary component mapped to the unknot in $S^3$ is given by
\begin{align*}
\widetilde{\sfPsi}_{(0,3)} \ &= \ \left(1 \otimes 1 \otimes \mathcal{U}\right)\left(\sfPsi_{(0,3)}\right) \\
&= \ 1 + \sum_{\lambda \vdash n \in \IN} (-1)^n \prod_{\ydiagram{1} \in \lambda} \left(a q^{2c(\ydiagram{1})}-a^{-1}\right) \sfW_{\lambda,\emptyset} \otimes \sfW_{\lambda,\emptyset} \ \in \ \widehat{\Sk}(S^{1}\times\R^{2})^{\otimes 2}.
\end{align*}

The partition function $\widetilde{\sfPsi}_{(0,3)}$ satisfies a recursion relation. We state it for a framed version of $\widetilde{\sfPsi}_{(0,3)}$ (framing $(-1,-1)$) with shifted $a$ powers to make it more directly applicable in the proof of Theorem \ref{thm:alternative quiver formula for Hopf} below.

\begin{prp}\label{prop:relation for Z_{(0,3)}}
	The partition function $\sfPsi_{(0,3)}$ of a three holed sphere satisfies the relations 
	\begin{equation}\label{eq:relation for Z_{(0,3)}),first identity}
		\left( \left(- \sfP_{-1,0}^{(1)} + \sfP_{0,0}^{(1)} \right) \otimes a_{L_2} - a_{L_1} \otimes \left( \sfP_{0,0}^{(2)} - \sfP_{-1,0}^{(2)}\right) \right) \left(\widetilde{\sfPsi}_{(0,3)}^{(-1,-1)}[a]\right) \ = \ 0
	\end{equation}
	and
	\begin{align}\label{eq:relation for Z_{(0,3)}),second identity}
		&\left( \left(a_{L_1} \sfP_{1,-1}^{(1)} - \sfP_{0,-1}^{(1)} + a_{L_1} \left(\sfP_{0,0}^{(1)} - \sfP_{-1,0}^{(1)} \right) \right) \otimes 1 \right) \left(\widetilde{\sfPsi}_{(0,3)}^{(-1,-1)}[a]\right)\\\notag
		&= \left(1 \otimes \left(a_{L_2} \left(\sfP_{0,0}^{(2)} - \sfP_{-1,0}^{(2)}\right) + a_{L_2} \left(a^{2}+a_{L_1}^2\right) \sfP_{-1,1}^{(2)} - a_{L_2}^{2} \sfP_{-2,1}^{(2)} - a^2a_{L_1}^2 \sfP_{0,1}^{(2)} \right) \right) \left(\widetilde{\sfPsi}_{(0,3)}^{(-1,-1)}[a]\right)
	\end{align}
\end{prp}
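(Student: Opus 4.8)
The plan is to verify both identities by direct computation, expanding $\widetilde{\sfPsi}_{(0,3)}^{(-1,-1)}[a]$ in the basis $\sfW_{\lambda,\emptyset}\otimes\sfW_{\mu,\emptyset}$ and applying the explicit operator-action formulas \eqref{eq:P10-eigenvalues} and \eqref{eq:generalformulaePW}. First I would record the coefficients: starting from the conjectural formula \eqref{eq:definition of Z_{(0,3)}}, applying $\mathcal{U}$ to the third factor, then the framing shift \eqref{eq:framed-Z} with $f_1=f_2=-1$ and the $[a]$-shift, one obtains $\widetilde{\sfPsi}_{(0,3)}^{(-1,-1)}[a]=\sum_\lambda d_\lambda\,\sfW_{\lambda,\emptyset}\otimes\sfW_{\lambda,\emptyset}$, supported on the diagonal (the same partition on both factors). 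The structural fact I would isolate first is that $d_\lambda$ is \emph{multiplicative over boxes}: since $\prod_{\ydiagram{1}\in\lambda}\bigl(aq^{2c(\ydiagram{1})}-a^{-1}\bigr)$ and $\kappa(\lambda)=2\sum_{\ydiagram{1}\in\lambda}c(\ydiagram{1})$ are built box-by-box, adding a box of content $c_0$ multiplies $d_\lambda$ by an explicit factor depending only on $c_0$ and the framing/area variables. This multiplicativity is exactly what makes the recursions close.

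For the first identity \eqref{eq:relation for Z_{(0,3)}),first identity}, I would observe that only the meridional operators $\sfP_{0,0}$ and $\sfP_{-1,0}$ occur, and these act diagonally through content polynomials. By \eqref{eq:P10-eigenvalues} one has $(\sfP_{0,0}-\sfP_{-1,0})\sfW_{\lambda,\emptyset}=(q-q^{-1})\,a_L^{-1}C_\lambda(q^{-2})\,\sfW_{\lambda,\emptyset}$ on each factor, with $a_L$ the relevant framing variable. Hence the whole operator in \eqref{eq:relation for Z_{(0,3)}),first identity} is diagonal, and the identity reduces to checking that its scalar eigenvalue on each $\sfW_{\lambda,\emptyset}\otimes\sfW_{\lambda,\emptyset}$ vanishes; this is a one-line computation in $C_\lambda(q^{-2})$ once the prefactors $a_{L_1},a_{L_2}$ are inserted.

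For the second identity \eqref{eq:relation for Z_{(0,3)}),second identity} the partition-changing operators enter: on the first factor the box-removers $\sfP_{1,-1},\sfP_{0,-1}$ together with the diagonal $\sfP_{0,0},\sfP_{-1,0}$, and on the second factor the box-adders $\sfP_{0,1},\sfP_{-1,1},\sfP_{-2,1}$ together with the diagonal part. Applying these to the diagonal series produces, on the left, terms $\sfW_{\lambda^{-}\!,\emptyset}\otimes\sfW_{\lambda,\emptyset}$ and $\sfW_{\lambda,\emptyset}\otimes\sfW_{\lambda,\emptyset}$, and on the right terms $\sfW_{\lambda,\emptyset}\otimes\sfW_{\lambda^{+}\!,\emptyset}$ and $\sfW_{\lambda,\emptyset}\otimes\sfW_{\lambda,\emptyset}$. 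I would then match coefficients of each $\sfW_{\alpha,\emptyset}\otimes\sfW_{\beta,\emptyset}$ separately. The diagonal part $\alpha=\beta$ is again an eigenvalue identity as above. For the off-diagonal part one compares, for $\beta=\alpha^{+}$ obtained by adding a single box of content $c_0$, the left-hand coefficient ($d_{\beta}$ times the factor-one weight for removing that box) with the right-hand coefficient ($d_{\alpha}$ times the factor-two weight for adding it); using the ratio $d_{\alpha^{+}}/d_{\alpha}$ from multiplicativity together with the explicit weights from \eqref{eq:generalformulaePW}, this collapses to a single per-box identity in $c_0,a,a_{L_1},a_{L_2},q$. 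Because every border strip appearing is a single box, the height signs $(-1)^{\hgt}$ in \eqref{eq:generalformulaePW} are constant and cause no trouble.

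The main obstacle I anticipate is not the combinatorics --- which is trivialized by the single-box structure and the multiplicativity of $d_\lambda$ --- but the careful bookkeeping of the three variables $a,a_{L_1},a_{L_2}$, the signs, and the framing/$4$-chain factors coming from \eqref{eq:framed-Z}, \eqref{eq:framed-Pij} and \eqref{eq:aL-Pij-framed}. In particular one must track precisely how the $[a]$-shift and the framing $(-1,-1)$ enter $d_\lambda$, and how the rescalings \eqref{eq:aL-Pij-framed} modify the content-dependent weights of $\sfP_{m,\pm 1}$, so that the per-box factor from $d_{\alpha^{+}}/d_\alpha$ exactly cancels the weight produced by the box-adding and box-removing operators. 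Once these prefactors are fixed consistently, both relations follow from the elementary per-box and diagonal checks described above.
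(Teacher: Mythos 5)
Your proposal is correct and follows essentially the same route as the paper's own proof: expand $\widetilde{\sfPsi}_{(0,3)}^{(-1,-1)}[a]=\sum_\lambda d_\lambda\,\sfW_{\lambda,\emptyset}\otimes\sfW_{\lambda,\emptyset}$ with the box-multiplicative coefficients $d_\lambda=(-1)^{|\lambda|}\prod_{\square\in\lambda}\left(a^2q^{-2c(\square)}-q^{-4c(\square)}\right)$, prove the first identity by the diagonal eigenvalue argument, and prove the second by matching the single-box removal weight $\left(a_{L_1}^2q^{2c}-1\right)$ on the first factor against the single-box addition weights on the second factor through the ratio $d_{\lambda+\square}/d_\lambda=-\left(a^2q^{-2c}-q^{-4c}\right)$, which is exactly the computation the paper carries out. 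One warning about the bookkeeping you yourself flag as the main obstacle: since $\left(\sfP^{(i)}_{0,0}-\sfP^{(i)}_{-1,0}\right)\sfW_{\lambda,\emptyset}=(q-q^{-1})\,a_{L_i}^{-1}C_\lambda(q^{-2})\,\sfW_{\lambda,\emptyset}$, the two terms in the first identity cancel only when each framing variable $a_{L_i}$ multiplies the operator acting on its \emph{own} factor, i.e.\ the operator that the paper's proof actually shows to annihilate the series is $a_{L_1}\left(-\sfP^{(1)}_{-1,0}+\sfP^{(1)}_{0,0}\right)\otimes 1-1\otimes a_{L_2}\left(\sfP^{(2)}_{0,0}-\sfP^{(2)}_{-1,0}\right)$ (the pairing also used later in the proof of Theorem \ref{thm:alternative quiver formula for Hopf}), so your ``one-line computation'' must land on that pairing rather than on the literal placement of $a_{L_1},a_{L_2}$ printed in the Proposition, under which the eigenvalues differ by $a_{L_1}^{-1}a_{L_2}-a_{L_1}a_{L_2}^{-1}$ and do not cancel.
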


\begin{proof}
	The partition function $\widetilde{\sfPsi}_{(0,3)}^{(-1,-1)}[a]$ is given by
	\begin{equation*}
		\widetilde{\sfPsi}_{(0,3)}^{(-1,-1)}[a] \coloneqq 1 + \sum_{\lambda \vdash n \in \IN} (-1)^n \prod_{\ydiagram{1} \in \lambda} \left(a^2 q^{-2c(\ydiagram{1})} - q^{-4c(\ydiagram{1})}\right) \sfW_{\lambda,\emptyset} \otimes \sfW_{\lambda,\emptyset}.
	\end{equation*}
	It is immediate that the operator 
	\begin{equation*}
		a_{L_1} \left(- \sfP_{-1,0}^{(1)} + \sfP_{0,0}^{(1)} \right) \otimes 1 - 1 \otimes a_{L_2} \left( \sfP_{0,0}^{(2)} - \sfP_{-1,0}^{(2)}\right)
	\end{equation*}
	annihilates $\widetilde{\sfPsi}_{(0,3)}^{(-1,-1)}[a]$ since $a_L(- \sfP_{-1,0} + \sfP_{0,0})$ has the basis elements $\sfW_{\lambda,\emptyset}$ as eigenvectors with corresponding eigenvalues which are independent of $a_L$. This implies the relation \eqref{eq:relation for Z_{(0,3)}),first identity}. It remains to prove that 
	\begin{equation*}
		\begin{gathered}
			\left( \left( a_{L_1} \sfP_{1,-1}^{(1)} - \sfP_{0,-1}^{(1)}\right) \otimes 1 \right) \, \widetilde{\sfPsi}_{(0,3)}^{(-1,-1)}[a] \\
			= 
			\left(  \left(a^{2}+a_{L_1}^2\right) \otimes a_{L_2} \sfP_{-1,1}^{(2)} - 1\otimes a_{L_2}^{2} \sfP_{-2,1}^{(2)} - a^2a_{L_1}^2  \otimes \sfP_{0,1}^{(2)} \right) \, \widetilde{\sfPsi}_{(0,3)}^{(-1,-1)}[a].
		\end{gathered}
	\end{equation*}
	To show this, recall that
	\begin{equation*}
		\left(\left(a_{L_1} \sfP_{1,-1}^{(1)} - \sfP_{0,-1}^{(1)}\right)\otimes 1\right) \, \sfW_{\lambda,\emptyset}\otimes \sfW_{\emptyset,\emptyset} = \sum_{\lambda \in \mu + \ydiagram{1}} \left(a_{L_1}^2 q^{2 c(\ydiagram{1})} - 1 \right) \sfW_{\mu,\emptyset}\otimes \sfW_{\emptyset,\emptyset}
	\end{equation*}
	and that 
	\begin{align*}
		\Bigl( \left(a^{2}+a_{L_1}^2\right) \otimes a_{L_2} \sfP_{-1,1}^{(2)} &- 1\otimes a_{L_2}^{2} \sfP_{-2,1}^{(2)} - a^2a_{L_1}^2 \otimes \sfP_{0,1}^{(2)} \Bigr) \, \sfW_{\emptyset,\emptyset}\otimes  \sfW_{\mu,\emptyset} \\
		&= \sum_{\lambda \in \mu + \ydiagram{1}} \left(\left(a^2 + a_{L_1}^2\right) q^{-2c(\ydiagram{1})} - q^{-4c(\ydiagram{1})} - a^2 a_{L_1}^2 \right) \sfW_{\emptyset,\emptyset}\otimes \sfW_{\lambda,\emptyset}.
	\end{align*}
	We compute
	\begin{align*}
			&\left( \left(  a_{L_1} \sfP_{1,-1}^{(1)} - \sfP_{0,-1}^{(1)}\right) \otimes 1 \right)\left(\widetilde{\sfPsi}_{(0,3)}^{(-1,-1)}[a]\right)\\
			&= \sum_{\mu \vdash n \in \IN_0} (-1)^n \prod_{\ydiagram{1} \in \mu}\left(a^2 q^{-2c(\ydiagram{1})} - q^{-4c(\ydiagram{1})}\right)\\
			 & \qquad\qquad  \cdot\left(- \sum_{\lambda \in \mu + \ydiagram{1}} \left(a^2 q^{-2c(\ydiagram{1})} - q^{-4c(\ydiagram{1})} \right) \left(a_{L_1}^2 q^{2 c(\ydiagram{1})} - 1 \right)  \right) \sfW_{\mu,\emptyset} \otimes \sfW_{\lambda,\emptyset}\\
			&=  \sum_{\mu \vdash n \in \IN_0} (-1)^n \prod_{\ydiagram{1} \in \mu}\left(a^2 q^{-2c(\ydiagram{1})} - q^{-4c(\ydiagram{1})}\right) \\
			& \qquad\qquad \cdot \left(-\sum_{\lambda \in \mu + \ydiagram{1}} \left( a^2 a_{L_1}^2 - \left( a^2 + a_{L_1}^2 \right) q^{-2c(\ydiagram{1})} +  q^{-4c(\ydiagram{1})}\right) \right) \sfW_{\mu,\emptyset} \otimes \sfW_{\lambda,\emptyset}\\
			&= \left( \left(a^2 + a_{L_1}^2\right)\otimes a_{L_2} \sfP_{-1,1}^{(2)} - 1\otimes a_{L_2}^2 \sfP_{-2,1}^{(2)} - a^2 a_{L_1}^2\otimes \sfP^{(2)}_{0,1} \right)\left(\widetilde{\sfPsi}_{(0,3)}^{(-1,-1)}[a]\right).
	\end{align*}
	The proposition follows.
\end{proof}

\section{Skein valued recursion for the Hopf link}\label{sec:hopf}
In this section we derive the skein recursion for the Hopf link from knowledge of holomorphic curves with Reeb chord asymptotics at its ideal contact boundary. 
\begin{figure}[h!]
	\begin{center}
		\includegraphics[width=0.75\textwidth]{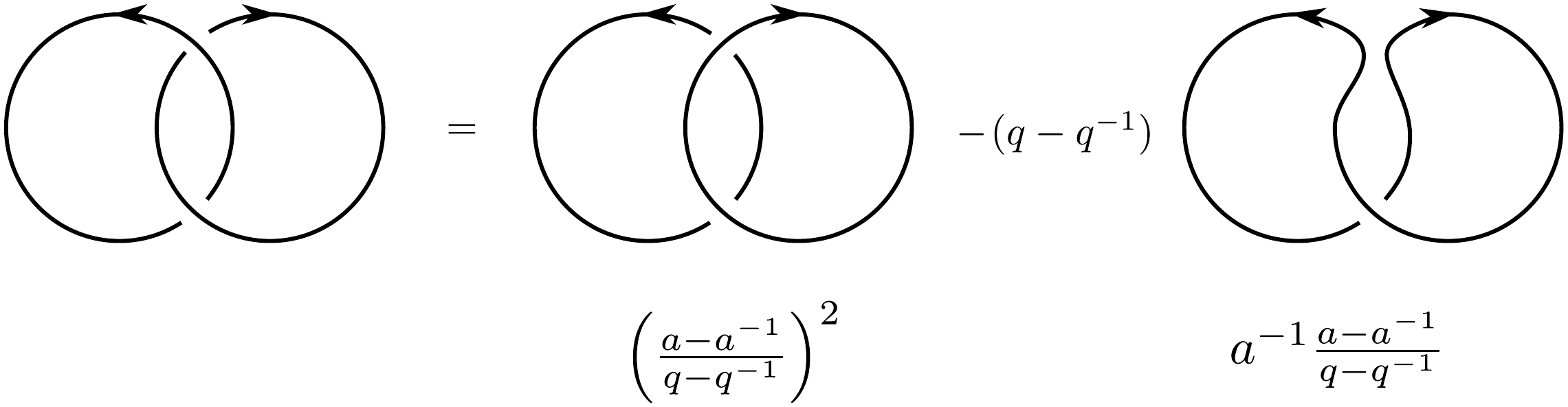}
		\caption{The Hopf link with negative crossings.}
		\label{fig:negative-hopf}
	\end{center}
\end{figure}
The main new ingredient compared to earlier examples are the appearance of degree zero Reeb chords. When there are no such chords recursion relations can be read off directly from the moduli spaces of holomorphic curves with asymptotics at degree one Reeb chords. Here such moduli spaces have broken curves in their boundary and the recursion relation appears when the inside pieces of such curves are eliminated algebraically. We describe the curves in Section~\ref{ssec:curves on Leg Hopf}, carry out the elimination theory and prove Theorem~\ref{t:Hopf} in Section~\ref{ssec:proofThmHopf}. In Section~\ref{ssec:auxthmHopf} we discuss auxiliary aspects of the recursion relation.

\subsection{Holomorphic curves on the Legendrian conormal of the Hopf link}\label{ssec:curves on Leg Hopf}
As for any knot or link, holomorphic curves on the Legendrian conormal can be computed in the limit when the conormal approaches a multiple of the unknot conormal where the holomorphic curves on the conormal correspond to holomorphic curves on the unknot conormal with flow trees attached. Here the flow trees are described by the link conormal front over the unknot conormal.

For the Hopf link conormal Legendrian, the Morse flow, the unknot disks and resulting holomorphic curves were determined in~\cite{Ekholm:2018iso}. We use a more symmetric Morse flow here that results from isotoping the Legendrian. The resulting Morse flow is shown in Figure~\ref{fig:Hopf-Legendrian}. To see the isotopy, recall that the Hopf link  results from a perturbation of a Bott-Morse Legendrian where the Bott manifold of Reeb chords are the stable manifolds of $a_{ij}$. The isotopy that takes us from~\cite{Ekholm:2018iso} to Figure~\ref{fig:Hopf-Legendrian} simply slides $a_{21}$ and $b_{21}$ along its Bott-manifold.

\begin{figure}[h!]
\begin{center}
\includegraphics[width=0.95\textwidth]{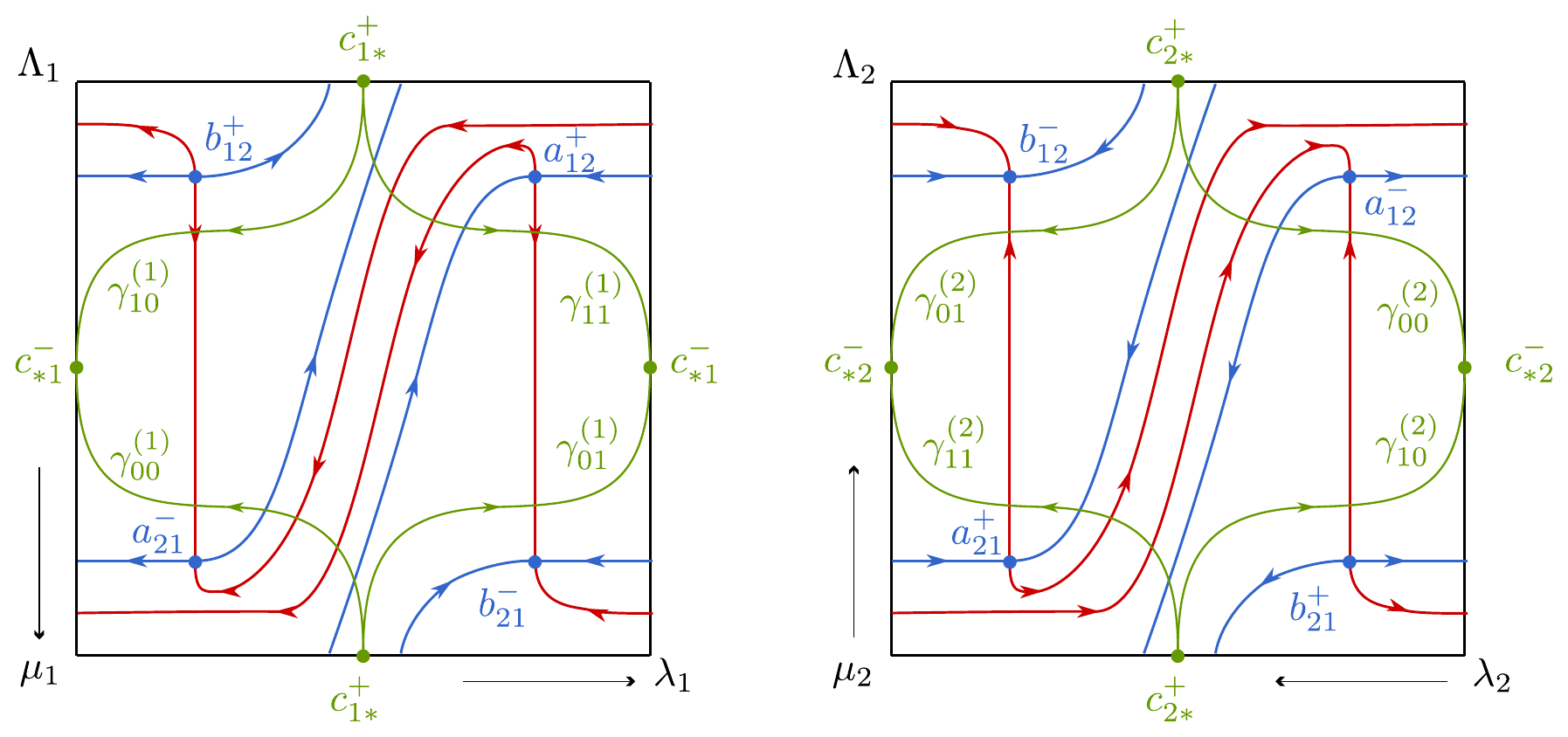}
\caption{Legendrian tori of the Hopf link conormals. The oriented basis $(\lambda_j,\mu_j)$ are induced by the natural orientation on the complement filling.}
\label{fig:Hopf-Legendrian}
\end{center}
\end{figure}

As mentioned above, we follow the SFT-terminology of~\cite{Ekholm:2018iso}, and call the curve count at infinity the Hamiltonian. We write $\sfH^{\textnormal{ws}}(c,a_{1},\dots,a_{m})$ for the worldsheet skein valued count of holomorphic curves with positive punctures at the degree one chord $c$ and degree 0 chords $a_{1},\dots,a_{m}$. We compute relevant parts of the Hamiltonian directly from Figure~\ref{fig:Hopf-Legendrian}.

\subsubsection{Reeb chords of the Hopf-link Legendrian}
The Reeb chords are
\be
c_{11},\;c_{12},\; c_{21},\; c_{22}, \; b_{12}, \; b_{21}
\ee
of degree $1$ and
\be
a_{12},\; a_{21}
\ee
of degree $0$.

\subsubsection{Curves with positive puncture at $c_{11}$}\label{ssec:c11}
There are the following five curves with positive puncture at $c_{11}$, 
\be\label{eq:Hc11}
\sfH^{\textnormal{ws}}(c_{11}) = 
\ \picLabel{{$\text{D}_{00}$}{70}{130}} {Hc11_00} {.350}
\ -\  \picLabel{{$\text{D}_{10}$}{70}{130}} {Hc11_10} {.350}
\ -\  \picLabel{{$\text{D}_{01}$}{70}{130}} {Hc11_01} {.350}
\ +\  \picLabel{{$\text{D}_{11}$}{70}{130}} {Hc11_11} {.350}
\ +\  \picLabel{{$\text{D}_{00}$}{130}{120}} {Hc11_a12_a21_new} {.350}
\ee
where the curves $\alpha_1,\alpha_2,\alpha_3$ arise from attaching Morse flow lines to $\text{D}_{00}$ as shown in Figure~\ref{fig:alpha123}. Here, $\text{D}_{i,j}$ is the disk that has boundary $\gamma_{i,j}^{(1)}$ on $\Lambda_1$ and thus boundary $\gamma_{1-i,1-j}^{(2)}$ on $\Lambda_2$.
The signs in this and similar formulas below are determined by signs of the big disks and intersection signs of (oriented) boundaries of big disks with flow lines attached to them as in \cite{ekholm2013knot}.
\begin{figure}[h!]
\begin{center}
\includegraphics[width=0.75\textwidth]{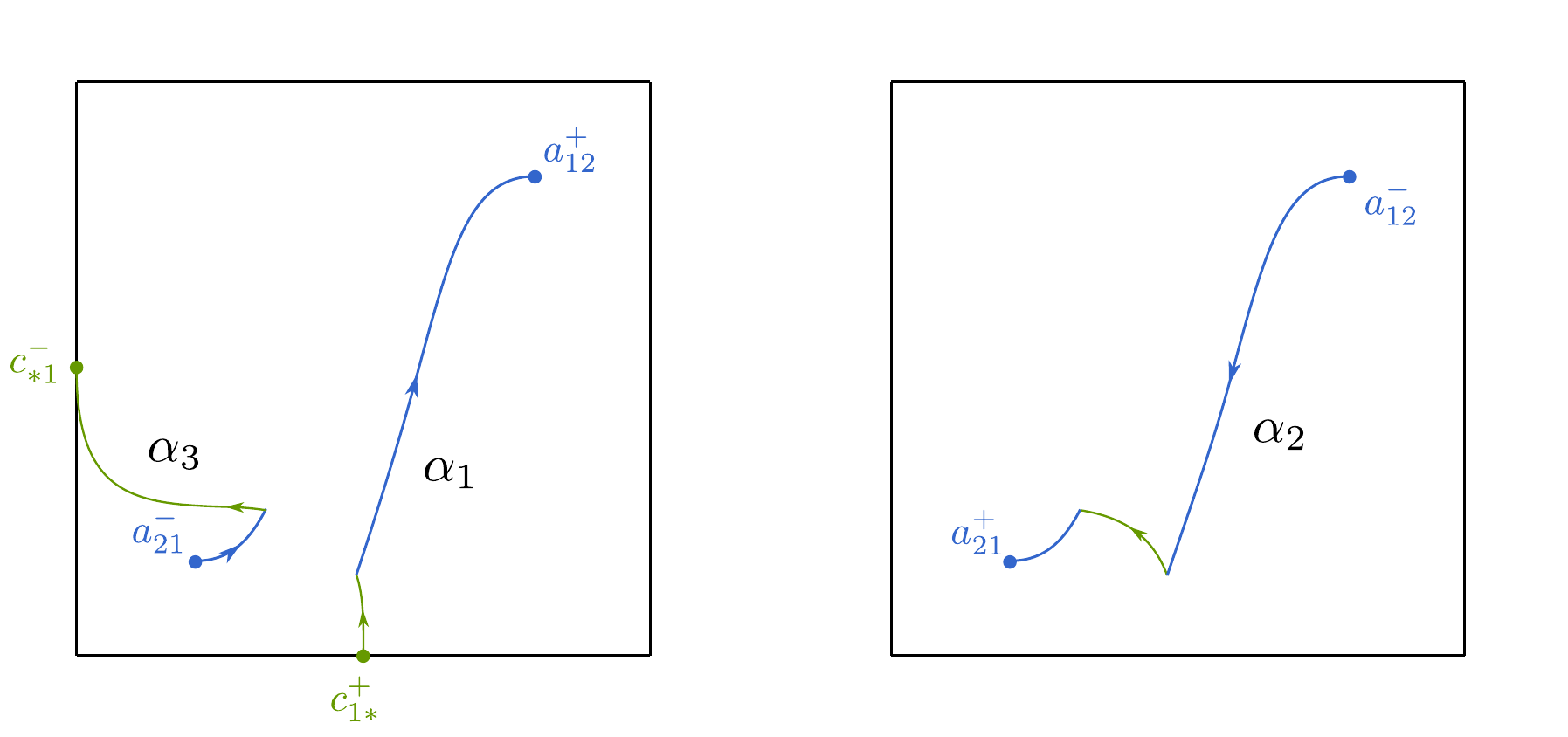}
\caption{Boundary components of the last disk in~\eqref{eq:Hc11}}
\label{fig:alpha123}
\end{center}
\end{figure}

\subsubsection{Curves with positive puncture at $c_{22}$}\label{ssec:c22}
There are the following five curves with positive puncture at $c_{22}$ (symmetric to $c_{11}$).
\be\label{eq:Hc22}
\sfH^{\textnormal{ws}}(c_{22}) = 
\ \picLabel{{$\text{D}_{11}$}{70}{130}} {Hc22_00} {.350}
\ -\  \picLabel{{$\text{D}_{01}$}{70}{130}} {Hc22_10} {.350}
\ -\  \picLabel{{$\text{D}_{10}$}{70}{130}} {Hc22_01} {.350}
\ +\  \picLabel{{$\text{D}_{00}$}{70}{130}} {Hc22_11} {.350}
\ +\  \picLabel{{$\text{D}_{11}$}{130}{120}} {Hc22_a21_a12_new} {.350}
\ee
where the curves $\beta_1,\beta_2,\beta_3$ are shown in Figure~\ref{fig:beta123}.

\begin{figure}[h!]
\begin{center}
\includegraphics[width=0.75\textwidth]{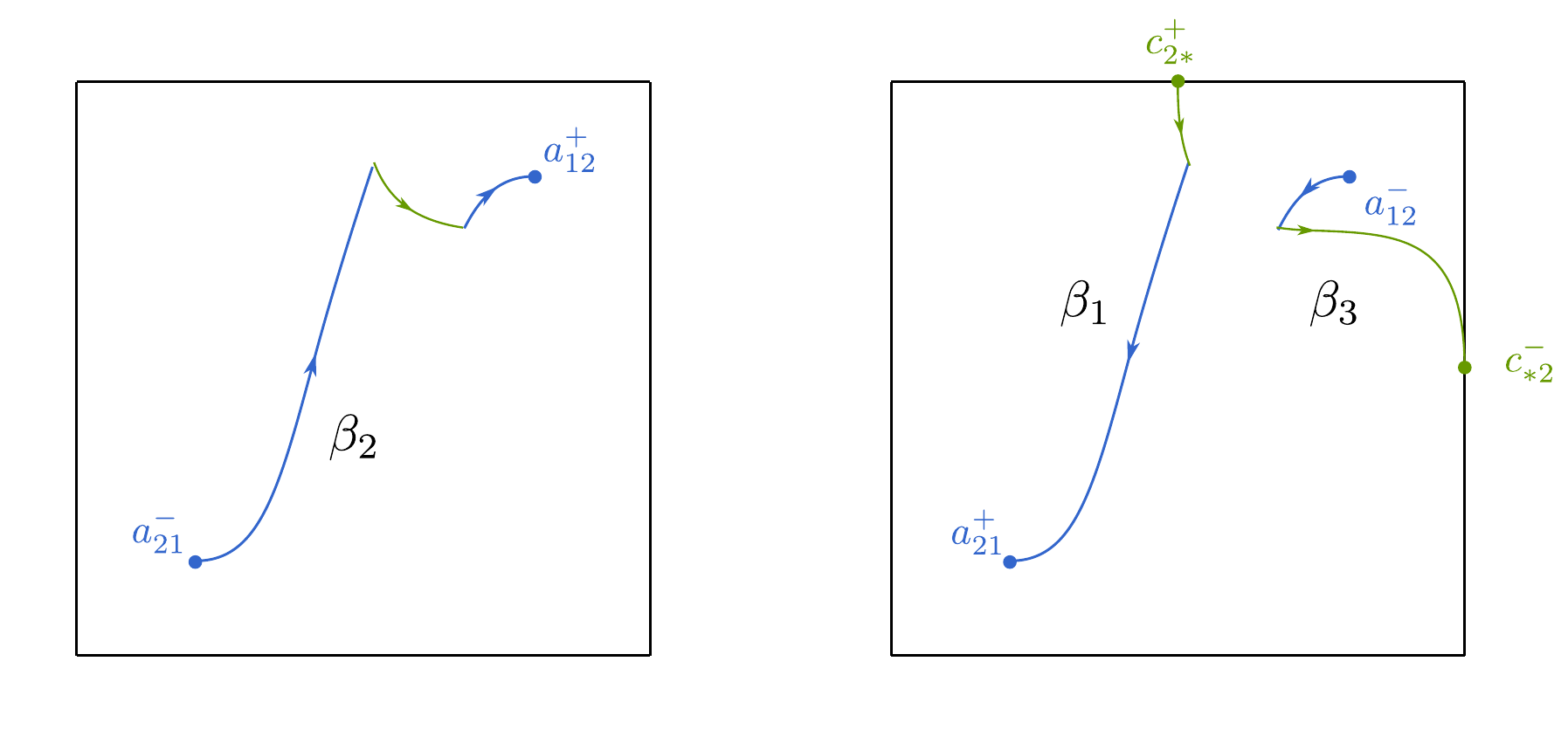}
\caption{Boundary components of the last disk in~\eqref{eq:Hc22}}
\label{fig:beta123}
\end{center}
\end{figure}

\subsubsection{Curves with positive punctures at $c_{12}$ and $a_{21}$}\label{ssec:c12a21}
There are the following four curves with positive punctures at $c_{12}$ and $a_{21}$.  
\be\label{eq:Hc12}
\begin{split}
	\sfH^{\textnormal{ws}}(c_{12},a_{21}) 
	& = 
	\ \picLabel{{$\text{D}_{00}$}{105}{115} {$\text{T}_{a_{21}}$}{305}{100}} {Hc12_a12_new} {.350}
	\qquad-\ \  \picLabel{{$\text{D}_{11}$}{105}{115} {$\text{T}_{a_{21}}$}{305}{100}} {Hc12_a12_bis_new} {.350}
	\\
	& -\  \picLabel{{$\text{D}_{11}$}{70}{100}} {Hc12_a21_new} {.350}
	\ -\  \picLabel{{$\text{D}_{10}$}{70}{100}}{Hc12_a21_bis_new} {.350}
\end{split}
\ee
where the curves arise from attaching Morse flow lines to the respective disks as shown in Figure~\ref{fig:c12-boundaries}, and $\text{T}_{a_{21}}$ denotes the trivial strip over the Reeb chord $a_{21}$. 

\begin{figure}
     \centering
     \begin{subfigure}[b]{0.49\textwidth}
         \centering
         \includegraphics[width=\textwidth]{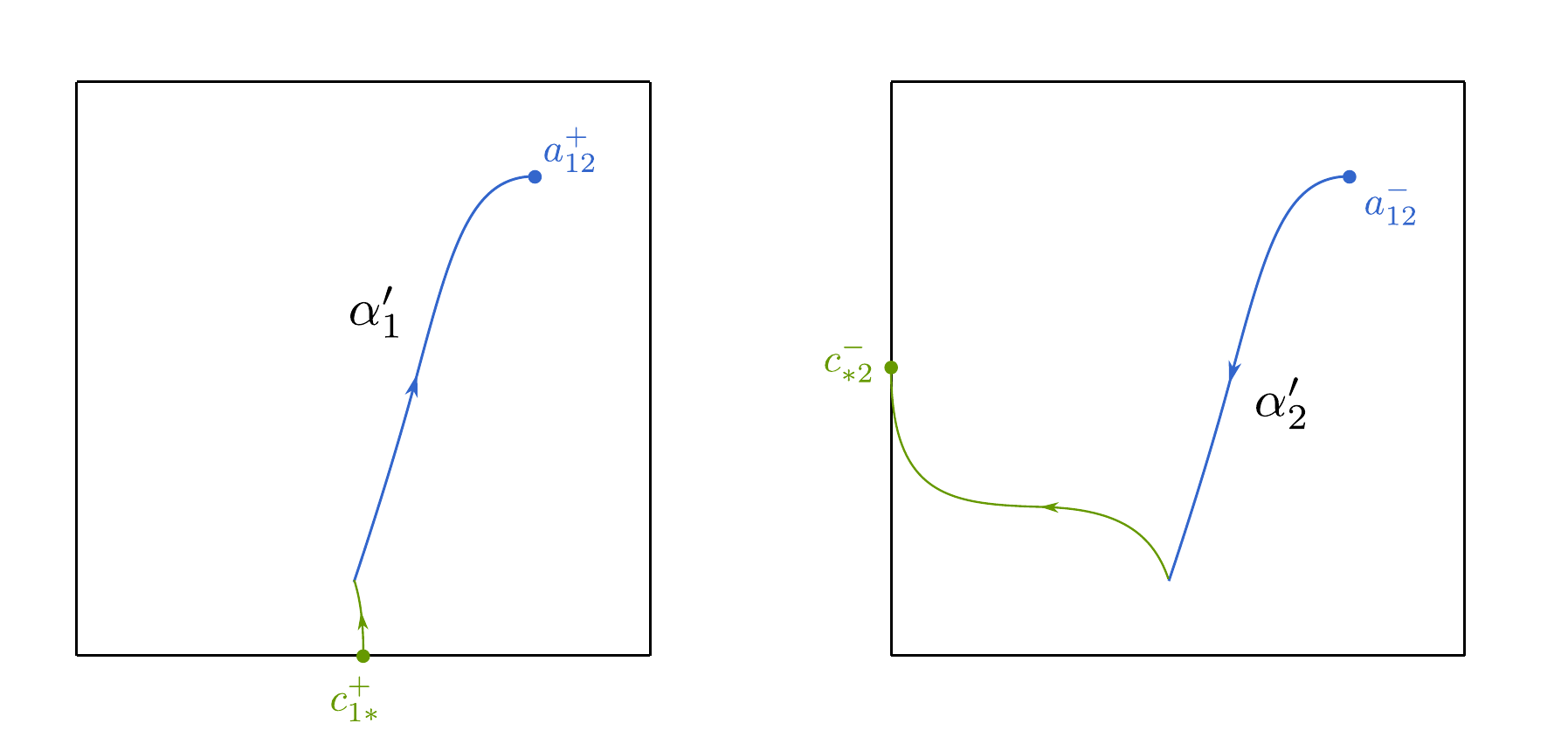}
         \caption{}
         \label{fig:}
     \end{subfigure}
     \hfill
     \begin{subfigure}[b]{0.49\textwidth}
         \centering
         \includegraphics[width=\textwidth]{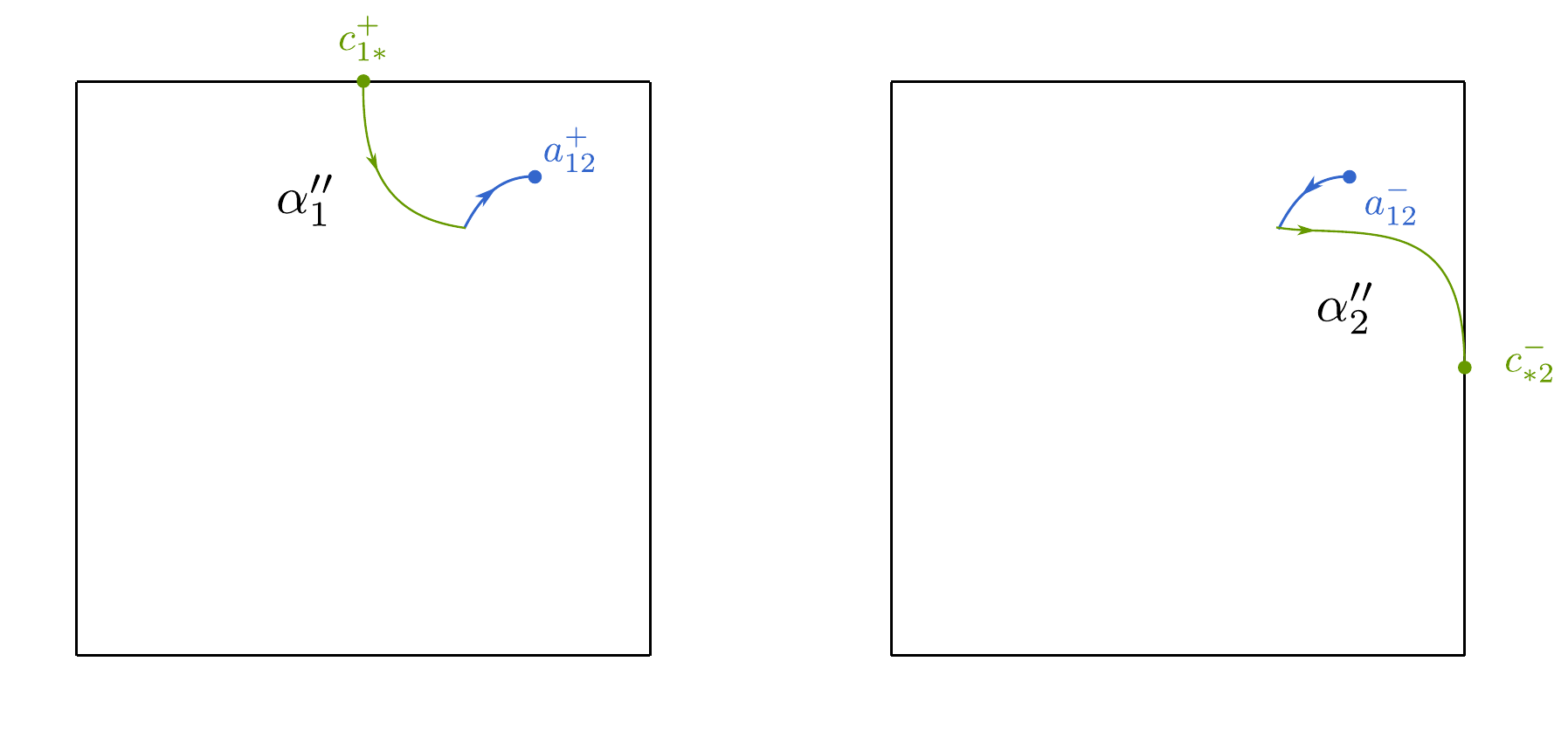}
         \caption{}
         \label{fig:}
     \end{subfigure}
     \begin{subfigure}[b]{0.49\textwidth}
         \centering
         \includegraphics[width=\textwidth]{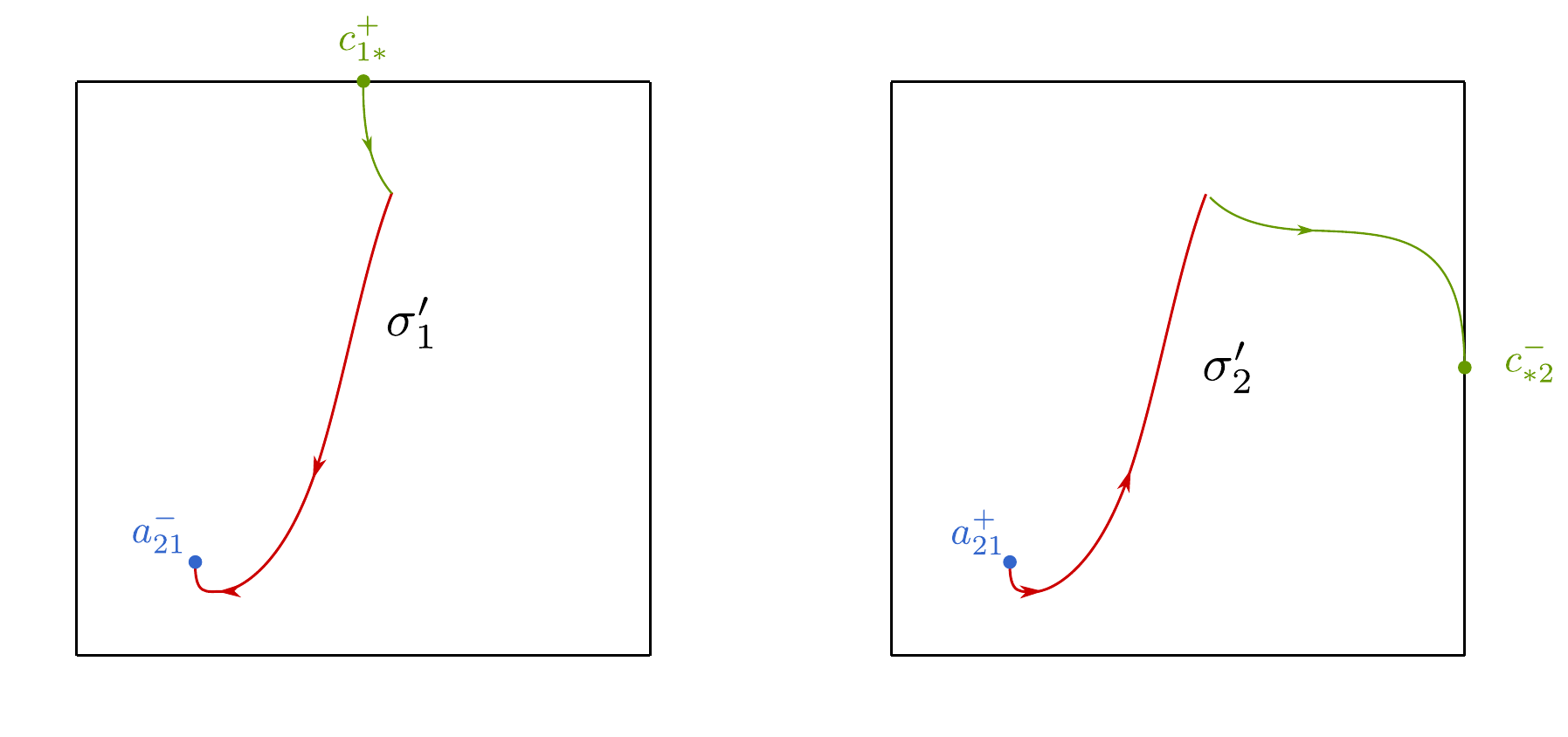}
         \caption{}
         \label{fig:}
     \end{subfigure}
     \hfill
     \begin{subfigure}[b]{0.49\textwidth}
         \centering
         \includegraphics[width=\textwidth]{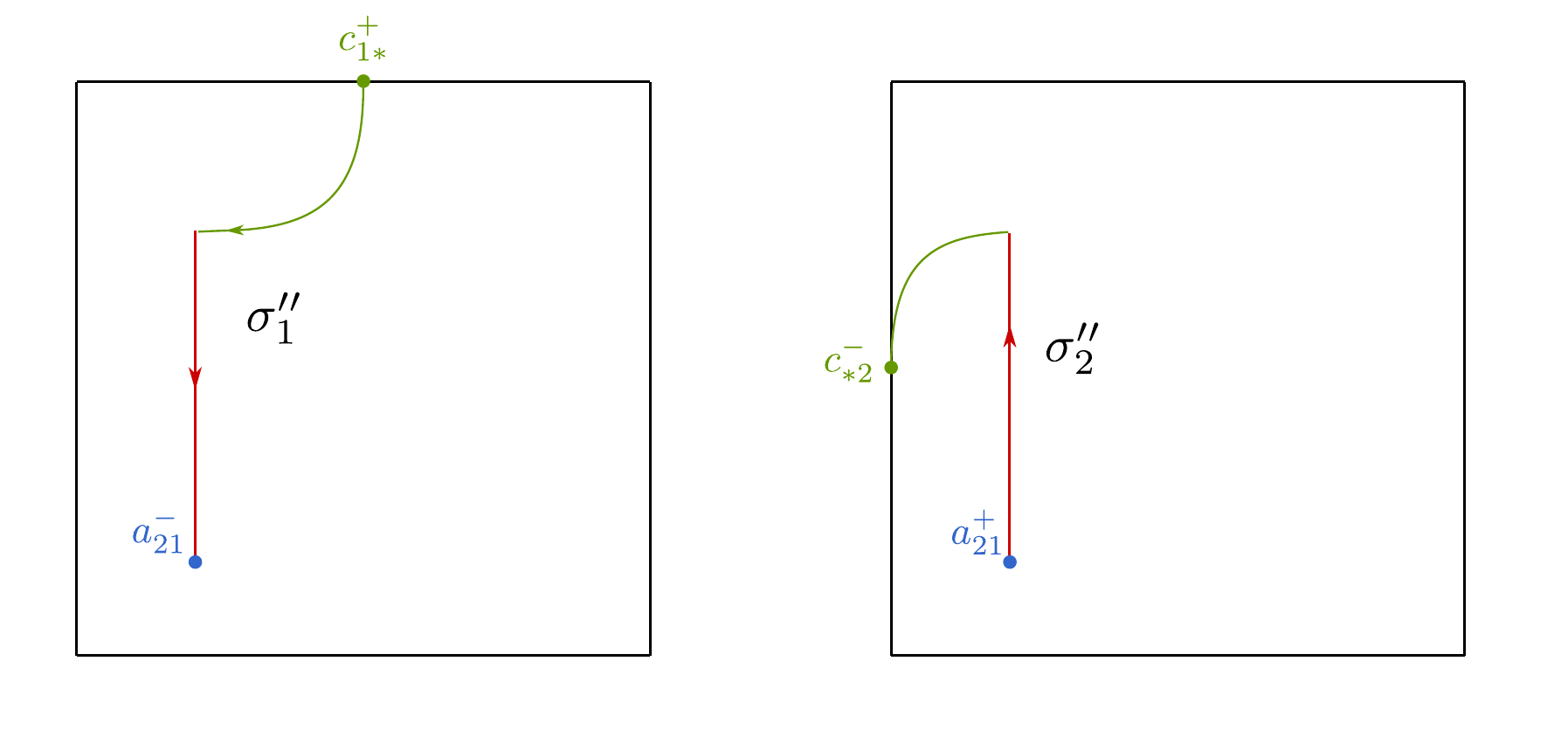}
         \caption{}
         \label{fig:}
     \end{subfigure}
        \caption{Boundaries of curves in~\eqref{eq:Hc12}. Trivial strips from $a_{21}$ to itself are omitted.}
        \label{fig:c12-boundaries}
\end{figure}

\subsubsection{Curves with positive punctures at $c_{21}$ and $a_{12}$}\label{ssec:c21a12}
There are the following four curves with positive punctures at $c_{21}$ and $a_{12}$ (symmetric to $c_{12}$ and $a_{21}$). 
\be\label{eq:Hc21}
\begin{split}
	\sfH^{\textnormal{ws}}(c_{21},a_{12}) & = 
	\ \picLabel{{$\text{D}_{11}$}{105}{115} {$\text{T}_{a_{12}}$}{305}{100}} {Hc21_a21_new} {.350}
	\qquad +\ \ \picLabel{{$\text{D}_{00}$}{105}{115} {$\text{T}_{a_{12}}$}{305}{100}} {Hc21_a21_bis_new} {.350}
	\\
	& +\  \picLabel{{$\text{D}_{00}$}{70}{100}}{Hc21_a12_new}{.350}
	\ +\  \picLabel{{$\text{D}_{01}$}{70}{100}}{Hc21_a12_bis_new}{.350}
\end{split}
\ee
where the curves arise from detours along Morse flow lines to the respective disks as shown in Figure~\ref{fig:c21-boundaries},

\begin{figure}
     \centering
     \begin{subfigure}[b]{0.49\textwidth}
         \centering
         \includegraphics[width=\textwidth]{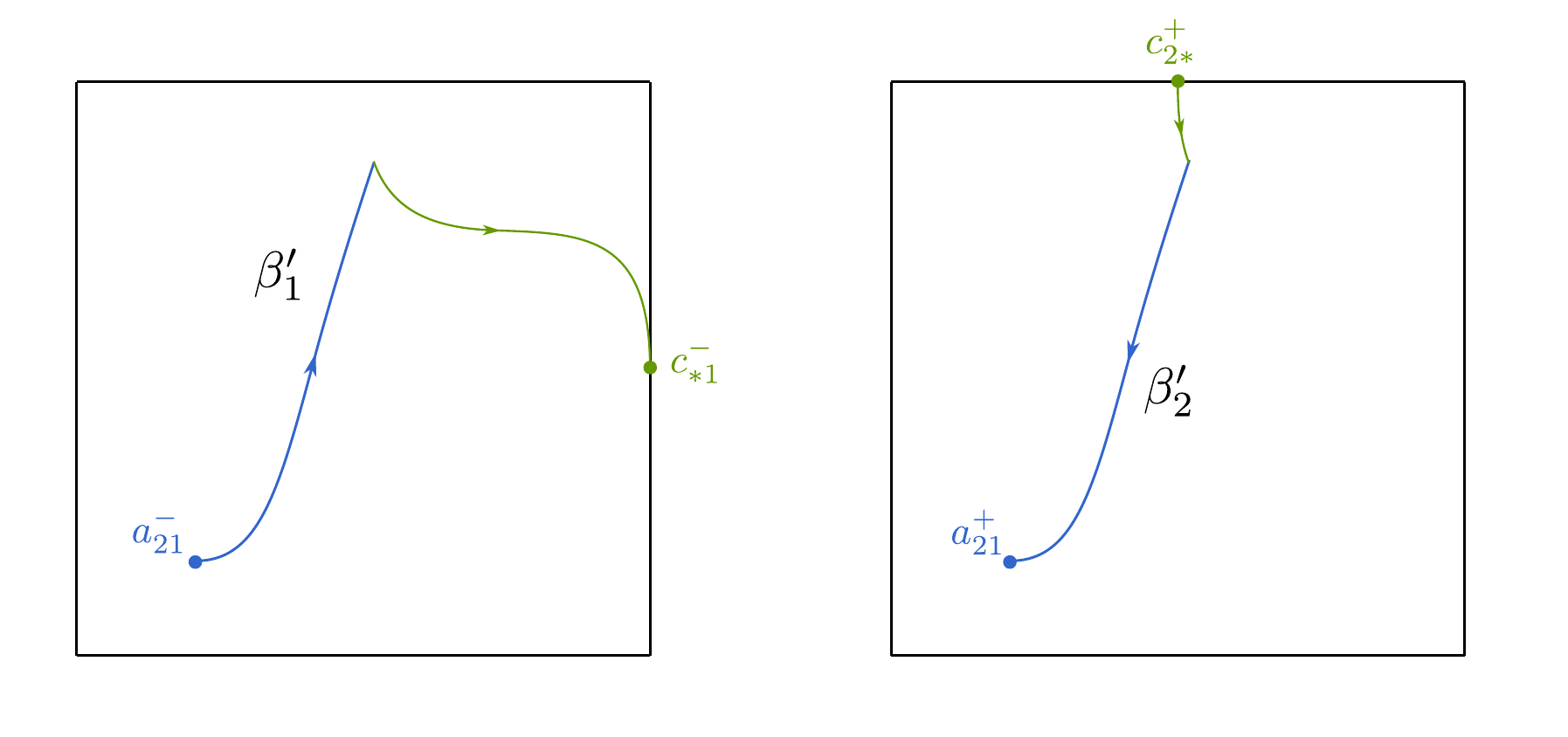}
         \caption{}
         \label{fig:}
     \end{subfigure}
     \hfill
     \begin{subfigure}[b]{0.49\textwidth}
         \centering
         \includegraphics[width=\textwidth]{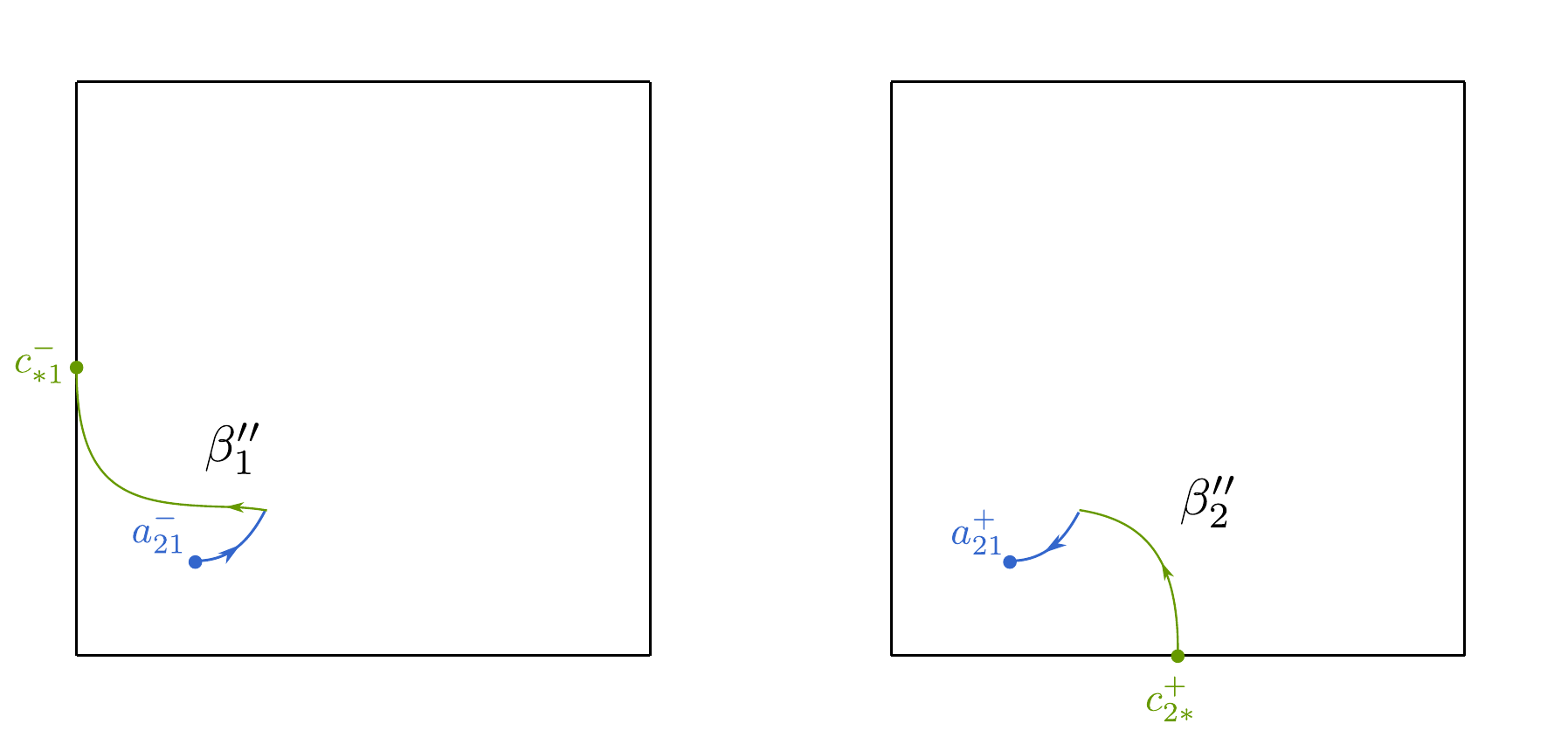}
         \caption{}
         \label{fig:}
     \end{subfigure}
     \begin{subfigure}[b]{0.49\textwidth}
         \centering
         \includegraphics[width=\textwidth]{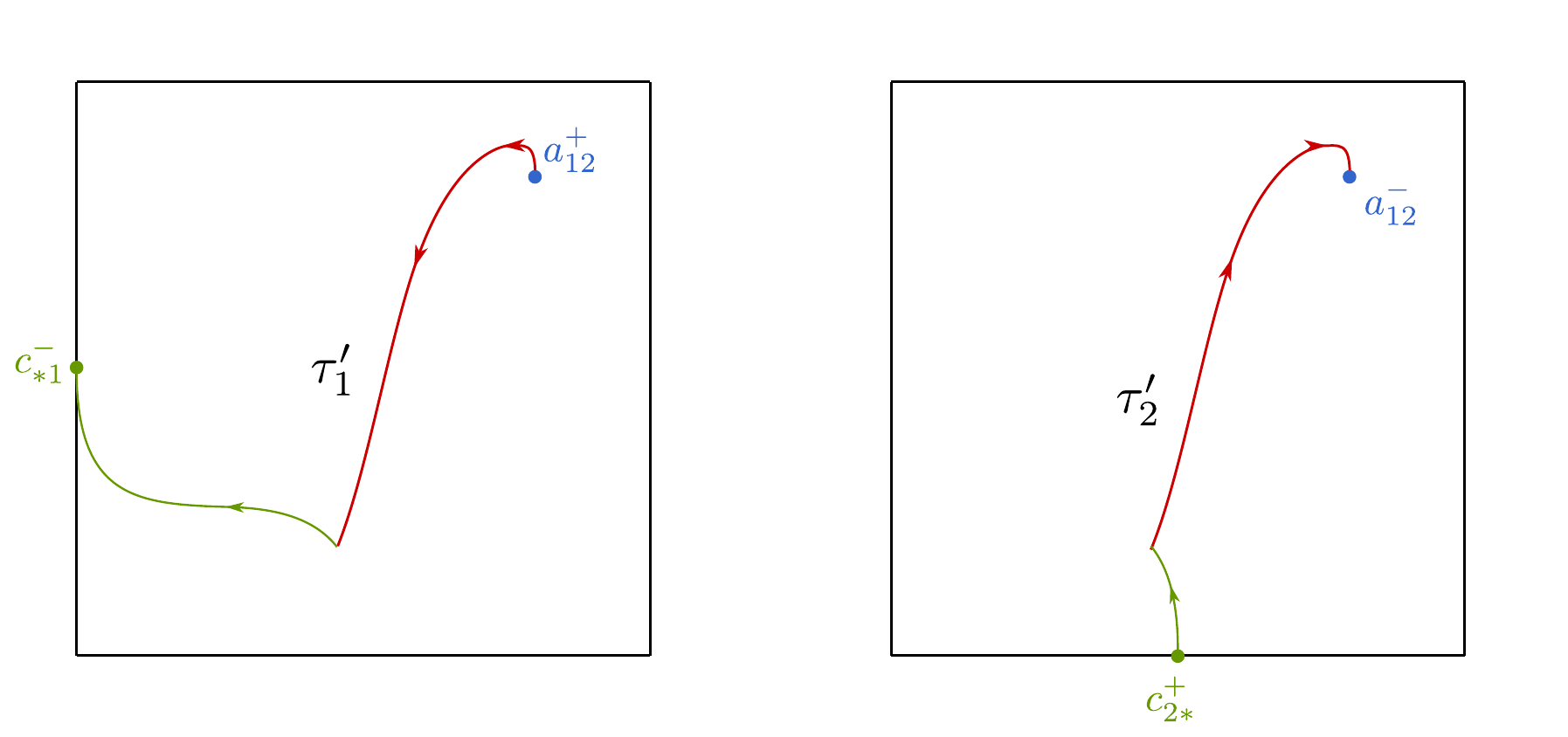}
         \caption{}
         \label{fig:}
     \end{subfigure}
     \hfill
     \begin{subfigure}[b]{0.49\textwidth}
         \centering
         \includegraphics[width=\textwidth]{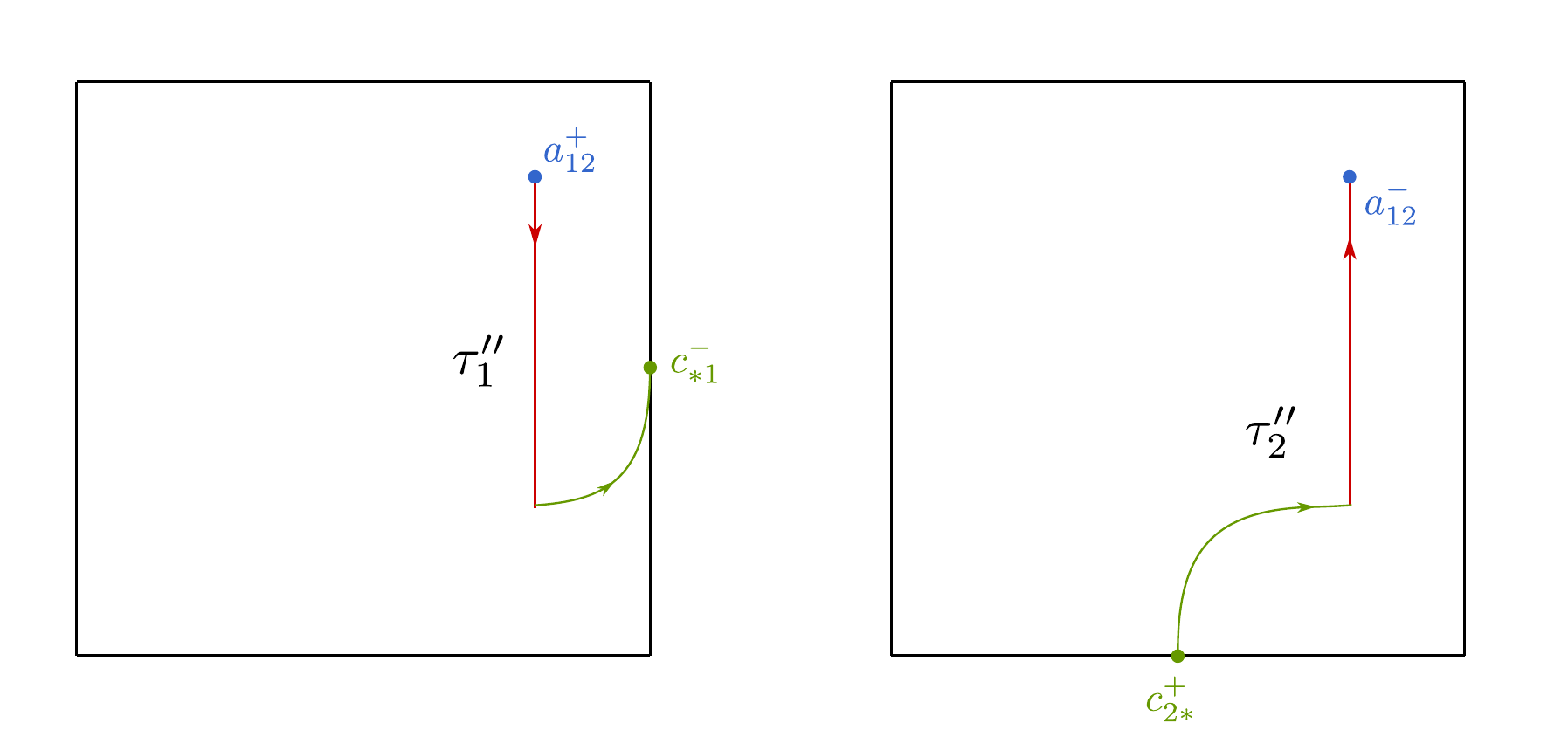}
         \caption{}
         \label{fig:}
     \end{subfigure}
        \caption{Boundaries of curves in~\eqref{eq:Hc21}. Trivial strips from $a_{12}$ to itself are omitted.}
        \label{fig:c21-boundaries}
\end{figure}

\subsection{Proof of Theorem~\ref{t:Hopf} -- worldsheet skein $D$-module generators}\label{ssec:proofThmHopf}
In this section we prove the first part of Theorem~\ref{t:Hopf}. We first show that the augmentation variety is generated by augmentations of Lagrangian fillings, then we derive the three recursion relations. We then show in Section \ref{ssec:specialization-and-uniqueness}   that these determine skein valued partition functions uniquely.

\subsubsection{Augmentations and Lagrangian fillings}
The augmentation variety of the Hopf link is cut out by the following three equations in $(\C^{\ast})^{4}$, see~\cite{Aganagic:2013jpa},
\be
\begin{split}\label{eq:classical-aug-var-factorized}
	A_1  \ &= \ (1 - y_1 - x_1 + a^2 x_1 y_1) - (1 - y_2- x_2 + a^2 x_2 y_2) \ = \ 0,\\
	A_2' \ &= \ (1 - y_1 - x_1 + a^2 x_1 y_1)(y_1- x_2) \ = \ 0, \\
	A_3' \ &= \ (1 - y_1 - x_1 + a^2 x_1 y_1)(x_1 - y_2) \ = \ 0.
\end{split}
\ee
There are two main branches in the augmentation variety, each of dimension two,
\be\label{eq:V-Hopf-branches}
\begin{split}
	V_{\Gamma}^{(1)} \ &=  \ \left\{x_1 = y_2 ,\quad x_2 = y_1\right\}, \\
	V_{\Gamma}^{(2)} \ &= \  \left\{1 - y_1 - x_1 + a^2 x_1 y_1 = 0, \quad 1 - y_2- x_2 + a^2 x_2 y_2=0\right\}. 
\end{split}
\ee
The branches intersect along a curve (a `diagonal unknot curve'):
\begin{equation} \label{eq:V-Hopf-branches-intersection}
	V_{\Gamma}^{(1)}\cap V_{\Gamma}^{(2)} \ = \ \left\{x_1 = y_2, \; x_2 = y_1, \; 1 - y_1 - x_1 + a^2 x_1 y_1 = 0\right\}. 
\end{equation}

Along $V_\Gamma^{(1)}$ augmentations are induced by the Lagrangian filling which is the complement of the Hopf link, with the topology $T^{2}\times\R$. Thinking of the Hopf link as two fibers of the Hopf fibration it is easy to see that the complement of the Hopf link is a Lagrangian that admits an exact deformation which is disjoint from the 0-section in $T^{\ast} S^{3}$. Exactness then implies that the Lagrangian does not support any holomorphic curve and the associated partition function is simply $1$ and the corresponding    
homomorphism
\[
\sfH_{\text{BRST};\Gamma} \to \Sk(T^{2}\times \R),
\]
then simply takes $1$ to $1$.

The branch $V_{\Gamma}^{(2)}$ is simply the product of the augmentation varieties of the unknot components of the Hopf link, $V^{(2)}=V_{U_1}\times V_{U_2}$ and has augmentations induced by combinations of the fillings of the unknot components.

Noting that the complement of an unknot component is Hamiltonian isotopic to the conormal of the other component we find that along $V_{\Gamma}^{(2)}$ we have skein homomorphisms
\[
\sfH_{\Gamma;\text{BRST}} \to \Sk(L_{1}\cup L_{2}),
\]
where $L_{j}\approx S^{1}\times\R^{2}$ and where $L_{1}$ and $L_{2}$ are either conormal or complement fillings of components  according to the asymptotics. Counting curves on these Lagrangian fillings gives wave functions. Below we obtain recursion relations by elimination in the skein and therefore the curve counting wave function satisfies the recursion relation by skein invariance and the description of the boundary of 1-dimensional moduli spaces. The first statement of the theorem follows.

\subsubsection{Skein valued elimination theory}
We next turn to the world sheet skein $D$-module generators. We use Proposition \ref{prp: chain map eqn} and elimination.

\subsubsection*{The operator $\sfA^{\textnormal{ws}}_1$ from $\sfH^{\textnormal{ws}}(c_{11})$ and $\sfH^{\textnormal{ws}}(c_{22})$}
We consider the Hamiltonians of Reeb chords $c_{11}, c_{22}$, see \eqref{eq:Hc11} --~\eqref{eq:Hc22}.
In each case there is only one disk with negative punctures, the last one which is a triangle.
Note that both have the \emph{same} negative punctures and that the corresponding terms in the boundary of the 1-dimensional moduli spaces that arise by gluing rigid curves from the inside with corresponding positive Reeb chord asymptotics can then be identified provided that we find capping surfaces for $c_{11}$ and $c_{22}$ so that the two triangles at infinity can be identified.

The desired identification is obtained by capping $c_{11}$ with a path $\gamma_1$ such that
\be
	\alpha_1\star\gamma_1\star\alpha_3 = \beta_2\,,
\ee
and capping $c_{22}$ with a path $\gamma_2$ such that
\be
	\beta_3\star\gamma_2\star\beta_1 = \alpha_2\,.
\ee
We do not draw the capping paths, but it is easy to see that they are as follows
\be
\begin{split}	
	\gamma_1 & \simeq \alpha_1^{-1}\star\beta_2\star \alpha_3^{-1} \simeq (\gamma^{(1)}_{00})^{-1}\,,
	\\
	\gamma_2 & \simeq \beta_3^{-1}\star\alpha_2\star \beta_1^{-1} \simeq (\gamma^{(2)}_{00})^{-1}\,.
	\\
\end{split}
\ee
If we then choose the capping surfaces
\be
\begin{split}
	\text{D}_1 \simeq (\text{D}^{(1)}_{00})^{-1}\,,
	\\	
	\text{D}_2 \simeq (\text{D}^{(2)}_{11})^{-1}\,,
\end{split}
\ee
where inversion denotes orientation reversal, we have that
\vspace{15pt}
\be
\picLabel{{$\text{D}_{00}$}{130}{110}
	{$\gamma_1$}{120}{220}
	{$\text{D}_1$}{125}{178}
	}{wsHc11_a12_a21_capped_new} {.350}  
\ = \ 
\picLabel{{$\text{D}_{11}$}{130}{110}
	{$\gamma_2$}{120}{220}
	{$\text{D}_2$}{125}{178}
	}{wsHc22_a21_a12_capped_new} {.350}  
\ .
\ee

Thus capping $\sfH^{\textnormal{ws}}(c_{ii})$ with $\text{D}_i$ and eliminating triangles by taking the difference of the corresponding operator equations we get from~\eqref{eq:Hc11} --~\eqref{eq:Hc22} 
\be\label{eq:skein-A1}
\begin{split}
	\sfA_1^{\textnormal{ws}} &= \sfH^{\textnormal{ws}}(c_{11};\gamma_1) - \sfH^{\textnormal{ws}}(c_{22};\gamma_2) \\
	& \simeq 
	\( \text{D}_{00}^{(1)} \)^{-1} \cdot \left(\text{D}_{00}^{(1)} - \text{D}_{10}^{(1)} - \text{D}_{01}^{(1)} + \text{D}_{11}^{(1)} \right)
	- 
	\(\text{D}_{11}^{(2)}\)^{-1} \cdot \left(\text{D}_{11}^{(2)} - \text{D}_{01}^{(2)} - \text{D}_{10}^{(2)} + \text{D}_{00}^{(2)} \right)\,.
\end{split}
\ee

\subsubsection*{The operator $\sfA^{\textnormal{ws}}_2$ from $\sfH^{\textnormal{ws}}(c_{12},a_{21})$, $\sfH^{\textnormal{ws}}(c_{11})$ and $\sfH^{\textnormal{ws}}(c_{22})$}\label{ssec:A2}

To obtain a recursion relation from $\sfH^{\textnormal{ws}}(c_{12},a_{21})$ we need to cancel the curves with negative punctures. These disks have negative punctures at $a_{12}$ and $a_{21}$ and can be canceled by the corresponding terms in $\sfH^{\textnormal{ws}}_{c_{11}}$ and $\sfH^{\textnormal{ws}}_{c_{22}}$, respectively. We use the following system of capping surfaces
\be\label{eq:Hc12-cappings-gamma12p}
	\picLabel{{$\text{D}_3$}{130}{85}} {wsgamma12_capping_new} {.350}  
\ee
where $\gamma_1'$ runs from $a_{21}^-$ to $c_{1*}^+$, while $\gamma_2'$ runs from $c_{2*}^-$ to $a_{21}^+$.
If these capping paths for $\sfH^{\textnormal{ws}}(c_{12},a_{21})$ satisfy the following
\be
\begin{split}
	\alpha_1'\star\gamma_1'  &= \alpha_1\star\delta_1'\star\alpha_3 \\
	\gamma_2'\star\alpha_2'  &= \alpha_2
\end{split}	
\qquad
\begin{split}
	\alpha_1''\star\gamma_1'  &= \beta_2\\
	\gamma_2'\star\alpha_2''  &= \beta_1\star\delta_2'\star\beta_3 
\end{split}	
\ee
where $\delta_{1}',\delta_2'$ are capping paths for $c_{11}$ and $c_{22}$, respectively, then
we get a system of four equations in four variables which admits the following solution
\be
\begin{split}
	\gamma_1' &= (\alpha_1'')^{-1}\star\beta_2,\\
	\delta_1' &= (\alpha_1)^{-1}\star \alpha_1'\star(\alpha_1'')^{-1}\star\beta_2 \star(\alpha_3)^{-1}\\
	& \simeq (\gamma^{(1)}_{10})^{-1},
\end{split}
\qquad
\begin{split}
	\gamma_2' &= \alpha_2\star(\alpha_2')^{-1},\\
	\delta_2' &= (\beta_1)^{-1} \star \alpha_2\star(\alpha_2')^{-1} \star \alpha_2'' \star \beta_3^{-1}\\
	& \simeq (\gamma^{(2)}_{01})^{-1}.
\end{split}
\ee
From this we see that we can choose $\text{D}_3$ as $(\text{D}_{10})^{-1}$ with a flow line to $\text{a}_{21}$ attached and 
\be
\text{D}_4 \simeq (\text{D}_{10}^{(1)})^{-1} \quad \text{D}_5 \simeq (\text{D}_{10}^{(2)})^{-1}.
\ee
With capping surfaces fixed in this way we have the following identities
\be
	\picLabel{{$\text{D}_{00}$}{105}{115} {$\text{T}_{a_{21}}$}{280}{100}
	{$\text{D}_3$}{165}{210}} {wsHc12_a12_capped_new} {.350}  
	\quad = \ 
	\picLabel{{$\text{D}_{00}$}{130}{110}
		{$\delta'_1$}{120}{230}
		{$\text{D}_4$}{125}{178}
	}{wsHc11_a12_a21_capped_new} {.350}  
	\ ,
	\qquad
	\picLabel{{$\text{D}_{11}$}{105}{115} {$\text{T}_{a_{21}}$}{280}{100}
		{$\text{D}_3$}{165}{210}} {wsHc12_a12_bis_capped_new} {.350}  
	\quad = \ 
	\picLabel{{$\text{D}_{11}$}{130}{110}
		{$\delta'_2$}{120}{230}
		{$\text{D}_5$}{125}{178}
	}{wsHc22_a21_a12_capped_new} {.350}  
	\ .
\ee

This implies that we can eliminate all curves with negative punctures using linear combination of $\sfH^{\textnormal{ws}}(c_{12},a_{21})$, $\sfH^{\textnormal{ws}}(c_{11})$ and $\sfH^{\textnormal{ws}}(c_{22})$. We get 
\be\label{eq:skein-A2}
\begin{split}
	\sfA^{\textnormal{ws}}_2 &= \sfH^{\textnormal{ws}}(c_{12},a_{21};\text{D}_3) - \sfH^{\textnormal{ws}}(c_{11},\text{D}_4) + \sfH^{\textnormal{ws}}(c_{22},\text{D}_5) \\
	& \simeq -\( \text{D}_{10}^{(1)} \)^{-1} \cdot \left(\text{D}_{00}^{(1)} - \text{D}_{10}^{(1)} - \text{D}_{01}^{(1)} + \text{D}_{11}^{(1)} \right)
	+ 
	\(\text{D}_{10}^{(2)}\)^{-1} \cdot \left(\text{D}_{11}^{(2)} - \text{D}_{01}^{(2)} - \text{D}_{10}^{(2)} + \text{D}_{00}^{(2)} \right)
	\\
	& - \text{A}_1 - \text{A}_2\,,
\end{split}
\ee
where the annuli $\text{A}_1$ and $\text{A}_2$ are up to isotopy obtained from $(\text{D}_{10})^{-1} \cdot \text{D}_{11}$ and $(\text{D}_{10})^{-1} \cdot \text{D}_{10}$ respectively by attaching a flow line starting and ending on the disk:
\be
\text{A}_1 = \picLabel{{$\text{D}_{11}$}{70}{100}
				{$\text{D}_3$}{135}{200}} {wsHc12_a21_capped_new} {.350} \,,
\qquad
\text{A}_2 = \picLabel{{$\text{D}_{10}$}{70}{100}
				{$\text{D}_3$}{135}{200}} {wsHc12_a21_bis_capped_new} {.350} \,.
\ee
Both $\text{A}_1$ and $\text{A}_2$ have a contractible component, so they can be written as the difference of two disks in the worldsheet skein, one of which cancels the disks $(\text{D}_{10}^{(2)})^{-1} \cdot \text{D}_{11}^{(2)}$ and $(\text{D}_{10}^{(2)})^{-1} \cdot \text{D}_{10}^{(2)}$ in $\sfA^{\textnormal{ws}}_2$, respectively.

\subsubsection*{The operator $\sfA^{\textnormal{ws}}_3$ from $\sfH^{\textnormal{ws}}(c_{21},a_{12})$, $\sfH^{\textnormal{ws}}(c_{11})$ and $\sfH^{\textnormal{ws}}(c_{22})$}
As in Section~\ref{ssec:A2}, we obtain a recursion relation from $\sfH^{\textnormal{ws}}(c_{21},a_{12})$ by eliminating negative puncture curves.
By symmetry between Morse flows on the two Legendrian tori $\Lambda_1$ and $\Lambda_2$, the argument is directly analogous and we get
\be
\begin{split}\label{eq:skein-A3}
	\sfA^{\textnormal{ws}}_3
	& \simeq -\( \text{D}_{10}^{(2)} \)^{-1} \cdot \left(\text{D}_{00}^{(2)} - \text{D}_{10}^{(2)} - \text{D}_{01}^{(2)} + \text{D}_{11}^{(2)} \right)\\
	& + 
	\(\text{D}_{10}^{(1)}\)^{-1} \cdot \left(\text{D}_{11}^{(1)} - \text{D}_{01}^{(1)} - \text{D}_{10}^{(1)} + \text{D}_{00}^{(1)} \right)
	- \text{A}_1' - \text{A}_2'\,,
\end{split}
\ee
for two annuli $\text{A}_1'$ and $\text{A}_2'$.

Equations~\eqref{eq:skein-A1},~\eqref{eq:skein-A2},~\eqref{eq:skein-A3} shows that $\sfA_{i}^{\ws}$, $i=1,2,3$ lie in the world sheet skein $D$-module.

\subsection{Proof of Theorem \ref{t:Hopf} -- specialization of $\sfA_i^{\ws}$ and uniqueness} \label{ssec:specialization-and-uniqueness}

We describe the homomorphism $\rho_L$ that maps the (universal) worldsheet skein $D$-module $\sfD^{\ws}_\Gamma$ to the skein $D$-module $\sfD_L = \Sk(\partial L) / \sfI_L$ for Lagrangian fillings over corresponding to points in the augmentation variety $V_\Gamma$ and prove uniqueness of solutions to the skein recursions.

\subsubsection{Lagrangian fillings of the Legendrian Hopf link conormal}
Recall that the augmentation variety $V_\Gamma$ can be considered as the moduli space of (equivalence classes of Floer theoretically unobstructed) Lagrangian fillings of the Legendrian tori $\Lambda_\Gamma$ at infinity. Recall further that the variety has two branches $V_{\Gamma}=V_{\Gamma}^{(1)}\cup V_{\Gamma}^{(2)}$, see \eqref{eq:V-Hopf-branches}. The branch $V^{(1)}$ is the surface parameterized by the two periods along the exact filling $L_{0}$ with topology $T^{2}\times\R$, we treat this case in Section \ref{sssec: uniquess link complement}. The branch $V^{(2)}$ is the product of the augmentation curves of the two unknot components of $\Gamma$. We use notation $L_{st}$, $s,t\in\{l^{\pm},m^{\pm},d\}$ to denote these fillings. This notation is explained pictorially in Figure \ref{fig:Hopf-phases}, where $L_{st}$ refers to a Lagrangian filling with $L_{1}$ on the branch labeled by $s$ and $L_{2}$ labeled by $t$. Note that the roles of $l^{\pm}$ and $m^{\pm}$ are reverted between the components reflecting the fact that for the Hopf link the meridian of one component is the longitude of the other. 
\begin{figure}[h!]
\begin{center}
\includegraphics[width=0.5\textwidth]{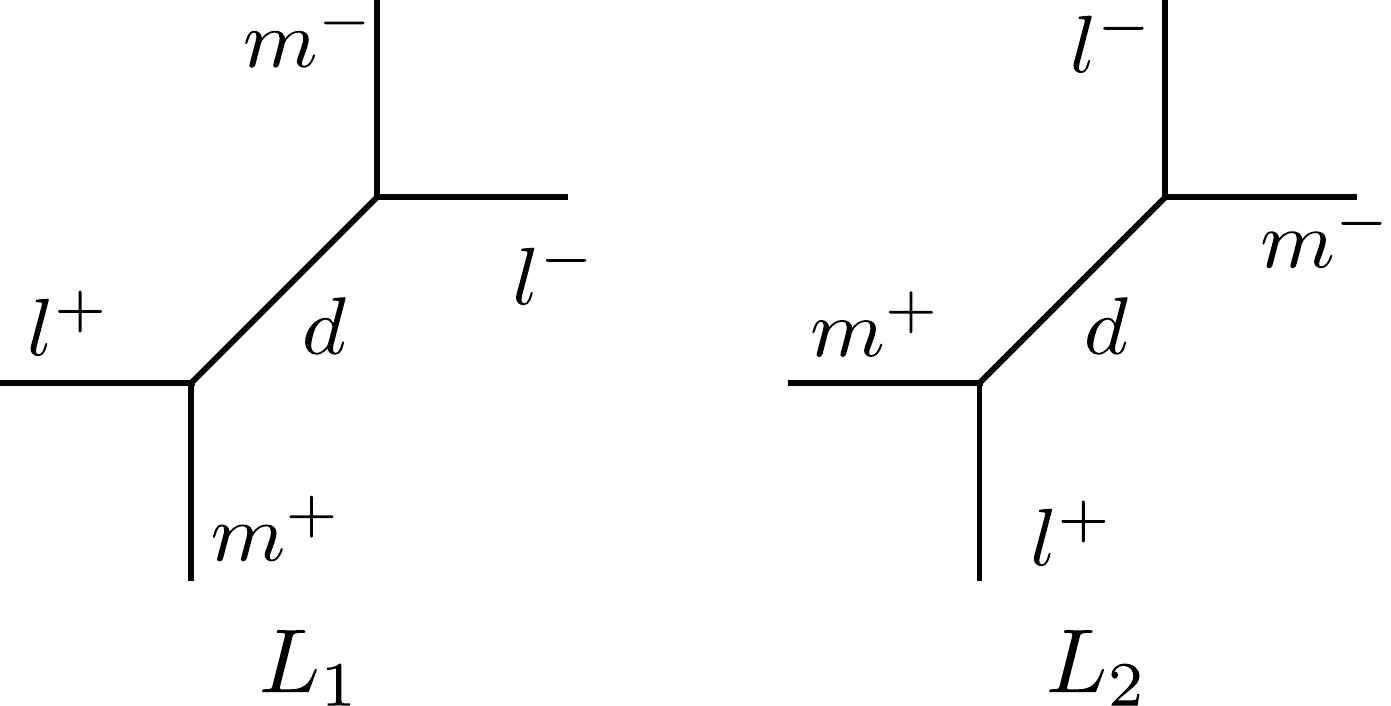}
\caption{Lagrangian fillings of the Hopf link Legendrian over points in $V_{\Gamma}^{(2)}$.}
\label{fig:Hopf-phases}
\end{center}
\end{figure}

We will prove that the partition function $\sfZ_{L_{st}}$ is uniquely determined by $\rho_{L_{st}}(\sfA_{i}^{\ws})$ for $(s,t)\in\{(l^{+},l^{+}),(l^{+},m^{+}), (l^{+},d),(d,d)\}$. Other cases follow using various symmetries. Consider for example $L_{l^{+},l^{-}}$. Here we shift the conormal $L_{2;l}$ in the negative rather than the positive direction. This means that the second of the two basic annuli changes orientation both in the conormal and in $S^{3}$. Thus, the component in $S^{3}$ is the positive Hopf link. The same effect can be obtained by starting from $L_{l^{+}l^{+}}$, reverting the orientation of $S^{3}$ and changing basis in the conormal of the second component. Uniqueness of $L_{l^{+}l^{+}}$ then implies uniqueness for $L_{l^{+}l^{-}}$. Next observing that the complement Lagrangian of one component is Lagrangian isotopic to the conormal Lagrangian of the other we relate $L_{l^{\pm},d}$ and $L_{m^{\pm},d}$ and relabeling components we get further combinations. When the Lagrangians lie on the same leg we must consider their relative position here again relabeling reduces to one case. By these symmetries it follows that it is sufficient to consider the cases listed above. To simplify notation we will denote them simply $L_{ll}$, $L_{lm}$, $L_{ld}$, and $L_{dd}$.

\subsubsection{Two unknot conormals -- $L_{ll}$}\label{ss:RecursionConConFilling}
With $\sfA_i = \rho_\Lambda(\sfA_i^\textnormal{ws})$, the recursion operators at infinity in the case of the conormal-conormal filling read 
\begin{align}\label{eq:A1-ConormalInfinity}
	\sfA_1
	& = 
	\(
	\sfP^{(1)}_{0,0}
	- a^{-1} \sfP^{(1)}_{1,0}
	- a^{-1} a_{\Lambda_1} \sfP^{(1)}_{0,1}
	+ \sfP^{(1)}_{1,1}
	\) \otimes a_{\Lambda_2}\\
	&\quad - a_{\Lambda_1} \otimes  
	\(
	\sfP^{(2)}_{0,0}
	- a^{-1} \sfP^{(2)}_{1,0}
	- a^{-1} a_{\Lambda_2} \sfP^{(2)}_{0,1}
	+ \sfP^{(2)}_{1,1}
	\)\,\\ \label{eq:A2-ConormalInfinity}
	\sfA_2 
	&=
	a_{\Lambda_1}
	\(
	- a \sfP^{(1)}_{-1,0}
	+ \sfP^{(1)}_{0,0}
	\)\otimes 1
	+
	\(
	a_{\Lambda_1} \sfP^{(1)}_{-1,1}
	- a \sfP^{(1)}_{0,1}
	\)\otimes (a_{\Lambda_2})^2
	\\
	&+ 1\otimes a_{\Lambda_2}
	\(
	a (a_{\Lambda_2})^{-1} \sfP^{(2)}_{0,-1}
	- \sfP^{(2)}_{1,-1}
	- \sfP^{(2)}_{0,0}
	+ a \sfP^{(2)}_{1,0}
	\) \,\\ \label{eq:A3-ConormalInfinity}
	\sfA_3
	& = 
	1\otimes a_{\Lambda_2}
	\(
	- a \sfP^{(2)}_{-1,0}
	+ \sfP^{(2)}_{0,0}
	\)
	+
	(a_{\Lambda_1})^2 \otimes 
	\(
	a_{\Lambda_2} \sfP^{(2)}_{-1,1}
	- a \sfP^{(2)}_{0,1}
	\)
	\\
	& + a_{\Lambda_1}
	\(
	a (a_{\Lambda_1})^{-1} \sfP^{(1)}_{0,-1}
	- \sfP^{(1)}_{1,-1}
	- \sfP^{(1)}_{0,0}
	+ a \sfP^{(1)}_{1,0}
	\)\otimes 1\,.
\end{align}

Here, $\rho_\Lambda$ is defined via the choices of framing vector fields and 4-chains described below. The correct powers of $a$ and $a_{\Lambda_i}$ can be determined by counting intersections of the present holomorphic curves with the 4-chains of the Lagrangians. Alternatively, these are the unique powers up to an overall scaling so that the operators \eqref{eq:A1-conormal}, \eqref{eq:A2-conormal}, and \eqref{eq:A3-conormal} determined below annihilate the HOMFLYPT generating function of the Hopf link.

Denote the two components of the filling $L_{l;i}\approx S^{1}\times\R^{2}$, $i=1,2$. 
We use coordinates on $L_{l;1}$ and $L_{l;2}$ so that the operators $\sfP^{(i)}_{k,l}$ on $\Lambda_i$ act on the skein of $L_{l;i}$ by the formulas described in Section \ref{ssec:skeinsolidtorus}, and we substitute $a_{\Lambda_i} \to a_{L_i}$, where $a_{L_i}$ denotes the $a$-variable corresponding to $L_{l;i}$. We choose the framing vector field along the central $S^1 \subseteq L_{l;i}$ and the 4-chains $C_i$ of $L_{l;i}$ in such a way that the central $S^1$ has vanishing self-linking in these coordinates and that the $C_i$ do not intersect any of the present Lagrangians. The framing vector field of $S^3$ is chosen so that the two components of the Hopf link, framed by this vector field, have self-linking number $+1$. As a consequence, the 4-chain $C_{S^3}$ will intersect $L_{l;i}$ in a $(-1,1)$-curve (note that the orientation on $L_{l;i}$ we use is opposite to the orientation induced by $S^3$). Therefore, when acting with the operators $\sfP^{(i)}_{k,l}$ on the partition function with boundary near the central $S^1$, we need to rescale the operators according to 
\be
\sfP^{(i)}_{k,l} \mapsto a^{k+l} \sfP^{(i)}_{k,l}
\ee
to obtain the correct action. The recursion relation thus reads:
\begin{align}
	\label{eq:A1-conormal}
	\sfA_1 =
	\ &= \
	\(
	\sfP^{(1)}_{0,0}
	- \sfP^{(1)}_{1,0}
	-a_{L_1} \sfP^{(1)}_{0,1}
	+a^2 \sfP^{(1)}_{1,1}
	\)\otimes a_{L_2}\\ \notag
	&\quad- a_{L_1}\otimes
	\(
	\sfP^{(2)}_{0,0}
	- \sfP^{(2)}_{1,0}
	-a_{L_2} \sfP^{(2)}_{0,1}
	+ a^2 \sfP^{(2)}_{1,1}
	\),\\ 
	\label{eq:A2-conormal}
	\sfA_2 
	\ &= \
	a_{L_1}
	\(
	-\sfP^{(1)}_{-1,0}
	+ \sfP^{(1)}_{0,0}\)\otimes 1
	+  \(a_{L_{1}}\sfP^{(1)}_{-1,1}
	- a^2 \sfP^{(1)}_{0,1}
	\)\otimes (a_{L_2})^2\\\notag
	&\quad+ 
	1\otimes a_{L_{2}}\((a_{L_{2}})^{-1}\sfP^{(2)}_{0,-1}
	-\sfP^{(2)}_{1,-1}
	-\sfP^{(2)}_{0,0}
	+a^2 \sfP^{(2)}_{1,0}
	\),\\ 
	\label{eq:A3-conormal}
	\sfA_3
	\ &= \
	1\otimes a_{L_2}
	\(
	-\sfP^{(2)}_{-1,0}
	+ \sfP^{(2)}_{0,0}\)
	+ (a_{L_1})^2\otimes \(a_{L_{2}}\sfP^{(2)}_{-1,1}
	- a^2 \sfP^{(2)}_{0,1}
	\) \\\notag
	&\quad+a_{L_1}
	\(
	(a_{L_1})^{-1} \sfP^{(1)}_{0,-1}
	- \sfP^{(1)}_{1,-1}
	-\sfP^{(1)}_{0,0}
	+a^2 \sfP^{(1)}_{1,0}
	\)\otimes 1\,.
\end{align}  

We show next that these operators determine a partition function with initial conditions corresponding to the conormal filling uniquely. The partition function takes values only in the non-negative part $\widehat{\Sk}_{+}(L_{l;1})\otimes \widehat{\Sk}_{+}(L_{l;2})$ of the skein modules of $L_{l;1}$ and $L_{l;2}$ (i.e., curves in non-negative homology classes). 
Let $\sfZ_j$, $j=0,1$ be two partition functions with degree $(0,0)$ parts equal to $1$ and consider $R$-linear maps of homogeneous degree $(i,j)$,
\be
A_{i,j},B_{i,j} \ \colon \ \Sk_+(L_1) \otimes \Sk_+(L_2) \ \to \ \Sk_+(L_1)  \otimes \Sk_+(L_2)
\ee
Assume that $\sfZ_k$, $k=0,1$ satisfy
	\begin{equation}\label{eq:(P_{1,0} - P_{0,0}) otimes 1 - 1 otimes (P_{1,0} - P_{0,0}) = A_(i,j)}
		\left(\left(\sfP_{1,0}^{(1)} - \sfP_{0,0}^{(1)}\right) \otimes a_{L_1}^{-1} - a_{L_2}^{-1} \otimes \left(\sfP_{1,0}^{(2)} - \sfP_{0,0}^{(2)}\right)\right) \sfZ_k =  \sum_{i,j \geq 0, (i,j) \neq (0,0)} A_{i,j}(\sfZ_k)
	\end{equation}
	and
	\begin{equation}\label{eq:(P_{1,-1} - P_{0,-1}) otimes 1) = B_(i,j)}
		\left(\left(a_{L_1} \sfP_{1,-1}^{(1)} - \sfP_{0,-1}^{(1)}\right) \otimes 1\right)\sfZ_k = \sum_{i,j \geq 0} B_{i,j}(\sfZ_k).
	\end{equation}
We show below that then $\sfZ_0 = \sfZ_1$, which implies the desired uniqueness.

We induct on $(i,j)$. The degree $(0,0)$ parts of $\sfZ_0$ and $\sfZ_1$ are both equal to 1 by assumption. Fix $(i,j)$ and assume that we know that the degree $(k,l)$ parts of $\Phi_0$ and $\Phi_1$ are equal for all $(k,l)$ with $k \leq i, l \leq j, (k,l) \neq (i,j)$.
Note that
\begin{align*}
	&\left(\left(\sfP_{1,0}^{(1)} - \sfP_{0,0}^{(1)}\right) \otimes a_{L_1}^{-1} - a_{L_2}^{-1} \otimes \left(\sfP_{1,0}^{(2)} - \sfP_{0,0}^{(2)}\right)\right) \sfW_{\mu,\emptyset} \otimes \sfW_{\lambda,\emptyset} \\
	&\qquad = z \left(\sum_{\ydiagram{1} \in \mu} q^{2 c(\ydiagram{1})} - \sum_{\ydiagram{1} \in \lambda} q^{2 c(\ydiagram{1})} \right)\sfW_{\mu,\emptyset} \otimes \sfW_{\lambda,\emptyset}
\end{align*}
vanishes exactly when $\mu = \lambda$. Since the degree $(i,j)$ part of $ \textnormal{Sk}_+(L_1) \otimes \textnormal{Sk}_+(L_2)$ is a free module over the integral domain $R$, it follows from the induction hypothesis and \eqref{eq:(P_{1,0} - P_{0,0}) otimes 1 - 1 otimes (P_{1,0} - P_{0,0}) = A_(i,j)} that the degree $(i,j)$ part of $\sfZ_1 - \sfZ_0$ is equal to zero if $i \neq j$ and equal to
	\begin{equation}\label{eq:diagonal partition function in degree i}
		\sum_{\lambda \vdash i} b_\lambda \sfW_{\lambda,\emptyset} \otimes \sfW_{\lambda,\emptyset}
	\end{equation}
for some $b_\lambda \in R$ if $i = j$.
	
Let $i = j$ and recall that  
	\begin{equation*}
		(a_{L} \sfP_{1,-1} - \sfP_{0,-1})(\sfW_{\lambda,\emptyset}) = \sum_{\lambda \in \mu + \ydiagram{1}} \left(a_{L}^{2} q^{2 c(\ydiagram{1})} - 1 \right) \sfW_{\mu,\emptyset}.
	\end{equation*}
Using the induction hypothesis and \eqref{eq:(P_{1,-1} - P_{0,-1}) otimes 1) = B_(i,j)}, it follows that the degree $(i-1,i)$ part of 
$$
\left(\left(a_{L_1} \sfP_{1,-1}^{(1)} - \sfP_{0,-1}^{(1)}\right) \otimes 1\right)(\sfZ_1 - \sfZ_0)
$$ 
vanishes, and we find
\begin{equation*}
\sum_{\lambda \vdash i} \sum_{\lambda \in \mu + \ydiagram{1}} \left(a_{L_1}^{2} q^{2 c(\ydiagram{1})} - 1 \right) b_\lambda \sfW_{\mu,\emptyset} \otimes \sfW_{\lambda,\emptyset} = 0,
\end{equation*}
	which implies that $b_\lambda = 0$ for all $\lambda \vdash i$ and consequently $\sfZ_{0}=\sfZ_{1}$. \\

\subsubsection{One unknot conormal and one unknot complement -- $L_{lm}$}\label{sec:conormal-complement-filling}
As the curves relevant to the skein recursion lie at infinity they are unaffected by the particular choice of filling. However, the $4$-chains do depend on the filling and we need to trace how they affect the recursion.
 
We choose the 4-chain $C_{2}'$ of $L_{2;m}$ by gluing the 4-chains $C_{S^3}$ of $S^{3}$ and the previous 4-chain $C_2$ of $L_{2;l}$. We modify $C_2$ so that it intersects $S^3$ in a parallel (unlinked) copy of the second component of the Hopf link. The intersection curve then has linking number $-1$ with the first Hopf link component and the effect on the partition function is the substitution
\be
\sfW^{(1)}_{\lambda,\overline{\mu}} \mapsto a_{L_2}^{-|\lambda| + |\mu|} \sfW^{(1)}_{\lambda,\overline{\mu}}, 
\ee
which changes the operators in \eqref{eq:A1-ConormalInfinity}, \eqref{eq:A2-ConormalInfinity}, and \eqref{eq:A3-ConormalInfinity} after the substitution $a_{\Lambda_i} \to a_{L_i}$ according to
\be
\sfP^{(1)}_{i,j} \mapsto a_{L_{2}}^{-j} \sfP^{(1)}_{i,j}.
\ee

Here we use the same symbol $a_{L_2}$ to denote the $a$-variable corresponding to both $L_{2;l}$ and $L_{2;m}$. Now we glue $S^3$ to $L_{2;l}$ and the 4-chain $C_{S^3}$ of $S^3$ to the 4-chain $C_2$ of $L_{2;l}$ to obtain the complement filling $L_{2;m}$ with 4-chain $C_{2}'$. Since at infinity, $C_{2}' = C_2 \cup C_{S^3}$, we get the induced change of variables $a \mapsto a a_{\Lambda_2}$ in relation to Section \ref{ss:RecursionConConFilling}.

Since the cycle $(0,1)$ in $\Lambda_2$ contracts in $L_{2;m}$ and $(1,0)$ does not, it is convenient to change basis on $H_1(\Lambda_2)$ by
\be\label{eq:conormal-complement-H1-basis}
\sfP^{(2)}_{i,j} \mapsto \sfP^{(2)}_{-j,i}\,.
\ee
This relabeling then preserves the orientation of $\Lambda_2$ and the skein algebra \eqref{eq:commutator}. In the new basis, the operators at infinity become
\begin{align}
	\label{eq:skein-A1,con-comp4chain}
	\sfA_1
	& = 
	\(
	\sfP^{(1)}_{0,0}
	- a^{-1} a_{\Lambda_2}^{-1} \sfP^{(1)}_{1,0}
	- a^{-1} a_{\Lambda_1} a_{\Lambda_2}^{-2} \sfP^{(1)}_{0,1}
	+ a_{\Lambda_2}^{-1} \sfP^{(1)}_{1,1}
	\) \otimes a_{\Lambda_2}\\
	&\quad - a_{\Lambda_1} \otimes  
	\(
	\sfP^{(2)}_{0,0}
	- a^{-1} a_{\Lambda_2}^{-1} \sfP^{(2)}_{0,1}
	- a^{-1} \sfP^{(2)}_{-1,0}
	+ \sfP^{(2)}_{-1,1}
	\) \, ,\\
	\label{eq:skein-A2,con-comp4chain}
	\sfA_2 
	&=
	a_{\Lambda_1}
	\(
	- a a_{\Lambda_2} \sfP^{(1)}_{-1,0}
	+ \sfP^{(1)}_{0,0}
	\)\otimes 1
	+
	\(
	a_{\Lambda_1} a_{\Lambda_2}^{-1} \sfP^{(1)}_{-1,1}
	- a \sfP^{(1)}_{0,1}
	\)\otimes (a_{\Lambda_2})^2
	\\
	&+ 1\otimes a_{\Lambda_2}
	\(
	a \sfP^{(2)}_{1,0}
	- \sfP^{(2)}_{1,1}
	- \sfP^{(2)}_{0,0}
	+ a a_{\Lambda_2} \sfP^{(2)}_{0,1}
	\)\,,\\
	\label{eq:skein-A3,con-comp4chain}
	\sfA_3
	& = 
	1\otimes a_{\Lambda_2}
	\(
	- a a_{\Lambda_2} \sfP^{(2)}_{0,-1}
	+ \sfP^{(2)}_{0,0}
	\)
	+
	(a_{\Lambda_1})^2 \otimes 
	\(
	a_{\Lambda_2} \sfP^{(2)}_{-1,-1}
	- a a_{\Lambda_2} \sfP^{(2)}_{-1,0}
	\)
	\\
	& + a_{\Lambda_1}
	\(
	a (a_{\Lambda_1})^{-1} (a_{\Lambda_2})^2 \sfP^{(1)}_{0,-1}
	- a_{\Lambda_2} \sfP^{(1)}_{1,-1}
	- \sfP^{(1)}_{0,0}
	+ a a_{\Lambda_2} \sfP^{(1)}_{1,0}
	\)\otimes 1\,.
\end{align}
Finally, we choose the framing vector field of $S^3$ along the intersection $S^3 \cap L_{2;m}$ before shifting $L_{2;m}$ so that this circle has self-linking number $+1$, and we choose framing vector field of $L_{2;m}$ and perform the shifts of $L_1$ and $L_{2;m}$ in such a way that near the intersection of $L_1 \cap L_{2;m}$, the framing vector field of $L_{2;m}$ induces trivial self-linking for this intersection and $L_1$ is shifted relative to $L_{2;m}$ in the direction of a 1-form dual to the tangent vector of the first component of the Hopf link. This induces the rescaling $\sfP^{(2)}_{i,j} \mapsto a^{-i+j} \sfP^{(2)}_{i,j}$ since $C_{S^3}$ now intersects $L_{2;m}$ in a $(-1,-1)$-curve, and $\sfP^{(1)}_{i,j} \mapsto a^{i+j} a_{L_{2}}^{i} \sfP^{(1)}_{i,j}$ since $C_{2;m}$ intersects $L_1$ in a $(0,1)$-curve and $C_{S^3}$ intersects $L_1$ in a $(-1,1)$-curve. We then get the final form of the operators
\begin{align}
	\label{eq:A1-conormal-complement}
	\sfA_1
	\ &= \
	\(
	\sfP^{(1)}_{0,0}
	- \sfP^{(1)}_{1,0}
	- a_{L_1} a_{L_{2}}^{-2} \sfP^{(1)}_{0,1}
	+ a^2 \sfP^{(1)}_{1,1}
	\)\otimes a_{L_{2}}\\ \notag
	&\quad- a_{L_1}\otimes
	\(
	\sfP^{(2)}_{0,0}
	- a_{L_{2}}^{-1} \sfP^{(2)}_{0,1}
	- \sfP^{(2)}_{-1,0}
	+ a^2 \sfP^{(2)}_{-1,1}
	\),\\ 
	\label{eq:A2-conormal-complement}
	\sfA_2 
	\ &= \
	a_{L_1}
	\(
	-\sfP^{(1)}_{-1,0}
	+ \sfP^{(1)}_{0,0}\)\otimes 1
	+  \(
	a_{L_{1}} a_{L_{2}}^{-2} \sfP^{(1)}_{-1,1}
	- a^2 \sfP^{(1)}_{0,1}
	\)\otimes (a_{L_{2}})^2\\\notag
	&\quad+ 
	1 \otimes a_{L_{2}}\(\sfP^{(2)}_{1,0}
	- \sfP^{(2)}_{1,1}
	- \sfP^{(2)}_{0,0}
	+ a^2 a_{L_{2}} \sfP^{(2)}_{0,1}
	\),\\ 
	\label{eq:A3-conormal-complement}
	\sfA_3
	\ &= \
	1\otimes a_{L_{2}}
	\(
	- a_{L_{2}}^{-1} \sfP^{(2)}_{0,-1}
	+ a_{L_{2}}^{-2} \sfP^{(2)}_{0,0}\)
	+ (a_{L_1})^2 \otimes \( 
	a_{L_{2}}^{-1} \sfP^{(2)}_{-1,-1}
	- a^2 a_{L_{2}}^{-1} \sfP^{(2)}_{-1,0}
	\) \\\notag
	&\quad+a_{L_1}
	\(
	(a_{L_1})^{-1} \sfP^{(1)}_{0,-1}
	- \sfP^{(1)}_{1,-1}
	- a_{L_{2}}^{-2} \sfP^{(1)}_{0,0}
	+ a^2 \sfP^{(1)}_{1,0}
	\)\otimes 1\,,
\end{align} 
(where $\sfA_3$ was multiplied by $a_{L_{2}}^{-2}$) that are the image of $\sfA_i^{\ws}$ under $\rho_{L_{lm}}$ that generate the HOMFLYPT skein $D$-module for $\partial L_{lm}$. 

The skein valued recursion operators \eqref{eq:A1-conormal-complement}, \eqref{eq:A2-conormal-complement}, and \eqref{eq:A3-conormal-complement} uniquely determine the skein valued partition function of $L_{lm}$. With coordinates as above, the partition function is supported in degrees $(i,j)$ such that $i \geq 0$ and $i + j\geq 0$, and its degree $(0,0)$ part equals $1$. We again use induction on $(i,j)$. Assuming that the degree $(k,l)$ part of a solution $\sfZ$ to \eqref{eq:A1-conormal-complement}, \eqref{eq:A2-conormal-complement}, and \eqref{eq:A3-conormal-complement} is uniquely determined by the conditions on $\sfZ$ for all $(k,l) \neq (i,j)$ with $k \leq i, l \leq j$, then, as before, the degree $(i,j)$ part of $\sfZ$ is uniquely determined by \eqref{eq:A1-conormal-complement} unless $i = -j$ in which case it is determined up to addition of a term of the form  
\begin{equation}
	\sum_{\lambda \vdash i} b_\lambda \sfW_{\lambda,\emptyset} \otimes \sfW_{\emptyset,\overline{\lambda}}.
\end{equation}
Here the coefficients $b_\lambda$ are uniquely determined by \eqref{eq:A3-conormal-complement}.

\subsubsection{A middle leg Lagrangian and an unknot conormal -- $L_{dl}$}\label{ssec:middle conormal}
We obtain the recursion in this case from the case of $L_{ll}$ by applying the coordinate change
\be
\sfP^{(1)}_{i,j} \mapsto (-1)^j \sfP^{(1)}_{j,-i-j}
\ee 
(moving the first brane to the middle leg) and adjusting signs and powers of $a_{L_1}$ with the result 
\begin{align}
	\sfA_1
	\ &= \
	\(
	\sfP^{(1)}_{0,0}
	- (a_{L_1})^{-1} \sfP^{(1)}_{0,-1}
	+ \sfP^{(1)}_{1,-1}
	- a^2 \sfP^{(1)}_{1,-2}
	\)\otimes a_{L_2}\\ \notag
	&\quad- a_{L_1}\otimes
	\(
	\sfP^{(2)}_{0,0}
	- \sfP^{(2)}_{1,0}
	- (a_{L_1})^{-2} a_{L_2} \sfP^{(2)}_{0,1}
	+ a^2 \sfP^{(2)}_{1,1}
	\),\\ 
	\sfA_2 
	\ &= \
	\(
	- a_{L_1} \sfP^{(1)}_{0,1}
	+ \sfP^{(1)}_{0,0}
	- (a_{L_2})^{2} \sfP^{(1)}_{1,0}
	+ a^2 (a_{L_2})^{2} \sfP^{(1)}_{1,-1}
	\)\otimes 1 \\\notag
	&\quad+ 
	a_{L_1} \otimes \(
	\sfP^{(2)}_{0,-1}
	- a_{L_2} \sfP^{(2)}_{1,-1}
	- (a_{L_1})^{-2} a_{L_2} \sfP^{(2)}_{0,0}
	+ a^2 a_{L_2} \sfP^{(2)}_{1,0}
	\),\\ 
	\sfA_3
	\ &= \
	a_{L_2} \otimes
	\(
	-\sfP^{(2)}_{-1,0}
	+ \sfP^{(2)}_{0,0}
	+ \sfP^{(2)}_{-1,1}
	- a^2 (a_{L_1})^2 (a_{L_2})^{-1} \sfP^{(2)}_{0,1}
	\) \\\notag
	&\quad+a_{L_1}
	\(
	- \sfP^{(1)}_{-1,1}
	+ \sfP^{(1)}_{-1,0}
	- \sfP^{(1)}_{0,0}
	+ a^2 a_{L_1} \sfP^{(1)}_{0,-1}
	\)\otimes 1\,.
\end{align} 
Here the support of the partition function involves also the $a$-grading. If $t$ denotes the $a$-degree and $(i,j)$ are as usual then the partition function is supported in degrees $(i,j,t)$, where $j\ge 0$, $i+2t\geq0$, and $t \geq 0$.  Uniqueness follows as before.

\subsubsection{Two middle leg conormals -- $L_{dd}$}\label{ssec:middle middle}
In this case we also move the second Lagrangian in $L_{dl}$ to the middle leg and accordingly change coordinates on $\Lambda_2$ according to $\sfP^{(2)}_{i,j} \mapsto (-1)^i \sfP^{(2)}_{i,-i-j}$. The recursion becomes
\begin{align}
	\sfA_1
	\ &= \
	\(
	\sfP^{(1)}_{0,0}
	- (a_{L_1})^{-1} \sfP^{(1)}_{0,-1}
	+ \sfP^{(1)}_{1,-1}
	- a^2 \sfP^{(1)}_{1,-2}
	\)\otimes a_{L_2}\\ \notag
	&\quad- a_{L_1}\otimes
	\(
	\sfP^{(2)}_{0,0}
	+ \sfP^{(2)}_{0,-1}
	- (a_{L_1})^{-2} a_{L_2} \sfP^{(2)}_{1,-1}
	- a^2 \sfP^{(2)}_{1,-2}
	\),\\ 
	\sfA_2 
	\ &= \
	\(
	- a_{L_1} (a_{L_2})^{-2} \sfP^{(1)}_{0,1}
	+ \sfP^{(1)}_{0,0}
	- \sfP^{(1)}_{1,0}
	+ a^2 \sfP^{(1)}_{1,-1}
	\)\otimes a_{L_2} \\\notag
	&\quad+ 
	a_{L_1} \otimes \(
	(a_{L_2})^{-1} \sfP^{(2)}_{-1,1}
	+ \sfP^{(2)}_{-1,0}
	- \sfP^{(2)}_{0,0}
	- a^2 \sfP^{(2)}_{0,-1}
	\),\\ 
	\sfA_3
	\ &= \
	a_{L_2} \otimes
	\(
	\sfP^{(2)}_{0,1}
	+ \sfP^{(2)}_{0,0}
	- \sfP^{(2)}_{1,0}
	- a^2 (a_{L_1})^2 (a_{L_2})^{-1} \sfP^{(2)}_{1,-1}
	\) \\\notag
	&\quad+a_{L_1}
	\(
	-\sfP^{(1)}_{-1,1}
	+ \sfP^{(1)}_{-1,0}
	- \sfP^{(1)}_{0,0}
	+ a^2 a_{L_1} \sfP^{(1)}_{0,-1}
	\)\otimes 1\,.
\end{align} 
Here, we rescaled $\sfA_2$ by $(a_{L_2})^{-1}$ and used that $(a_{L_2})^{-1} \sfP^{(1)}_{0,0} - (a_{L_1})^{-1} \sfP^{(2)}_{0,0} =  a_{L_2} \sfP^{(1)}_{0,0} - a_{L_1} \sfP^{(2)}_{0,0}$. 

To bring the operators into a more symmetric form, we change the framing on the second brane: $\sfP^{(2)}_{i,j} \mapsto \sfP^{(2)}_{i+j,j}$. The resulting operators read
\begin{align}
	\sfA_1
	\ &= \
	\(
	\sfP^{(1)}_{0,0}
	- (a_{L_1})^{-1} \sfP^{(1)}_{0,-1}
	+ \sfP^{(1)}_{1,-1}
	- a^2 \sfP^{(1)}_{1,-2}
	\)\otimes a_{L_2}\\ \notag
	&\quad- a_{L_1}\otimes
	\(
	\sfP^{(2)}_{0,0}
	+ \sfP^{(2)}_{-1,-1}
	- (a_{L_1})^{-2} a_{L_2} \sfP^{(2)}_{0,-1}
	- a^2 \sfP^{(2)}_{-1,-2}
	\),\\ 
	\sfA_2 
	\ &= \
	\(
	- a_{L_1} \(a_{L_2}\)^{-2} \sfP^{(1)}_{0,1}
	+ \sfP^{(1)}_{0,0}
	- \sfP^{(1)}_{1,0}
	+ a^2 \sfP^{(1)}_{1,-1}
	\)\otimes a_{L_2} \\\notag
	&\quad+ 
	a_{L_1} \otimes \(
	\(a_{L_2}\)^{-1} \sfP^{(2)}_{0,1}
	+ \sfP^{(2)}_{-1,0}
	- \sfP^{(2)}_{0,0}
	- a^2 \sfP^{(2)}_{-1,-1}
	\),\\ 
	\sfA_3
	\ &= \
	a_{L_2} \otimes
	\(
	\sfP^{(2)}_{1,1}
	+ \sfP^{(2)}_{0,0}
	- \sfP^{(2)}_{1,0}
	- a^2 (a_{L_1})^2 (a_{L_2})^{-1} \sfP^{(2)}_{0,-1}
	\) \\\notag
	&\quad+a_{L_1}
	\(
	- \sfP^{(1)}_{-1,1}
	+ \sfP^{(1)}_{-1,0}
	- \sfP^{(1)}_{0,0}
	+ a^2 a_{L_1} \sfP^{(1)}_{0,-1}
	\)\otimes 1\,.
\end{align} 
The partition function is supported on $(i,j,t)$ with $i+2t\ge 0$, $i+2t+j\ge 0$, and $t \geq 0$. Uniqueness follows as for $L_{ll}$.

\subsubsection{The Hopf link complement -- $L_0$}\label{sssec: uniquess link complement} 
Lastly, we consider the Hopf link complement filling $L_0 \approx T^2 \times \R$. To obtain the operators at infinity, we proceed as for $L_{lm}$, but instead of gluing $C_1$ to $C_{S^3}$, we glue $C_1$ to $C'_{2}$ to obtain the 4-chain $C$. At infinity, the 4-chain becomes the union $C = C_1 \cup C'_{2}$, and we set $a_{\Lambda_1} = a_{\Lambda_2}= a_\Lambda$. 

We then choose the coordinates on $L_0 \cong T^2 \times \R$ in such a way that $\sfP_{i,j}^{(2)}$ now acts as multiplication by $\sfP_{i,j}$ from above in the skein of $T^2$ and $\sfP_{i,j}^{(1)}$ acts as multiplication by $\sfP_{-i,j}$ from below. The operators acting on the skein of $L$ become
\begin{align}
	\label{eq:A1-complement}
	\sfA_1
	\ &= \
	a^{-1} [\sfP_{0,1},\,\cdot\,] + a^{-1} [\sfP_{-1,0},\,\cdot\,] - [\sfP_{-1,1},\,\cdot\,]
	\,,\\ 
	\label{eq:A2-complement}
	\sfA_2 
	\ &= \
	a [\sfP_{1,0},\,\cdot\,] - [\sfP_{1,1},\,\cdot\,] + a [\sfP_{0,1},\,\cdot\,]
	\,,\\ 
	\label{eq:A3-complement}
	\sfA_3
	\ &= \
	- a [\sfP_{0,-1},\,\cdot\,] + [\sfP_{-1,-1},\,\cdot\,] - a [\sfP_{-1,0},\,\cdot\,]
	\,,
\end{align} 
where in addition to the described changes we rescaled $\sfA_2$ and $\sfA_3$ by a factor of $a_L^{-2}$. 

Given the initial condition that the degree $(0,0)$-part of the partition function $\sfZ$ is equal to $1$ and $\sfZ$ is non-vanishing only in non-negative degrees, uniqueness follows again by induction on the degree $(i,j)$: assuming that we have already proven uniqueness in degrees $(k,l) \neq (i,j)$ with $k \leq i$ and $l \leq j$, then the degree $(i-1,j)$-part of $\sfA_1(\sfZ)$ implies uniqueness in degree $(i,j)$ if $j > 0$ since $\left[\sfP_{-1,0},\sfP_{i,j}\right] = (q^{-j} - q^{j})\sfP_{i-1,j}$. If $j=0$ and $i\neq 0$, then the degree $(i-1,-1)$-part of $\sfA_3(\sfZ)$ proves the desired uniqueness since $\left[\sfP_{-1,-1},\sfP_{i,0}\right] = (q^{i} - q^{-i})\sfP_{i-1,-1}$.

This completes the proof of Theorem \ref{t:Hopf}.\qed

\subsection{Framing and reduction to quantum curves}\label{ssec:auxthmHopf}
In this section we discuss tow further aspects of Theorem~\ref{t:Hopf}.

\subsubsection{Skein recursion identities in generic framing}
As before we obtain skein recursions in arbitrary framings by straightforward change of variables \eqref{eq:framed-Pij} applied to $\sfA_{j}$, $j=1,2,3$ for the filling $L_{ll}$ (with partition function given by all colored HOMFLYPT)
\begin{align}
	\begin{split} 				\label{eq:A1-skein-framed}
	\sfA_1^{(f_1,f_2)} 			
	& = 
	\(
		\sfP^{(1)}_{0,0}
		- \sfP^{(1)}_{1,0}
		-(-1)^{f_1}a_{\Lambda_1} \sfP^{(1)}_{f_1,1}
		+ (-1)^{f_1} a^2 \sfP^{(1)}_{1+f_1,1}
	\) \otimes a_{\Lambda_2}
	\\
	& - a_{\Lambda_1} \otimes  
	\(
		\sfP^{(2)}_{0,0}
		- \sfP^{(2)}_{1,0}
		-(-1)^{f_2}a_{\Lambda_2} \sfP^{(2)}_{f_2,1}
		+ (-1)^{f_2}a^2 \sfP^{(2)}_{1+f_2,1}
	\), 
	\end{split}
\\[4pt]
	\begin{split} 				\label{eq:A2-skein-framed}
	\sfA_2^{(f_1,f_2)} & =  			
	a_{\Lambda_1}
	\(
		-\sfP^{(1)}_{-1,0}
		+ \sfP^{(1)}_{0,0}
	\)\otimes 1
	+ (-1)^{f_1}
	\(
		a_{\Lambda_1} \sfP^{(1)}_{-1+f_1,1}
		- a^2 \sfP^{(1)}_{f_1,1}
	\)\otimes (a_{\Lambda_2})^2
	\\
	&+ 1\otimes a_{\Lambda_2} 
	\(
		(-1)^{f_2} (a_{\Lambda_2})^{-1} \sfP^{(2)}_{-f_2,-1}
		- (-1)^{f_2} \sfP^{(2)}_{1-f_2,-1}
		-\sfP^{(2)}_{0,0}
		+a^2 \sfP^{(2)}_{1,0}
	\) \,,
	\end{split}
	\\[4pt]
	\begin{split} 				\label{eq:A3-skein-framed}
	\sfA_3^{(f_1,f_2)}				
	& = 
	1\otimes a_{\Lambda_2}
	\(
		-\sfP^{(2)}_{-1,0}
		+ \sfP^{(2)}_{0,0}
	\)
	+
	(a_{\Lambda_1})^2 \otimes 
	(-1)^{f_2}\(
		a_{\Lambda_2} \sfP^{(2)}_{-1+f_2,1}
		- a^2 \sfP^{(2)}_{f_2,1}
	\)
	\\
	& + a_{\Lambda_1}
	\(
		(-1)^{f_1}(a_{\Lambda_1})^{-1} \sfP^{(1)}_{-f_1,-1}
		- (-1)^{f_1} \sfP^{(1)}_{1-f_1,-1}
		-\sfP^{(1)}_{0,0}
		+a^2 \sfP^{(1)}_{1,0}
	\)\otimes 1\,.
	\end{split}
\end{align}
These operators define the skein $D$-module of the (conormal-conormal filling of the) framed Hopf link.

\subsubsection{Specialization to the $U(1)$ skein}\label{sec:U1-spec}
The $U(1)$ limit on each of the branes $L_1, L_2$ is defined by
\[
	a_{\Lambda_i}\to q
	\qquad 
	\sfP^{(k)}_{\pm 1,0} \to \hat x_k^{\pm 1}
	\qquad
	\sfP^{(k)}_{0,\pm 1} \to \hat y_k^{\pm 1}
\]
with $\hat y_k \hat x_r = q^{2\delta_{kr}} \hat y_k \hat x_r $.
From the above relations it follows also that
\[
	\sfP^{(k)}_{1,1} \to - q  \hat x_k\hat y_k
	\qquad
	\sfP^{(k)}_{-1,1} \to -q^{-1} \hat x_k \hat y_k^{-1}
	\qquad
	\sfP^{(k)}_{1,-1} \to q^{-1} \hat x_k^{-1} \hat y_k\,.
\]
Using these in the skein valued recursion operators~\eqref{eq:skein-A1},~\eqref{eq:skein-A2} and~\eqref{eq:skein-A3} gives
\begin{align}\notag
	\hat A_1 
	& = 
	q \left[\(1 - \hat y_1 - q \hat x_1 + a^2 q \hat x_1\hat y_1\) 
	- \(1  - \hat y_2 - q \hat x_2  + a^2 q \hat x_2\hat y_2\)\right], 
	\\\notag
	\hat A_2 
	& = 
	q \(-\hat y_1^{-1} + 1 + q \hat x_1 \hat y_1^{-1} - q \, a^2\, \hat x_1 \) 
	+ q \(q^{-1} \hat x_2^{-1} - q^{-1} \hat x_2^{-1}\hat y_2 - 1 + a^2\, \hat y_2 \), 
	\\\notag
	\hat A_3 
	& = 
	q \(-\hat y_2^{-1} + 1 + q \hat x_2 \hat y_2^{-1} - q \, a^2\, \hat x_2 \) 
	+ q \(q^{-1} \hat x_1^{-1} - q^{-1} \hat x_1^{-1}\hat y_1 - 1 + a^2\, \hat y_1 \) 
	\\\label{eq:U1-recursion}
\end{align}
In agreement with the quantum $U(1)$-relations in~\cite{Aganagic:2013jpa}

Taking the semi-classical limit of the three skein recursions~\eqref{eq:U1-recursion}, we get the augmentation variety of the Hopf link cut out by the three equations \eqref{eq:classical-aug-var-factorized}.
Recall that these equations define an augmentation variety with two branches \eqref{eq:V-Hopf-branches} which intersect along the one dimensional locus \eqref{eq:V-Hopf-branches-intersection}.
Viewing the augmentation variety as the characteristic variety of the $D$-module determined by the quantized versions $\hat A_{j}$, $j=1,2,3$, the phenomenon that branches of the augmentation variety intersect in codimension one is related to the irreducibility of the $D$-module.

\section{Generalized quiver structures of skein valued curve counts}\label{sec:generalized-quiver-structures}
The skein valued recursion operators determined above determines the partition functions of the Lagrangian fillings, for the conormal the HOMFLYPT polynomials colored by arbitrary partitions. In this section we describe these, and how they can be viewed as being generated by basic disks and annuli with linking boundaries, generalizing the quiver description of holomorphic curves for knots and links proposed in~\cite{Ekholm:2018eee, Ekholm:2019lmb}.

\subsection{Basic curves}\label{sec:basic-curves}
As a preliminary step towards a skein valued quiver description, we discuss the building blocks. We first discuss the building blocks themselves and how they interact through a natural algebraic structure in the skein.

\subsubsection{The disk}\label{ssec:disk}
The skein valued disk partition function in framing $f$ is, see Section \ref{sec:recSktoricbrane}: 
\be\label{eq:psi-disk}
	\sfPsi_{\mathrm{di}}^{(f)}
	:= 
	\sum_\lambda  ((-1)^{|\lambda|} q^{\kappa(\lambda)})^f  \left[\prod_{\ydiagram{1}\in\lambda} \frac{- q^{-c(\ydiagram{1})}}{q^{h(\ydiagram{1})}-q^{-h(\ydiagram{1})}}\right]  \sfW_{\lambda,\emptyset}
	\ \in \ \Sk(S^{1}\times\R^{2}),
\ee
which coincides with the partition function of a toric brane in $\IC^3$, and as such it satisfies the skein recursion obtained from \eqref{eq:recSktoricbrane} by specializing $a_\Lambda=a_L$ and by applying the framing transformations \eqref{eq:framed-Pij} and \eqref{eq:aL-Pij-framed}
\be\label{eq:toric-brane-recursion}
	(\sfP_{0,0} - \sfP_{1,0} + (-1)^{1-f}a_{L} \sfP_{f,1})\;\sfPsi_{\mathrm{di}}^{(f)} \ =  \ 0,
\ee
where the extra 4-chain intersection with $L$ gives the effective rescaling $\sfP_{f,1}\mapsto a_{L}^{-f} \sfP_{f,1}$.

\subsubsection{The annulus}\label{ssec:annulus}
The skein valued annulus partition function in framing $(f_{1},f_{2})$ is, see  Section \ref{sec:skein-rec-annulus}:
\be
	\sfPsi_{\mathrm{an}}^{(f_1, f_2)} := \sum_{\lambda}  
	\left[(-1)^{|\lambda|} q^{\kappa(\lambda)}\right]^{f_1+f_2} \
	\sfW_{\lambda,\emptyset}\otimes \sfW_{\lambda,\emptyset}
	\ \in \ \Sk(S^{1}\times\R^{2})\otimes \Sk(S^{1}\times\R^{2}).
\ee
It satisfies the following identity  
\be
\left(a_{L_1}^{-1}(\sfP^{(1)}_{0,0} - \sfP^{(1)}_{1,0})\otimes 1 
-1\otimes a_{L_2}^{-1}(\sfP^{(2)}_{0,0} - \sfP^{(2)}_{1,0})\right) \, \sfPsi_{\mathrm{an}}^{(f_1, f_2)}  \ = \ 0,
\ee
that follows from the (infinite) recursion
\be
	\left(
	a_{L_1}^{-1} (\sfP^{(1)}_{1,0} - \sfP^{(1)}_{0,0}) - (q-q^{-1})\sum_{n\geq 1} {\sfC_{n}}^{(1;f_{1})} \otimes {\sfC_{n}}^{(2;f_{2})} 
	\right)
	\,
	\sfPsi_{\mathrm{an}}^{(f_1, f_2)} 
	\ = \ 0
\ee
where $\sfC_{n}^{(a;f)}$ is a curve on brane $L_a$ winding $n$ times around the longitude with $n-1$ positive self-crossings framing changed to framing $f$.

\subsubsection{The twisted annulus}\label{ssec:twistedannulus}
An alternative annulus is obtained by taking one of the boundaries to wind along the anti-meridian of the two solid tori: 
\be\label{eq:twisted-ann}
	\sfPsi_{\widetilde{\mathrm{an}}}^{(f_1, f_2)} := \sum_{\lambda}  
	\left[(-1)^{|\lambda|} q^{\kappa(\lambda)}\right]^{f_1+f_2} \
	\sfW_{\lambda,\emptyset}\otimes \sfW_{\emptyset,\overline{\lambda}}
	\ \in \ \widehat{\Sk}(S^{1}\times\R^{2})\otimes \widehat{\Sk}´(S^{1}\times\R^{2})
\ee
which in framing $(0,0)$ is annihilated by the following operators: 
\begin{align}
\label{eq:twAnnulus-B1}
\sfB_1 \ &= \ \( \sfP^{(1)}_{0,0} - \sfP^{(1)}_{1,0} \) \otimes a_{L_2} - a_{L_1} \otimes \( \sfP^{(2)}_{0,0} - \sfP^{(2)}_{-1,0} \) \,,\\
\label{eq:twAnnulus-B2}
\sfB_2 \ &= \ a_{L_1} \( -\sfP^{(1)}_{-1,0} + \sfP^{(1)}_{0,0}\) \otimes 1 + 1 \otimes a_{L_{2}}\(\sfP^{(2)}_{1,0} - \sfP^{(2)}_{0,0} \)\,,\\
\label{eq:twAnnulus-B3}
\sfB_3 \ &= \ \( (a_{L_1})^{-i} \sfP^{(1)}_{i,1} - (a_{L_{2}})^{-2i} \sfP^{(1)}_{0,1} \) \otimes 1 - 1 \otimes \( (a_{L_2})^{-i} \sfP^{(2)}_{-i,1} - (a_{L_2})^{-2i} \sfP^{(2)}_{0,1}\) \,,\\
\label{eq:twAnnulus-B4}
\sfB_4 \ &= \ \( (a_{L_1})^{-i} \sfP^{(1)}_{i,-1} - (a_{L_{1}})^{-2i} \sfP^{(1)}_{0,-1} \) \otimes 1 - 1 \otimes \( (a_{L_2})^{-i} \sfP^{(2)}_{-i,-1} - (a_{L_1})^{-2i} \sfP^{(2)}_{0,-1}\) 
\end{align}
which follow directly from the action of $\sfP_{i,j}$ on $\sfW_{\lambda,\overline{\mu}}$ described in Section \ref{ssec:skeinsolidtorus}.

\subsubsection{The anti-annulus}\label{ssec:antiannulus}
The skein valued anti-annulus partition function is: 
\be
	\sfPsi_{\overline{\mathrm{an}}}^{(f_1, f_2)} 
	= \sum_\lambda 
	\left[(-1)^{|\lambda|} q^{\kappa(\lambda)}\right]^{f_1-f_2} 
	\sfW_{\lambda,\emptyset}\otimes \sfW_{\lambda^t,\emptyset}
\ee
where we used $\kappa(\lambda)=-\kappa(\lambda^t)$, which satisfies the following identity 
\be
\left(a_{L_1}^{-1}(\sfP^{(1)}_{0,0} - \sfP^{(1)}_{1,0}) \otimes 1
-1\otimes a_{L_2}(\sfP^{(2)}_{0,0} - \sfP^{(2)}_{-1,0})\right)\;
\sfPsi_{\overline{\mathrm{an}}}^{(f_1, f_2)} 
\ = \ 0,
\ee
that follows from the (infinite) recursion
\be
\left(
a_{L_1}^{-1} (\sfP^{(1)}_{1,0} - \sfP^{(1)}_{0,0}) - (q-q^{-1})\sum_{n\geq 1} {\sfC_{n}}^{(1;f_{1})} \otimes {\sfC_{n}^{\ast}}^{(2;f_{2})} 
\right)\,
\sfPsi_{\overline{\mathrm{an}}}^{(f_1, f_2)} 
\ = \ 0
\ee
where ${\sfC_{n}^{\ast}}^{f}$ is a curve winding $n$ times around the longitude of a brane with $n-1$ negative self-crossings framing changed to framing $f$.

We will use the basic curve partition functions in Sections \ref{ssec:disk}--\ref{ssec:antiannulus} with additional homology variables. We write $\sfPsi_{\mathrm{di}}^{(f)}[\xi]$ for the partition function from $\sfPsi_{\mathrm{di}}^{(f)}$ by replacing $\sfW_\lambda$ by $\xi^{|\lambda|} \sfW_\lambda $. We use similar notation for the annuli partition functions.

\subsection{The algebra structure on $\Sk(S^1\times \IR^2)$}\label{sec:algebra-Ws}
We will also use the algebra structure of the HOMFLYPT skein module $\Sk(L)$. We recall its definition and discuss an extension describing products of linked curves.

The product of (unlinked) elements of $\Sk(L)$ is obtained by viewing the solid torus as the product $A\times I$, where $A$ is an annulus and $I$ an interval.  
We multiply one skein element with another by gluing the outer boundary component of one annulus to the inner boundary component of the other. The product is obviously commutative.

In the basis $\sfW_{\lambda,\overline{\mu}}$ the algebra is that of universal characters of $GL(N)$ defined in \cite{koike1989decomposition}
\be\label{eq:Koike-product}
	\sfW_{\lambda,\overline{\mu}}\, \sfW_{\nu,\overline{\rho}}
	= 
	\sum_{\sigma,\overline{\tau}}
	M^{\sigma,\overline{\tau}}_{\lambda,\overline{\mu}; \nu,\overline{\rho}}
	\sfW_{\sigma,\overline{\tau}}
\ee
where the structure constants $M^{\sigma,\overline{\tau}}_{\lambda,\overline{\mu}; \nu,\overline{\rho}}$ have an explicit expression in terms of Littlewood-Richardson coefficients $c_{\lambda\mu}^{\nu}$ as follows
\be\label{eq:Koike-product-M}
	M^{\sigma,\overline{\tau}}_{\lambda,\overline{\mu}; \nu,\overline{\rho}}
	=
	\sum_{\alpha,\beta,\gamma,\delta}
	\left[\sum_{\kappa} c_{\kappa\alpha}^{\lambda} c_{\kappa\beta}^{\rho}\right]
	\left[\sum_{\epsilon} c_{\epsilon\gamma}^{\mu} c_{\epsilon\delta}^{\nu} \right]
	c^{\sigma}_{\alpha\delta}c^{\tau}_{\beta\gamma}\,.
\ee

Specializing to the $\sfW_{\lambda} := \sfW_{\lambda,\emptyset}$ gives in fact the 
Littlewood-Richardson product~\cite{morton2002homfly}
\be\label{eq:LRprod}
\sfW_{\lambda} \sfW_{\mu} = \sum_{\nu} c_{\lambda\mu}^{\nu} \sfW_\nu\,.
\ee
This result is natural from the physical point of view, as mentioned in Section \ref{ssec:introskeinDmodule}, $\sfW_{\lambda} = \sfW_{\lambda,\emptyset}$ can be viewed as line operators  
in (complexified) $U(N)_\kappa$ Chern-Simons theory on $S^{1}\times\R^{2}$ and, as such, give a generalization of the Verlinde basis \cite{Verlinde:1988sn, Witten:1988hf}. 
There is a natural algebraic structure on this basis, corresponding to the fusion algebra, which in the limits of large rank and large level becomes the Littlewood-Richardson product. 

Another specialization that will be useful below corresponds to the product of  $\sfW_{\lambda,\emptyset}$ and $\sfW_{\emptyset,\overline{\rho}}$. In this case \eqref{eq:Koike-product} simplifies to
\be
	\sfW_{\lambda,\emptyset}\, \sfW_{\emptyset,\overline{\rho}}
	= 
	\sum_{\sigma,\overline{\tau}}
	\left[\sum_{\kappa} c_{\kappa\sigma}^{\lambda} c_{\kappa\tau}^{\rho}\right]
	\sfW_{\sigma,\overline{\tau}}^{(0,0)}\,.
\ee

Finally we introduce a twisted product on  $\sfW_{\lambda} := \sfW_{\lambda,\emptyset}$ controlled by a discrete parameter $f\in\IZ$, and defined as follows
\be\label{eq:twisted-prod}
	{\sfW_{\lambda} \star_f \sfW_{\mu} = \sum_{\nu} c_{\lambda\mu}^{\nu} \,q^{f (\kappa(\nu)-\kappa(\lambda)-\kappa(\mu))}\, \sfW_\nu}.
\ee
The case $\star_0$ is just the usual product, and we omit that in notation.
The product \eqref{eq:Koike-product} correspond to the product in the skein of the annulus $\Sk(S^1\times[0,1])$.
The twisted product \eqref{eq:twisted-prod} correspond to its framed counterpart (specialized to positive-winding curves).

\subsection{Proof of Theorem \ref{t: skein quiver 1}}
We find solutions to the recursion relations for the various Lagrangian fillings. 

\subsubsection{The filling $L_{lm}$}\label{sec:conormal-complement-solution}\label{sssec: L_lm}  
We find the solution to the recursion relations in Section \ref{sec:conormal-complement-filling}.
Recall the unknot partition function \eqref{eq: unknot skein quiver} corresponding to the product of unlinked disks (in framing zero):
\be
\sfZ^{}_U  
= 
\sfPsi_{\mathrm{di}}^{(0)}\,  \cdot \, \sfPsi_{\mathrm{di}}^{(1)}[a^2]
= \sum_{\lambda} \prod_{\ydiagram{1}\in\lambda} \frac{a^2 q^{c(\ydiagram{1})} - q^{-c(\ydiagram{1})}}{q^{h(\ydiagram{1})}-q^{-h(\ydiagram{1})}}\, \sfW_\lambda 
\ee
The partition function for the complement of the unknot is obtained by replacing $\sfW_\lambda$ by $(-1)^{|\lambda|}\sfW_{\lambda^t}$ in the sum, which gives 
\be
\sfZ^{c}_U  
= 
\sfPsi_{\mathrm{di}}^{(1)}\,  \cdot \, \sfPsi_{\mathrm{di}}^{(0)}[a^2]
= \sum_{\lambda} \prod_{\ydiagram{1}\in\lambda} \frac{q^{c(\ydiagram{1})} - a^2 q^{-c(\ydiagram{1})}}{q^{h(\ydiagram{1})}-q^{-h(\ydiagram{1})}}\, \sfW_\lambda 
\ee

The unique solution for $L_{lm}$ is the product of an unknot on $L_1$, and anti-unknot on $L_{2}$ and a twisted annulus \eqref{eq:twisted-ann} in framing $(0,0)$ connecting them
\be\label{eq:conormal-complement-Z}
\sfZ_{L_{lm}} = 
\left[\sfZ^{}_U  \otimes \sfZ^{c}_U  \right]\star_0 \sfPsi_{\widetilde{\mathrm{an}}}
\ee
Using the algebra defined in \eqref{eq:Koike-product} this can be expressed as follows
\be
\begin{split}
	\sfZ_{L_{lm}}  
	& = 
	\sum_{\alpha,\beta,\lambda,\mu,\nu,\kappa}
	c^\alpha_{\lambda\nu} c_{\kappa\beta}^{\mu} c_{\kappa\gamma}^{\nu}
	\left(\prod_{\ydiagram{1}\in\lambda} \frac{a^2 q^{c(\ydiagram{1})} - q^{-c(\ydiagram{1})}}{q^{h(\ydiagram{1})}-q^{-h(\ydiagram{1})}}\right)
	\left(\prod_{\ydiagram{1}\in\mu} \frac{ q^{c(\ydiagram{1})} - a^2 q^{-c(\ydiagram{1})}}{q^{h(\ydiagram{1})}-q^{-h(\ydiagram{1})}}\right)
	\sfW_{\alpha\emptyset}\otimes \sfW_{\beta\overline{\gamma}}\,.
\end{split}
\ee

Next we show that \eqref{eq:conormal-complement-Z} is annihilated by \eqref{eq:A1-conormal-complement}, \eqref{eq:A2-conormal-complement} and \eqref{eq:A3-conormal-complement}. We recall the recursion operators:
\begin{align}
	\sfA_1
	\ &= \
	\(
	\sfP^{(1)}_{0,0}
	- \sfP^{(1)}_{1,0}
	- a_{L_1} (a_{L_{2}})^{-2} \sfP^{(1)}_{0,1}
	+ a^2 \sfP^{(1)}_{1,1}
	\)\otimes a_{L_{2}}\\ \notag
	&\quad- a_{L_1}\otimes
	\(
	\sfP^{(2)}_{0,0}
	- (a_{L_{2}})^{-1} \sfP^{(2)}_{0,1}
	- \sfP^{(2)}_{-1,0}
	+ a^2 \sfP^{(2)}_{-1,1}
	\),\\ 
	\sfA_2 
	\ &= \
	a_{L_1}
	\(
	-\sfP^{(1)}_{-1,0}
	+ \sfP^{(1)}_{0,0}\)\otimes 1
	+  \(
	a_{L_{1}} (a_{L_{2}})^{-2} \sfP^{(1)}_{-1,1}
	- a^2 \sfP^{(1)}_{0,1}
	\)\otimes (a_{L_{2}})^2\\\notag
	&\quad+ 
	1 \otimes a_{L_{2}}\(\sfP^{(2)}_{1,0}
	- \sfP^{(2)}_{1,1}
	- \sfP^{(2)}_{0,0}
	+ a^2 a_{L_{2}} \sfP^{(2)}_{0,1}
	\),\\ 
	\sfA_3
	\ &= \
	1\otimes
	\(
	- \sfP^{(2)}_{0,-1}
	+ (a_{L_{2}})^{-1} \sfP^{(2)}_{0,0}\)
	+ (a_{L_1})^2 \otimes (a_{L_{2}})^{-1} \( 
	\sfP^{(2)}_{-1,-1}
	- a^2 \sfP^{(2)}_{-1,0}
	\) \\\notag
	&\quad+a_{L_1}
	\(
	(a_{L_1})^{-1} \sfP^{(1)}_{0,-1}
	- \sfP^{(1)}_{1,-1}
	- (a_{L_{2}})^{-2} \sfP^{(1)}_{0,0}
	+ a^2 \sfP^{(1)}_{1,0}
	\)\otimes 1\,.
\end{align} 

We will use the open recursion relations
\begin{align}
	\(\sfP'_{0,0} - \sfP'_{1,0} + a^2 \sfP'_{1,1} - a_L \sfP'_{0,1}\)\(\sfZ^{}_U\)& = 0\,,\\
	\(\sfP'_{0,0} - \sfP'_{1,0} + \sfP'_{1,1} - a^2 a_L \sfP'_{0,1}\)\(\sfZ^{c}_U\) &= 0\,
\end{align}
to reduce the claim to identities about the partition function of the twisted annulus $\sfPsi_{\widetilde{\mathrm{an}}}$.

For any partition function $B$, we have from the open recursion relation that
\begin{align}
	\(\sfP_{1,0} - a^2 \sfP_{1,1}\)\(\sfZ_U \star_0 B\) &= \sfZ_U \star_0 \(\sfP_{1,0} - \sfP_{1,1} \)\(B\) \,,\\
	\(\sfP_{-1,0} - \sfP_{-1,1}\)\(\sfZ_U \star_0 B\) &= \sfZ_U \star_0 \(\sfP_{-1,0} - a^2 \sfP_{-1,1} \)\(B\)\,,\\
	\(\sfP_{1,-1} - a^2 \sfP_{1,0}\)\(\sfZ_U \star_0 B\) &= \sfZ_U \star_0 \(\sfP_{1,-1} - \sfP_{1,0} \)\(B\)\,,\\
	\(\sfP_{-1,0} - a^2 \sfP_{-1,1}\)\(\sfZ^{c}_U \star_0 B\) &= \sfZ^{c}_U \star_0 \(\sfP_{-1,0} - \sfP_{-1,1} \)\(B\)\,,\\
	\(\sfP_{1,0} - \sfP_{1,1}\)\(\sfZ^{c}_U \star_0 B\) &= \sfZ^{c}_U \star_0 \(\sfP_{1,0} - a^2 \sfP_{1,1} \)\(B\)\,,\\
	\(\sfP_{-1,-1} - a^2 \sfP_{-1,0}\)\(\sfZ^{c}_U \star_0 B\) &= \sfZ^{c}_U \star_0 \(\sfP_{-1,-1} - \sfP_{-1,0} \)\(B\)\,.
\end{align}
It follows that $\sfA_1$, $\sfA_2$, and $\sfA_3$ annihilates $B \star_{0}\(\sfZ_U \otimes \sfZ^{c}_U\)$ if and only if
\begin{align}
	\sfA_1'
	\ &= \
	\(
	\sfP^{(1)}_{0,0}
	- \sfP^{(1)}_{1,0}
	- a_{L_1} (a_{L_{2}})^{-2} \sfP^{(1)}_{0,1}
	+ \sfP^{(1)}_{1,1}
	\)\otimes a_{L_{2}}\\ \notag
	&\quad- a_{L_1}\otimes
	\(
	\sfP^{(2)}_{0,0}
	- (a_{L_{2}})^{-1} \sfP^{(2)}_{0,1}
	- \sfP^{(2)}_{-1,0}
	+ \sfP^{(2)}_{-1,1}
	\)\,,\\
	\sfA_2' 
	\ &= \
	a_{L_1}
	\(
	-\sfP^{(1)}_{-1,0}
	+ \sfP^{(1)}_{0,0}\)\otimes 1
	+ \( a^2 a_{L_{1}} (a_{L_{2}})^{-2} \sfP^{(1)}_{-1,1}
	- a^2 \sfP^{(1)}_{0,1}
	\)\otimes (a_{L_{2}})^2\\\notag
	&\quad+ 
	1\otimes a_{L_{2}}\(\sfP^{(2)}_{1,0}
	-a^2 \sfP^{(2)}_{1,1}
	- \sfP^{(2)}_{0,0}
	+a^2 a_{L_{2}} \sfP^{(2)}_{0,1}
	\)\,,\\
	\sfA_3'
	\ &= \
	1\otimes
	\(
	-\sfP^{(2)}_{0,-1}
	+ (a_{L_1})^2 (a_{L_{2}})^{-1} \sfP^{(2)}_{0,0}\)
	+ (a_{L_1})^2\otimes (a_{L_{2}})^{-1} \(\sfP^{(2)}_{-1,-1}
	- \sfP^{(2)}_{-1,0}
	\) \\\notag
	&\quad+a_{L_1}
	\(
	(a_{L_1})^{-1} \sfP^{(1)}_{0,-1}
	- \sfP^{(1)}_{1,-1}
	- \sfP^{(1)}_{0,0}
	+ \sfP^{(1)}_{1,0}
	\)\otimes 1
\end{align}
annihilate $B$, where we used the identity
\be
(a_{L_{2}})^{-1} \sfP^{(2)}_{0,0} - a_{L_1} (a_{L_{2}})^{-2} \sfP^{(1)}_{0,0} = (a_{L_1})^{2} (a_{L_{2}})^{-1}\sfP^{(2)}_{0,0} - a_{L_1} \sfP^{(1)}_{0,0}
\ee
to rewrite $\sfA_3'$. It remains to show that the operators $\sfA_1'$, $\sfA_2'$, and $\sfA_3'$ all annihilate $\sfPsi_{\widetilde{\mathrm{an}}}$, which is an immediate consequence of \eqref{eq:twAnnulus-B1}, \eqref{eq:twAnnulus-B2}, \eqref{eq:twAnnulus-B3}, and \eqref{eq:twAnnulus-B4}.

\subsubsection{The filling $L_{dl}$}\label{sssec: L_dl}
The recursion in Section \ref{ssec:middle conormal}  
has the solution
\begin{equation}
	\sfZ = \( \(\sfPsi^{(0)}_{\text{di}} \overline{\sfPsi}^{(0)}_{\text{di}}[a^2] \) \otimes \(\sfPsi^{(0)}_{\text{di}} \sfPsi^{(1)}_{\text{di}}[a^2]\) \) \sfPsi^{(0,0)}_{\text{an}},
\end{equation}
where 
\begin{equation}
	\overline{\sfPsi}^{(0)}_\text{di} = \sum_\lambda \prod_{\square \in \lambda} \frac{q^{-c(\square)}}{q^{h(\square)} - q^{-h(\square)}} \sfW_{\emptyset,\overline{\lambda}}.
\end{equation}

This is seen as follows. Using the relative recursion relations for the disks, it follows that for any partition function $B$, we have that
\begin{equation}
	\begin{split}
		\(\sfP_{1,-1} - a^2 \sfP_{1,-2}\) \(B \star \sfPsi^{(0)}_{\text{di}} \star \overline{\sfPsi}^{(0)}_{\text{di}}[a^2] \) &= \( \sfP_{1,-1} - \sfP_{1,0} \)(B) \star \sfPsi^{(0)}_{\text{di}} \star \overline{\sfPsi}^{(0)}_{\text{di}}[a^2] \,,\\
		\(\sfP_{1,0} - a^2 \sfP_{1,-1}\)\(B \star \sfPsi^{(0)}_{\text{di}} \star \overline{\sfPsi}^{(0)}_{\text{di}}[a^2] \) &= \( \sfP_{1,0} - \sfP_{1,1} \)(B) \star \sfPsi^{(0)}_{\text{di}} \star \overline{\sfPsi}^{(0)}_{\text{di}}[a^2] \,,\\
		\(\sfP_{-1,0} - \sfP_{-1,1}\) \(B \star \sfPsi^{(0)}_{\text{di}} \star \overline{\sfPsi}^{(0)}_{\text{di}}[a^2] \) &= \( \sfP_{-1,0} - a^2 \sfP_{-1,-1} \)(B) \star \sfPsi^{(0)}_{\text{di}} \star \overline{\sfPsi}^{(0)}_{\text{di}}[a^2] \,,\\
		\(\sfP_{1,0} - a^2 \sfP_{1,1}\)\(B \star \sfPsi^{(0)}_{\text{di}} \star \sfPsi^{(1)}_{\text{di}}[a^2] \) &= \( \sfP_{1,0} -  \sfP_{1,1} \)(B) \star \sfPsi^{(0)}_{\text{di}} \star \sfPsi^{(1)}_{\text{di}}[a^2] \,,\\
		\(\sfP_{1,-1} - a^2 \sfP_{1,0}\)\(B \star \sfPsi^{(0)}_{\text{di}} \star \sfPsi^{(1)}_{\text{di}}[a^2] \) &= \( \sfP_{1,-1} -  \sfP_{1,0} \)(B) \star \sfPsi^{(0)}_{\text{di}} \star \sfPsi^{(1)}_{\text{di}}[a^2] \,,\\
		\(\sfP_{-1,0} - \sfP_{-1,1}\)\(B \star \sfPsi^{(0)}_{\text{di}} \star \sfPsi^{(1)}_{\text{di}}[a^2] \) &= \( \sfP_{-1,0} -  a^2 \sfP_{-1,1} \)(B) \star \sfPsi^{(0)}_{\text{di}} \star \sfPsi^{(1)}_{\text{di}}[a^2] \,.\\
	\end{split}
\end{equation}
This implies that $\sfZ$ is annihilated by $\sfA_1,\sfA_2,$ and $\sfA_3$ if and only if $\sfPsi^{(0,0)}_{\text{an}} = \sum_\lambda \sfW_{\lambda,\emptyset} \otimes \sfW_{\lambda,\emptyset}$ is annihilated by the operators
\begin{align}
	\sfA'_1
	\ &= \
	\(
	\sfP^{(1)}_{0,0}
	- (a_{L_1})^{-1} \sfP^{(1)}_{0,-1}
	+ \sfP^{(1)}_{1,-1}
	- \sfP^{(1)}_{1,0}
	\)\otimes a_{L_2}\\ \notag
	&\quad- a_{L_1}\otimes
	\(
	\sfP^{(2)}_{0,0}
	- \sfP^{(2)}_{1,0}
	- (a_{L_1})^{-2} a_{L_2} \sfP^{(2)}_{0,1}
	+ \sfP^{(2)}_{1,1}
	\),\\ 
	\sfA'_2 
	\ &= \
	\(
	- a_{L_1} \sfP^{(1)}_{0,1}
	+ \sfP^{(1)}_{0,0}
	- (a_{L_2})^{2} \sfP^{(1)}_{1,0}
	+ (a_{L_2})^{2} \sfP^{(1)}_{1,1}
	\)\otimes 1 \\\notag
	&\quad+ 
	a_{L_1} \otimes \(
	\sfP^{(2)}_{0,-1}
	- a_{L_2} \sfP^{(2)}_{1,-1}
	- (a_{L_1})^{-2} a_{L_2} \sfP^{(2)}_{0,0}
	+ a_{L_2} \sfP^{(2)}_{1,0}
	\),\\ 
	\sfA'_3
	\ &= \
	a_{L_2} \otimes
	\(
	-\sfP^{(2)}_{-1,0}
	+ \sfP^{(2)}_{0,0}
	+ a^2 \sfP^{(2)}_{-1,1}
	- a^2 (a_{L_1})^2 (a_{L_2})^{-1} \sfP^{(2)}_{0,1}
	\) \\\notag
	&\quad+a_{L_1}
	\(
	- a^2 \sfP^{(1)}_{-1,-1}
	+ \sfP^{(1)}_{-1,0}
	- \sfP^{(1)}_{0,0}
	+ a^2 a_{L_1} \sfP^{(1)}_{0,-1}
	\)\otimes 1\,.
\end{align} 
These relations follow immediately from the action of the operators $\sfP_{i,j}$ on $\sfW_{\lambda,\emptyset}$.

\subsubsection{The filling $L_{dd}$}\label{sssec: L_dd}
The recursion is found in Section \ref{ssec:middle middle} after the change of coordinates.

The solution is
\begin{equation}
	\sfZ = \( \(\sfPsi^{(0)}_{\text{di}} \overline{\sfPsi}^{(0)}_{\text{di}}[a^2] \) \otimes \(\sfPsi^{(1)}_{\text{di}} \overline{\sfPsi}^{(1)}_{\text{di}}[a^2]\) \) \widetilde{\sfPsi}^{(0,0)}_{\text{an}}\,,
\end{equation}
where
\begin{equation}
	\widetilde{\sfPsi}^{(0,0)}_{\text{an}} = \sum_\lambda \sfW_{\lambda,\emptyset} \otimes \sfW_{\emptyset,\overline{\lambda}}
\end{equation}

This is seen as follows: using the relative recursion relations for the disks, it follows that for any partition function $B$, we have that
\begin{equation}
	\begin{split}
		\(\sfP_{1,-1} - a^2 \sfP_{1,-2}\) \(B \star \sfPsi^{(0)}_{\text{di}} \star \overline{\sfPsi}^{(0)}_{\text{di}}[a^2] \) &= \( \sfP_{1,-1} - \sfP_{1,0} \)(B) \star \sfPsi^{(0)}_{\text{di}} \star \overline{\sfPsi}^{(0)}_{\text{di}}[a^2] \,,\\
		\(\sfP_{1,0} - a^2 \sfP_{1,-1}\)\(B \star \sfPsi^{(0)}_{\text{di}} \star \overline{\sfPsi}^{(0)}_{\text{di}}[a^2] \) &= \( \sfP_{1,0} - \sfP_{1,1} \)(B) \star \sfPsi^{(0)}_{\text{di}} \star \overline{\sfPsi}^{(0)}_{\text{di}}[a^2] \,,\\
		\(\sfP_{-1,0} - \sfP_{-1,1}\) \(B \star \sfPsi^{(0)}_{\text{di}} \star \overline{\sfPsi}^{(0)}_{\text{di}}[a^2] \) &= \( \sfP_{-1,0} - a^2 \sfP_{-1,-1} \)(B) \star \sfPsi^{(0)}_{\text{di}} \star \overline{\sfPsi}^{(0)}_{\text{di}}[a^2] \,,\\
		\(\sfP_{-1,-1} - a^2 \sfP_{-1,-2}\) \(B \star \sfPsi^{(1)}_{\text{di}} \star \overline{\sfPsi}^{(1)}_{\text{di}}[a^2] \) &= \( \sfP_{-1,-1} - \sfP_{-1,0} \)(B) \star \sfPsi^{(1)}_{\text{di}} \star \overline{\sfPsi}^{(1)}_{\text{di}}[a^2] \,,\\
		\(\sfP_{-1,0} - a^2 \sfP_{-1,-1}\)\(B \star \sfPsi^{(1)}_{\text{di}} \star \overline{\sfPsi}^{(1)}_{\text{di}}[a^2] \) &= \( \sfP_{-1,0} - \sfP_{-1,1} \)(B) \star \sfPsi^{(1)}_{\text{di}} \star \overline{\sfPsi}^{(1)}_{\text{di}}[a^2] \,,\\
		\(\sfP_{1,0} - \sfP_{1,1}\) \(B \star \sfPsi^{(1)}_{\text{di}} \star \overline{\sfPsi}^{(1)}_{\text{di}}[a^2] \) &= \( \sfP_{1,0} - a^2 \sfP_{1,-1} \)(B) \star \sfPsi^{(1)}_{\text{di}} \star \overline{\sfPsi}^{(1)}_{\text{di}}[a^2] \,.
	\end{split}
\end{equation}

This implies that $\sfZ$ is annihilated by $\sfA_1,\sfA_2,$ and $\sfA_3$ if and only if $\widetilde{\sfPsi}^{(0,0)}_{\text{an}}$ is annihilated by the operators
\begin{align}
	\sfA'_1
	\ &= \
	\(
	\sfP^{(1)}_{0,0}
	- (a_{L_1})^{-1} \sfP^{(1)}_{0,-1}
	+ \sfP^{(1)}_{1,-1}
	- \sfP^{(1)}_{1,0}
	\)\otimes a_{L_2}\\ \notag
	&\quad- a_{L_1}\otimes
	\(
	\sfP^{(2)}_{0,0}
	+ \sfP^{(2)}_{-1,-1}
	- \(a_{L_1}\)^{-2} a_{L_2} \sfP^{(2)}_{0,-1}
	- \sfP^{(2)}_{-1,0}
	\),\\ 
	\sfA'_2 
	\ &= \
	\(
	- a_{L_1} \(a_{L_2}\)^{-2} \sfP^{(1)}_{0,1}
	+ \sfP^{(1)}_{0,0}
	- \sfP^{(1)}_{1,0}
	+ \sfP^{(1)}_{1,1}
	\)\otimes a_{L_2} \\\notag
	&\quad +
	a_{L_1} \otimes \(
	\(a_{L_2}\)^{-1} \sfP^{(2)}_{0,1}
	+ \sfP^{(2)}_{-1,0}
	- \sfP^{(2)}_{0,0}
	- \sfP^{(2)}_{-1,1}
	\),\\ 
	\sfA'_3
	\ &= \
	a_{L_2} \otimes
	\(
	a^2 \sfP^{(2)}_{1,-1}
	+ \sfP^{(2)}_{0,0}
	-\sfP^{(2)}_{1,0}
	- a^2 (a_{L_1})^2 (a_{L_2})^{-1} \sfP^{(2)}_{0,-1}
	\) \\\notag
	&\quad+a_{L_1}
	\(
	- a^2 \sfP^{(1)}_{-1,-1}
	+ \sfP^{(1)}_{-1,0}
	- \sfP^{(1)}_{0,0}
	+ a^2 a_{L_1} \sfP^{(1)}_{0,-1}
	\)\otimes 1\,.
\end{align}
As before, the corresponding relations are be verified by a straightforward computation.

\subsubsection{The filling $L_{0}$}\label{sssec: L_0}
To finish the proof we consider the link complement filling, then the operators \eqref{eq:A1-complement}, \eqref{eq:A2-complement}, and \eqref{eq:A3-complement} in the recursion relation can be written purely as linear combinations of commutators $\left[\sfP_{i,j},\cdot \right]$. Hence, $\sfZ = 1$ is a solution. This is a manifestation of the fact that $L$ can by made disjoint from the zero-section via an exact Lagrangian isotopy.

The results in Sections \ref{sssec: L_lm}, \ref{sssec: L_dl}, \ref{sssec: L_dd}, and \ref{sssec: L_0} establish the desired result. \qed

\subsection{A closed form expression for the Hopf link in all colors}
In this section we show that the partition function of all colored HOMFLYPT polynomials of the Hopf link, i.e., the partition function of $L_{ll}$ (or $L_{mm}$) can be expressed as the product of four basic disks and the partition function from Section \ref{ssec:threeholedsphere} that we conjecture to correspond to a basic holomorphic three holed sphere. More precisely, we have the following.

\begin{thm}\label{thm:alternative quiver formula for Hopf}
	The HOMFLYPT partition function of the conormal filling $L_{ll}$ (and that of $L_{mm}$) of the Hopf link can be expressed as 
	\be\label{eq:alternative_quiver_formula_for_hopf}
	\sfZ_{L_{ll}} \ = \ \left(\left(\sfPsi^{(0)}_\textnormal{di} \otimes \sfPsi^{(0)}_\textnormal{di}  \right) \star_{-1} \widetilde{\sfPsi}_{(0,3)}^{(-1,-1)}[a] \right) \star_{0} \left(\sfPsi_\textnormal{di}^{(1)}[a^2] \otimes \sfPsi_\textnormal{di}^{(1)}[a^2] \right).
	\ee
\end{thm}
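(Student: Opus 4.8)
The plan is to invoke the uniqueness statement of Theorem~\ref{t:Hopf}. Since the conormal filling $L_{ll}$ determines an initial condition for which there is a \emph{unique} partition function annihilated by the operators $\sfA_1,\sfA_2,\sfA_3$ in the form \eqref{eq:A1-conormal}--\eqref{eq:A3-conormal}, it suffices to verify that the right-hand side of \eqref{eq:alternative_quiver_formula_for_hopf} both satisfies the correct initial condition and is annihilated by these three operators. The initial condition is immediate: each factor $\sfPsi^{(0)}_{\mathrm{di}}$, $\sfPsi^{(1)}_{\mathrm{di}}[a^2]$ and $\widetilde{\sfPsi}_{(0,3)}^{(-1,-1)}[a]$ has constant term $1$, so the $(0,0)$-component of the product is $1$; moreover every factor lies in the positive part $\widehat{\Sk}_+\otimes\widehat{\Sk}_+$, and the products $\star_{-1},\star_0$ preserve this through the Littlewood--Richardson structure of \eqref{eq:twisted-prod}, so the expansion never produces conjugate partitions $\overline{\nu},\overline{\rho}$.

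The core of the argument is a peeling computation that strips off the four disk factors and reduces the annihilation conditions to the two identities of Proposition~\ref{prop:relation for Z_{(0,3)}}. Following the mechanism already used in Sections~\ref{sssec: L_lm}, \ref{sssec: L_dl} and \ref{sssec: L_dd}, I would use the relative disk recursion \eqref{eq:toric-brane-recursion} (in framings $0$ and $1$, with the $a^2$-shift) to commute each operator $\sfP^{(i)}_{p,q}$ occurring in $\sfA_1,\sfA_2,\sfA_3$ through the factors $\sfPsi^{(1)}_{\mathrm{di}}[a^2]$ on the outer $\star_0$ layer and then through $\sfPsi^{(0)}_{\mathrm{di}}$ on the inner $\star_{-1}$ layer. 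Factor by factor this converts combinations such as $\sfP_{p,q}-a^2\sfP_{p,q+1}$ into $\sfP_{p,q}-\sfP_{p,q+1}$ and analogous shifted combinations, exactly as in the displayed systems of six relations in those sections. After both layers are removed, $\sfA_1$ collapses to a nonzero multiple of the first identity \eqref{eq:relation for Z_{(0,3)}),first identity}, while $\sfA_2$ and $\sfA_3$ collapse to the second identity \eqref{eq:relation for Z_{(0,3)}),second identity} and to its image under the involution exchanging the two tensor factors (which is a symmetry of $\widetilde{\sfPsi}_{(0,3)}^{(-1,-1)}[a]$). All three expressions then vanish by Proposition~\ref{prop:relation for Z_{(0,3)}}.

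The main obstacle I expect is the bookkeeping of framing and $a$-powers through the twisted product $\star_{-1}$. Unlike the ordinary product $\star_0$ used for the annulus fillings, $\star_{-1}$ carries the framing factor $q^{-(\kappa(\nu)-\kappa(\lambda)-\kappa(\mu))}$ from \eqref{eq:twisted-prod}, and this twist interacts with the content-dependent eigenvalues $q^{2c(\square)}$ produced when $\sfP_{p,q}$ adds or removes a box on a given component. I would need to check that this framing twist is precisely what converts the framing-$(-1,-1)$ operators appearing in Proposition~\ref{prop:relation for Z_{(0,3)}} into the framing-$0$ operators $\sfA_i$, i.e.\ that the $\star_{-1}$-twisting compensates the framing superscript on $\widetilde{\sfPsi}_{(0,3)}^{(-1,-1)}[a]$. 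Closely tied to this is matching the explicit powers of $a$, $a_{L_1}$ and $a_{L_2}$ after the two rounds of peeling. Since all of these contributions are linear in the box contents, the cleanest route is to evaluate every operator on a single basis element $\sfW_{\mu,\emptyset}\otimes\sfW_{\lambda,\emptyset}$ and reduce each identity to the box-adding formulas already recorded in the proof of Proposition~\ref{prop:relation for Z_{(0,3)}}, thereby turning the whole verification into a finite check on the coefficients attached to each added box.
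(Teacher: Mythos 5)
Your proposal is correct and follows essentially the same route as the paper's own proof: invoke the uniqueness part of Theorem~\ref{t:Hopf}, peel the four disk factors off via the relative disk recursions (the paper's \eqref{eq:relative recursion disk} and \eqref{eq:relative recursion inverse disk}, together with the framed-product identity \eqref{eq:P_{k,l} and framed product} handling the $\star_{-1}$ twist you flagged), and reduce the annihilation by $\sfA_1$ and $\sfA_3$ to the two identities of Proposition~\ref{prop:relation for Z_{(0,3)}}, with $\sfA_2$ following by the tensor-factor symmetry. The bookkeeping concern you raise is exactly what the paper resolves by evaluating on basis elements $\sfW_{\mu,\emptyset}\otimes\sfW_{\lambda,\emptyset}$ and tracking box contents, as you propose.
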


\begin{proof}
	Denote the right hand side of \eqref{eq:alternative_quiver_formula_for_hopf} by $\sfZ$. We show that $\sfZ$ satisfies the recursion relations for the Hopf link. The uniqueness part of Theorem \ref{t:Hopf} then gives the result. More precisely we claim that $\sfZ$ is annihilated by $\sfA_1$ and $\sfA_3$:
	\begin{align}\label{eq:recursion for neg Hopf link,first identity}
		&\left( \left( a_{L_1}^{-1}\left( \sfP_{1,0}^{(1)} - \sfP_{0,0}^{(1)} \right)- a^{2}a_{L_1}^{-1} \sfP_{1,1}^{(1)} + \sfP_{0,1}^{(1)} \right) \otimes 1 \right) \sfZ\\\notag 
		&\qquad = \left( 1 \otimes \left( a_{L_2}^{-1}\left(\sfP_{1,0}^{(2)} - \sfP_{0,0}^{(2)}\right) - a^{2} a_{L_2}^{-1} \sfP_{1,1}^{(2)} + \sfP_{0,1}^{(2)} \right) \right) \sfZ,
	\end{align}
	and
	\begin{align}\label{eq:recursion for neg Hopf link,second identity}
		&\left( \left(a_{L_1} \sfP_{1,-1}^{(1)} - \sfP_{0,-1}^{(1)} + a_{L_1} \left( \sfP_{0,0}^{(1)} - a^2 \sfP_{1,0}^{(1)} \right) \right) \otimes 1 \right) \sfZ\\\notag 
		&\qquad =  \left( 1 \otimes  a_{L_2} \left(\sfP_{0,0}^{(2)} - \sfP_{-1,0}^{(2)}\right) + a_{L_1}^2\otimes a_{L_2} \left(\sfP_{-1,1}^{(2)} - a^2 \sfP_{0,1}^{(2)} \right) \right) \sfZ.
	\end{align}
	The relation from $\sfA_2$ then follows by symmetry. 
	
	From the relative recursion relation 
	\be\label{eq:relative recursion disk}
	\left(\sfP'_{0,0} - \sfP'_{1,0} - \sfP'_{1,1} \right)\left(\sfPsi_\textnormal{di}^{(1)}\right) = 0
	\ee
	which is a framed\footnote{In addition to changing $\sfP'_{0,1} \to \sfP'_{1,1}$ we have also rescaled $\sfP'_{1,1} \to a^{-1}_L \sfP'_{1,1}$ to obtain the correct action on the framed solid torus, see Section \ref{ssec:skeinsolidtorus}.} version of \eqref{eq:relrecSktoricbrane}, 
	we have that 
	\begin{equation}\label{eq:P_{1,0} on cdot Psi^(0,1)(a)}
		\sfP_{1,0} \left( A \star_0 \sfPsi_\textnormal{di}^{(1)}[a^2] \right) = \sfP_{1,0} \left( A \right) \star_0 \sfPsi_\textnormal{di}^{(1)}[a^2] + a^{2} \sfP_{1,1} \left( A \star_0 \sfPsi_\textnormal{di}^{(1)}[a^2] \right)
	\end{equation}
	for any element $A \in \widehat{\textnormal{Sk}}(L)$ in the completed skein of the solid torus $L$.
	
	Similarly, the relative recursion relation 
	\be\label{eq:relative recursion inverse disk}
	\left(\sfP'_{0,0} - \sfP'_{1,0} - a_{L}\sfP'_{0,1} \right)\left(\sfPsi_\textnormal{di}^{(0)}\right) = 0
	\ee
	implies that 
	\begin{equation}\label{eq:P_{1,0} on fcdot{-1} Psi^-(0,1)(a^{-1})}
		\sfP_{1,0} \left( A \star_{-1} \sfPsi^{(0)}_\textnormal{di} \right) = \sfP_{1,0}(A) \star_{-1} \sfPsi^{(0)}_\textnormal{di} - a_L \sfP_{0,1} \left(A \star_{-1} \sfPsi^{(0)}_\textnormal{di}  \right).
	\end{equation}
	We find that 
	\begin{align*}
		&\left( \left( a_{L_1}^{-1}\left( \sfP^{(1)}_{1,0} - \sfP^{(1)}_{0,0} \right)- a^{2} a_{L_1}^{-1} \sfP^{(1)}_{1,1} + \sfP^{(1)}_{0,1} \right) \otimes 1 \right) \sfZ
		\ \overset{\eqref{eq:P_{1,0} on cdot Psi^(0,1)(a)},\eqref{eq:P_{1,0} on fcdot{-1} Psi^-(0,1)(a^{-1})}}{=}\\ 
		&\quad\left(\left(\sfPsi^{(0)}_\textnormal{di} \otimes \sfPsi^{(0)}_\textnormal{di} \right) \star_{-1} \left(\left(\sfP^{(1)}_{1,0} - \sfP^{(1)}_{0,0}\right)\otimes a_{L_1}^{-1}\right)\left( \widetilde{\sfPsi}_{(0,3)}^{(-1,-1)}[a] \right) \right) \star_0 \left(\sfPsi_\textnormal{di}^{(1)}[a^2] \otimes \sfPsi_\textnormal{di}^{(1)}[a^2] \right) \
		\overset{\eqref{eq:relation for Z_{(0,3)}),first identity}}{=}\\ 
		&\quad\left(\left(\sfPsi^{(0)}_\textnormal{di} \otimes \sfPsi^{(0)}_\textnormal{di}  \right) \star_{-1} \left(a_{L_2}^{-1} \otimes \left(\sfP^{(2)}_{1,0} - \sfP^{(2)}_{0,0}\right)\right)\left( \widetilde{\sfPsi}_{(0,3)}^{(-1,-1)}[a] \right) \right) \star_0 \left(\sfPsi_\textnormal{di}^{(1)}[a^2] \otimes \sfPsi_\textnormal{di}^{(1)}[a^2] \right) \
		\overset{\eqref{eq:P_{1,0} on cdot Psi^(0,1)(a)},\eqref{eq:P_{1,0} on fcdot{-1} Psi^-(0,1)(a^{-1})}}{=}\\ 
		&\quad\left(1 \otimes \left( a_{L_2}^{-1}\left(\sfP^{(2)}_{1,0} - \sfP^{(2)}_{0,0}\right) - a^{2} a_{L_2}^{-1} \sfP^{(2)}_{1,1} + \sfP^{(2)}_{0,1} \right) \right) \sfZ,
	\end{align*}
	and \eqref{eq:recursion for neg Hopf link,first identity} follows.

	The relative recursion relation \eqref{eq:relative recursion disk} implies the identities
	\begin{equation}\label{eq:P_{-1,1} on cdot Psi^(0,1)(a)} 
		\sfP_{-1,1} \left( A \star_0 \sfPsi_\textnormal{di}^{(1)}[a^2] \right) = \sfP_{-1,1} \left( A \right) \star_0 \sfPsi_\textnormal{di}^{(1)}[a^2] - a^2 \sfP_{-1,2} \left( A \star_0 \sfPsi_\textnormal{di}^{(1)}[a^2] \right),
	\end{equation}
	\begin{equation}\label{eq:P_{-1,0} on cdot Psi^(0,1)(a)}
		\sfP_{-1,0} \left( A \star_0 \sfPsi_\textnormal{di}^{(1)}[a^2] \right) = \sfP_{-1,0} \left( A \right) \star_0 \sfPsi_\textnormal{di}^{(1)}[a^2] - a^2 \sfP_{-1,1} \left( A \right) \star_0 \sfPsi_\textnormal{di}^{(1)}[a^2],
	\end{equation}
	and 
	\begin{equation}\label{eq:P_{1,-1} on cdot Psi^(0,1)(a)}
		\sfP_{1,-1} \left( A \star_0 \sfPsi_\textnormal{di}^{(1)}[a^2] \right) = \sfP_{1,-1} \left( A \right) \star_0 \sfPsi_\textnormal{di}^{(1)}[a^2] + a^2 \sfP_{1,0} \left( A \star_0 \sfPsi_\textnormal{di}^{(1)}[a^2]\right)
	\end{equation}
	for any $A \in \widehat{\textnormal{Sk}}(L)$. Similarly, the relation \eqref{eq:relative recursion inverse disk} implies the identities
	\begin{equation}\label{eq:P_{0,1} on fcdot{-1} Psi^-(0,1)(a^{-1})}
		\sfP_{0,1} \left( A \star_{-1} \sfPsi^{(0)}_\textnormal{di} \right) = \sfP_{0,1}(A) \star_{-1} \sfPsi^{(0)}_\textnormal{di} - a_{L} \sfP_{-1,2} \left(A \star_{-1} \sfPsi^{(0)}_\textnormal{di}  \right),
	\end{equation}
	\begin{equation}\label{eq:P_{-1,0} on fcdot{-1} Psi^-(0,1)(a^-1)}
		\sfP_{-1,0} \left( A \star_{-1} \sfPsi^{(0)}_\textnormal{di} \right) = \sfP_{-1,0} \left( A \right) \star_{-1} \sfPsi^{(0)}_\textnormal{di} + a_L \sfP_{-2,1} \left( A \right) \star_{-1} \sfPsi^{(0)}_\textnormal{di},
	\end{equation}
	and
	\begin{equation}\label{eq:P_{0,-1} on fcdot{-1} Psi^-(0,1)(a^-1)}
		\sfP_{0,-1} \left( A \star_{-1} \sfPsi^{(0)}_\textnormal{di} \right) = \sfP_{0,-1} \left( A \right) \star_{-1} \sfPsi^{(0)}_\textnormal{di} + a_L \sfP_{-1,0} \left( A \right) \star_{-1} \sfPsi^{(0)}_\textnormal{di}.
	\end{equation}
	Furthermore, we observe that for $A = B \star_{-1} C$, 
	\begin{equation}\label{eq:P_{k,l} and framed product}
		\sfP_{-1,1}(A) = \sfP_{-1,1}(B) \star_{-1} C\quad\text{ and }\quad
		\sfP_{1,-1}(A) = \sfP_{1,-1}(B) \star_{-1} C,
	\end{equation}
	where in the right hand sides $\sfP_{\pm1,\mp1}(B)$ should be understood as inserting $\sfP_{\pm1,\mp1}$ on the boundary of a small tubular neighborhood of $B$.	
	Combining \eqref{eq:P_{-1,1} on cdot Psi^(0,1)(a)}, \eqref{eq:P_{-1,0} on cdot Psi^(0,1)(a)}, \eqref{eq:P_{0,1} on fcdot{-1} Psi^-(0,1)(a^{-1})}, \eqref{eq:P_{-1,0} on fcdot{-1} Psi^-(0,1)(a^-1)}, and \eqref{eq:P_{k,l} and framed product} we obtain
	\begin{align*}
		&\left( a_{L_2} \left(\sfP_{0,0}^{(2)} - \sfP_{-1,0}^{(2)}\right) + a_{L_1}^2 \left(a_{L_2} \sfP_{-1,1}^{(2)} - a^2 \sfP_{0,1}^{(2)} \right) \right) \left( \left( \sfPsi^{(0)}_\textnormal{di}  \star_{-1} A \right) \star_0 \left(\sfPsi_\textnormal{di}^{(1)}[a^2] \right) \right)\\
		&= \Bigl( \sfPsi^{(0)}_\textnormal{di} \ \star_{-1}\\ 
		&\qquad\left( a_{L_2} \left(\sfP_{0,0}^{(2)} - \sfP_{-1,0}^{(2)}\right) + a_{L_2} \left(a^{2}+a_{L_1}^2\right) \sfP_{-1,1}^{(2)} - a_{L_2}^{2} \sfP_{-2,1}^{(2)} - a^2a_{L_1}^2 \sfP_{0,1}^{(2)} \right) (A) \Bigr)\\ 
		&\quad\quad\star_0 \left(\sfPsi_\textnormal{di}^{(1)}[a^2] \right).
	\end{align*}
	Then combining \eqref{eq:P_{1,-1} on cdot Psi^(0,1)(a)}, \eqref{eq:P_{0,-1} on fcdot{-1} Psi^-(0,1)(a^-1)}, and \eqref{eq:P_{k,l} and framed product} we obtain
	\begin{align*}
		&\left( a_{L_1} \sfP_{1,-1}^{(1)} - \sfP_{0,-1}^{(1)} + a_{L_1} \left( \sfP_{0,0}^{(1)} - a^2 \sfP_{1,0}^{(1)} \right) \right) \left( \left( \sfPsi^{(0)}_\textnormal{di}  \star_{-1} A \right) \star_0 \left(\sfPsi_\textnormal{di}^{(1)}[a^2] \right) \right)\\
		&= \Bigl( \sfPsi^{(0)}_\textnormal{di} \ \star_{-1} \\ 
		&\qquad\left( a_{L_1} \sfP_{1,-1}^{(1)} - \sfP_{0,-1}^{(1)} + a_{L_1} \left(\sfP_{0,0}^{(1)} - \sfP_{-1,0}^{(1)} \right) \right) (A) \Bigr) \\ 
		&\quad\quad \star_0  \left(\sfPsi_\textnormal{di}^{(1)}[a^2] \right).
	\end{align*}
	Thus, relation \eqref{eq:recursion for neg Hopf link,second identity} is equivalent to relation \eqref{eq:relation for Z_{(0,3)}),second identity} in Proposition \ref{prop:relation for Z_{(0,3)}}. The theorem follows.
\end{proof}

\subsection{A conjectured skein valued quiver for the Hopf link in all colors}\label{sec:skein valued-quiver}
We propose a quiver-like partition function that we conjecture is equivalent to the skein valued partition function for the filling $L_{ll}$ (and then also $L_{mm}$). The partition function is constructed from the unknot basic disks on each knot conormal and an annulus and an anti-annulus with boundaries linking in the conormal branes, see Figure \ref{fig:hopf-link-quiver-description}, as follows.

Recall the unknot skein valued partition function in zero framing~\cite{Ekholm:2020csl}, see Section \ref{sec:unknot}.
We observe that \eqref{eq:Z-unknot-explicit} in framing zero factorizes is a product of two unlinked disks
\be\label{eq: unknot skein quiver}
	\sfZ^{(0)}_U  = \sum_{\lambda}H_{U;\lambda}(a,q)\, \sfW_\lambda 
	= 
	\sfPsi_{\mathrm{di}}^{(0)}[a^{-1}]\,  \cdot \, \sfPsi_{\mathrm{di}}^{(1)}[a]\,.
\ee
In generic framing this becomes
\be
\begin{split}
	\sfZ_U^{(f)} &= 
	\sum_{\lambda_1,\lambda_2}
	\left[(-1)^{|\lambda_1|} q^{\kappa(\lambda_1)}\right]^{f} 
	\left[(-1)^{|\lambda_2|} q^{\kappa(\lambda_2)}\right]^{f+1}
	\\ 
	&\qquad \times\left[\prod_{\ydiagram{1}\in\lambda_1 } \frac{-q^{-c(\ydiagram{1})}}{q^{h(\ydiagram{1})} - q^{-h(\ydiagram{1})}}\right]
	\left[\prod_{\ydiagram{1}\in\lambda_2 } \frac{-q^{-c(\ydiagram{1})}}{q^{h(\ydiagram{1})} - q^{-h(\ydiagram{1})}}\right]
	a^{|\lambda_2|-|\lambda_1|}\, \sfW^{}_{\lambda_1}\star_f \sfW^{}_{\lambda_2}
\end{split}
\ee
We introduce notation for the unknot HOMFLYPT $H^O_{\lambda}$ colored by the partition $\lambda$ and for the annulus $H^{+}_{\nu}$ and anti-annulus $H^{-}_{\mu}$ in colors $\nu$ and $\mu$ and in appropriate framing and homology classes as follows 
\be
\begin{split}
	H^O_{\lambda}&= \  a^{|\lambda|} \, H_{U;\lambda}(a,q)\ =\  \prod_{\ydiagram{1}\in\lambda} \frac{a^2 q^{c(\ydiagram{1})} - q^{-c(\ydiagram{1})}}{q^{h(\ydiagram{1})}-q^{-h(\ydiagram{1})}}\\
	H^-_\mu&= (-a^{2})^{|\mu|}\\
	H_\nu^+&= q^{-2\kappa(\nu)}.\\
\end{split}
\ee
These are the coefficients of the following partition functions 
\be
\begin{split}
	\sum_\lambda H^O_{\lambda}\, \sfW_\lambda
	& = 
	\sfZ^{(0)}_U[a] 
	=
	\sfPsi_{\mathrm{di}}^{(0)}\,  \cdot \, \sfPsi_{\mathrm{di}}^{(1)}[a^2]
	\\
	\sum_\mu H^-_\mu\, \sfW_\mu\otimes \sfW_{\mu^t}
	& = \sfPsi_{\overline{\mathrm{an}}}^{(0,0)} [a^2]
	\\
	\sum_{\nu} H_\nu^+ \, \sfW_\nu\otimes \sfW_\nu 
	& = \sfPsi_{{\mathrm{an}}}^{(-1,-1)} 
\end{split}
\ee
Our conjectural form of the Hopf link partition function starts from the unknot disks on each brane:
\be
\begin{split}
	\sfZ_{\text{disks}} 
	& = \sum_{\lambda_1,\lambda_2}
	\(H^{O}_{\lambda_1} \sfW_{\lambda_1}\otimes 1\) \star_0 \(H^{O}_{\lambda_2} 1\otimes \sfW_{\lambda_2}\),
	\\
	& = \sfZ^{(0)}_{U_1}[a] \otimes \sfZ^{(0)}_{U_2}[a],  \\
	& = \left(\sfPsi_{\mathrm{di}}^{(0)}\,  \cdot \, \sfPsi_{\mathrm{di}}^{(1)}[a^2]\right)\otimes \left(\sfPsi_{\mathrm{di}}^{(0)}\,  \cdot \, \sfPsi_{\mathrm{di}}^{(1)}[a^2]\right).
\end{split}
\ee
Next to these we place an unlinked anti-annulus
\be
\begin{split}
	\sfZ_{\text{disks+a.a.}} 
	& = 
	\sfPsi_{\overline{\mathrm{an}}}^{(0,0)} [a^2]
	\star_0 
	\sfZ_{\text{disks}}
	\\
	& = 
	\sfPsi_{\overline{\mathrm{an}}}^{(0,0)} [a^2]
	\star_0 
	\left[
		\left(\sfPsi_{\mathrm{di}}^{(0)}\,  \cdot \, \sfPsi_{\mathrm{di}}^{(1)}[a^2]\right)\otimes \left(\sfPsi_{\mathrm{di}}^{(0)}\,  \cdot \, \sfPsi_{\mathrm{di}}^{(1)}[a^2]\right)
	\right]
\end{split}
\ee
Finally add a framed annulus that links all disks and the anti-annulus with $-1$ units of linking on each brane, and obtain the conjectured skein valued partition function of the Hopf Link conormal filling:
\be\label{eq:Hopf-quiver-skein}
\begin{split}
	\sfZ_{ll} & = 
	\sfPsi_{{\mathrm{an}}}^{(-1,-1)} 
	\star_{-1} 
	\sfZ_{\text{disks+a.a.}}
	\\
	& = 
	\sfPsi_{{\mathrm{an}}}^{(-1,-1)} 
	\star_{-1} 
	\left[ 
	\sfPsi_{\overline{\mathrm{an}}}^{(0,0)} [a^2]
	\star_0 
	\left[
		\left(\sfPsi_{\mathrm{di}}^{(0)}\,  \cdot \, \sfPsi_{\mathrm{di}}^{(1)}[a^2]\right)\otimes \left(\sfPsi_{\mathrm{di}}^{(0)}\,  \cdot \, \sfPsi_{\mathrm{di}}^{(1)}[a^2]\right)
	\right]
	\right]
\end{split}
\ee
Developing the product, we arrive at the following formula for the Hopf link HOMFLYPT polynomials, in terms of contributions from unknot basic disks and the annulus and the anti-annulus
\be\label{eq:Hopf-skein-quiver}
	{
	H^{\text{quiver}}_{\sigma_1,\sigma_2} = 
	\sum_{\lambda_1,\lambda_2,\mu,\nu}\sum_{\rho_1,\rho_2}
	q^{-(\kappa^{\sigma_1}_{\rho_1\nu})}
	q^{-(\kappa^{\sigma_2}_{\rho_2\nu})}
	H^O_{\lambda_1}H^O_{\lambda_2} H^-_\mu H_\nu^+
	c_{\lambda_1\mu}^{\rho_1} c_{\lambda_2\mu^t}^{\rho_2}
	c_{\rho_1\nu}^{\sigma_1} c_{\rho_2\nu}^{\sigma_2}
	}\,,
\ee
where
\[
\kappa^{\sigma}_{\rho\nu}=\kappa(\sigma)-\kappa(\rho)-\kappa(\nu).
\]
We have checked numerically that these agree with the known expression for HOMFLYPT polynomials, obtained e.g. from the topological vertex, see~\eqref{eq:Hopf-HOMFLY-top-vert}.

\subsubsection{Remarks on the skein-valued knots-quivers correspondence}
The quiver description that we found involves a finite number of basic holomorphic curves, that we denote `skein-basic' curves. In the specialization to the $U(1)$ skein, these become the basic holomorphic curves of the standard quiver descriptions of HOMFLYPT polynomials of knots and links (generalized to include annuli), see \cite{Kucharski:2017ogk, Ekholm:2018eee}.
As observed in \cite{Ekholm:2019lmb}, basic holomorphic curves in the $U(1)$ skein of a knot conormal filling correspond to LMOV invariants (which count open M2 branes) of disks winding once around the longitude of $L$, and all other LMOV invariants are encoded by their linking data. 

The skein-basic curves appearing in \eqref{eq:Hopf-quiver-skein} generalize this notion, implying in particular that the LMOV invariants of basic curves `stabilize' when the number of branes on each component of the Hopf link conormal is taken to infinity. 
Unlike the basic curves appearing in the $U(1)$ quiver description, skein-basic curves encode the HOMFLYPT polynomials colored by arbitrary partitions. 
Correspondingly, they also encode information about LMOV invariants colored by arbitrary partitions.

\subsection{Moduli space bifurcations and the Hopf link partition function}\label{sec:mdliproof}
In this section we outline a proof of Conjecture~\ref{c: skein quiver}. We first illustrate the argument to derive the description~\eqref{eq: unknot skein quiver} of the uknot partition function.

\subsubsection{The unknot partition function from bifurcations}
Recall the proof of the Ooguri-Vafa conjecture from~\cite{Ekholm:2019yqp,Ekholm:2021colored}, see Section \ref{ssec:reviewskeinsonbranes}: shift the conormal of the knot off of the zero section. For small shifts there is only the basic annulus stretching between the conormal and the zero section, after sufficient SFT stretching all boundary components shrink and curves leave the zero section and can be transferred to the conifold. 

Applying this to the unknot, it is possible to arrange the complex structure so that the boundary of the initial annulus shrinks to a point, without other skein moves. At the time when the boundary component becomes a point there must be already a corresponding moduli space of disks that crosses the Lagrangian at the shrinking moment. Furthermore, right before this moment the whole partition function is carried by the annulus, which means there is also another disk that cancels the contribution of the crossing disk before the crossing.

Now, the disk that does not cross the Lagrangian is the only curve without $a$-powers and hence accounts for the extremal HOMFLYPT of the unknot. We know the corresponding recursion relation and conclude this disk and its multiple covers contribute one of the factors in~\eqref{eq: unknot skein quiver}. Then in order to cancel this contribution, the crossing disk is the corresponding anti-disk (corresponding to framing $1$) and when crossing that disk picks up a factor of $a^{2}$. After the crossing moment these two disks account for the whole partition function and we conclude that~\eqref{eq: unknot skein quiver} holds.

\subsubsection{The Hopf link partition function from bifurcations}
Consider now instead the Hopf link, represented by two linked unknots where we take one of them very small. Consider shifted off conormals and corresponding basic annuli of the two components. Deform the complex structures so that the two annuli cross. After that time there are two unlinked annuli and one three holed sphere, with a framed unknot close to a standard unknot as boundary in $S^{3}$. We now stretch until all curve boundaries contract. Before the contraction, crossing disk and annuli moduli spaces that account for the extremal Hopf link (i.e., the curves with no $a$-powers in them) must appear. Since the extremal part of the (formal) partition function of a three holed sphere in Section \ref{ssec:threeholedsphere} is an annulus partition function, Theorem \ref{thm:alternative quiver formula for Hopf} implies that all curves are accounted for by multiple covers of two disks and one annulus. Further, since the whole partition function is carried by the curves with boundary before the crossing the curves that cross are the anti-curves of the extremal Hopf. We conclude that the partition function is generated by four disks and two annuli that come in curve/anti-curve pairs. Finally, consider the extremal curves in the standard framing. It is straightforward to check that the configuration of curves in Figure~\ref{fig:hopf-link-quiver-description} gives the partition function $1$ at $a=1$: disks first cancel and then the pair of annulus/anti-annulus is a framed version of the standard canceling pair. If we knew that all holomorphic curves were multiple covers of the basic curves also after stretching, it would follow that the partition functions is generated as claimed.

\appendix

\section{The partition function of $L_{ll}$ from the topological vertex}\label{eq:HOMFLY-Z-Hopf}
The partition function of HOMFLYPT polynomials for the Hopf link \eqref{eq: Hopf HOMFLY partitionfunction}, which corresponds to the conormal-conormal filling $L_{ll}$, can also be obtained from the topological vertex.

More precisely, we have checked that \eqref{eq: Hopf HOMFLY partitionfunction}, obtained as the unique normalized solution to  $\sfA_i\cdot \sfZ_L=0$ with $i=1,2,3$, agrees with the open topological string partition function of two toric branes on external legs in the resolved conifold 
\be\label{eq:Hopf-HOMFLY-top-vert}
H_{\Gamma;\lambda,\mu}(a,q) =  \frac{q^{-\kappa(\lambda)} }{(a^2;q^2;q^2)_\infty} \sum_{\nu} C_{\lambda\mu^t\nu}(q) C_{00\nu^t}(q)\, (-a^2)^{|\nu|} 
\ee
where $C_{\lambda\mu\nu}(q)$ is the topological vertex function~\cite{Aganagic:2003db}.
The normalization $q^{-\kappa(\lambda)} $ corresponds to a choice of framing such that HOMFLYPT polynomials enjoy a $\mathbb{Z}_2$ symmetry $H_{\Gamma;\lambda,\mu} = H_{\Gamma;\mu,\lambda}$. 

In a more general choice of framing, the partition function becomes
\be
\begin{split}
	\sfZ_{ll}^{(f_1, f_2)}
	&= \Phi_{L_1}^{f_1}\otimes \Phi_{L_2}^{f_2} \, (\sfZ_\Gamma)
	\\
	&= \sum_{\lambda,\mu}
	\left[(-1)^{|\lambda_1|} q^{\kappa(\lambda_1)}\right]^{f_1} 
	\left[(-1)^{|\lambda_2|} q^{\kappa(\lambda_2)}\right]^{f_2} 
	H_{\Gamma;\lambda_1,\lambda_2}(a,q)\, \sfW_{\lambda_1}\otimes \sfW_{\lambda_2}\,,
\end{split}
\ee
where the framing operators $\Phi_{L_i}^{f_i}$ were defined in \eqref{eq:framed-Z}.
This obeys the framed skein valued recursion relations 
\be
\sfA_i^{(f_1,f_2)} \cdot \sfZ_{ll}^{(f_1, f_2)} = 0\,,
\quad \text{ for framings }(f_1, f_2)\in \IZ^2\,,
\ee
defined by the operators \eqref{eq:A1-skein-framed}-\eqref{eq:A2-skein-framed}-\eqref{eq:A3-skein-framed},
after specializing $a_{\Lambda_i} = a_{L_i}$, and after introducing an additional rescaling of $P^{(\alpha)}_{i,j}$ by appropriate powers of $a_{L_{\alpha}}^{-j f_\alpha}$ as given by 4-chain intersections, see \eqref{eq:aL-Pij-framed}.

\bibliography{biblio}{}
\bibliographystyle{JHEP}
\end{document}